\newtheoremstyle{thmstyle}
	{7pt} 
	{0pt} 
	{\itshape} 
	{} 
	{\bfseries} 
	{:} 
	{.5em} 
	{} 
\theoremstyle{thmstyle} 
\newtheorem{theorem}{\bf Theorem}[]
\newtheorem{proposition}[theorem]{\bf Proposition}
\newtheorem{corollary}[theorem]{\bf Corollary}
\newtheorem{lemma}[theorem]{\bf Lemma}
\newtheorem{claim}[theorem]{\bf Claim}
\newtheorem{remark}[theorem]{\bf Remark}
\newtheorem{definition}[theorem]{\bf Definition}
\numberwithin{equation}{section}
\crefname{theorem}{Theorem}{Theorems}
\crefname{lemma}{Lemma}{Lemmas}
\title{Spectral decomposition of pseudo-cuspforms, and meromorphic continuation of Eisenstein series, on $\mathbb{Q}$-rank one arithmetic quotients}
\author{Iver Walkoe}
\date{10.29.2019} 
\begin{document}

%
%

\pagestyle{plain} 
\setcounter{page}{1}
\pagenumbering{roman}
\maketitle

\begin{quote}{\sc Abstract:}
We extend Lax-Phillips' theorem on discreteness of pseudo-cuspforms, in the style of Colin de Verdi{\`e}re's use of the Friedrichs self-adjoint extension of a restriction of the Laplace-Beltrami operator, as opposed to the use of semigroup methods. We use this to prove meromorphic continuation of Eisenstein series in several $\mathbb{Q}$-rank one cases, again following Colin de Verdi{\`e}re, as opposed to the semigroup-oriented viewpoint of Lax-Phillips and W. Mueller.
\\ \end{quote}

\setcounter{tocdepth}{2}
\tableofcontents

%
%
%
%
%
%
\newpage
\setcounter{page}{1}
\pagenumbering{arabic}
\pagestyle{plain}

\section{Introduction and synopsis}\label{Intro-and-Motivation}

This paper is a modified version of the author's dissertation submitted to the University of Minnesota (\cite{Walkoe-2019}) . As elaborated below, we follow Lax-Phillips' discreteness argument for pseudo-cuspforms, making use of the Friedrichs self-adjoint extension of a restriction of the Laplace-Beltrami operator as outlined in \cite{CdV-pseudo}. Then, following \cite{CdV-Eis}, we prove meromorphic continuation of Eisenstein series in many $\mathbb{Q}$-rank one cases. Technically, the primary distinction is the use of the approach of Colin de Verdi{\`e}re, as opposed to the semigroup-oriented viewpoint of Lax-Phillips and W. Mueller.

The method is to examine properties of the Casimir operator acting on automorphic Sobolev spaces on $G/K$ for the $\mathbb{Q}$-rank one groups being investigated. The analytic set-up is first used to establish a Rellich-like compactness result. This in turn is used to show the discreteness of the spectrum of the Friedrichs self-adjoint extension $\widetilde{\Delta}_a$ of the Lax-Phillips operator $\Delta_a$ (defined below) and, consequently, the discrete decomposition of the space of $L^2$ pseudo-cuspforms on $\Gamma\backslash G/K$. These results are then used to demonstrate the meromorphic continuation of Eisenstein series up-to and past the critical line $\text{Re}(s) = \frac{1}{2}$.

We begin by examining three concrete groups: $O(r,1)$, $U(r,1)$ and $Sp^\ast(r,1)$ (described below), for which we provide complete development of the steps outlined above. Our discussion concludes by extending the above arguments to a more general class of $\mathbb{Q}$-rank one orthogonal groups. This generalization requires invocation of the compactness of anisotropic quotients and units theorem (recalled in an appendix).

We hope the exposition is of value both by highlighting the clean lines of reasoning in Colin de Verdi{\`e}re's use of the Friedrichs self-adjoint extension to establish meromorphic continuation of Eisenstein series, and by providing explicit, if occasionally gritty, computations that seem difficult to find in the literature.

I want to express my appreciation for the guidance and insight provided by Professor Paul Garrett in directing my dissertation program and for suggesting this problem. I also want to thank Professors Brubaker, Diaconu and McGehee of the University of Minnesota for agreeing to serve on my thesis committee.

The argument largely breaks down into the following steps:
\begin{enumerate}
\item Deriving a suitable expression for the Casimir operator $\Omega_\frak{g}$ on $G/K$ by:
\begin{enumerate}
	\item Expressing the Lie algebra $\frak{g}$ and its dual $\frak{g}^\ast$ in suitable Iwasawa coordinates and invoking $\Omega_\frak{g} = \sum_{x \in \frak{g}} x \cdot x^\prime$.
	\item Identifying, and dropping, the terms of the Casimir operator  {\it parallel to $K$} since these terms act trivially on functions on $G/K$.
\end{enumerate}
\item Several convenient simplifications are obtained by invoking reduction theory:
\begin{itemize}
	\item In the $\mathbb{Q}$-rank one cases we consider, reduction theory implies that the geometry of $G/K$ has a particularly tractable form. Namely, {\it cusps are points}: a neighborhood of a cusp on $\Gamma\backslash G/K$ is of the form ``{\it compact manifold} $\times$ {\it ray}'' where the ray (i.e., a one-dimensional manifold homeomorphic to $(0, \infty)$) corresponds to a ``{\it height}'' parameter that will be the primary focus of the analysis (cf. \hyperref[sec:Reduction-Theory-Siegel-Sets]{\S 2.2} and \hyperref[sec:Remarks-Number-of-Cusps]{\S 2.3}). 
	\item In these cases, which do not include Siegel modular forms, minimal parabolic subgroups are also maximal (proper) parabolic subgroups and there is a {\it finite number of rational $\Gamma$-conjugacy classes}. That is, there is a {\it finite number} of cusps.
	\item The analysis requires bounds on Sobolev-like norms of families parameterized in terms of the {\it ray} coordinate for each cusp. Since there are only finitely many cusps, the appropriate maximum or minimum will thus work for all the cusps. As such, keeping track of the individual cusps only introduces a notational burden; we simplify the presentation by treating the situation as though there were only a single cusp with parameter $y$ which does not change the mathematical content.
	\item The study of the functional equation(s), that is, of the scattering matrix, naturally requires attention to the interactions of the Eisenstein series attached to the various cusps. However, here we consider only the logically prior analytical problem of meromorphic continuation which does not involve interactions between the constant terms on the various cusps so there is no lost content by treating individual cusps.
\end{itemize}
\item The Lax-Phillips approach requires not only the positivity of $-\Omega_\frak{g}$ but also of its factors parallel to the compact and ray mentioned above, which we establish.
\item The key analytical estimates involve proving that a suitable Sobolev norm is bounded by the $L^2_a$ norm where $a$ is a value of the height parameter $y$. Our proof is a more complete elaboration of the arguments sketched in pp. 204 - 206 in \cite{LP}. The use of this bound is discussed in item \#6 below.
\begin{itemize}
	\item {\it Automorphic test functions} $\mathscr{D}_a$ are conventional test functions (smooth, compactly supported) that are also in $L^2_a$. We define an {\it automorphic Sobolev spaces} $\frak{B}^1_a$ as the closure of  $\mathscr{D}_a$ in the Lax-Phillips space $L^2_a$ with respect to an inner-product expressed in terms of the Casimir/Laplacian operator.
	\item We establish the density of the automorphic test functions in the Lax-Phillips space $L^2_a$ (cf. \hyperref[sec:Density-of-Automorphic-Test-Functions]{\S 7 ``For $a \gg 1 , \; \mathscr{D}_a \text{ is dense in } L^2_a(\Gamma \backslash G /K)$''}). 
	\item $L^2_a$ norms of truncated tails of elements of  $\frak{B}^1_a$ are then shown to vanish {\it strongly} by showing that the Lax-Phillips $L^2_a$ norm of the tail is bounded by a vanishing parameter on the tail times the element's global Sobolev $\frak{B}^1_a$ norm (cf. \hyperref[sec:L2-norms-zero-strongly]{\S 8 ``$L^2$ norms of truncated tails go to $0$ strongly''}).
	\item Last, and necessary for the validity of the Rellich lemma, we show and that this relationship is preserved under smooth truncations (cf. \hyperref[sec:Global-B1a-bounds-of-smooth-trunc-L2a-tail-norms]{\S 9 ``$\frak{B}^1$ norms of tails are bounded by global $\frak{B}^1$ norms''}).
\end{itemize}	
\item The positivity results imply the existence of a Friedrichs self-adjoint extension to the Lax-Phillips operator  (cf. \hyperref[sec:Remarks-Symmetric-versus-Self-Adjoint]{\S 2.6}, \hyperref[sec:Friedrichs-extension-defn]{\S 2.7} and \hyperref[sec:Unbounded-ops-with-compact-resolvent]{\S 2.8}).
\item The bounding of the Sobolev norm by the $L^2_a$ norm provides a Rellich lemma: the inclusion of the automorphic Sobolev space $\frak{B}^1_a$ into the Lax-Phillips space $L^2_a$ is compact. We apply this to conclude that the Friedrichs self-adjoint extension $\widetilde{\Delta}_a$ of $\Delta_a$ has a compact resolvent (cf. \hyperref[sec:compact-inclusion-compact-resolvent]{``compact inclusions imply compact resolvents''}). These results combine to show the discreteness of the spectrum of $\widetilde{\Delta}_a$ and the corresponding discrete decomposition of pseudo-cuspforms.
\item Meromorphic continutation of Eisenstein series $E_s$ is then established by:
\begin{enumerate}
\item Creating a pseudo-Eisentstein series $h_s$ from the height function $\eta$ and a smooth cutoff $\tau$. We show that $h_s$ is entire as a function-valued function of $s$.
\item A new function $\widetilde{E}_s$ is defined by taking an image of $h_s$ under the compact resolvent $(\widetilde{\Delta}_a - \lambda_s)^{-1}$ and subtracting this image from $h_s$ (where $\lambda_s = c \cdot s(s - 1) \text{ for suitable } c \in \mathbb{R}$).  $\widetilde{E}_s$ is seen to agree with $E_s$ for $\text{Re}(s) > 1$ and $\widetilde{E}_s$ extends $E_s$ past the critical line. 
\end{enumerate}
\end{enumerate}
Conceptually, the gist of the overall argument is, for suitably large values of the cusp height $a$, to demonstrate the positivity of the tangential and nontangenial components of $\Delta_a$ which establishes the existence of the Friedrichs extension $\widetilde{\Delta}_a$ of the restriction of the Laplacian $\Delta_a$. Next, the bounding of the Sobolev norms in the cusp parameter $a$ of smoothly truncated functions by their global Sobolev norms proves the Rellich lemma which is used to demonstrate the discreteness of the spectrum of $\widetilde{\Delta}_a$ and the compactness of its resolvent $(\widetilde{\Delta}_a - \lambda_s)^{-1}$.

Then, though the resolvent $(\widetilde{\Delta}_a - \lambda_s)^{-1}$ is not a true projection operator, $\widetilde{E}_s$ is created in a process analogous to orthogonalization: the pseudo-Eisenstein series $h_s$ is not an eigenfunction of $\Delta_a$, but by snipping off the image of $h_s$ under $(\widetilde{\Delta}_a - \lambda_s)^{-1}$ from $h_s$, we create an eigenfunction which agrees with the genuine Eisenstein series $E_s$ in the domain of $E_s$ and extends $E_s$ past $\text{Re}(s) = 1$ as a consequence of the entirety of $h_s$ and the holomorphic operator $(\widetilde{\Delta}_a - \lambda_s)^{-1}$.

\section{Motivation for the approach and some history}\label{Intro-and-Motivation}

The spectral methods employed in \cite{LP} are based on semigroup methods as expounded in \cite{Hille-Phillips-1957}. These results, though approached from a different viewpoint as explicated in \cite{PBG}, will be used to prove the discreteness of pseudo-cuspforms and the meromorphic continuation of Eisenstein series on several families of rank one groups. The perspective developed in \cite{PBG} clarifies the methodology sketched in \cite{CdV-Eis} and \cite{CdV-pseudo}. We use \cite{PBG} as our primary reference for background material and reproduce some of the results there for the convenience of the reader.

First is the case of $O(r,1)$, where the unipotent radical is abelian. Then, the ideas will be extended to the simplest rational forms of $U(r,1)$ and $Sp^\ast (r,1)$, which present somewhat greater technical challenges. Then we treat a fairly general case of $\mathbb{Q}$-rank one orthogonal groups.
The key element is systematic use of modern analysis, notably operator theory and global automorphic Sobolev spaces. 

\subsection{Motivation: discrete decomposition of $L^2(\Gamma \backslash G/K)$ cuspforms}\label{sec:Motivation-discrete-decomposition}

For the groups $G$ being considered, with discrete (arithmetic) subgroup $\Gamma$ and (maximal) compact subgroup $K$, automorphic forms will be $\mathbb{C}$-valued functions on $\Gamma \backslash G/K$, meeting further conditions depending on the situation. 
A key role is played by the constant term $c_P f$ of an automorphic form $f$ on $\Gamma \backslash G/K$ for parabolic $P$ with unipotent radical $N$, a function on $N \Gamma_\infty \backslash G/K$ defined by  
$$
(\text{constant term})\; f(x) = c_P f(x) = \int_{(N \cap \Gamma) \backslash N} f(n \cdot x) \; dn
$$
The left-invariant Haar measure $dn$ on $N$ descends to the quotient as a right-invariant measure since $N \cap \Gamma$ is discrete. The first example, $G = O(r,1)$, is such that the group $N$, the unipotent radical of a (minimal) parabolic subgroup, is abelian (isomorphic to $\mathbb{R}^{r-1}$), which will not be the case in subsequent examples. In all the cases considered, the quotient $(N \cap \Gamma)\backslash N$ is compact. 
The quotient has the unique compatible measure for {\it winding-up} and {\it unwinding}
$$
\int_{(N \cap \Gamma) \backslash N} \bigg( \sum_{\gamma \in N \cap \Gamma} \phi(\gamma n) \bigg) dn = \int_N \phi(n) \; dn \qquad (\text{for all } \phi \in C^o_c(N))
$$

The integral defining the constant term shows that $c_P f$ is a left $N$-invariant function on $G/K$:
$$
c_P f(n^\prime x) = \int_{(N \cap \Gamma) \backslash N} f(n \cdot n^\prime x) \; dn= \int_{(N \cap \Gamma) \backslash N} f((n n^\prime) \cdot x)  \; dn = c_P f(x) \enspace (\text{for } n^\prime \in N)
$$

Let $P$ be the parabolic with unipotent radical $N$, so that $P = NM$ where $M$ is a Levi-Malcev subgroup of $P$. $P$ fits into an Iwasawa decomposition $G = PK$. We have that $\Gamma_\infty = P \cap \Gamma$ normalizes $N \cap \Gamma$, noting that in the simpler examples  $N \cap \Gamma$ is of finite index in $P \cap \Gamma$. The constant term is still left $\Gamma_\infty$-invariant. Together, with the normality of $N$ in $P$, these observations show $c_P f$ is left $N \Gamma_\infty$-invariant. Thus, constant terms of functions $f$ on $\Gamma \backslash G/K$ are left $\Gamma \cap M$-invariant functions on
$$
N\backslash G/K = N\backslash (N A^+ K)/K \approx A^+ \times (\Gamma \cap M^1)\backslash M^1/(M^1 \cap K)
$$
(a fuller discussion of the groups occurring above, including $M^1$ is in section \cref{sec:Remarks-Number-of-Cusps}). In the first cases we examine, this simplifies to a ray
$$
N\backslash G/K = N\backslash (N A^+ K)/K \approx A^+ \approx (0, \infty)
$$

Some care is called for in that $f \in L^2(\Gamma\backslash G/K)$ does not imply that $c_P f \in L^2(N \Gamma_\infty \backslash G/K)$. However, if $f$ is locally $L^1$, so that $|f|$ has finite integrals over compact subsets of $\Gamma \backslash G$, Fubini's theorem implies that a compactly-supported integral of $f$ in one of several variables is again locally $L^1$. This applies to $x \times y \to f(n_x a_y)$ in Iwasawa coordinates. This aspect of the constant term map will be clarified later.

Cuspforms are automorphic forms $f$ meeting the Gelfand condition $c_P f = 0$. The term cuspform can be used in a strong sense that further requires a cuspform to be a $\Delta$-eigenfunction in $L^2(\Gamma\backslash G/K)$, but this usage often proves too restrictive. An additional complication is that $L^2$ functions do not have good pointwise values, so vanishing of the constant term must must be taken in an almost everywhere sense for $L^2$ automorphic functions. It turns out often better to consider the constant term map as a map on distributions, and the Gelfand condition interpreted as a distributional vanishing condition, as will be done later when the constant term map is examined in greater detail. Let
$$
L^2_o(\Gamma\backslash G/K) = \{L^2\text{-cuspforms}\} = \{f \in L^2(\Gamma\backslash G/K) \; : \; c_P f = 0 \}
$$
A classic result is the discrete decomposition of the space of cuspforms: 
%
%
%
%
\begin{theorem}
The space $L^2_o(\Gamma \backslash G/K)$ of square-integrable cuspforms is a closed subspace of $L^2(\Gamma\backslash G/K)$, and has an orthonormal basis of $\Delta$-eigenfunctions. Each eigenspace is finite-dimensional, and the number of eigenvalues below a given bound is finite.
\end{theorem}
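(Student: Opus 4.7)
The plan is to reduce the theorem to the spectral theorem for positive self-adjoint operators with compact resolvent. Three ingredients are required: (i) closedness of $L^2_o(\Gamma\backslash G/K)$ in $L^2(\Gamma\backslash G/K)$; (ii) construction of a Friedrichs extension $\widetilde{\Delta}_o$ of $-\Delta$ on the cuspidal automorphic Sobolev space; and (iii) compactness of the inclusion of that Sobolev space into $L^2_o$. Once these are in place, the conclusions follow from standard functional analysis.

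For (i), I would realize the constant term distributionally. Pairing $f \in L^2(\Gamma\backslash G/K)$ against a test function $\phi \in C_c^\infty(N\Gamma_\infty \backslash G/K)$, the winding-up identity for $\int_{(N\cap\Gamma)\backslash N}$ combined with Fubini rewrites $\langle c_P f, \phi\rangle$ as the integral of $f$ against a continuous function on $\Gamma\backslash G/K$ with support compact modulo $\Gamma$. Each such pairing is therefore a continuous linear functional on $L^2(\Gamma\backslash G/K)$, and $L^2_o$ is the intersection of the kernels of a countable dense family of such functionals, hence closed.

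Ingredients (ii) and (iii) form the heart of the argument, and hinge on cuspidal decay. The Gelfand condition $c_P f = 0$ is exactly the vanishing of the zeroth Fourier coefficient of $f$ along the compact quotient $(N\cap\Gamma)\backslash N$; the remaining Fourier (or, for nonabelian $N$, Plancherel) modes can be shown to decay exponentially in the height parameter $y$. Quantitatively, one bounds the $L^2$-norm of the tail of a cuspform above height $y$ by a constant times $e^{-c y}$ times a cuspidal Sobolev norm. Combining this with the classical Rellich lemma on a compact truncation of the fundamental domain yields the desired compact inclusion, so $\widetilde{\Delta}_o$ has compact resolvent.

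The spectral theorem for compact self-adjoint operators, applied to $(\widetilde{\Delta}_o + 1)^{-1}$, then gives an orthonormal basis of $\widetilde{\Delta}_o$-eigenfunctions with eigenvalues $\lambda_n \to \infty$ of finite multiplicity, and in particular finitely many lying below any prescribed bound. The main obstacle is the cuspidal decay estimate: in the nonabelian cases $U(r,1)$ and $Sp^\ast(r,1)$, expansion along $(N\cap\Gamma)\backslash N$ requires genuine nilmanifold harmonic analysis rather than ordinary Fourier series, and one must further check that smooth truncation preserves the cuspidal Sobolev norm, parallel to the estimates developed for pseudo-cuspforms in the sections to follow.
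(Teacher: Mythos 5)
Your outline is correct and is essentially the argument the paper has in mind: the paper itself gives no proof of this theorem beyond citing Selberg, Langlands, and Gelfand--Piatetski-Shapiro, but it explicitly attributes closedness to the recharacterization of $L^2_o$ as the orthogonal complement of the pseudo-Eisenstein series $\Psi_\phi$ (your ingredient (i), via the adjunction $\int \phi\cdot c_Pf=\int\Psi_\phi\cdot f$), and the entire body of the paper carries out your ingredients (ii) and (iii) --- Friedrichs extension, tail estimates, Rellich compactness, compact resolvent, discrete spectrum --- for the pseudo-cuspform spaces $L^2_a$, of which $L^2_o$ is the case $a=0$ of vanishing constant term at all heights.

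One quantitative claim in your plan is overstated, though it does not affect the structure. For $f$ merely in the first-order cuspidal Sobolev space $\mathfrak{B}^1\cap L^2_o$, the Fourier/Plancherel argument along $(N\cap\Gamma)\backslash N$ does not give exponential decay of the tail: bounding $\sum_{\xi\neq 0}|\hat f(\xi,y)|^2$ by $\sum_\xi |\xi|^2|\hat f(\xi,y)|^2$ and trading $|\xi|^2$ for $-\Delta_x$ costs you exactly one factor of $y^{-2}$ (coming from $-\Delta_x \le c^{-2}(-y^2\Delta_x)$ for $y>c$), so the honest estimate is $\int_{y>c}|f|^2 \ll c^{-2}\,|f|^2_{\mathfrak{B}^1}$, as in the paper's \S 8. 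Exponential decay requires controlling all derivatives and holds for $\Delta$-eigenfunction cuspforms, not for general elements of $\mathfrak{B}^1\cap L^2_o$. The polynomial bound is all the total-boundedness argument needs, so your Rellich step and the spectral conclusion go through unchanged once you replace $e^{-cy}$ by $O(c^{-1})$.
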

\begin{proof}
cf. \cite{Selberg-1956}, \cite{Langlands-SLN544-1967-1976}, \cite{Gelfand-PS-1963}
\end{proof}
The closed-ness of the space of $L^2$ cuspforms comes from recharacterization of it in terms of pseudo-Eisenstein series, the basic theory of which is described later. In contrast, the full space $L^2(\Gamma\backslash G/K)$ does not have a basis of $\Delta$-eigenfunctions. In the concrete cases first examined of $O(r,1)$, $U(r,1)$ and $Sp^\ast(r,1)$, the orthogonal complement of cuspforms in $L^2(\Gamma\backslash G/K)$ consists primarily of integrals of non-$L^2$ eigenfunctions for $\Delta$, namely the standard Eisenstein series $E_{s}$. However, in the more general $\mathbb{Q}$-rank one case, the orthogonal complement of cuspforms in $L^2(\Gamma\backslash G/K)$ consists primarily of integrals of the {\it cuspidal-data} Eisenstein series $E_{s,f}$ where $f$ is a function on $(\Gamma \cap M^1)\backslash M^1/(M^1 \cap K)$. Letting $\eta$ be the {\it height} function (see \cref{sec:Remarks-Number-of-Cusps}), in the concrete cases first considered, the constant term is $\eta^s + \eta^{1-s} = y^s + y^{1-s}$; in  the more general $\mathbb{Q}$-rank one case, the constant term is $(\eta^s + c_{s,f}\cdot \eta^{1-s})\cdot f(m^\prime)$ where $m^\prime \in M^1$ (cf. \cite{Langlands-SLN544-1967-1976}, \cite{Moeglin-Waldspurger-1995} and \cite{PBG}).

The operator $\Delta$ presents technical issues that require precise treatment. For example, while $L^2 (\Gamma\backslash G/K)$ lies inside the collection of distributions on $\Gamma\backslash G/K$, and distributional interpretation of the action of $\Delta$  would make it well-defined on all of $L^2(\Gamma\backslash G/K)$, $\Delta$ would not stabilize $L^2(\Gamma\backslash G/K)$. This would ostensibly seem to obstruct use of the symmetry or self-adjointness of $\Delta$ as an (unbounded) operator on a Hilbert space. To address this complication, precise invocation of the theory of unbounded operators on Hilbert spaces is used, notably of the Friedrichs extension of the restriction of $\Delta$ to subspaces of its domain as explicated in \cite{PBG}.

Although $\Gamma\backslash G/K$ may fail to be smooth, $\Gamma\backslash G$ is always smooth, because $\Gamma$ is discrete, and the equivalence
$$
C^\infty_c(\Gamma\backslash G/K) = C^\infty_c(\Gamma\backslash G)^K = \text{ right } K \text{-fixed test functions on } \Gamma\backslash G
$$
avoids issues of smoothness of $\Gamma\backslash G/K$ by using $K$-invariant functions on the smooth space $\Gamma\backslash G$. Questions regarding existence and sufficiency of $K$-invariant test functions are resolved by use of the Gelfand-Pettis vector-valued integral (cf. \cite{PBG} chapter 14). Specifically, the Gelfand-Pettis integral maps all test functions to right $K$-invariant ones since $K$ is compact and the action of $G$ on the right is continuous. Additionally, if the measure of $K$ is normalized to be $1$, then the averaging map is the identity on already-$K$-invariant functions.

We will introduce a more general setting when discussing $\mathbb{Q}$-rank one orthogonal algebraic groups, but will first consider real Lie groups admitting simpler coordinate descriptions (see \cref{sec:Remarks-Number-of-Cusps} for an overview of the general case considered). In these coordinates, for $g \in G$, a key role is played by a function $\eta(g)$ called the {\it height} of $g$ which is a function of a single real parameter in the Iwasawa coordinates of $g$. Let $g=n_x m_y = n_x m^\prime \ell_y$ be Iwasawa coordinates of $g$, with $n_x \in N$ and $m_y = m^\prime \ell_y \in M = A^+ \cdot M^1$ (the subgroups $A^+$ and $M^1$ and their relationship are discussed in \cref{sec:Remarks-Number-of-Cusps}). Define a function $\eta(g)$ called the {\it height} of $g$ by $\eta(g) = \eta(n_x m^\prime \ell_y) = \eta(\ell_y)$. In the cases of $O(r,1)$, $U(r,1)$ and $Sp^\ast(r,1)$, the height is a power of the {\it ray} coordinate $y$. 

\begin{remark}
Arithmetic quotients of $\mathbb{Q}$-rank one groups have cusps which are essentially points as opposed to the more geometrically complicated structures that arise when treating higher rank groups. By reduction theory, in the rank one case, a finite number of Siegel sets cover the quotient and the quotient can be expressed as a union of a compact set (with potentially complicated geometry) and a finite number of cusps which admit a simpler description. Notably, also by reduction theory, cylindrical {\it collar} neighborhoods of the cusps can be chosen corresponding to sufficiently large values of a height parameter so that the analysis on the neighborhood of each cusp is effectively independent of the analysis on the other cusps, as described in the section \hyperref[sec:Reduction-Theory-Siegel-Sets]{reduction theory and Siegel sets} (see also \cite{PBG}, \cite{Borel-1991}, \cite{Platonov-Rapinchuk-1994} and \cite{Springer-1994}). The net effect, which we will continually exploit, is that, while there might be more than one cusp, because the number of cusps is finite, in each situation where a bound on the height parameter is needed, a single value can be found that will work for all of the cusps individually. This allows us to treat the (finite) collection of cusps as though there was just one cusp which has the added value of reducing notational clutter and highlighting the key height parameter. There is no mathematical substance skirted by this approach, but the method is justified by clarifying the description and by simplifying the notation.
\end{remark}

For $a \geq 0$, define the Lax-Phillips space $L^2_a(\Gamma\backslash G/K)$ of pseudo-cuspforms by
$$
\{ f \in L^2(\Gamma\backslash G/K) : c_P f(m_y) = 0 \text{ for }m_y = m^\prime \ell_y, \; m^\prime \in M^1, \ell_y \in A^+ \text{ and } \eta(\ell_y) \geq a \}
$$
Let  
$$
\Delta_a = \Delta \bigg|_{C^\infty_c (\Gamma\backslash G/K) \cap L^2_a (\Gamma\backslash G/K )}
$$
and $\tilde{\Delta}_a$ its Friedrichs extension (see \cref{sec:Friedrichs-extension-defn}), called a {\it pseudo-Laplacian} attached to the restriction of $\Delta$ to $L^2_a(\Gamma\backslash G/K)$. We show
\begin{theorem}
For cut-off height a sufficiently large depending on $\Gamma$, $\tilde{\Delta}_a$ has purely discrete spectrum.
\end{theorem}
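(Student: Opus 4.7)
The plan is to realize $\widetilde{\Delta}_a$ as a semibounded self-adjoint operator whose resolvent is compact; the spectral theorem for such operators then immediately yields purely discrete spectrum of finite-multiplicity eigenvalues accumulating only at infinity. The argument breaks into three logical stages, aligning with items (3)--(6) of the synopsis.

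First, I would verify that $\Delta_a$ is symmetric with a strictly positive quadratic form on the dense domain $\mathscr{D}_a = C^\infty_c(\Gamma\backslash G/K) \cap L^2_a$. Symmetry follows from formal self-adjointness of $\Delta$ together with vanishing of boundary contributions on compactly supported test functions. Positivity of $-\Omega_\mathfrak{g}$, along with positivity of its components parallel to the compact cross-section and to the cuspidal ray --- extracted in Iwasawa coordinates as outlined in item (3) of the synopsis --- furnishes the lower bound on the quadratic form required by the Friedrichs construction, producing the self-adjoint extension $\widetilde{\Delta}_a$ with form domain the automorphic Sobolev space $\mathfrak{B}^1_a$.

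Second, and analytically the decisive step, I would establish a Rellich-type compactness lemma: for $a$ sufficiently large, the inclusion $\mathfrak{B}^1_a \hookrightarrow L^2_a$ is compact. Given a bounded sequence $\{f_n\}\subset \mathfrak{B}^1_a$, pick a large height $T \ge a$ and split each $f_n$ by a smooth cutoff into a head supported on the relatively compact complement of the tall cusp neighborhood and a tail supported above height $T$. On the head, classical Rellich--Kondrachov on the smooth manifold $\Gamma\backslash G$ (applied to the $K$-invariant functions) extracts an $L^2$-convergent subsequence. On the tail, vanishing of $c_P f_n$ above $\eta = a$ forces the $N$-Fourier expansion to involve only nontrivial frequencies, which couple to the ray-direction factor of $-\Omega_\mathfrak{g}$ to produce an estimate of the form
\[
\|(\text{tail of } f_n \text{ above } T)\|_{L^2_a} \;\le\; \varepsilon(T)\cdot \|f_n\|_{\mathfrak{B}^1_a}, \qquad \varepsilon(T)\to 0 \text{ as } T\to\infty.
\]
A standard diagonal extraction then yields an $L^2_a$-Cauchy subsequence. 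A separate check, the content of \S 9, is that the smooth truncation preserves $\mathfrak{B}^1_a$-bounds, so truncated pieces remain legitimate form-domain elements.

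Third, compactness of the inclusion of the form domain into the Hilbert space is equivalent, by the standard correspondence recalled in \S 2.8, to compactness of the resolvent $(\widetilde{\Delta}_a - \lambda)^{-1}$ for any $\lambda$ off the spectrum. The spectral theorem for compact self-adjoint operators then supplies an orthonormal basis of $L^2_a$ consisting of $\widetilde{\Delta}_a$-eigenfunctions with eigenvalues of finite multiplicity and no finite accumulation point.

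The hard part is the tail estimate driving the Rellich lemma. The essential point is to quantify a spectral gap produced by the absence of the constant-term frequency: vanishing of $c_P f$ above $\eta = a$ means the $N$-average is suppressed, so on the ray the ray-direction part of $-\Omega_\mathfrak{g}$ acts with eigenvalues bounded away from zero, and it is precisely this gap that yields the vanishing factor $\varepsilon(T)$. Making this rigorous in Iwasawa coordinates, expressing everything in terms of $\eta$ and the $N$-variables, and then ensuring that smooth truncations preserve the $\mathfrak{B}^1_a$-inequality, is where the hypothesis ``$a$ sufficiently large depending on $\Gamma$'' actually enters, and constitutes the most delicate computation; this is where the paper's fleshing-out of the sketch in \cite{LP} pp.~204--206 does its real work.
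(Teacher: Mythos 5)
Your proposal follows essentially the same route as the paper: positivity of $-\Omega_{\mathfrak{g}}$ and of its ray and cross-sectional fragments feeds the Friedrichs construction; a head/tail splitting with a quantitative tail estimate (driven by the spectral gap from the missing constant-term frequency) plus preservation of $\mathfrak{B}^1_a$-bounds under smooth truncation gives the Rellich lemma; and compactness of $\mathfrak{B}^1_a \hookrightarrow L^2_a$ transfers to compactness of the resolvent, whence discrete spectrum. The paper phrases precompactness via the total-boundedness (finite $\varepsilon$-ball covering) criterion rather than your sequential-extraction/diagonal argument, but these are interchangeable, and your identification of the tail estimate as the locus of the real work is accurate.

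The one substantive point you pass over is your parenthetical assertion that $\mathscr{D}_a = C^\infty_c(\Gamma\backslash G/K)\cap L^2_a$ is \emph{dense} in $L^2_a$. This is not automatic --- the intersection of a dense subspace with a closed subspace cut out by the constant-term vanishing condition need not be dense in that closed subspace --- and without it the Friedrichs extension is self-adjoint only on the $L^2_a$-closure of $\mathscr{D}_a$, which could a priori be proper, undermining the claimed spectral decomposition of all of $L^2_a$. The paper devotes a full section to this (\S 7), approximating $f \in L^2_a$ by arbitrary test functions $f_r$ and then subtracting pseudo-Eisenstein series built from smoothly cut-off constant terms $\beta_t(\eta)\cdot c_P f_r$ to restore membership in $L^2_a$, with the correction shown to vanish in $L^2$; this is also a place where the hypothesis ``$a$ sufficiently large depending on $\Gamma$'' genuinely enters (via reduction theory guaranteeing the relevant cylinder injects into the quotient), not only in the tail estimate and truncation bounds where you locate it. Supplying this density argument would complete your outline.
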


The proof will be an extension of the Lax-Phillips argument \cite{LP} but in the context of Friedrichs extensions rather than semigroups. For $O(r,1)$, it is convenient that the unipotent radical $N$ is abelian, so a literal Fourier expansion can be used as in \cite{LP}. For the other groups considered, $U(r,1)$ and $Sp^\ast(r,1)$, the unipotent radical is two-step nilpotent. For $U(r,1)$ it is a Heisenberg group. We prove a result extending the idea of \cite{CdV-Eis}. Compare also \cite{Muller-1996} and the appendix in \cite{Moeglin-Waldspurger-1995}.
%
%
\begin{theorem}
For $\mathbb{Q}$-rank one arithmetic quotients, the Eisenstein series $E_{s,f}$ associated to the cusps of $\Gamma$ and $\Omega^1$ eigenfunctions $f$ on $(\Gamma \cap M^1)\backslash M^1/(M^1 \cap K)$ have meromorphic continuations.
\end{theorem}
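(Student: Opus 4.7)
The plan is to execute the Colin de Verdière construction outlined in item~(7) of the introduction. The key players are an entire-in-$s$ family of truncated pseudo-Eisenstein series $h_{s,f}$ and the meromorphic-in-$s$ compact resolvent $(\widetilde{\Delta}_a-\lambda_s)^{-1}$ whose existence and properties come from the previous theorem. Throughout, $\lambda_s = c\cdot s(s-1)$ is the Casimir eigenvalue making $\eta^s\cdot f$ a $\Delta$-eigenfunction, and the cut-off height $a$ is fixed once and for all large enough that $\widetilde{\Delta}_a$ has compact resolvent and Siegel-set reduction theory cleanly isolates the (finitely many) cusps.

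First I would fix a smooth cut-off $\tau\colon(0,\infty)\to[0,1]$ with $\tau\equiv 0$ on $(0,a]$ and $\tau\equiv 1$ on $[2a,\infty)$, extend $f$ left-$N$-, right-$A^+K$-invariantly to $G$ via its Iwasawa $M^1$-coordinate, and form the pseudo-Eisenstein series by winding up:
\[
h_{s,f}(g) \;=\; \sum_{\gamma\in\Gamma_\infty\backslash\Gamma}\; \tau\bigl(\eta(\gamma g)\bigr)\,\eta(\gamma g)^s\,f(\gamma g).
\]
By reduction theory with $a$ large, the sum is locally finite --- in each cusp deep enough ($\eta(g)\geq 2a$) only $\gamma=e$ contributes, giving $h_{s,f}(g)=\eta(g)^s f(g)$ --- so $h_{s,f}$ is smooth of moderate growth and $s\mapsto h_{s,f}$ is an \emph{entire} function-valued family. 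Since $\eta^s f$ is a $\lambda_s$-eigenfunction of $\Delta$, one has $(\Delta-\lambda_s)(\tau\eta^s f) = [\Delta,\tau]\,\eta^s f$, supported on the compact band $\{a\leq\eta\leq 2a\}$; winding up preserves this, so $(\Delta-\lambda_s)h_{s,f}$ is smooth, has compact support in the cusp direction, lies in $\mathscr{D}_a\subset L^2_a(\Gamma\backslash G/K)$, and is holomorphic in $s$.

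Next, exploiting that the previous theorem makes $(\widetilde{\Delta}_a-\lambda_s)^{-1}\colon L^2_a\to L^2_a$ a compact, meromorphic-in-$s$ operator (with poles only at the discrete spectrum of $\widetilde{\Delta}_a$), I would define
\[
\widetilde{E}_{s,f} \;=\; h_{s,f} \;-\; (\widetilde{\Delta}_a-\lambda_s)^{-1}\bigl((\Delta-\lambda_s)h_{s,f}\bigr),
\]
a meromorphic-in-$s$ family of smooth automorphic functions on $\Gamma\backslash G/K$. Applying $\Delta-\lambda_s$ gives zero: the correction lies in the Friedrichs domain, on which $\widetilde{\Delta}_a$ and distributional $\Delta$ agree, so subtracting reproduces $(\Delta-\lambda_s)h_{s,f}$ and cancels. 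Thus $\widetilde{E}_{s,f}$ is a genuine meromorphic-in-$s$ family of $\lambda_s$-eigenfunctions. To identify $\widetilde{E}_{s,f}$ with the classical $E_{s,f}$ on the half-plane of absolute convergence $\mathrm{Re}(s)>1$, both being moderate-growth $\lambda_s$-eigenfunctions, I would match their constant-term asymptotics cusp-by-cusp and invoke a uniqueness result for moderate-growth eigenfunctions with prescribed leading $\eta^s f$ term, away from the exceptional set of $s$ where $\lambda_s$ meets the discrete spectrum of $\widetilde{\Delta}_a$; analytic continuation then pushes the equality $\widetilde{E}_{s,f}=E_{s,f}$ off the convergence half-plane.

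The main technical obstacle is the identification step. The constant term of $h_{s,f}$ above the cut-off is only the clean identity-cell contribution $\tau(\eta)\eta^s f$ (no $\eta^{1-s}$), while that of the full $E_{s,f}$ is $\eta^s f + c_{s,f}\eta^{1-s} f$; showing that the resolvent correction is precisely what restores the $\eta^{1-s}$ scattering piece --- equivalently, pinning down $\widetilde{E}_{s,f}=E_{s,f}$ by uniqueness of moderate-growth eigenfunctions with prescribed leading behavior --- is the crux. This requires careful accounting of both Bruhat cells in the wound-up sum, of the behavior of the constant term under the resolvent, and of the characterization of $E_{s,f}$ within the ambient space of $\lambda_s$-eigenfunctions on $\Gamma\backslash G/K$. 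All of this hinges on the assumption that $a$ is large enough for Siegel-set reduction theory to force the winding sum to collapse on the nose in each cusp, so that the clean formula for $c_P h_{s,f}$ above $a$ and the compact support of $(\Delta-\lambda_s)h_{s,f}$ are both available; this is the arithmetic input without which the analytic construction could not recover the genuine Eisenstein series.
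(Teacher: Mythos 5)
Your proposal follows essentially the same route as the paper: wind up the smoothly cut-off $\tau\cdot\eta^s f$ into an entire pseudo-Eisenstein series $h_{s,f}$, observe that $(\Delta-\lambda_{s,f})h_{s,f}$ is compactly supported and lies in $L^2_a$, subtract its image under the compact (hence meromorphic-in-$s$) resolvent $(\widetilde{\Delta}_a-\lambda_{s,f})^{-1}$, and identify the result with $E_{s,f}$ on $\mathrm{Re}(s)>1$ by uniqueness of the solution of the resolvent equation. The only cosmetic discrepancy is that the eigenvalue should be $\lambda_{s,f}=c\,s(s-1)+\lambda_f$ rather than $c\,s(s-1)$ alone when the cuspidal datum $f$ has a nonzero $\Omega^1$-eigenvalue, which you implicitly acknowledge by describing it as "the eigenvalue making $\eta^s f$ a $\Delta$-eigenfunction."
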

\noindent
Although the current methods prove meromorphic continuation without further efforts for multiple cusps, the proof of functional equations {\it is} more complicated because the functional equation is not localized to individual cusps but has subtle arithmetic information involving the interaction between cusps.

Again following the approach developed in Lax-Phillips but using Friedrichs extensions as used in \cite{CdV-pseudo} and \cite{CdV-Eis} rather than semigroups, we show
\begin{theorem}
For $\mathbb{Q}$-rank one arithmetic quotients, the new eigenfunctions (termed ``exotic'' in \cite{PBG}) for $\widetilde{\Delta}_a$ with $\lambda < -\frac{1}{4}$ are certain truncated Eisenstein series $\wedge^a_\kappa E_{s,f,\kappa}$ associated to the cusps $\kappa$ of $\Gamma$ 
where $P_\kappa$ is the parabolic stabilizing $\kappa$ with Levi-Malcev decomposition:

$$P_\kappa= N_\kappa\cdot M_\kappa = N_\kappa\cdot M^1_\kappa\cdot A^+_\kappa$$.	

Then, with $M_\kappa \ni m_a=m'\cdot \ell_a$ with $m'\in M^1_\kappa$ and $\ell_a\in A^+_\kappa$ ($M^1$ and $A^+$ will be described completely below), and where $c_{P_\kappa}$ is the constant term along $P_\kappa$, the exotic eigenfunctions are characterized by
$$c_{P_\kappa}E_{s,f,\kappa}(m_a)=0$$
\end{theorem}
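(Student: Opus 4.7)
The plan is to show that $\wedge^a_\kappa E_{s,f,\kappa}$ lies in the domain of $\widetilde{\Delta}_a$ with eigenvalue $\lambda_s$ precisely when the matching condition $c_{P_\kappa}E_{s,f,\kappa}(m_a)=0$ holds, and then to identify every exotic eigenfunction of $\widetilde{\Delta}_a$ with $\lambda<-\tfrac{1}{4}$ with such a truncation. The untruncated $E_{s,f,\kappa}$ is already a $\Delta$-eigenfunction with eigenvalue $\lambda_s$ but fails to be square-integrable because its constant term contains the growing factor $\eta^s$. The Lax-Phillips truncation $\wedge^a_\kappa E_{s,f,\kappa}$ subtracts a pseudo-Eisenstein series built from the constant term, so the result equals $E_{s,f,\kappa}$ for $\eta<a$ and $E_{s,f,\kappa}-c_{P_\kappa}E_{s,f,\kappa}$ for $\eta>a$. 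A direct Maass--Selberg-style computation shows $c_{P_\kappa}(\wedge^a_\kappa E_{s,f,\kappa})$ vanishes identically for $\eta>a$, so the truncation lives in $L^2_a$.

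Next I would apply $\Delta$ piecewise. On each of the open regions $\eta<a$ and $\eta>a$ the truncation is a smooth $\Delta$-eigenfunction with the same eigenvalue $\lambda_s$, since the constant term $c_{P_\kappa}E_{s,f,\kappa}(m_y)=(\eta^s+c_{s,f}\eta^{1-s})f(m')$ is itself a $\Delta$-eigenfunction of the same eigenvalue: the radial factors solve the ODE obtained from the Casimir restricted to $A^+_\kappa$, and $f$ is an $\Omega^1$-eigenfunction on $M^1_\kappa$. The distributional discrepancy $\Delta\wedge^a_\kappa E_{s,f,\kappa}-\lambda_s\wedge^a_\kappa E_{s,f,\kappa}$ is therefore supported on the hypersurface $\eta=a$ and decomposes into a $\delta_a'$-type term weighted by the function-value jump $c_{P_\kappa}E_{s,f,\kappa}(m_a)$ and a $\delta_a$-type term weighted by the jump of the radial derivative of that same constant term. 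Both interface contributions lie entirely in the constant-term direction along the cusp.

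For membership in the form domain $\frak{B}^1_a$, which is the closure of $\mathscr{D}_a$ in the Sobolev inner product attached to $\Delta$, the $\delta_a'$ in the first derivative coming from a value jump must be eliminated, and this forces $c_{P_\kappa}E_{s,f,\kappa}(m_a)=0$. Once value-continuity at $\eta=a$ holds, the remaining radial-derivative-jump contribution to $\Delta\wedge^a_\kappa E_{s,f,\kappa}$ is a $\delta_a$-distribution living in the constant-term direction, and it pairs trivially against every test function $\phi\in\mathscr{D}_a$ because $c_{P_\kappa}\phi(m_a)=0$ by the very definition of $\mathscr{D}_a$. Consequently the Friedrichs criterion $\langle\wedge^a_\kappa E_{s,f,\kappa},\Delta\phi\rangle=\lambda_s\langle\wedge^a_\kappa E_{s,f,\kappa},\phi\rangle$ is satisfied for every $\phi\in\mathscr{D}_a$, and $\wedge^a_\kappa E_{s,f,\kappa}$ is a genuine eigenfunction of $\widetilde{\Delta}_a$ with eigenvalue $\lambda_s$.

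For the converse I would invoke the spectral decomposition of the orthogonal complement of cuspforms in $L^2(\Gamma\backslash G/K)$ by Eisenstein integrals $E_{s,f,\kappa}$ along the critical line together with residues. An exotic eigenfunction $u$ of $\widetilde{\Delta}_a$ with eigenvalue $\lambda_s<-\tfrac{1}{4}$, orthogonal to the ordinary cuspform spectrum, must have constant term on the cusp of the form $(\alpha\eta^s+\beta\eta^{1-s})f(m')$ for $\eta<a$ and identically zero for $\eta>a$; value-continuity at $\eta=a$ pins the ratio $\alpha/\beta$ to the Eisenstein ratio and identifies $u$ with a scalar multiple of $\wedge^a_\kappa E_{s,f,\kappa}$ for the corresponding $s$. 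The hard part will be the precise distributional analysis at $\eta=a$ together with the extraction of the spectrum: the vanishing condition $c_{P_\kappa}E_{s,f,\kappa}(m_a)=0$ is in the simplest rank-one case the transcendental equation $a^{2s-1}=-c_{s,f}$, and one must check that its solutions form a discrete sequence whose eigenvalues $\lambda_s=c\cdot s(s-1)$ exhaust the exotic spectrum strictly below $-\tfrac{1}{4}$.
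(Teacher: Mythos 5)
Your proposal is correct and follows essentially the same route the paper takes: the paper's proof is carried by the distributional characterization of the Friedrichs extension in \S\ref{sec:Distrl-Char-Friedrichs-Ext}--\S\ref{sec:Distl-Char-Pseudo-Laplacians} (namely $\widetilde{\Delta}_a u = f$ iff $u\in\frak{B}^1\cap L^2_a$ and $\Delta u = f + c\cdot\eta_a$ distributionally, with only $\eta_a$ itself and no transverse derivatives lying in $\frak{B}^{-1}$), together with the remark that $\frak{B}^1$-membership forces the constant term into $H^1$ in $y$, hence continuity at $\eta=a$, hence the vanishing condition $c_{P}E_{s,f}(m_a)=0$. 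Your concrete jump computation at the interface ($\delta_a'$ weighted by the value jump, killed by form-domain membership; $\delta_a$ weighted by the derivative jump, harmless because it annihilates $\mathscr{D}_a$ and lies in the closure of $\Theta$) is exactly the hands-on version of that abstract characterization.
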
 

In the case of a single cusp, the condition simplifies to $c_P E_{s,f}(m_y) = 0$ where $M \ni m = m^\prime \cdot \ell_a$, $m^\prime \in M^1$, $\ell_a \in A^+$.

The notation is hopefully suggestive as to the intended meaning and interpretation. We follow the notational conventions of \cite{PBG} to help comparison with the more general treatment developed there. For additional detail and  development, see \cite{PBG} \S\S \; 1.11, 2.10, 3.14, 11.6 and 11.11.

%
%
%
%
%

\section{Background and overview of methods}\label{primary-tools}

The relevant background and methods used in the spectral analysis of automorphic forms are typically found in substantial volumes whose purpose is to function as references and often have broad scope. Rather than point the reader to these large works for background and context, we begin with an overview to establish terminology and perspective. We use \cite{PBG} as our primary reference and reproduce selected material for the convenience of the reader. The bibliography in \cite{PBG} is extensive and contains numerous additional references including many original sources. Other contemporary references are \cite{Iwaniec} and \cite{Moeglin-Waldspurger-1995}. 

We use techniques from modern analysis to identify solutions of self-adjoint extensions to invariant symmetric operators restricted to subspaces of functions defined on quotient spaces $\Gamma\backslash G$ and $\Gamma\backslash G/K$. Analysis of these operators is applied to the discrete decomposition of the space of pseudo-cuspforms and the meromorphic continuation of Eisenstein series $E_s$ by solving differential equations for smoothly truncated Eisenstein series. The tail terms will be inhomogeneous parts for distributional solutions of Poisson-type equations attached to the associated Friedrichs extension. Specifically, the operators will be used to define Sobolev-like spaces of functions with square-integrable derivatives that will be shown to have a Rellich-type compactness property. The Rellich compactness property will then be used to prove the discreteness of the spectrum of the operators which will in turn be used to establish a discrete decomposition of associated pseudo-cuspforms and the analytic continuation of certain types of Eisenstein series.

%
%
%

\subsection{Remarks on pseudo-Eisenstein series}\label{sec:Remarks-on-Pseudo-Eis}

The space of pseudo-Eisenstein series arises as the orthogonal complement to the space of cuspforms. For $L^2(\Gamma \backslash G/K)$, a goal could be to express the orthogonal complement to cuspforms $L^2_o(\Gamma \backslash G/K)$ in terms of $\Delta$-eigenfunctions. To exhibit explicit examples of $L^2$ functions spanning the complement, the Gelfand vanishing condition can be recast in a distributional fashion. First, for $f \in L^2(\Gamma \backslash G)$, the constant term $c_P f$ is a left $N\Gamma_\infty$-invariant function on $G$ which vanishes as a distribution if and only if 
$$
\int_{N\Gamma_\infty \backslash G} \phi \cdot c_P f = 0 \qquad (\text{for all } \phi \in C^\infty_c(N\Gamma_\infty\backslash G)
$$
with right $G$-invariant measure on $N\Gamma_\infty\backslash G$: the modular function of $G$ restricted to $N\Gamma_\infty$ equals the modular function of $N\Gamma_\infty$
$$
\delta_G \big|_{N\Gamma_\infty} = \delta_{N\Gamma_\infty}
$$
Since $f$ is right $K$-invariant, $c_P f$ is a right $K$-invariant distribution so it only needs to be evaluated on test functions $\phi \in C^\infty_c(N\Gamma_\infty \backslash G)^K$. Making use of the Iwasawa decomposition, there are isomorphisms
$$
N\backslash G/K \approx N \backslash (N A^+ M^1 K)/K \approx A^+ \cdot M^1/(K \cap M^1)
$$
that identify $N\backslash G/K$ with $A^+ \cdot M^1/(K \cap M^1)$ and also identify right $K$-invariant functions $\phi$ on $N\backslash G$ with functions of the height parameter $\textit{height}(n_x m_y k) = \textit{height}(n_x m^\prime \ell_y k) = \eta(g)$ (where $\ell_y \in A^+$ and $m^\prime \in M^1$). For $f \in L^2$, $f$ is locally integrable, and thus Fubini's theorem implies its constant term $c_P f$ is locally integrable. As such, $c_P f$ can be integrated against test functions on $N \Gamma_\infty \backslash G/K$.

A spectral theory involves the expression of an element of a vector space in terms of a basis of well-understood elements. The finite-dimensional case is of course completely understood and classical while in the infinite-dimensional, topological vector space setting many subtleties arise. In the function space setting, particularly those defined on homogeneous spaces, we typically prefer a basis comprised of eigenfunctions of a natural translation invariant differential operator. A common complication is the existence of important eigenfunctions that are not integrable in a desired sense. This arises already in the most basic case of the real line $\mathbb{R}$, the invariant one-dimensional Laplacian $d^2/dx^2$ and exponential functions $f(t) = e^{2 \pi i \xi t}$ for $\xi \in \mathbb{R}$. In the case of automorphic forms on $\Gamma\backslash G/K$, a comparable role is played by the {\it genuine Eisenstein series}. In the simpler, concrete cases considered the Eisenstein series are given by
$$
E_s(g) = \sum_{\gamma \in \Gamma_\infty\backslash \Gamma} \eta(\gamma \cdot g)^s
$$
where $\eta$ is the height function. In the more general case, including the $\mathbb{Q}$-rank one case we consider, Eisenstein series are given by
$$
E_s(g) = \sum_{\gamma \in \Gamma_\infty\backslash \Gamma}f(m^\prime(\gamma \cdot g))^s
$$
where $f$ is function on $(\Gamma \cap M^1)\backslash M^1/(M^1 \cap K)$.
Genuine Eisenstein series have many desirable properties. They are eigenfunctions for the invariant Laplacian and, for $\text{re}(s) > 1$, converge absolutely and uniformly on compact sets and are of moderate growth (cf. \cite{PBG} and references therein).  However, they are not in $L^2(\Gamma\backslash G/K)$. A collection of $L^2(\Gamma\backslash G/K)$ functions expressible in terms of integrals of genuine Eisenstein series is provided by {\it pseudo-Eisenstein series}.

Given a test function $\phi$ in $C^\infty_c(N\Gamma_\infty \backslash G)^K$, the corresponding pseudo-Eisenstein series $\Psi_\phi$ in $C^\infty_c(\Gamma \backslash G/K)$ is given by
$$
\Psi_\phi (g) = \sum_{\gamma \in \Gamma_\infty \backslash \Gamma} \phi(\gamma \cdot g)
$$
which fits into an adjunction:
$$
\int_{N\Gamma_\infty \backslash G} \phi \cdot c_P f = \int_{\Gamma \backslash G} \Psi_\phi \cdot f \qquad (\text{for } f \in L^2(\Gamma \backslash G))
$$
This adjunction is useful in aspects of unwinding and winding-up. Convergence of the sum is proven below. 

The expression for $\Psi_\phi$ can be obtained by direct computation using the $N\Gamma_\infty$-invariance of $\phi$ and the $\Gamma$-invariance of $f$, as follows. First, unwind $c_P f$
\begin{align*}
\int_{N\Gamma_\infty \backslash G} \phi \; \cdot \; c_P f =& \int_{N\Gamma_\infty \backslash G} \phi(g) \bigg( \int_{(N \cap \Gamma_\infty)\backslash N} f(ng) \; dn \bigg) \; d\mu(g) \\
 =& \int_{\Gamma_\infty \backslash G} \phi(g) \; f(g) \; d\mu(g)
\end{align*}
Using the $\Gamma$-invariance of $f$, wind up the integral on the right
\begin{align*}
\int_{\Gamma_\infty \backslash G} \phi(g) \; f(g) \; d\mu(g) =& \int_{\Gamma \backslash G} \sum_{\gamma \in \Gamma_\infty \backslash \Gamma } f(\gamma \cdot g) \phi(\gamma \cdot g) \;  d\mu(g) \\
=& \int_{\Gamma \backslash G} f(g) \bigg( \sum_{\gamma \in \Gamma_\infty \backslash \Gamma } \phi(\gamma \cdot g) \bigg) \; d\mu(g)
\end{align*}
which exhibits the convergence of the pseudo-Eisenstein series $\Psi_\phi$ associated to $\phi$ as all integrals are finite. This is the preparation for the standard result (cf. \cite{PBG} \S 1.8):
\begin{lemma}
The series for a pseudo-Eisenstein series $\Psi_\phi$ is locally finite: for $g$ in a fixed compact set in $G$, there are only finitely-many non-zero summands in $\Psi_\phi(g) = \sum_{\gamma \in \Gamma_\infty \backslash \Gamma} \phi(\gamma g)$. This implies $\Psi_\phi \in C^\infty_c(\Gamma \backslash G)$.	
\end{lemma}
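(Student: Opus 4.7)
The plan is to show local finiteness first, and then derive smoothness and compact support on the quotient as consequences. The key ingredients are three: (i) $\phi$ has compact support as a function on $N\Gamma_\infty\backslash G$; (ii) $N\cap\Gamma$ is a lattice in $N$, so that $(N\cap\Gamma)\backslash N$ is compact; and (iii) $\Gamma$ acts properly discontinuously on $G$, so that $\Gamma\cap K'$ is finite for any compact $K'\subset G$.

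First, lift the compactly supported $\phi$: choose a compact $K_\phi \subset G$ with $\mathrm{supp}(\phi)\subset N\Gamma_\infty K_\phi$, viewing $\phi$ as a left $N\Gamma_\infty$-invariant function on $G$. Fix a compact $C\subset G$. A summand $\phi(\gamma g)$ with $g\in C$ is nonzero only if $\gamma g\in N\Gamma_\infty K_\phi$, i.e. $\gamma\in N\Gamma_\infty K_\phi C^{-1}$. Writing $\gamma = n\,\theta\, k\, c^{-1}$ with $n\in N$, $\theta\in\Gamma_\infty$, $k\in K_\phi$, $c\in C$, we have $\theta^{-1}\gamma = (\theta^{-1}n\theta)\, k\, c^{-1}$; since $N$ is normalized by $P\supset\Gamma_\infty$, this puts $\theta^{-1}\gamma\in NK_\phi C^{-1}$. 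So each $\Gamma_\infty$-coset $\Gamma_\infty\gamma$ that contributes has a representative in $\Gamma\cap NK_\phi C^{-1}$.

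Next, exploit the compactness of $(N\cap\Gamma)\backslash N$: pick a compact $D\subset N$ with $N = (N\cap\Gamma)\cdot D$. Any $\gamma\in\Gamma\cap NK_\phi C^{-1}$ can then be written $\gamma = \nu\, d\, k\, c^{-1}$ with $\nu\in N\cap\Gamma\subset\Gamma_\infty$, $d\in D$, $k\in K_\phi$, $c\in C$, so that $\nu^{-1}\gamma\in DK_\phi C^{-1}$. The set $DK_\phi C^{-1}$ is compact, so $\Gamma\cap DK_\phi C^{-1}$ is finite by proper discontinuity. Since each relevant $\Gamma_\infty$-coset $\Gamma_\infty\gamma$ has a representative in this finite set, only finitely many cosets contribute to $\Psi_\phi(g)$ as $g$ ranges over $C$. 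This establishes local finiteness.

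Local finiteness combined with smoothness of $\phi$ yields smoothness of $\Psi_\phi$ termwise. For compact support on $\Gamma\backslash G$, observe that $\mathrm{supp}(\Psi_\phi)$ is contained in the image of $N\Gamma_\infty K_\phi$ in $\Gamma\backslash G$; this image factors through $(N\cap\Gamma)\backslash N\cdot K_\phi$, which is compact, and so it is compact in $\Gamma\backslash G$. Hence $\Psi_\phi\in C^\infty_c(\Gamma\backslash G)$. The principal technical point is the step using $(N\cap\Gamma)\backslash N$ compact to reduce the noncompact set $NK_\phi C^{-1}$ to a compact piece modulo $\Gamma_\infty$; the rest amounts to bookkeeping with the Iwasawa/parabolic structure and proper discontinuity of $\Gamma$.
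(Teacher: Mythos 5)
Your proof is correct, and it is precisely the standard argument that the paper cites from \cite{PBG} \S 1.8 rather than reproducing: reduce a contributing coset representative into $\Gamma\cap N K_\phi C^{-1}$ using that $\Gamma_\infty$ normalizes $N$, then use compactness of $(N\cap\Gamma)\backslash N$ to replace the noncompact set $N K_\phi C^{-1}$ by the compact $D K_\phi C^{-1}$, and conclude by discreteness of $\Gamma$. The deduction of smoothness (termwise, from local finiteness) and of compact support in the quotient (image of $D K_\phi$) is also as expected, so there is nothing to correct.
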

\noindent
Also implying (cf. {\it ibid}).
\begin{corollary}
	Square-integrable cuspforms are the orthogonal complement in $L^2(\Gamma \backslash G /K)$ to the subspace of $L^2(\Gamma \backslash G/K)$ spanned by the pseudo-Eisenstein series $\Psi_\phi$ with $\phi \in C^\infty_c(N \Gamma_\infty\backslash G /K)$. The map $f \to c_P f$ is continuous from $L^2(\Gamma \backslash G /K)$ to distributions on $N\Gamma_\infty\backslash G/K$.
\end{corollary}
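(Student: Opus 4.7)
The plan is to exploit the adjunction
$$
\int_{N\Gamma_\infty \backslash G} \phi \cdot c_P f \;=\; \int_{\Gamma\backslash G} \Psi_\phi \cdot f \qquad (\phi \in C^\infty_c(N\Gamma_\infty\backslash G/K),\; f \in L^2(\Gamma\backslash G/K))
$$
that was just derived by winding up and unwinding, together with the preceding lemma that $\Psi_\phi \in C^\infty_c(\Gamma\backslash G/K)$. The key point is that both sides of the adjunction are jointly continuous in the relevant senses, so orthogonality on one side translates to distributional vanishing on the other.

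First I would prove the orthogonal-complement claim by a pair of implications. If $f \in L^2(\Gamma\backslash G/K)$ is a cuspform, then $c_P f = 0$ as a distribution on $N\Gamma_\infty\backslash G/K$ (this is the distributional reading of the Gelfand condition indicated earlier in the excerpt), so the left side of the adjunction vanishes for every test $\phi$, giving $\langle \Psi_\phi, f\rangle_{L^2} = 0$. Conversely, if $f$ is $L^2$-orthogonal to every $\Psi_\phi$ with $\phi \in C^\infty_c(N\Gamma_\infty\backslash G/K)$, then the adjunction shows $\int \phi \cdot c_P f = 0$ for all such $\phi$, hence $c_P f = 0$ as a distribution, so $f$ lies in $L^2_o(\Gamma\backslash G/K)$. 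Taking the orthogonal complement of the span is the same as taking the orthogonal complement of its closure, so no additional density argument is required here.

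For the continuity statement, I would fix a test function $\phi \in C^\infty_c(N\Gamma_\infty\backslash G/K)$ and view $f \mapsto \langle \phi, c_P f\rangle$ as a linear functional on $L^2(\Gamma\backslash G/K)$. By the adjunction this equals $\langle \Psi_\phi, f\rangle_{L^2}$, and since $\Psi_\phi \in C^\infty_c(\Gamma\backslash G/K) \subset L^2(\Gamma\backslash G/K)$ by the lemma above, Cauchy--Schwarz gives
$$
\bigl|\langle \phi, c_P f\rangle\bigr| \;\le\; \|\Psi_\phi\|_{L^2(\Gamma\backslash G)}\cdot \|f\|_{L^2(\Gamma\backslash G)}.
$$
Thus $f \mapsto c_P f$ is continuous from $L^2(\Gamma\backslash G/K)$ into the space of distributions on $N\Gamma_\infty\backslash G/K$ equipped with its weak dual topology (testing against each fixed $\phi$).

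The only conceptual obstacle is making sure the distributional interpretation of $c_P f$ for $f \in L^2$ is actually legitimate: one needs $c_P f$ to make sense as a locally integrable function (hence as a distribution) on $N\Gamma_\infty\backslash G/K$, which, as the excerpt notes, follows from local integrability of $f$ and Fubini applied in Iwasawa coordinates. Once this is in place, both halves of the corollary reduce to direct applications of the adjunction identity.
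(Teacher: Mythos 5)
Your proof is correct and follows exactly the route the paper intends: the corollary is stated as a consequence of the adjunction $\int_{N\Gamma_\infty\backslash G}\phi\cdot c_Pf=\int_{\Gamma\backslash G}\Psi_\phi\cdot f$ developed immediately beforehand (the paper itself only cites \cite{PBG} \S 1.8 rather than writing the argument out). Your two implications plus the Cauchy--Schwarz bound $|\langle\phi,c_Pf\rangle|\le\|\Psi_\phi\|_{L^2}\|f\|_{L^2}$ are precisely the standard argument, and your remark that local integrability of $c_Pf$ (via Fubini) is what legitimizes the distributional reading is the right point to flag.
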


%
%

\subsection{Reduction theory and Siegel sets}\label{sec:Reduction-Theory-Siegel-Sets}

We will need only a few results from reduction theory regarding existence and basic properties of Siegel sets. Siegel sets, to be made precise below, are geometrically-simple sets with convenient covering properties in terms of translations by a discrete subgroup $\Gamma$. Fix a number field $k$ and $k$-bilinear $k$-valued form $S$ on a $k$-vector space V; in the $\mathbb{Q}$-rank one case ($\mathbb{Q}$-rank defined in the \hyperref[sec:Remarks-Number-of-Cusps]{next section}), there is a single $\mathbb{Q}$-conjugacy class of $\mathbb{Q}$-rational parabolics, given by the $k$-stabilizer of a $k$-isotropic $k$-line in the $k$-vector space. In this case, there is a single $\mathbb{Q}$-conjugacy class of $\mathbb{Q}$-parabolics so that there is no hierarchy to $\mathbb{Q}$-parabolics. Thus, cusps can be thought of as points which are topologically unrelated to each other. A characteristic that proves decisive is that a finite number of Siegel sets can surject onto the quotient. In those Siegel sets, a height parameter from the split component of the standard Levi-Malcev subgroup can be identified so that subsets of the Siegel sets, determined by conditions on the height parameter, have a particularly useful decomposition, as described below. Depending on context, Siegel sets may be considered to be subsets of $G$ or of $G/K$ as the key properties of Siegel sets do not involve the compact subgroup $K$. 

Slightly abusing notation, and since our primary focus is on real Lie groups, we will use $G$ both for the algebraic group and its real points. If we need to specify the local points over a particular field or ring, we will use standard notation as in the case of the rational points of $G$, namely $G(\mathbb{Q}) = G_\mathbb{Q}$. When emphasizing a collection of places (e.g., archimedean), we will use a subscript; thus the points of $G$ at the place $\nu$ will be denoted $G_\nu$. We will also use the convention that $\Gamma$ is an arithmetic subgroup of $G$.

In all of the cases considered, $G$ will be a $\mathbb{Q}$-rank one algebraic group with an Iwasawa decomposition of its real points
$$
G = PK = NMK
$$
where $K$ is a maximal compact of $G$, $P$ the standard minimal parabolic, $N$ the unipotent radical of $P$, and $M = A^+ \cdot M^1$ the corresponding standard Levi-Malcev component (the subgroups $A^+$ and $M^1$ are defined in \cref{sec:Remarks-Number-of-Cusps}). For our purposes, $G$ will be assumed to have a single cusp (for reasons elaborated below). For $g \in G$, let $g = n_g m_g k_g$ be $g$'s Iwasawa coordinates.  We recall the definition and some properties of Siegel sets: 
\begin{itemize}
	\item A (standard) Siegel set is a subset of $G$ given by a compact set $C \subset N$,  a compact set $D \subset M^1$, and a (height) parameter $t$
	$$
	\frak{S}_{t,C,D} = \{ n m k : \text{height of } m = \eta(m) = \eta(m^\prime \ell_y) = \eta(\ell_y) = \eta(y) \geq t\}
	$$
	where $n \in C \subset N$, $m = m^\prime \ell_y$ ($m^\prime \in D \subset M^1$, $\ell_y \in A^+$), and $k \in K$.
	\item Conveniently, in the first examples we treat of $O(r,1)$, $U(r,1)$ and $Sp^\ast(r,1)$, {\it $M^1$ will be compact so the prescription of $D$ will not be necessary and Siegel sets can be specified as $\frak{S}_{t,C}$}.
	\item Reduction theory implies that there is a sufficiently small $t$ and compact sets $C_i \subset N$ and $D_i \subset M^1$, along with a finite number of elements $g_i \in G(\mathbb{Q})$ so that $\Gamma$-translates of the union of the $g_i$-translates of the standard Siegel sets $\frak{S}_i = \frak{S}_{t,C_i,D_i}$ cover $G$:
	$$
	\Gamma \cdot \bigg( \bigcup_i g_i \cdot \frak{S}_i \bigg) = G
	$$
	Such a collection of Siegel sets clearly surjects to the quotients $\Gamma\backslash G$ and $\Gamma\backslash G/K$.
	\item Given such a collection $\frak{S}$ of Siegel sets, reduction theory then guarantees the existence of a height $t_o \gg t$ so that the subsets $\frak{S}^i_{t_o}$ of $\frak{S}^i_{t,C,D}$ given by the height condition
	$$
	\frak{S}^i_{t_o} = \big\{ g  \in \frak{S}^i_{t,C,D} \big| \enspace g = n \cdot m \cdot k \enspace \text{such that } m = m^\prime a_y \text{ and the height of } a_y = \eta(a_y) > t_o \big\}
	$$
	satisfy, {\it for all $i$ and $j$}, $\frak{S}^i_{t_o} \cap \gamma \frak{S}^j_{t_o} \ne \emptyset$ implies $\gamma \in \Gamma \cap P = \Gamma_\infty$. This implies the existence of a sufficiently large height so that high-enough portions of the Siegel sets in $\frak{S}$ do not interact in the quotient: the associated cusps can be treated separately. Since there is a finite collection of Siegel sets, common bounds can be taken to work for all the Siegel sets. In particular and without loss of generality, this allows us to simplify our notation by examining a single Siegel set as the results apply to any finite collection of cusps.
	\item The existence of suitable Siegel sets provides a way to use {\it separation of variables} to simplify analysis on the quotient $\Gamma\backslash G / K$ by making {\it tail} estimates on the ray $A^+$ more tractable. Using the simplification described above, this is implemented by examining a Siegel set $\frak{S}$ assumed for simplicity to surject to the quotient $\Gamma\backslash G$. Let $g=n_x m_y = n_x m^\prime \ell_y$ be Iwasawa coordinates with $n_x \in N$ and $m_y = m^\prime \ell_y \in M = A^+ \cdot M^1$. For $c \in \mathbb{R}$, let $\eta(g) = \eta(n_x m^\prime \ell_y) = \eta(\ell_y)$ be the height of $g$. Let $Y_o$ and $Y_\infty$ be the respective images of $\{g \in \frak{S}  :  \eta(g) \leq c+1\}$ and $\{g \in \frak{S} : \eta(g) \geq c \}$ in $\Gamma \backslash G/K$. By construction, the interiors of $Y_o$ and $Y_\infty$ cover $\Gamma\backslash G/K$. In the cases considered, while $Y_o$ will be a compact set with possibly complicated geometry, for sufficiently large $c \geq t_o$, $Y_\infty$ will conveniently decompose as a product of a compact manifold and a ray:
		$$
		Y_\infty \approx (N \cap \Gamma)\backslash N \times (M^1 \cap \Gamma)\backslash M^1/(M^1 \cap K) \times (c, \infty)
		$$
	\item The groups under consideration all admit Iwasawa decompositions with Levi-Malcev subgroups $M$ with simple split components $A^+$. As described in the next section, $A^+$ is a single {\it ray} (i.e., a subset of the real line homeomorphic to $(0,\infty)$).
\end{itemize}

%
%

\subsection{Analysis on neighborhoods of cusps of $\Gamma\backslash G/K$}\label{sec:Remarks-Number-of-Cusps}

We treat the quotients as though {\it there is only one cusp in $\Gamma\backslash G/K$} since when working with a finite number of Siegel sets corresponding to point cusps, a common bound can be found to assure disjointness. 

An additional complication, not of immediate interest to us here is that, when there is more than one cusp, the functional equation of Eisenstein series implies the Eisenstein series associated to a cusp is not mapped to itself, but gets smeared over the other cusps. The scattering matrix, relating the Eisenstein series at the cusps at $1-s$ to the Eisenstein series at the cusps at $s$, is nontrivial to determine. From a classical viewpoint the scattering matrix presents additional difficulties. Since those difficulties are not our focus, nor impact our calculations, we do not need to account for separate cusps and behave as though there was a just a single cusp.

Let $k$ be a number field (finite extension of $\mathbb{Q}$), and $S$ a symmetric $(r+1) \times (r+1)$ matrix over $k$ with non-zero diagonal entries. At different archimedean places of $k$, the local signature of $S$ can vary. Let the ``$k$-rank'' of a $k$-valued quadratic form on a $k$-vectorspace be the dimension of a (hence every, by Witt) maximal totally isotropic subspace.

We recall that a quadratic form over a non-archimedean field (or for characteristic not $2$) in $5$ or more variables has an isotropic vector (cf. \cite{Borevich-Shafarevich-1966} \cite{OMeara-2000}). This, in conjunction with the Hasse principle, that a quadratic form over a number field has a global non-trivial isotropic vector if and only if it has a local isotropic vector everywhere, implies that in dimension greater than or equal to $5$ that $S$ is $k$-anisotropic if and only if there is a real archimedean place where it is anisotropic. Thus, the $k$-rational rank in higher dimensions is mostly controlled by what happens at archimedean places.

That is, while we cannot require conditions at complex places, if there is a real archimedean place at which $S$ has signature $(r,1)$, and at every other real archimedean place the signature of $S$ is $(p,q)$ with $\text{min}(p,q)\ge 1$, and $r+1 \ge 5$, then the $k$-rank is $1$. In particular, there cannot be any real place of $k$ where the signature of $S$ is $(r+1,0)$ -- {\it $S$ is nowhere anisotropic} -- and there is at least one real place of $k$ where the signature of $S$ is $(r,1)$. In this case there are $k$-coordinates in which
$$
S = 
\renewcommand\arraystretch{1.25}
\begin{pmatrix}
0 & 0 & 1 \\
0 & S^\prime & 0 \\
1 & 0 & 0
\end{pmatrix}
$$
with $S^\prime$ anisotropic over $k$.

Thus, when the $\mathbb{Q}$-rank of $G$ is one, the cusps of $\Gamma\backslash G/K$ correspond to $\mathbb{Q}$-conjugacy classes of $\mathbb{Q}$-parabolics of $G$. Hasse-Minkowski and the assumption that the dimension of the vector space is $r+1 \geq 5$, implies that if the $\mathbb{R}$-rank is one at one real place and at least one at all real places, then the global rank is one.

This implies the pictures of standard proper $\mathbb{Q}$-rational parabolic $P$, unipotent radical $N$, and Levi-Malcev component $M$ exhibit the same structural features brought to bear in the more elementary versions of the $O(r,1)$, $U(r,1)$ and $Sp^\ast(r,1)$ examples.

However, in this case the Levi component $M$ is of the form
$$
\begin{pmatrix}
u & 0 & 0 \\
0 & h & 0 \\
0 & 0 &  u^{-1}
\end{pmatrix}
\qquad h \in O(S^\prime), u \in GL(1, k)
$$
and is not the cartesian product of a ray and a compact group but rather decomposes as a product $M = A^+ \cdot M^1$
$$
A^+  =  \bigg\{ m_y = 
\renewcommand\arraystretch{1.5}
\begin{pmatrix}
\ell & 0 & 0 \\
0 & 1_{r-1} & 0 \\
0 & 0 & \ell^{-1}
\end{pmatrix}
\ : \; \ell \in (0,\infty) \bigg\}
$$
Where the ray $(0,\infty)$ is embedded diagonally in $k_\infty^\times$. The complement to $A^+$ in the Levi component $M$ is
$$
M^1  =  \bigg\{ 
\renewcommand\arraystretch{1.5}
\begin{pmatrix}
u & 0 & 0 \\
0 & m^\prime & 0 \\
0 & 0 &  u^{-1}
\end{pmatrix}
\ : \; \text{where } N_{k/\mathbb{Q}}(u) = 1 \text{ and } m^\prime \in O(S^\prime) \bigg\}
$$
That is, the Levi component of P is $M = k_\infty^\times \times O(S')$ and extending the Galois norm $N_{k/\mathbb{Q}}$ to be suitably multilinear, $M^1$ is
$$
\{ b \in k_\infty^\times : N_{k/\mathbb{Q}}(b)=1  \} \times O(S^\prime)
$$
It is still the case though that $M_k\backslash M_\mathbb{A}$ is a cartesian product of a ray and a compact set by the compactness of anisotropic quotients. Thus the previous methods apply.

Viewed as a real Lie group, $G$ has an Iwasawa decomposition
$$
G = PK = NMK
$$
where $P$ is a parabolic subgroup, $K$ a (maximal) compact subgroup, $N$ the unipotent radical of $P$ and $M$ a Levi-Malcev complement to $N$. This gives
$$
G/K \approx N \cdot M/(M \cap K)
$$
In later sections, $G$ will be viewed as an algebraic orthogonal group, thought of as a functor from commutative $\mathbb{Q}$-algebras to groups. $G$ will correspond to a non-degenerate $k$-valued quadratic form $S$ of $k$-rank one on a $k$-vectorspace $V$. In this context, the group $G(\mathbb{Q}) = G_\mathbb{Q}$ of $\mathbb{Q}$-points of $G$ is the collection of $k$-linear automorphisms of $V$ preserving the form $S$. 

For a field extension $E$ of $\mathbb{Q}$, we can make a corresponding $E$-vectorspace $V\otimes_k(E \otimes_\mathbb{Q} k)$, extend the form $S$ bilinearly, and let $G(E)$ be the group of $E \otimes_\mathbb{Q} k$-linear automorphisms of $V\otimes_k(E \otimes_\mathbb{Q} k)$. For our purposes, $E$ will often be $\mathbb{R}$ or $\mathbb{Q}_p$ for varying $p$. 

For $\mathbb{Q}$-rank one, the number of cusps is the number of $\Gamma$-conjugacy classes of $\mathbb{Q}$-parabolics. This number is finite but is difficult to determine. For instance, for $\Gamma=SL(2,\frak{o})$ with ring of integers $\frak{o}$ of a number field, the number of cusps of $\Gamma\backslash G$ is the class number of $\frak{o}$.

However, it is convenient that the {\it collar} neighborhoods used in our analysis on cusps do not interact. That is, we have that finitely many Siegel sets suffice to cover the quotient (i.e., attached to $\mathbb{Q}$-parabolic conjugacy classes), and, given one such Siegel set $\frak{S}$ attached to minimal parabolic $P$, and a different minimal parabolic $Q$, there is a sufficiently small Siegel set $\frak{S}^\prime$ attached to $Q$ so that for $\gamma \in \Gamma$
$$
\gamma \cdot \frak{S} \cap \frak{S}^\prime \neq \emptyset \text{ if and only } \gamma \cdot P \cdot \gamma^{-1}=Q
$$

That is, for $\mathbb{Q}$-rank one orthogonal groups, reduction theory guarantees the existence of a sufficiently large height value so that the $\Gamma$-translates of the collar neighborhoods of the cusps do not interact, allowing each cusp to be treated individually. Since there is a finite number of cusps, this lets us choose the greatest (or least, depending on context) suitable height parameter which will then work for all of the cusps, thus allowing us to essentially treat the cusps separately.

In classical terms, for the representatives $P$ for each of the finitely-many $\Gamma$-conjugacy classes of rational parabolics, we have a corresponding unipotent radical $N$ and constant term.

Thus, up to choice of normalization and coordinates, the split component is isomorphic to the connected component of the identity in 
$$
P/N M^1 \approx \mathbb{R}^\times
$$
that is, the connected component being the ``ray''. Note that $M^1$ may not be compact but $(M^1 \cap \Gamma)\backslash M^1$ will be compact by the compactness of anisotropic quotients.

Adelically, Witt's theorem shows that all parabolics of the same type are $k$-conjugate which, together with adelic reduction theory, establishes that a single Siegel set can cover the quotient, so that there is just one cusp adelically.

%
%

\subsection{Truncation operators}\label{sec:Remarks-Truncation-Operators}

The genuine Eisenstein series are not in $L^2(\Gamma\backslash G/K)$, but from the theory of the constant term (see \cite{PBG} \S 8.1) the only obstruction is the constant term, which can be altered by truncation, removing this obstacle. 
The spectral decomposition of cuspforms (see \cite{Moeglin-Waldspurger-1995} and \cite{Langlands-SLN544-1967-1976}, along with references in \cite{PBG}) and the theory of the constant term establish that the $\Delta$-eigenfunction cuspforms are of rapid decay, and that the residues of Eisenstein series are in $L^2(\Gamma\backslash G/K)$ and are orthogonal to cuspforms.
We want truncation to produce automorphic forms. Naive truncation of the constant term for large values
$$
\text{naive truncation of }f(g) =
\begin{cases} 
f(g) &(\text{for } \eta(g) \leq T) \\
f(g) - c_P f(g) &(\text{for } \eta(g) > T)
\end{cases}
$$
does not produce an automorphic form: on a single Siegel set $\frak{S}_{t,C}$ this description functions correctly but it does not extend to $G/K$ or $G$ as it is not $\Gamma$-invariant. For sufficiently large (depending on the reduction theory) $T$ we can remedy this and achieve $\Gamma$-invariance by first defining the tail $c^T_P f$ of the constant term $c_P f$ of $f$ to be
$$
c^T_P f(g) =
\begin{cases} 
0 &(\text{for } \eta(g) \leq T) \\
c_P f(y) &(\text{for } \eta(g) > T)
\end{cases}
$$
For legibility, we may replace a subscript by an argument in parentheses in the notation for pseudo-Eisenstein series when the function $\phi$ has a more lengthy expression in which case we write
$$
\Psi(\phi) = \Psi_\phi
$$
Although $c^T_P f$ need not be smooth, nor compactly supported, by design (that is, for $T$ sufficiently large) its support is sufficiently high so that we have control over the analytical issues:
\begin{claim}
For $T$ sufficiently large, the pseudo-Eisenstein series $\Psi(c^T_P f)$ is a locally finite sum, hence, uniformly convergent on compacts.
\end{claim}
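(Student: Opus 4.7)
The strategy uses two features: the support of $c^T_P f$ lies in the high-cusp region $\mathscr{T}_T := \{g \in G : \eta(g) \geq T\}$, and reduction theory identifies $\mathscr{T}_T$, for $T$ sufficiently large, with the $\Gamma_\infty$-translates of the top piece $\frak{S}_T = \{g \in \frak{S} : \eta(g) \geq T\}$ of a standard Siegel set $\frak{S}$. Combined with Siegel's finiteness property, these will force the sum defining $\Psi(c^T_P f)$ to collapse to a finite sum on any compact subset of $G$.

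First, $c^T_P f$ is left $N \Gamma_\infty$-invariant on $G$ (since $c_P f$ is, and the height function $\eta$ is $\Gamma_\infty$-invariant), so the sum $\sum_{\gamma \in \Gamma_\infty \backslash \Gamma} c^T_P f(\gamma g)$ is well-defined, independent of the choice of coset representative in each $\Gamma_\infty \gamma$. Fix a standard Siegel set $\frak{S}$ covering the quotient $\Gamma \backslash G$. Since $(\Gamma_\infty \cap N) \backslash N$ is compact, and $(\Gamma_\infty \cap M^1) \backslash M^1$ is compact as well (trivially when $M^1$ itself is compact, as in the $O(r,1)$, $U(r,1)$, and $Sp^\ast(r,1)$ cases, and by compactness of anisotropic quotients in the more general $\mathbb{Q}$-rank one orthogonal case of \S 2.3), one can, by enlarging the $N$- and $M^1$-factors of $\frak{S}$ if necessary, arrange $\mathscr{T}_T = \Gamma_\infty \cdot \frak{S}_T$ for all sufficiently large $T$.

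Next, fix a compact set $K_0 \subset G$. I would show that only finitely many left cosets $\Gamma_\infty \gamma \in \Gamma_\infty \backslash \Gamma$ satisfy $\gamma K_0 \cap \mathscr{T}_T \neq \emptyset$. If $\gamma K_0 \cap \mathscr{T}_T \neq \emptyset$, the decomposition $\mathscr{T}_T = \Gamma_\infty \frak{S}_T$ yields some $\delta \in \Gamma_\infty$ with $(\delta^{-1} \gamma) K_0 \cap \frak{S}_T \neq \emptyset$. Now invoke Siegel's classical finiteness property (see \cite{PBG}, or Borel's \emph{Introduction aux groupes arithm\'etiques}): for the compact $K_0$ and the Siegel set $\frak{S}_T$, the set
$$
F \; := \; \{\gamma' \in \Gamma : \gamma' K_0 \cap \frak{S}_T \neq \emptyset\}
$$
is finite. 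Hence $\delta^{-1} \gamma \in F$, so $\gamma \in \Gamma_\infty \cdot F$, and the coset $\Gamma_\infty \gamma$ lies in the finite set $\{\Gamma_\infty \gamma' : \gamma' \in F\}$.

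Consequently, on any compact $K_0 \subset G$ the series $\Psi(c^T_P f)(g) = \sum_{\gamma \in \Gamma_\infty \backslash \Gamma} c^T_P f(\gamma g)$ reduces to a sum with only finitely many nonzero terms, so it is locally finite and, being a finite sum on each compact, uniformly convergent on compact sets. The main technical ingredient is Siegel's finiteness property for Siegel sets, a cornerstone of reduction theory; the remaining ingredients --- the support constraint on $c^T_P f$, the identification $\mathscr{T}_T = \Gamma_\infty \frak{S}_T$, and the left $N\Gamma_\infty$-invariance of $c^T_P f$ --- are all in place from \S\S 2.2--2.3.
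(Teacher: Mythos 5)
Your proof is correct, but it routes through a different piece of reduction theory than the paper does. The paper's argument takes as its single input the statement that for $T$ large the full horoball $\{n a_y k : y > T\}$ meets $\gamma\cdot\{n a_y k : y > T\}$ only for $\gamma \in \Gamma_\infty$; since the support of $c^T_P f$ lies in that horoball and the horoball is $\Gamma_\infty$-stable, at most \emph{one} summand $c^T_P f(\gamma g)$ is nonzero at any given point $g$, and local finiteness is immediate. You instead decompose the horoball as $\Gamma_\infty \cdot \frak{S}_T$ (using compactness of $(N\cap\Gamma)\backslash N$ and $(M^1\cap\Gamma)\backslash M^1$) and then invoke the Siegel finiteness property for the pair $(K_0, \frak{S}_T)$ to bound the number of contributing cosets on a compact set. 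Both inputs are standard; in effect you have unpacked the ingredients that the paper's one-line appeal to "reduction theory" bundles together. What you lose is the sharper pointwise conclusion --- at most one nonzero term rather than finitely many --- which the paper exploits in the very next claim (that $\Psi(c^T_P f) = c^T_P f$ on a Siegel set, i.e., the only surviving summand is the identity coset). What you gain is an argument that does not depend on the support being confined to the high part of the cusp: your Siegel-property argument is exactly the one that proves local finiteness for a general pseudo-Eisenstein series $\Psi_\phi$ with $\phi$ of arbitrary support modulo $N\Gamma_\infty$, so it generalizes where the paper's does not. Either way the claim as stated follows.
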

\begin{proof}
The tail $c^T_P f$ is left $N$-invariant. Reduction theory (cf. \cite{Borel-1965-66b}, \cite{PBG} and \cite{Springer-1994}) shows that, given $t_o$, for large-enough $t$, a set $\{n a_y k \; : \; y > t_o \}$ does not meet $\gamma \cdot \{ n a_y k \; :\; y > t \}$ unless $\gamma \in \Gamma_\infty$. Thus, for large-enough $T$, $\{ n a_y k \; : \; y > T \} $ does not meet $\gamma \cdot \{n a_y k \; : \; y>T \}$ unless $\gamma \in \Gamma_\infty$. Thus, $\gamma_1 \cdot \{ n a_y k \; : \; y > T \}$ does not meet $\gamma_2 \cdot \{ n a_y k \; : \; y > T \}$ unless $\gamma_1 \Gamma_\infty = \gamma_2 \Gamma_\infty$ $\square$
\\
\end{proof}
The translates of a standard Siegel set $\frak{S}_{t,C}$ cover $G$. The description of general Siegel sets is outside the scope of our discussion (cf. \cite{Borel-1965-66b} and \cite{PBG} and \cite{Springer-1994}). For our purposes, in a real-rank one group, a standard Siegel set is
$$
\frak{S}_{t,C} = \{ n a_y k : n \in C \text{ compact } \subset N, k \in K, a_y \in \text{ split component of } M, y \geq t \}
$$
This will require an extension for the more general cases of $\mathbb{Q}$-rank one groups. The above result leads similarly to
\begin{claim}
On a standard Siegel set $\frak{S}_{t,C}$, $\Psi(c^T_P f) = c^T_P f$ for all $T$ sufficiently large depending on $t$.
\end{claim}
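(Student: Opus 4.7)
The plan is to show that for $g \in \frak{S}_{t,C}$ and $T$ sufficiently large (depending on $t$), the only coset in the sum
$$
\Psi(c^T_P f)(g) = \sum_{\gamma \in \Gamma_\infty \backslash \Gamma} c^T_P f(\gamma g)
$$
contributing nontrivially is the identity coset, whose contribution is precisely $c^T_P f(g)$. The statement is essentially a mild extension of the preceding claim: there, reduction theory excluded overlap of the high-tail region $\{\eta > T\}$ with its own $\Gamma$-translates; here, we extend that exclusion to overlap of the high-tail region with $\Gamma$-translates of the entire Siegel set $\frak{S}_{t,C}$.

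First, observe $\frak{S}_{t,C} \subseteq \{n a_y k : y > t-\epsilon\}$ for any $\epsilon>0$, since membership in the Siegel set only requires $\eta(\cdot) \geq t$. Apply the reduction theory input invoked in the preceding proof, with $t_o := t-\epsilon$: for $T$ sufficiently large, $\gamma \cdot \{y > t-\epsilon\} \cap \{y > T\} = \emptyset$ unless $\gamma \in \Gamma_\infty$. Translating, if $g \in \frak{S}_{t,C}$ and $\eta(\gamma g) > T$ for some $\gamma \in \Gamma$, then $\gamma \in \Gamma_\infty$. Hence every summand with $\gamma \notin \Gamma_\infty$ vanishes, since nonvanishing would require $\gamma g$ to lie in the support $\{\eta > T\}$ of $c^T_P f$. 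The identity coset contributes $c^T_P f(g)$ by left $N\Gamma_\infty$-invariance of $c^T_P f$ (the constant term $c_P f$ is $N\Gamma_\infty$-invariant, and the characteristic function of $\{\eta > T\}$ is $\Gamma_\infty$-invariant since $\eta$ is). Combining, $\Psi(c^T_P f)(g) = c^T_P f(g)$ on $\frak{S}_{t,C}$.

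The genuine analytic ingredient is the reduction theory statement itself, which was already quoted and invoked in the preceding proof; nothing new is needed beyond interpreting it with the threshold on one side lowered from $T$ to $t-\epsilon$. I expect the only subtleties to be purely bookkeeping: the introduction of the auxiliary $\epsilon>0$ to convert $\eta \geq t$ into a strict inequality compatible with the open tail regions $\{y>\cdot\}$, and the $\Gamma_\infty$-invariance of $\eta$, which holds in the $\mathbb{Q}$-rank one arithmetic setting under consideration since $\Gamma_\infty \subseteq P$ and its projection to the split component $A^+$ is trivial. Notably, the compact set $C \subseteq N$ plays no role in the reduction theory input, as that input is insensitive to the $N$-coordinate of $g$; hence the threshold $T_0$ depends only on $t$ and on $\Gamma$, as claimed.
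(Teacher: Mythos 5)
Your proof is correct and follows essentially the same route as the paper: invoke the reduction-theoretic non-interaction of $\gamma\cdot\{\eta>T\}$ with $\{\eta>t_o\}$ for $\gamma\notin\Gamma_\infty$ and $T$ large, conclude that only the identity coset survives, and identify its contribution via the left $N\Gamma_\infty$-invariance of $c^T_P f$. The $\epsilon$-adjustment and the explicit remark on $\Gamma_\infty$-invariance of $\eta$ are harmless bookkeeping the paper leaves implicit.
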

\begin{proof}
By reduction theory, a set $\{n a_y k : y > t_o \}$ does not meet $\gamma \cdot \{ n a_y k : y > T \}$ unless $\gamma \in \Gamma_\infty$, for large-enough $T$ depending on $t_o$. Thus, for large-enough $T$, $\{ n a_y k  : y > T \}$ does not meet $\frak{S}_{t_o , C}$ unless $\gamma \in \Gamma_\infty$. That is, the only non-zero summand in $\Psi(c^T_P f)$ is the term $c^T_P f$ itself. $\square$
\\
\end{proof}
\noindent
Thus, we find that the proper definition of the truncation operator $\wedge^T$ is
$$
\wedge^T f = f - \Psi(c^T_P f)
$$
As desired, a critical effect of the truncation procedure is:
\begin{claim}
For $s$ away from poles, the truncated Eisenstein series $\wedge^T E_s$ is of rapid decay in Siegel sets.
\end{claim}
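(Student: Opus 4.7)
The plan is to exploit the explicit formula for the constant term of the genuine Eisenstein series together with the previous claim, which asserts that on a standard Siegel set $\mathfrak{S}_{t,C}$ we have $\Psi(c^T_P f) = c^T_P f$ once $T$ is large enough relative to $t$. In our setting (the concrete groups $O(r,1)$, $U(r,1)$, $Sp^\ast(r,1)$, or in the general $\mathbb{Q}$-rank one case), away from poles of $E_s$ the constant term has the form $c_P E_s(g) = \eta(g)^s + c_s \cdot \eta(g)^{1-s}$ (respectively $(\eta^s + c_{s,f}\eta^{1-s})\cdot f(m')$ in the cuspidal-data case), which is smooth on $N\Gamma_\infty \backslash G/K$.

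First, I would fix a Siegel set $\mathfrak{S}_{t,C}$ and choose $T$ large enough so that the previous claim applies, i.e., so that on $\mathfrak{S}_{t,C}$ the pseudo-Eisenstein series $\Psi(c^T_P E_s)$ collapses to the single summand $c^T_P E_s$. Restricting to the subset where $\eta(g) > T$, we have $c^T_P E_s = c_P E_s$, and therefore
\[
\wedge^T E_s(g) \;=\; E_s(g) - \Psi(c^T_P E_s)(g) \;=\; E_s(g) - c_P E_s(g) \qquad (\eta(g) > T).
\]
On the complementary portion of $\mathfrak{S}_{t,C}$ with $t \le \eta(g) \le T$, the function $\wedge^T E_s$ coincides with $E_s$ on a set of bounded height, hence is bounded there; so rapid decay is a statement only about the region $\eta(g) \to \infty$.

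Second, I would invoke the theory of the constant term, specifically the standard fact that in a Siegel set $E_s - c_P E_s$ decays faster than any negative power of the height parameter. This is a consequence of the Bruhat-style decomposition of the sum defining $E_s$, separating the $\Gamma_\infty$-orbit contributing $\eta^s$ from the remaining orbits whose contribution can be estimated by a convergent geometric-type majorant in the unipotent radical $N$, uniformly on compacta in $s$ away from poles. This is carried out in detail in \cite{PBG} \S 8.1 and also in \cite{Moeglin-Waldspurger-1995}, and in the $\mathbb{Q}$-rank one setting considered here the argument reduces to the same estimate because cusps are points and $M^1$-variables are confined to a compact set.

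The main obstacle is invoking correctly the rapid-decay statement for $E_s - c_P E_s$ in full generality (including the cuspidal-data Eisenstein series $E_{s,f}$ of the $\mathbb{Q}$-rank one case), since one must ensure that the cuspidal datum $f$ on $(\Gamma\cap M^1)\backslash M^1/(M^1\cap K)$ does not spoil uniformity in the tail estimates. This is handled by the compactness of anisotropic quotients recalled in the appendix, so that $f$ is bounded and the only variable controlling decay is the height parameter $y$. Putting the two halves together, on $\mathfrak{S}_{t,C}$ we have $\wedge^T E_s = E_s - c_P E_s$ in the tail and a bounded smooth function on the complementary compact part, proving rapid decay in Siegel sets for $s$ away from poles of $E_s$.
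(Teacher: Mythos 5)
Your proposal is correct and follows the paper's own argument: reduce $\wedge^T E_s$ on a Siegel set to $E_s - c_P E_s$ in the region $\eta(g) > T$ via the preceding claim, then invoke the theory of the constant term for the rapid decay of $E_s - c_P E_s$. The extra remarks on the bounded-height portion and on uniformity in the cuspidal datum are harmless elaborations of the same route.
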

\begin{proof}
From the theory of the constant term, $E_s - c_P E_s$ is of rapid decay (cf. \cite{PBG} chapter 8 and \S 13.7) in a standard Siegel set. By the previous claim, $(E_s - c^T_P E_s)(g) = (E_s - c_P E_s)(g)$ for $\eta(g) \geq T$ , so it is is also of rapid decay. $\square$
\end{proof}

%
%

\subsection{Invariant description of the Casimir Operator}\label{sec:Remarks-The-Casimir-Operator}

The Killing form $B(X,Y)$ is $G$-invariant by $\textit{Ad}$ (and thus also $\frak{g}$-invariant by $\textit{ad}$), non-degenerate, and $G$-equivariantly identifies the dual $\frak{g}^\ast$ with $\frak{g}$. 
Using this, the Casimir operator can be characterized as usual as the element in the universal enveloping algebra $U\frak{g}$ of $\frak{g}$ which is the image of $1_\frak{g} \in \textrm{End}_R(\frak{g})$ under the chain of $G$-equivariant maps:
$$
\xymatrix{
1_\frak{g} \ar@{.>}[d]_{\rotatebox[origin=c]{270}{$\in$}} \ar[rrrr]                          &&&&                      \Omega = \Omega_\frak{g} \ar@{.>}[d]^{\rotatebox[origin=c]{270}{$\in$}} \\
\mathrm{End}_\mathbb{R}(\frak{g})  \ar[r]^\approx & \frak{g} \otimes \frak{g}^\ast \ar[r]^\approx_B & \frak{g} \otimes \frak{g} \ar[r]^{inc} & \otimes^\bullet \frak{g} \ar[r]^{quotient} & U\frak{g}}
$$

The identity map $1_\frak{g} \in \textrm{End}_\mathbb{R}(\frak{g})$ commutes with all automorphisms $\textrm{Ad}g$ from $g \in G$. Since all maps are $G$-equivariant, the image $\Omega_\frak{g}$ commutes with the action of $G$. This implies that the Casimir element $\Omega = \Omega_\frak{g}$ is an element of the sub-algebra $(U \frak{g})^G$ of $G$-invariant elements of $U\frak{g}$.

The $G$-invariance of $\Omega$ allows evaluation on functions on $G$ as either differential operators on the {\it left} or the {\it right}. Further, the images $\Omega f$ of right $K$-invariant functions $f$ are again right $K$-invariant. Thus, to evaluate $\Omega f (g)$ for $f$ on $G/K$ it suffices to evaluate $\Omega f (x)$ for a set of {\it representatives} $x$ for $G/K$.

There are at least two natural choices for representatives for $G/K$. Many models, such as those for hyperbolic $n$-space, use an $Iwasawa$ decomposition $G = PK = NMK$, where $N$ is the unipotent radical, $M$ is a Levi component of the parabolic subgroup $P$, and $K$ is a maximal compact subgroup, giving $G/K \approx NM/(M \cap K)$. A complication is that the Lie algebra of the parabolic $P = NM$ is not orthogonal to the Lie algebra $\frak{k}$ of $K$ with respect to the Killing form, and subsequent computations must take this into account.

To decompose the tangent space of $G$ with respect to $G/K$, coordinates from the {\it Cartan} decomposition are useful, as follows. Let $\tau$ be a Cartan involution on $\frak{g}$, that is, an involutory Lie algebra anti-automorphism such that $\frak{k}$ is the $(+1)$-eigenspace. Let $\frak{s}$ be the $(-1)$-eigenspace. Then the Cartan decomposition is $G = \textrm{exp}(\frak{s}) \cdot K$. Even though $\frak{s}$ is not a Lie subalgebra and $\textrm{exp}(\frak{s})$ is not a group, conveniently $\frak{s} \perp \frak{k}$ with respect to the invariant pairing $B( \cdot , \cdot)$.

For every basis $\{x_i\}$ of $\frak{g}$ and dual basis $\{x^\prime_i\}$  of $\frak{g}^\ast$ with $x^\prime_i = B(x_i, \cdot)$, the image of $1_\frak{g} \in \textrm{End}_\mathbb{R}(\frak{g})$ in $\frak{g} \otimes \frak{g}$ is $\sum_i x_i \otimes x^\prime_i$ so the Casimir operator $\Omega_\frak{g}$ can be expressed as
$$\Omega = \sum_{i=1}^n x_i x_i^\prime = \sum_{i=1}^n x_i^\prime x_i$$
The invariance of the Casimir element means $\Omega$ descends to both $\Gamma \backslash G$ and $\Gamma \backslash G/K$, giving the invariant Laplacian on the latter.

We will explicitly express the Casimir element in a two-step fashion: first expressing $\Omega_\frak{g} \in U \frak{g}$ using a basis for $\frak{g}$ and $\frak{g}^\ast$, the latter via identification of a dual basis by the Killing form as above,  and then making use of the natural coordinate systems provided by the Iwasawa decomposition to simplify the resulting expression by noting that operators associated with the maximal compact $K$ act by $0$ on functions on $G/K$.

%
%

\subsection{Symmetric versus self-adjoint operators}\label{sec:Remarks-Symmetric-versus-Self-Adjoint}

This is preparation for eigenfunction decompositions of Hilbert spaces by operators closely related to invariant Laplacians. The exposition below is based on \cite{PBG} (cf. Chapter 9).

Resolvents $R_\lambda = (T - \lambda)^{-1}$ can exist, as everywhere-defined, continuous linear maps on a Hilbert space, even for $T$ unbounded and only densely-defined. This requires that $T$ is symmetric, in the sense that $\langle T v, w \rangle = \langle v , T w \rangle$ for $v$, $w$ in the domain $D_T$ of $T$, and semi-bounded in the sense that there is a constant $C$ such that either $\langle Tv,v \rangle \geq C \cdot \langle v,v \rangle$ for all $v$ in $D_T$ or $\langle Tv,v \rangle \leq C \langle v,v \rangle$ for all $v$ in $D_T$. Under these conditions, $T$ has a Friedrichs self-adjoint extension, which provides several useful features, as will be described below.

In common scenarios, we may anticipate that a given unbounded operator is self-adjoint when extended suitably, and a simple version of the operator can be defined on an easily described, small, dense domain, where it specifies a symmetric operator. Then a self-adjoint extension is shown to exist, as in Friedrichs' theorem below.

A not-necessarily continuous, that is, not-necessarily bounded, linear operator $T$, defined on a dense subspace $D_T$ of a Hilbert space $V$, is called an {\it unbounded} operator on $V$, even though it is likely not defined or definable on all of $V$. We consider mostly {\it symmetric} unbounded operators $T$, meaning that $\langle Tv,w \rangle = \langle v,Tw \rangle$ for $v$,$w$ in the domain $D_T$ of $T$. 

For unbounded operators on $V$, description of the domain is essential: an unbounded operator $T$ on $V$ requires specifying a subspace $D$ of $V$ and a linear map $T :  D \to V$. By convention, explicit declaration of the domain of an unbounded operator is often suppressed, instead writing $T_1 \subset T_2$ when $T_2$ is an extension of $T_1$, in the sense that the domain of $T_2$ contains that of $T_1$, and the restriction of $T_2$ to the domain of $T_1$ agrees with $T_1$. Unlike self-adjoint operators on finite-dimensional spaces, and unlike  self-adjoint bounded operators on Hilbert spaces, symmetric unbounded operators, even when densely defined, usually need to be extended in order to behave more like self-adjoint operators in finite-dimensional and bounded-operator situations.

We see below that the adjoint $T^\ast$ of a symmetric operator $T$ is not symmetric unless already $T$ is self-adjoint, that is, unless $T = T^\ast$. In particular, existence of adjoints for symmetric, densely-defined operators $T$ does not immediately imply existence of $(T^\ast)^\ast$. Paraphrasing the notion of symmetry: a densely-defined operator $T$ is symmetric when $T \subset T^\ast$, and self-adjoint when $T = T^\ast$. These comparisons refer to the domains of these not-everywhere-defined operators. In the following claim and its proof, the domain of a map $S$ on $V$ is incorporated in a reference to its graph 
$$
\text{graph } S = \{ v \oplus Sv \; : \; v \in \text{domain} S \} \subset V \oplus V
$$
The direct sum $V \oplus V$ is a Hilbert space with natural inner product $\langle v \oplus v^\prime , w \oplus w^\prime \rangle = \langle v, v^\prime \rangle + \langle w, w^\prime \rangle$. Define an isometry $U :  V \oplus V \to V \oplus V$ by $v \oplus w \to -w \oplus v$. This is preparation for the standard (\cite{Friedrichs} \cite{PBG}) result concerning the existence of the Friedrichs self-adjoint extension for semi-bounded symmetric operators. We say that an operator $T^\prime$, $D^\prime$ is a sub-adjoint to a symmetric operator $T$, $D$ when 
$$
\langle Tv,w \rangle = \langle v,T^\prime w \rangle (\text{for } v \in D, w \in D^\prime)
$$
 For dense domain $D$, for given $D^\prime$ there is at most one $T^\prime$ meeting the sub-adjointness condition.

\begin{lemma}\label{Friedrichs-Extension-Existence}(Existence of Friedrichs extensions)
Given $T$ with dense domain $D$, there is a unique maximal $T^\ast$, $D^\ast$ among all sub-adjoints to $T$, $D$. The adjoint $T^\ast$ is closed, in the sense that its graph is closed in $V \oplus V$. In fact, the adjoint is characterized by its graph, the orthogonal complement in $V \oplus V$ to the image of the graph of $T$ under $U$, namely, 
$$
\text{graph } T^\ast = \text{ orthogonal complement of } U(\text{graph }T)
$$
\end{lemma}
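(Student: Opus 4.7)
The plan is to construct $T^\ast$ explicitly from the condition that defines a sub-adjoint, then read off the graph characterization, which immediately forces closedness.

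First I would define the candidate domain $D^\ast$ as the set of $w \in V$ for which the linear functional $v \mapsto \langle Tv, w\rangle$ is continuous on $D$ in the $V$-norm. Since $D$ is dense in $V$, any such continuous functional extends uniquely to a bounded functional on $V$, and by Riesz representation there is a unique element $u = T^\ast w \in V$ with $\langle Tv, w\rangle = \langle v, u\rangle$ for all $v \in D$. Linearity of $w \mapsto T^\ast w$ follows from uniqueness. Next I would verify maximality: if $(T', D')$ is any sub-adjoint to $(T,D)$, then for each $w \in D'$ the identity $\langle Tv, w\rangle = \langle v, T'w\rangle$ makes the functional $v \mapsto \langle Tv, w\rangle$ automatically continuous (bounded by $\|T'w\|\cdot\|v\|$), so $w \in D^\ast$; density of $D$ and the uniqueness in Riesz forces $T'w = T^\ast w$. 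Hence every sub-adjoint is a restriction of $(T^\ast, D^\ast)$, proving uniqueness of the maximal sub-adjoint.

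Next I would translate the defining condition into a graph-orthogonality statement. A pair $w \oplus u \in V \oplus V$ lies in $\mathrm{graph}\,T^\ast$ precisely when $\langle Tv, w\rangle = \langle v, u\rangle$ for all $v \in D$, i.e.\ when $\langle v,u\rangle - \langle Tv,w\rangle = 0$ for all $v \in D$. Computing $U(v \oplus Tv) = -Tv \oplus v$, the inner product in $V \oplus V$ of $w \oplus u$ with $U(v \oplus Tv)$ is $-\langle w, Tv\rangle + \langle u, v\rangle$, which is exactly the same condition. Therefore $\mathrm{graph}\,T^\ast = U(\mathrm{graph}\,T)^\perp$. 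Since orthogonal complements in a Hilbert space are always closed, closedness of $\mathrm{graph}\,T^\ast$ is automatic, and closedness of $T^\ast$ in the usual sense follows.

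I do not anticipate any serious obstacle: the argument is essentially a repackaging of Riesz representation. The one point that must be handled carefully is the appeal to density of $D$ when invoking Riesz, because without density the Riesz representative would not be unique, and the assignment $w \mapsto T^\ast w$ would not be well-defined as a single-valued operator; this is exactly why the hypothesis that $D$ is dense is essential and why the claim would fail (or require multi-valued ``operators'') for non-densely-defined $T$.
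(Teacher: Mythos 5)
Your proof is correct and is the standard von Neumann construction of the adjoint (define $D^\ast$ by continuity of $v \mapsto \langle Tv, w\rangle$, apply Riesz--Fr\'echet using density of $D$, and read off the graph identity $\mathrm{graph}\,T^\ast = U(\mathrm{graph}\,T)^\perp$), which is exactly the argument the paper implicitly relies on by citing \cite{Friedrichs} and \cite{PBG} without reproducing it. You also correctly flag the one essential hypothesis, density of $D$, as what makes $T^\ast w$ single-valued; no gaps.
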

\begin{corollary}
For $T_1 \subset T_2$ with dense domains, $T^\ast_2 \subset T^\ast_1$. $\square$
\end{corollary}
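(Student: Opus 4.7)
The plan is to unpack the definitions and run a direct chain of inclusions. By hypothesis, $T_1 \subset T_2$ means $D_{T_1} \subseteq D_{T_2}$ and $T_2$ agrees with $T_1$ on $D_{T_1}$, and both domains are dense. By the preceding lemma, each of $T_1$ and $T_2$ has a unique maximal sub-adjoint $(T_i^\ast, D_{T_i^\ast})$.

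First I would take an arbitrary $w \in D_{T_2^\ast}$. Then by definition of the adjoint,
\[
\langle T_2 v, w\rangle = \langle v, T_2^\ast w\rangle \qquad (\text{for all } v \in D_{T_2}).
\]
Restricting $v$ to the subspace $D_{T_1} \subseteq D_{T_2}$ and using $T_2 v = T_1 v$ there, I obtain
\[
\langle T_1 v, w\rangle = \langle v, T_2^\ast w\rangle \qquad (\text{for all } v \in D_{T_1}).
\]
This says that the pair $(w, T_2^\ast w)$ satisfies the sub-adjointness condition for $T_1$, so $w \in D_{T_1^\ast}$. Moreover, since $D_{T_1}$ is dense, the uniqueness clause in the preceding lemma forces $T_1^\ast w = T_2^\ast w$. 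Thus $D_{T_2^\ast} \subseteq D_{T_1^\ast}$ and the two adjoints agree on $D_{T_2^\ast}$, which is exactly the assertion $T_2^\ast \subset T_1^\ast$.

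There is essentially no obstacle here; the only point that could be mishandled is failing to invoke uniqueness of the adjoint value on the dense subspace $D_{T_1}$, which is the reason we need dense domains in the hypothesis. Alternatively, one could phrase the argument via graphs, using the characterization $\mathrm{graph}\, T_i^\ast = U(\mathrm{graph}\, T_i)^\perp$ from the lemma: since $\mathrm{graph}\, T_1 \subseteq \mathrm{graph}\, T_2$, applying the isometry $U$ preserves the inclusion, and taking orthogonal complements in $V \oplus V$ reverses it, yielding $\mathrm{graph}\, T_2^\ast \subseteq \mathrm{graph}\, T_1^\ast$, which is the same conclusion.
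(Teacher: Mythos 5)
Your argument is correct; the paper states this corollary without proof, treating it as immediate from the lemma, and your direct verification (restricting the adjointness identity to the smaller dense domain $D_{T_1}$ and invoking maximality and uniqueness of the sub-adjoint) is exactly the standard way to fill that gap. The alternative via $\mathrm{graph}\,T^\ast = (U(\mathrm{graph}\,T))^\perp$ and the inclusion-reversing property of orthogonal complements is equally valid and equally in the spirit of the lemma.
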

\begin{corollary}
$T \subset T^{\ast\ast}$ for densely-defined, symmetric $T$.
\end{corollary}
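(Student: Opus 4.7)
The plan is to use the graph-theoretic characterization of the adjoint recorded in \cref{Friedrichs-Extension-Existence}, namely that the graph of $T^*$ is the orthogonal complement in $V \oplus V$ of $U(\text{graph } T)$. Applying that same formula to $T^*$ in place of $T$ converts the desired containment $T \subset T^{**}$ into a direct orthogonality check, sidestepping the fact that the text's definition of ``sub-adjoint'' requires a symmetric primary operator, while $T^*$ itself is not yet known to be symmetric.

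Before forming $T^{**}$, I would first verify that $T^*$ has dense domain, so that the lemma legitimately applies to $T^*$. Symmetry of $T$ says exactly that $(T, D)$ is a sub-adjoint to itself, and the maximality half of \cref{Friedrichs-Extension-Existence} then yields $T \subset T^*$. In particular $D \subset D_{T^*}$, and since $D$ is dense in $V$, so is $D_{T^*}$, so $T^{**}$ is well-defined.

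It remains to show $\text{graph}(T) \subset \text{graph}(T^{**})$. Fix $v \in D$; by the graph description applied to $T^*$, the membership $v \oplus Tv \in \text{graph}(T^{**})$ reduces to the vanishing, for every $w \in D_{T^*}$, of
$$
\langle v \oplus Tv,\; -T^*w \oplus w \rangle_{V \oplus V} \;=\; -\langle v, T^*w\rangle + \langle Tv, w\rangle.
$$
This is zero precisely because $(T^*, D_{T^*})$ is sub-adjoint to $(T, D)$, so $v \in D_{T^{**}}$ with $T^{**}v = Tv$, giving $T \subset T^{**}$.

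There is no serious obstacle: the result is essentially a tautology once the graph characterization is in hand, and the only mildly delicate point, handled in the second paragraph, is ensuring $T^*$ is densely defined so that a second adjoint can be formed at all.
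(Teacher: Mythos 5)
Your proof is correct: the paper states this corollary without proof, and your argument via the graph characterization of the adjoint from \cref{Friedrichs-Extension-Existence} is exactly the intended route. The two points you isolate — that symmetry makes $(T,D)$ a sub-adjoint to itself so that $T\subset T^\ast$ and $D_{T^\ast}$ is dense, and that the sub-adjoint identity $\langle Tv,w\rangle=\langle v,T^\ast w\rangle$ is precisely the orthogonality of $v\oplus Tv$ to $U(\mathrm{graph}\,T^\ast)$ — are the whole content, and both are handled correctly.
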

\begin{corollary}
A densely-defined self-adjoint operator has a closed graph.	
\end{corollary}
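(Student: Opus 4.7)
The plan is to deduce this directly from \cref{Friedrichs-Extension-Existence} and the definition of self-adjointness. By hypothesis, $T$ is densely defined and self-adjoint, meaning $T = T^\ast$ as operators (i.e., with the same domain and the same action). So it suffices to show that the adjoint $T^\ast$ of any densely-defined operator has closed graph; then closedness of $\text{graph}\,T = \text{graph}\,T^\ast$ follows immediately.

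The lemma above already gives the crucial identification
$$
\text{graph}\,T^\ast \;=\; \bigl(U(\text{graph}\,T)\bigr)^\perp \;\subset\; V \oplus V,
$$
where $U(v \oplus w) = -w \oplus v$ is an isometry of the Hilbert space $V \oplus V$ with the natural inner product. An orthogonal complement of any subset of a Hilbert space is automatically a closed subspace (it is the intersection of kernels of continuous linear functionals $x \mapsto \langle x, y\rangle$ as $y$ ranges over the set). Hence $\text{graph}\,T^\ast$ is closed in $V \oplus V$.

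Combining these two observations: since $T = T^\ast$, we have $\text{graph}\,T = \text{graph}\,T^\ast$, which we just showed is closed. Therefore $T$ has closed graph. There is no real obstacle here; the entire content of the corollary is repackaging the characterization of the adjoint's graph as an orthogonal complement, which was done in the lemma. The only thing to be careful about is making explicit that self-adjointness means equality of graphs (same domain \emph{and} same action), not merely the symmetric inclusion $T \subset T^\ast$, since the latter would only give that $T$ is \emph{closable}, not closed.
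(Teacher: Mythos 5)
Your proof is correct and is essentially identical to the paper's: both use self-adjointness to identify $\text{graph}\,T$ with $\text{graph}\,T^\ast$, which is closed because the earlier lemma exhibits it as an orthogonal complement in $V\oplus V$. Your closing remark distinguishing $T=T^\ast$ from mere symmetry $T\subset T^\ast$ is a worthwhile clarification but does not change the argument.
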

\begin{proof}	
Self-adjointness of densely-defined $T$ includes equality of domains $T = T^\ast$. Again, since the graph of
$T^\ast$ is an orthogonal complement, it is closed. $\square$
\end{proof}

Closed-ness of the graph of a self-adjoint operator is essential in proving existence of resolvents, below.
\begin{corollary}
The adjoint $T^\ast$ of a symmetric densely-defined operator $T$ is also symmetric if and only if $T = T^\ast$. $\square$
\end{corollary}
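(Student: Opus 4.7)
The plan is to prove the biconditional in two directions, leveraging the two preceding corollaries on inclusions of adjoints. The backward direction is essentially trivial: if $T = T^*$, then the symmetry relation $\langle T u, v\rangle = \langle u, T v\rangle$ on $D_T = D_{T^*}$ that expresses symmetry of $T$ reads verbatim as symmetry of $T^*$.

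For the forward direction, assume $T^*$ is symmetric. The hypothesis of symmetry for $T^*$ unpacks to the operator inclusion $T^* \subset (T^*)^* = T^{**}$. On the other hand, since $T$ itself is symmetric we have $T \subset T^*$, and the first corollary above (adjoints reverse inclusion on densely-defined operators) applied to this inclusion yields $T^{**} \subset T^*$. Chaining the two opposite inclusions gives $T^* = T^{**}$, and combining with $T \subset T^{**}$ from the second corollary produces $T \subset T^* = T^{**}$. Identifying $T$ with its graph closure $T^{**}$, as is implicit in the sub-adjoint framework of \cref{Friedrichs-Extension-Existence} (the graph of $T^{**}$ is the closure of the graph of $T$ in $V \oplus V$, and the sub-adjointness condition depends only on this closure), the desired equality $T = T^*$ follows.

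The main obstacle is precisely this last identification. The chain of inclusions gives $T^* = T^{**}$ formally and without further work, but the literal equality $T = T^*$ in the corollary requires $T$ to already coincide with its double adjoint, i.e., to be a closed operator. Under the standing convention of working with densely-defined symmetric operators up to closure, this is automatic and the proof terminates in the line $T = T^{**} = T^*$; without that convention, what one genuinely extracts from the hypothesis that $T^*$ is symmetric is essential self-adjointness of $T$, namely $T^{**} = T^*$, which is then promoted to $T = T^*$ by replacing $T$ by its closure.
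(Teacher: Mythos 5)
Your argument is correct and is the standard one; the paper states this corollary with no proof at all (just a closing box), so there is nothing to compare against, but your route via the two preceding corollaries — symmetry of $T^\ast$ gives $T^\ast \subset T^{\ast\ast}$, while $T \subset T^\ast$ and reversal of inclusions under adjoints give $T^{\ast\ast} \subset T^\ast$, hence $T^\ast = T^{\ast\ast}$ — is surely what is intended. Your closing caveat is a genuine and worthwhile observation rather than a defect: as literally stated, the forward implication fails for a non-closed essentially self-adjoint $T$ (then $T^\ast = T^{\ast\ast} = \bar{T}$ is self-adjoint, hence symmetric, yet $T \subsetneq T^\ast$), so the chain of inclusions delivers $T^{\ast\ast} = T^\ast$ (essential self-adjointness), and the equality $T = T^\ast$ requires either that $T$ be closed or that operators be identified with their graph closures, as you say. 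Flagging that explicitly is the right thing to do.
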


\begin{proposition}
Eigenvalues for symmetric operators $T$, $D$ are real.
\end{proposition}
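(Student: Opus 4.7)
The plan is to exploit symmetry applied to an eigenvector paired with itself. Suppose $\lambda \in \mathbb{C}$ is an eigenvalue for $T$, $D$, so there exists a nonzero $v \in D$ with $Tv = \lambda v$. The idea is to compute $\langle Tv, v \rangle$ in two different ways, using the symmetry relation $\langle Tv, w \rangle = \langle v, Tw \rangle$ with the particular choice $w = v \in D$.

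On one hand, by conjugate linearity of the inner product in the second slot and by the eigenvalue equation, $\langle Tv, v \rangle = \langle \lambda v, v \rangle = \lambda \langle v, v \rangle$. On the other hand, symmetry together with the eigenvalue equation gives $\langle Tv, v \rangle = \langle v, Tv \rangle = \langle v, \lambda v \rangle = \bar{\lambda} \langle v, v \rangle$. Equating the two expressions yields $(\lambda - \bar{\lambda}) \langle v, v \rangle = 0$.

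Since $v \neq 0$, we have $\langle v, v \rangle > 0$, so $\lambda = \bar{\lambda}$, which exactly means $\lambda \in \mathbb{R}$.

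There is no real obstacle here: the argument is the standard one-line pairing trick, and the only hypothesis invoked is the symmetry condition $\langle Tv, w \rangle = \langle v, Tw \rangle$ on the common domain $D$, together with positive-definiteness of the Hilbert space inner product. In particular, no appeal to the adjoint $T^{\ast}$, to self-adjointness, or to the closedness of the graph is needed, which is consistent with the fact that this proposition is stated for merely symmetric (not necessarily self-adjoint) unbounded operators.
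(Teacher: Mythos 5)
Your proof is correct and is the standard pairing argument; the paper states this proposition without including a proof (deferring to standard references), and your argument is exactly the canonical one that would be supplied. The only hypothesis used is symmetry on the domain $D$ together with positive-definiteness of the inner product, which matches the generality in which the proposition is stated.
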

\noindent
We have the standard (cf. \cite{PBG})
\begin{theorem}
Theorem: Let $T$ be self-adjoint with dense domain $D$. For $\lambda \in \mathbb{C}$, $\lambda \notin \mathbb{R}$, the image $(T - \lambda )D$ is the whole Hilbert space $V$. The resolvent $R_\lambda$ exists. For $T$ positive, for $\lambda \notin [0, +\infty)$, the image $(T - \lambda )D$ is the whole space $V$, and $R_\lambda$ exists.
\end{theorem}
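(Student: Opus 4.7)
The plan is to establish, for the $\lambda$'s in question, three properties of $S_\lambda := T - \lambda$ acting on $D$: (i) injectivity together with a uniform lower bound $\|S_\lambda v\| \geq c\|v\|$, (ii) closedness of the range $S_\lambda D$, and (iii) density of $S_\lambda D$ in $V$. These three together force $S_\lambda D = V$ and produce a bounded everywhere-defined inverse $R_\lambda = S_\lambda^{-1}$ on $V$, so the proof reduces to verifying (i)--(iii) in each case.

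First I would treat $\lambda = a + ib$ with $b \neq 0$. Since $T$ is symmetric, $\langle Tv, v\rangle$ is real for $v \in D$, so the imaginary part of $\langle (T-\lambda)v, v\rangle$ equals $-b\|v\|^2$, and Cauchy--Schwarz yields $\|(T-\lambda)v\|\cdot\|v\| \geq |b|\|v\|^2$, that is, $\|(T-\lambda)v\| \geq |b|\|v\|$. This is (i) with $c = |b|$. For (ii), if $(T-\lambda)v_n \to w$ then the lower bound makes $(v_n)$ Cauchy, so $v_n \to v$ for some $v \in V$; because $T$ is self-adjoint its graph is closed (by the corollary above), hence $v \in D$ and $(T-\lambda)v = w$.

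The main obstacle is (iii), and this is where the full equality $T = T^\ast$, not mere symmetry, is indispensable. If $w \perp (T-\lambda)D$ then $\langle Tu, w\rangle = \lambda\langle u, w\rangle$ for every $u \in D$, exhibiting $u \mapsto \langle Tu, w\rangle$ as a bounded functional on $D$. By the graph-theoretic characterization of $D(T^\ast)$ as the orthogonal complement of $U(\mathrm{graph}\,T)$ in Lemma~\ref{Friedrichs-Extension-Existence}, this places $w \in D(T^\ast)$ with $T^\ast w = \bar\lambda w$. Self-adjointness collapses $D(T^\ast)$ back to $D$ and upgrades the equation to $Tw = \bar\lambda w$; since eigenvalues of symmetric operators are real (by the preceding proposition), either $w = 0$ or $\bar\lambda \in \mathbb{R}$, and the hypothesis $\lambda \notin \mathbb{R}$ forces $w = 0$.

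For the positive case with $\lambda \notin [0,\infty)$, the non-real subcase is already handled above. For real $\lambda < 0$, the imaginary-part trick is unavailable; instead I would use
\[
\langle (T-\lambda)v, v\rangle = \langle Tv, v\rangle + |\lambda|\,\|v\|^2 \geq |\lambda|\,\|v\|^2,
\]
which via Cauchy--Schwarz gives the lower bound $\|(T-\lambda)v\| \geq |\lambda|\|v\|$, and (i) and (ii) follow exactly as before. For (iii), the adjoint argument again produces $Tw = \lambda w$ with $\lambda < 0$, and positivity then forces $\lambda\|w\|^2 = \langle Tw, w\rangle \geq 0$, hence $w = 0$. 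The delicate point common to both cases is the reliance on $T = T^\ast$ in step (iii): mere symmetry would permit a nonzero vector of $D(T^\ast)\setminus D$ to obstruct density of the range, so self-adjointness does genuine work beyond what the symmetric hypothesis alone can supply.
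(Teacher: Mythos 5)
Your proof is correct, and it is the standard argument: the a priori lower bound $\|(T-\lambda)v\|\geq c\|v\|$ (from the imaginary part for non-real $\lambda$, from positivity for $\lambda<0$), closedness of the range via the closed graph of a self-adjoint operator, and density of the range via the adjoint computation $T^\ast w=\bar\lambda w$, which is exactly where $T=T^\ast$ is used. The paper itself gives no proof of this theorem (it only cites [PBG]), and your argument is precisely the one that reference supplies.
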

\noindent
and (cf. {\it ibid})
\begin{theorem}
(Hilbert) For $T$ self-adjoint, for points $\lambda$, $\mu$ off the real line, or, for $T$ positive self-adjoint and $\lambda$, $\mu$ off $[0, +\infty)$,
$$
R_\lambda - R_\mu = (\lambda - \mu) R_\lambda R_\mu
$$
For the operator-norm topology, $\lambda \to R_\lambda$, is holomorphic at such points.
\end{theorem}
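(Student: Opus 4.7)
The plan is to establish the resolvent identity by direct algebraic manipulation on the domain of $T$, and then derive operator-norm holomorphy from a Neumann-series expansion based on the identity itself.

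First I would fix $\lambda,\mu$ off the forbidden set (either $\mathbb{R}$ in the self-adjoint case or $[0,+\infty)$ in the positive case). By the immediately preceding theorem, $R_\lambda$ and $R_\mu$ exist as everywhere-defined bounded operators on $V$ whose ranges lie in $D = \mathrm{dom}(T)$; moreover $(T-\lambda)R_\lambda = I$ on all of $V$ and $R_\lambda(T-\lambda) = I$ on $D$, and likewise for $\mu$. For $v \in V$ we have $R_\mu v \in D$, so the compositions $R_\lambda(T-\mu)R_\mu v$ and $R_\lambda(T-\lambda)R_\mu v$ both make sense. The first equals $R_\lambda v$ because $(T-\mu)R_\mu = I$ on $V$, and the second equals $R_\mu v$ because $R_\lambda(T-\lambda)$ is the identity on $D$ and $R_\mu v \in D$. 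Subtracting gives
\[
R_\lambda v - R_\mu v \;=\; R_\lambda\bigl[(T-\mu) - (T-\lambda)\bigr]R_\mu v \;=\; (\lambda - \mu)\, R_\lambda R_\mu v,
\]
which is the desired identity on all of $V$.

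Next, for holomorphy at a fixed $\mu$, rearrange the identity as $R_\lambda\bigl[I - (\lambda - \mu)R_\mu\bigr] = R_\mu$. For $\lambda$ with $|\lambda - \mu| < 1/\|R_\mu\|$, the Neumann series $\sum_{n\ge 0}(\lambda - \mu)^n R_\mu^{n}$ converges in operator norm to $\bigl(I - (\lambda - \mu)R_\mu\bigr)^{-1}$, and multiplying on the left by $R_\mu$ yields
\[
R_\lambda \;=\; \sum_{n=0}^{\infty}(\lambda - \mu)^n\, R_\mu^{\,n+1},
\]
a norm-convergent power series in $\lambda$. To tie this expression to $(T-\lambda)^{-1}$ produced by the preceding theorem, I would note that the resolvent identity forces $R_\lambda$ to satisfy $R_\lambda\bigl[I - (\lambda-\mu)R_\mu\bigr] = R_\mu$, and since $I - (\lambda-\mu)R_\mu$ is invertible in the disk above, $R_\lambda$ is uniquely determined as the displayed series. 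Operator-norm holomorphy then follows immediately.

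The main obstacle I anticipate is simply the domain-bookkeeping in step one: because $T$ is unbounded, one must justify each appearance of $T-\mu$ and $T-\lambda$ by verifying that the vector being acted on lies in $D$, and for this the range condition $R_\mu(V) \subset D$ supplied by the previous theorem is essential. Once that is clean, no finer estimate than the crude bound $\|R_\lambda - R_\mu\| \le |\lambda-\mu|\,\|R_\lambda\|\,\|R_\mu\|$ is needed, since the Neumann series delivers both continuity and analyticity in a single stroke.
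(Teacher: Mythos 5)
Your derivation of the resolvent identity is exactly the paper's: the paper obtains it from the factorization $(T-\lambda)^{-1}\bigl((T-\mu)-(T-\lambda)\bigr)(T-\mu)^{-1} = (T-\lambda)^{-1} - (T-\mu)^{-1}$, which is the same algebraic manipulation you carry out (the paper suppresses the domain bookkeeping that you make explicit; your care with $R_\mu(V)\subset D$ is welcome and harmless). Where you diverge is the holomorphy claim. The paper divides the identity by $\lambda-\mu$ to get
$$
\frac{R_\lambda - R_\mu}{\lambda - \mu} = R_\lambda R_\mu
$$
and reads off complex differentiability directly from the difference quotient (using norm-continuity of $\lambda\mapsto R_\lambda$, which the identity itself supplies). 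You instead rewrite the identity as $R_\lambda\bigl[I - (\lambda-\mu)R_\mu\bigr] = R_\mu$ and invert by a Neumann series, obtaining $R_\lambda = \sum_{n\ge 0}(\lambda-\mu)^n R_\mu^{\,n+1}$ for $|\lambda-\mu| < 1/\|R_\mu\|$. Both routes are correct; yours buys a little more (an explicit norm-convergent power series, hence analyticity with a quantified radius, and a uniqueness statement for $R_\lambda$ on that disk), at the cost of one small caveat you should make explicit: the disk $|\lambda-\mu|<1/\|R_\mu\|$ need not lie entirely in the allowed region ($\mathbb{C}\setminus\mathbb{R}$, resp. $\mathbb{C}\setminus[0,+\infty)$), so the identification of the series with $(T-\lambda)^{-1}$ should be restricted to the intersection of that disk with the allowed region, which is still an open neighborhood of $\mu$ and hence enough for holomorphy at $\mu$. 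One minor slip: to pass from $R_\lambda\bigl[I-(\lambda-\mu)R_\mu\bigr]=R_\mu$ to the series you multiply on the \emph{right} by the Neumann inverse and then use that $R_\mu$ commutes with powers of itself; "multiplying on the left by $R_\mu$" is not quite what happens, though the conclusion is unaffected.
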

%

%
%

\subsection{Friedrichs' canonical self-adjoint extensions}\label{sec:Friedrichs-extension-defn}

%
Following \cite{PBG} (see \cite{Friedrichs} for the original development), semi-bounded operators are more tractable than general unbounded symmetric operators. A densely-defined symmetric operator $T$, $D$ is {\it positive} (or non-negative), denoted $T \geq 0$, when $\langle Tv,v \rangle \geq 0$ for all $v \in D$. All the eigenvalues of a positive operator are non-negative real. Similarly, $T$ is negative when $\langle T v, v \rangle \leq 0$ for all $v$ in the (dense) domain of $T$ . Generally, if there is a constant $c \in \mathbb{R}$ such that $\langle T v, v \rangle \geq c \cdot \langle v, v \rangle$ (written $T \geq c$), or $\langle T v, v \rangle \leq c \cdot \langle v, v \rangle$ (written $T \leq c$), $T$ is said to be semi-bounded. 

The following argument for positive operators can easily be adapted to the general semi-bounded situation.
For positive, symmetric $T$ on $V$ with dense domain $D$, define a hermitian form $\langle , \rangle_1$ and corresponding norm $| \cdot |_1$ by 
$$
\langle v,w \rangle_1 = \langle v,w \rangle + \langle Tv,w \rangle = \langle v,(1 + T)w \rangle = \langle (1 + T)v,w \rangle \qquad (\text{for } v, w \in D)
$$
The symmetry and positivity of $T$ make $\langle , \rangle_1$ positive-definite hermitian on $D$, and $\langle v, w \rangle_1$ has sense whenever at least one of $v$, $w$ is in $D$. Let $V^1$ be the Hilbert-space completion of $D$ with respect to the metric $d_1$ induced by the norm $| \cdot |_1$ on $D$. The completion $V^1$ continuously injects to $V$.
\begin{theorem}
	(Friedrichs) A positive, densely-defined, symmetric operator $T$ with domain $D$ dense in Hilbert space $V$ has a positive self-adjoint extension $\widetilde{T}$  with domain $D \subset V^1$, characterized by
$$
\langle (1 + T) v , (1 + \widetilde{T})^{-1} w \rangle = \langle v, w \rangle \qquad (\text{for } v \in D \text{ and } w \in V)
$$
The bound $\langle T v,v \rangle \geq 0$ for $v$ in the domain $D$ of $T$ is preserved. The resolvent $(1 + T )^{-1}  :  V \to V^1$ is
continuous with respect to the finer topology on $V^1$.
\end{theorem}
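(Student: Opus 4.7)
The plan is to build the extension by Riesz-representing the inverse $(1+\widetilde{T})^{-1}$ directly as a bounded operator on $V$, then recovering $\widetilde{T}$ from it. First I would form the completion $V^1$ of $D$ with respect to the graph norm $|v|_1^2 = \langle v,v\rangle + \langle Tv,v\rangle$, using positivity of $T$ to ensure $|\cdot|_1$ is genuinely a norm. Since $|v| \leq |v|_1$ on $D$, the identity extends to a continuous injection $V^1 \hookrightarrow V$; the injectivity part requires a small argument, namely that if $v_n \in D$ is $|\cdot|_1$-Cauchy and $|v_n|\to 0$ in $V$, then its $V^1$-limit is zero, which follows from $\langle v_n, v_n\rangle_1 = \langle v_n, (1+T)v_m\rangle + \langle v_n, v_n - v_m\rangle_1$ applied to $m$ large and $n\to\infty$.

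Next, for each $w\in V$ the functional $v \mapsto \langle v,w\rangle$ on $V^1$ is continuous because $|\langle v,w\rangle| \leq |v|\cdot |w| \leq |v|_1 \cdot |w|$. By Riesz representation in the Hilbert space $V^1$, there is a unique $Bw \in V^1$ with
\begin{equation*}
\langle v, Bw\rangle_1 = \langle v,w\rangle \qquad (\text{for all } v \in V^1).
\end{equation*}
The assignment $B\colon V \to V^1$ is linear, and the identity $|Bw|_1^2 = \langle Bw,w\rangle \leq |Bw|\cdot |w| \leq |Bw|_1 \cdot |w|$ shows $|Bw|_1 \leq |w|$, which already yields the claimed continuity of $(1+T)^{-1} = B$ into the finer topology on $V^1$. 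Taking $v = Bw$ shows $B$ is positive and symmetric on $V$, hence self-adjoint as a bounded operator, and injective (if $Bw=0$ then $\langle v,w\rangle=0$ for all $v$ in the dense $V^1$, forcing $w=0$).

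Then I would define $\widetilde{D} = \operatorname{range}(B) \subset V^1$ and $\widetilde{T} = B^{-1} - 1$ on $\widetilde{D}$. To see $\widetilde{T}$ extends $T$, observe that for $u\in D$ and any $v\in V^1$, approximation in $|\cdot|_1$ gives $\langle v, u\rangle_1 = \langle v, (1+T)u\rangle$, so $B(1+T)u = u$; thus $D \subset \widetilde{D}$ and $\widetilde{T}u = Tu$. The characterizing identity $\langle (1+T)v, (1+\widetilde{T})^{-1}w\rangle = \langle v, w\rangle$ is then just the Riesz identity evaluated at $v\in D$. Positivity of $\widetilde{T}$ follows because, for $u = Bw \in \widetilde{D}$, $\langle \widetilde{T}u, u\rangle = \langle u, u\rangle_1 - \langle u,u\rangle \geq 0$ by the definition of the graph norm and a density-in-$V^1$ approximation by elements of $D$.

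The main subtlety, and the step I expect to need the most care, is self-adjointness rather than mere symmetry: I must show $\widetilde{T}^\ast = \widetilde{T}$ on equal domains. The clean route is to argue that self-adjointness of $\widetilde{T}$ is equivalent to self-adjointness of its bounded resolvent $B = (1+\widetilde{T})^{-1}$, which I have already established via Riesz. Concretely, if $y \in \operatorname{dom}(\widetilde{T}^\ast)$ with $\widetilde{T}^\ast y = z$, then for every $u = Bw \in \widetilde{D}$ one has $\langle (1+\widetilde{T})u, y\rangle = \langle u, y+z\rangle$, i.e.\ $\langle w, y\rangle = \langle Bw, y+z\rangle = \langle w, B(y+z)\rangle$ for all $w\in V$ by self-adjointness of $B$, hence $y = B(y+z) \in \widetilde{D}$ and $(1+\widetilde{T})y = y+z$, giving $\widetilde{T}y = z$. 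This upgrades symmetry to self-adjointness and completes the proof.
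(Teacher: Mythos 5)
Your proposal is correct and follows essentially the same route as the paper: construct $V^1$, Riesz-represent $w\mapsto\langle\cdot,w\rangle$ in $V^1$ to get the bounded, positive, symmetric, injective map $B=(1+\widetilde{T})^{-1}$ with $|Bw|_1\leq|w|$, and define $\widetilde{T}=B^{-1}-1$ on $\operatorname{range}(B)$. The only divergence is at the final self-adjointness step, where the paper flips the graph of $B$ and uses $(SX)^\perp=S(X^\perp)$, while you compute $\operatorname{dom}(\widetilde{T}^\ast)$ directly from the self-adjointness of the bounded resolvent; both arguments are standard and equivalent, so no substantive gap remains.
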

\begin{proof}
Since the Friedrichs extension is so important to us, we recall some details. First, let $j$ be the continuous linear map $j : V^1 \to V$ obtained by extending by continuity the identity map $D \to D$, with the source being given the $| \cdot |_1$ topology and the target being given the $| \cdot |$ topology. We claim that $j$ is an injection. By construction, $\langle v,w \rangle_1 = \langle jv,Tw \rangle$ for $v \in V^1$ and $w \in D$. For $0 \neq v \in V^1$, since $D$ is dense in $V^1$, there exists $w \in D$ such that $\langle v,w \rangle_1 \neq 0$. For that $v$,
$$
0 \neq \langle v,w \rangle_1 = \langle jv,Tw \rangle
$$
Thus, $jv \neq 0$ for $0 \neq v \in V^1$, and $j$ is indeed injective. We may identify $V^1$ with its image in $V$, noting that $V^1$ has a finer topology than that induced from $V$.

For $h \in V$ and $v \in V^1$, the functional $\lambda_h :  v \to \langle v,h \rangle$ has a bound 
$$
|\lambda_h v| \leq |v| \cdot |h| \leq |v|_1 \cdot |h|
$$
so the norm of the functional $\lambda_h$ on $V^1$ is at most $|h|$. By Riesz-Fr{\' e}chet, there is unique $Bh$ in the Hilbert space $V^1$ with $|Bh|_1 \leq |h|$, such that $\lambda_h(v) = \langle v, Bh \rangle_1$ for $v \in V^1$, and then $|Bh| \leq |Bh|_1 \leq |h|$. The map $B  : V \to V^1$ is verifiably linear. There is a symmetry of $B$:
\begin{align*}
\langle Bv,w \rangle =& \lambda_w (Bv) = \langle Bv,Bw \rangle_1 = \langle Bw,Bv \rangle_1 = \lambda_v (Bw) \\
=& \langle Bw,v \rangle = \langle v,Bw \rangle \quad (\text{for } v,w \in V)
\end{align*}
Positivity of $B$ is similar:
$$
\langle v,Bv \rangle = \lambda_v(Bv) = \langle Bv,Bv \rangle_1 \geq \langle Bv,Bv \rangle \geq 0 
$$
$B$ is injective: for $Bw = 0$, for all $v \in V^1$
$$
0 = \langle v,0 \rangle_1 = \langle v,Bw \rangle_1 = \lambda_w(v) = \langle v,w \rangle
$$
Since $V^1$ is dense in $V$, this gives $w = 0$. The image of $B$ is dense in $V^1$: if $w \in V^1$ is such that
$\langle Bv,w \rangle_1 = \lambda v(w) = 0$ for all $v \in V$, taking $v = w$ gives
$$
0 = \lambda_w(w) = \langle w,Bw \rangle_1 = \langle Bw,Bw \rangle
$$
and by injectivity $w = 0$. Thus, $B :  V \to V_1 \subset V$ is bounded, symmetric, positive, injective, with dense image. In particular, $B$ is self-adjoint.

Thus, $B$ has a possibly unbounded positive, symmetric inverse $A$. Since $B$ injects $V$ to a dense subset $V_1$, necessarily $A$ surjects from its domain (inside $V_1$) to $V$. We claim that $A$ is self-adjoint. Let $S : V \oplus V \to V \oplus V$ by $S(v \oplus w) = w \oplus v$. Then $\text{graph } A = S(\text{graph } B)$. In computing orthogonal complements $X^\perp$, clearly
$$
(SX)^\perp = S(X^\perp) 
$$
since the domain of $B^\ast$ is the domain of $B$. Thus, $A$ is self-adjoint.
We claim that for $v$ in the domain of $A$, $\langle Av, v \rangle \geq \langle v, v \rangle$. Indeed, letting $v = Bw$, 
$$
\langle v,Av \rangle = \langle Bw,w \rangle  = \lambda_w Bw = \langle Bw,Bw \rangle_1 \geq \langle Bw,Bw \rangle = \langle v,v \rangle
$$
Similarly, with $v^\prime = Bw^\prime$, and $v \in V^1$,
$$
\langle v,Av^\prime \rangle = \langle v,w^\prime \rangle = \lambda_{w^\prime} v = \langle v,Bw^\prime \rangle_1 = \langle v,v^\prime \rangle_1 \qquad (v \in V^1, v^\prime \text{ in the domain of } A)
$$
Last, we show that $A$ is an extension of $S = 1 + T$. By the above,
$$
\langle v, Sw \rangle = \lambda_{Sw}v = \langle v, BSw \rangle_1 \qquad (\text{for } v,w \in D)
$$
but, by definition of $\langle , \rangle_1$
$$
\langle v, Sw \rangle = \langle v,w \rangle_1 \qquad (\text{for } v,w \in D)
$$
so that
$$
\langle v, w - BSw \rangle_1 = 0 \qquad (\text{for all } v,w \in D)
$$
Since $D$ is dense in $V_1$ (i.e., in the $| \cdot |_1$ topology), $BSw = w$ for $w \in D$. Thus, $w \in D$ is in the range of $B$, so is in the domain of $A$, and
$$
Aw = A(BSw) = Sw
$$
Thus, the domain of $A$ contains that of $S$ and extends $S$, so the domain of $A$ is dense in $V^1$. In fact, $B = (1 + T )^{-1}$ maps $V \to V^1$ continuously even with the finer $\langle , \rangle_1$-topology on $V^1$: the relation $\langle v, Bw \rangle_1 = \langle v,w \rangle$ for $v \in V^1$ with $v = Bw$ gives
$$
|Bw|^2_1 = \langle Bw, Bw \rangle_1 = \langle Bw,w \rangle \leq |Bw| \cdot |w| \leq |Bw|_1  \cdot |w|
$$
The resulting $|Bw|_1 \leq |w|$ gives continuity in the finer topology. $\square$
\end{proof}

The proof additionally shows the continuity of $(1 + \widetilde{T})^{-1} :  V \to V^1$ with the finer topology on $V^1$. This is a property of Friedrichs' self-adjoint extensions which is not shared by the other self-adjoint extensions of a given symmetric operator. This allows provides an important consequence used in \hyperref[sec:Delta-a-has-purely-discrete-spectrum]{$\tilde{\Delta}_a$ has purely discrete spectrum}.
\begin{corollary}\label{sec:compact-inclusion-compact-resolvent}
When the inclusion $V^1 \to V$ is compact, the resolvent $(1 + \widetilde{T})^{-1} : V \to V$ is compact.
\end{corollary}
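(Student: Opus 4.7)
The plan is to exploit the factorization of the resolvent $(1+\widetilde{T})^{-1}:V\to V$ through the intermediate Hilbert space $V^1$, using the sharpened continuity statement already extracted from the proof of Friedrichs' theorem.

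First I would recall that the proof of the Friedrichs theorem showed not merely that $B=(1+\widetilde{T})^{-1}$ maps $V$ continuously to $V$, but in fact that it maps $V$ continuously to $V^1$ when $V^1$ is equipped with its own finer topology coming from $\langle\cdot,\cdot\rangle_1$; the key inequality was $|Bw|_1\le |w|$, obtained from $|Bw|_1^2=\langle Bw,w\rangle\le|Bw|\cdot|w|\le|Bw|_1\cdot|w|$. Denote this strengthened map by $\widetilde{B}:V\to V^1$; it is a bounded linear operator between Hilbert spaces.

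Next I would factor the resolvent as the composition
\[
(1+\widetilde{T})^{-1}\ :\ V\ \xrightarrow{\ \widetilde{B}\ }\ V^1\ \xrightarrow{\ j\ }\ V,
\]
where $j$ is the inclusion of $V^1$ into $V$. By hypothesis $j$ is compact, and $\widetilde{B}$ is bounded, so the composition is compact by the standard fact that the composition of a compact operator with a bounded operator (on either side) is compact. This composition is, as a map $V\to V$, exactly $(1+\widetilde{T})^{-1}$, since on the underlying sets $\widetilde{B}$ and the original $B$ agree and the inclusion $j$ is the identity on elements.

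There is no real obstacle here; the only thing to be careful about is not to conflate the two topologies on $V^1$. The substantive work was already done inside the Friedrichs proof, where the continuity of $B$ was upgraded from $V\to V$ to $V\to V^1$ with the finer norm. Given that upgrade, the present corollary is a one-line composition-with-compact argument, and the conclusion follows directly. \qed
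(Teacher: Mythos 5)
Your proof is correct and follows essentially the same route as the paper: both factor the resolvent as $V \xrightarrow{\widetilde{B}} V^1 \xrightarrow{j} V$, using the bound $|Bw|_1 \le |w|$ from the Friedrichs construction to get continuity of $\widetilde{B}$ into $V^1$ with the finer topology, and then compose with the compact inclusion $j$. Nothing is missing.
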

\begin{proof}
In the notation of the proof of the theorem, $B : V \to V^1 \to V$ is the composition of this continuous map with the injection $V^1 \to V$ where $V^1$ has the finer topology. The composition of a continuous linear map with a compact operator is compact, so compactness of $V^1 \to V$ with the finer topology on $V^1$ suffices to prove compactness of the resolvent.
$\square$
\end{proof}

%
%

\subsection{Unbounded self-adjoint operators with compact resolvents}\label{sec:Unbounded-ops-with-compact-resolvent}
The following result and its proof are standard but reproduced here for reference (cf. \cite{PBG} Ch. 9). In the following, let $T$ be a possibly-unbounded operator with (dense) domain $D_T \subset V$.
\begin{lemma}\label{spectrum-T-spectrum-compact-resolvent}
For a not-necessarily-bounded self-adjoint operator $T$, if $T^{-1}$ exists and is compact, then $(T - \lambda)^{-1}$ exists and is a compact operator for $\lambda$ off a discrete set in $\mathbb{C}$, and is meromorphic in $\lambda$. Further, the spectrum of $T$ and non-zero spectrum of $T^{-1}$ are in the bijection $\lambda \leftrightarrow \lambda^{-1}$.
\end{lemma}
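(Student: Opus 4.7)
The plan is to reduce everything to the spectral theory of the compact self-adjoint operator $T^{-1}$ via an algebraic factorization. Since $T^{-1}$ is compact and self-adjoint (symmetry of $T^{-1}$ follows from self-adjointness of $T$ together with existence of the everywhere-defined bounded inverse), the classical spectral theorem gives a discrete set of real eigenvalues $\{\mu_n\}$ for $T^{-1}$ with finite-dimensional eigenspaces, accumulating only at $0$. Thus the set $\{1/\mu_n : \mu_n \ne 0\}$ is discrete in $\mathbb{C}$ with no accumulation point off of $\infty$.

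The key algebraic observation is that, on the domain of $T$, one has $T - \lambda = T(1 - \lambda T^{-1})$, and since $T^{-1}$ is everywhere-defined and bounded, this rearranges formally to
\[
(T - \lambda)^{-1} \;=\; (1 - \lambda T^{-1})^{-1}\, T^{-1}.
\]
So the first step I would carry out is: show that for $\lambda$ such that $1/\lambda$ is not an eigenvalue of $T^{-1}$ (including $\lambda = 0$ trivially, where $(T-0)^{-1} = T^{-1}$ by hypothesis), the operator $1 - \lambda T^{-1}$ is an invertible bounded operator on $V$. Away from the discrete set $\{1/\mu_n\}$, invertibility of $1 - \lambda T^{-1}$ follows from the Fredholm alternative applied to $\lambda T^{-1}$, which is compact: $1 - \lambda T^{-1}$ has trivial kernel precisely when $1/\lambda$ fails to be an eigenvalue of $T^{-1}$, and injectivity of a Fredholm-index-zero operator gives surjectivity. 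Once this is in hand, defining $(T - \lambda)^{-1}$ by the displayed formula, one verifies directly on the dense domain $D_T$ that it is a two-sided inverse to $T - \lambda$, and it is compact as the composition of the bounded $(1 - \lambda T^{-1})^{-1}$ with the compact $T^{-1}$.

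For meromorphy, I would argue as follows. The map $\lambda \mapsto \lambda T^{-1}$ is entire in operator norm, and the Neumann series $\sum (\lambda T^{-1})^n$ shows that $(1 - \lambda T^{-1})^{-1}$ is holomorphic in a neighborhood of $\lambda = 0$. The analytic Fredholm theorem then implies that $(1 - \lambda T^{-1})^{-1}$ extends to a meromorphic operator-valued function on all of $\mathbb{C}$, with poles exactly at the points $\lambda = 1/\mu_n$; composing with the fixed bounded $T^{-1}$ preserves meromorphy, giving the meromorphic continuation of $(T - \lambda)^{-1}$. Alternatively, a more hands-on version uses the spectral resolution of $T^{-1}$ to write $(1 - \lambda T^{-1})^{-1} = \sum_n (1 - \lambda\mu_n)^{-1} P_n$ where $P_n$ is the finite-rank spectral projector, exhibiting the poles and residues explicitly.

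Finally, the spectral bijection follows by unwinding these equivalences: $\lambda$ lies in the resolvent set of $T$ precisely when $(T - \lambda)^{-1}$ exists as a bounded everywhere-defined operator, which by the factorization happens precisely when $\lambda \ne 0$ and $1/\lambda$ lies in the resolvent set of $T^{-1}$ (using that $0 \notin \operatorname{spec}(T)$ since $T^{-1}$ was assumed to exist). Since the non-zero spectrum of the compact self-adjoint $T^{-1}$ consists entirely of eigenvalues, this gives the claimed bijection $\lambda \leftrightarrow \lambda^{-1}$. I expect the main subtlety to be bookkeeping about domains when passing between the unbounded $T - \lambda$ and the everywhere-defined $1 - \lambda T^{-1}$: one must check that $(1 - \lambda T^{-1})^{-1} T^{-1}$ indeed maps $V$ into $D_T$ and inverts $T - \lambda$ on $D_T$, rather than merely inverting on a formal level. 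Once this domain-tracking is done cleanly, the Fredholm/spectral input is essentially routine.
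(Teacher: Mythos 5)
Your proposal is correct and follows essentially the same route as the paper: the identical algebraic factorization $T-\lambda = T(1-\lambda T^{-1}) = T(\lambda^{-1}-T^{-1})\lambda$, invertibility of the compact perturbation away from the discrete set of eigenvalues of $T^{-1}$ (you cite the Fredholm alternative where the paper cites the spectral theorem for compact self-adjoint operators — the same fact), and compactness of $(T-\lambda)^{-1}$ as a bounded operator composed with the compact $T^{-1}$. The only cosmetic difference is that the paper derives compactness from the resolvent identity $R_\lambda = (\lambda R_\lambda + 1)\circ R_0$ and meromorphy from Hilbert's relation, whereas you read both directly off the factorization via the analytic Fredholm theorem; these are interchangeable.
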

\begin{proof}
The set of eigenvalues or point spectrum of $T$ consists of $\lambda \in \mathbb{C}$ such that $T - \lambda$ fails to be injective. The continuous spectrum consists of $\lambda$ with $T - \lambda$ injective and with dense image, but not surjective. Further, for possibly unbounded operators, we require a bounded (and thus continuous) inverse $(T - \lambda)^{-1}$ on $(T - \lambda)D_T$ for $\lambda$ to be in the continuous spectrum. The residual spectrum consists of $\lambda$ with $T - \lambda$ injective, but $(T - \lambda)D_T$ not dense.

The description of continuous spectrum simplifies for closed $T$ , that is, for $T$ with closed graph: we claim that for $(T - \lambda)^{-1}$ densely defined and continuous, $(T - \lambda)D_T$ is the whole space, so $(T - \lambda)^{-1}$ is everywhere defined, implying $\lambda$ cannot be in the residual spectrum. In particular, the continuity gives a constant $C$ such that $|x| \leq C \cdot |(T - \lambda)x|$ for all $x \in D_T$. Then $(T - \lambda)x_i$ Cauchy implies $x_i$ Cauchy, and $T$ closed implies $T (\lim x_i ) = \lim T x_i$ . Thus, $(T - \lambda)D_T$ is closed and the density of $(T - \lambda )D_T$ implies it is the whole space.

We need to prove that for $T^{-1}$ compact, the resolvent $(T - \lambda)^{-1}$ exists and is compact for $\lambda$ off a discrete set, and is meromorphic in $\lambda$. The non-zero spectrum of the compact self-adjoint operator $T^{-1}$ is point spectrum, from basic spectral theory for compact operators (see \cite{PBG} and references therein). We claim that the spectrum of $T$ and non-zero spectrum of $T^{-1}$ are in the natural bijection $\lambda \leftrightarrow \lambda^{-1}$. Since both $T$ and (the multiplication operator) $\lambda$ are invertible (and thus injective), the algebraic identities
$$
T^{-1} - \lambda^{-1} = T^{-1}( \lambda - T)\lambda^{-1} \qquad T - \lambda = T(\lambda^{-1} - T^{-1})\lambda
$$
imply that failure of either $T - \lambda$ or $T^{-1} - \lambda^{-1}$ to be injective forces the failure of the other, so the point spectra are identical. For (non-zero) $\lambda^{-1}$ not an eigenvalue of compact $T^{-1}$, $T^{-1} - \lambda^{-1}$ is injective and has a continuous, everywhere-defined inverse. That is, by the spectral theorem for self-adjoint compact operators, if $S$ is a compact self-adjoint then $S - \lambda$ is surjective for $\lambda \neq 0$ not an eigenvalue of $S$. For such $\lambda$, inverting the relation $T - \lambda = T(\lambda^{-1}- T^{-1})\lambda$ gives
$$
(T - \lambda)^{-1} = \lambda^{-1} (\lambda^{-1} - T^{-1})^{-1}T^{-1}
$$
from which $(T - \lambda)^{-1}$ is continuous and everywhere-defined so that $\lambda$ cannot be in the spectrum of $T$. Finally, $\lambda = 0$ is not in the spectrum of $T$ , because $T^{-1}$ exists and is continuous. This establishes the bijection.

Thus, for $T^{-1}$ compact self-adjoint, the spectrum of $T$ is countable, with no accumulation point in $\mathbb{C}$. Letting $R_\lambda = (T - \lambda)^{-1}$ the resolvent relation
$$
R_\lambda = (R_\lambda - R_0) + R_0 = (\lambda - 0) R_\lambda R_0 + R_0 = (\lambda R_\lambda + 1) \circ R_0
$$
expresses $R_\lambda$ as the composition of a continuous operator with a compact operator, proving its compactness.
$\square$
\end{proof}
Continuity is then immediate from Hilbert's relation
$$
(T - \lambda)^{-1} (\lambda - \mu)(T - \mu)^{-1} = (T - \lambda)^{-1} ((T - \mu)  - (T -\lambda)) (T - \mu)^{-1} = (T - \lambda)^{-1}  - (T - \mu)^{-1}
$$
Dividing through by $\lambda - \mu$ then gives
$$
\frac{(T - \lambda)^{-1} - (T - \mu)^{-1}}{\lambda - \mu} = (T - \lambda)^{-1}(T - \mu)^{-1}
$$
proving differentiability.
%
%

%
%
%

\subsection{Distributional characterization of Friedrichs extensions}\label{sec:Distrl-Char-Friedrichs-Ext}

Describing Friedrichs extensions of restrictions of $\Delta$ in terms of distributions can facilitate a finer analysis (cf. \cite{Bombieri-Garrett} and \cite{PBG} for details). In particular, for the cases being examined, this can be done abstractly and in the same context as the construction of the Friedrichs extension (cf. \cite{PBG} \S 11.2).

Let $V$ be a Hilbert space with a complex conjugation map $v \to \bar{v}$ compatible with the hermitian inner product on $V$ (i.e., for $v, w \in V$, $\langle v , w \rangle = \overline{\langle w , v \rangle }$, etc.). This gives a complex-linear isomorphism $c : V \to V^\ast$ of $V$ to its dual $V^\ast$ via Riesz-Fr{\' e}chet composed with complex conjugation, by $c :  v \to \langle \cdot , \bar{v} \rangle$. Let $S$ be a symmetric operator on $V$ with dense domain $D$, with $\langle Sv,v \rangle \geq \langle v,v \rangle$ for $v \in D$. Suppose that $S$ commutes with the conjugation map. Put $\langle x,y \rangle_{1} = \langle Sx,y \rangle$ for $x, y \in D$,and let $V^1$ be the completion of $D$ with respect to this norm. The identity map $D \to D$ induces a continuous injection $j : V^1 \to V$ with dense image. 

Write $V^{-1}$ for the Hilbert-space dual $(V^1)^\ast$ of $V^1$, with hermitian inner product $\langle, \rangle_{-1}$. Let $j^\ast$ be the adjoint map $j^\ast : V^\ast \to (V^1)^\ast$ of $j$, so composition with complex conjugation $c$ gives
\[
\xymatrix{
V^1  \ar[r]^j & V \ar@/^1pc/[rr]^{j^\ast \circ c} \ar[r]_c & V \ar[r]_{j^\ast} & V^{-1} \\
D \ar[u] \ar[ur]}
\]
There is a continuous linear map $S^\sharp : V^1 \to V^{-1}$, with the respective topologies, given by
$$
S^\sharp(x)(y) = \langle x, \bar{y} \rangle_1 \qquad (\text{for } x, y \in V^1)
$$
By Riesz-Fr{\' e}chet, this map is a topological isomorphism.
\begin{lemma}
The restriction of $S^\sharp$ to the domain of $\widetilde{S}$ is $j^\ast \circ c \circ \widetilde{S}$ . The domain of $\widetilde{S}$ is
$$
\text{domain } \widetilde{S} = \widetilde{D}  = \{ x \in V^1 \; : \; S^\sharp x \in (j^\ast \circ c)V \}
$$
\end{lemma}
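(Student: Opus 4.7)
The strategy is to translate the defining property of the Friedrichs extension, as established in the preceding theorem, into the dual language of $S^\sharp$, $c$, and $j^{*}$. From the Friedrichs construction one recovers the characterization: $x \in \widetilde{D}$ if and only if $x \in V^{1}$ and there exists some $v \in V$ (necessarily unique, by density of $jV^{1}$ in $V$) with $\langle x, y\rangle_{1} = \langle v, jy\rangle$ for every $y \in V^{1}$; in that case $\widetilde{S}x = v$. This is the Riesz--Fr\'echet reformulation of the Friedrichs condition used in the earlier proof, where $B = (1+T)^{-1}$ was characterized by $\langle v, Bh\rangle_{1} = \langle v, h\rangle$. Everything then reduces to carefully tracking how complex conjugation interacts with $\langle\cdot,\cdot\rangle_{1}$, $c$, and $j^{*}$, using the hypothesis that $S$ commutes with conjugation (so that conjugation preserves $V^{1}$ and its inner product).

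First I would verify the identity $S^\sharp\bigl|_{\widetilde{D}} = j^{*} \circ c \circ \widetilde{S}$. Fix $x \in \widetilde{D}$ and $y \in V^{1}$. By definition, $S^\sharp(x)(y) = \langle x, \bar y\rangle_{1}$; applying the Friedrichs characterization with $\bar y$ in place of $y$ gives $\langle x, \bar y\rangle_{1} = \langle \widetilde{S} x, j\bar y\rangle_{V}$. On the other hand, unwinding the definitions of $c$ and $j^{*}$ yields $(j^{*}\circ c\circ \widetilde{S})(x)(y) = c(\widetilde{S}x)(jy) = \langle jy, \overline{\widetilde{S}x}\rangle_{V}$. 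The two right-hand sides agree by the compatibility $\overline{\langle a,b\rangle} = \langle \bar a, \bar b\rangle$ together with conjugate symmetry, giving the claimed equality.

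Next I would establish the domain identification. If $x \in V^{1}$ satisfies $S^\sharp x \in (j^{*}\circ c)V$, pick $v \in V$ with $S^\sharp x = j^{*}(cv)$; unwinding this identity on an arbitrary $y \in V^{1}$ gives $\langle x, \bar y\rangle_{1} = \langle jy, \bar v\rangle_{V}$. Substituting $\bar y$ for $y$ and invoking the same conjugation compatibility turns this into $\langle x, y\rangle_{1} = \langle v, jy\rangle_{V}$ for all $y \in V^{1}$, which is precisely the Riesz--Fr\'echet/Friedrichs criterion for $x \in \widetilde{D}$ with $\widetilde{S}x = v$. The reverse inclusion is immediate from the first part, since $\widetilde{S}x \in V$ forces $S^\sharp x = j^{*}c(\widetilde{S}x) \in (j^{*}\circ c)V$.

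The main obstacle is not depth but bookkeeping: one must track simultaneously the three different pairings (on $V$, on $V^{1}$, and the $V^{-1}$--$V^{1}$ duality) and ensure that every substitution $y \leftrightarrow \bar y$ is legitimate. The key enabling hypothesis is that $S$ commutes with conjugation, which guarantees that conjugation is an isometry of $(V^{1}, \langle\cdot,\cdot\rangle_{1})$, so the substitutions $y \mapsto \bar y$ inside $V^{1}$ are allowed and preserve continuity of the relevant functionals.
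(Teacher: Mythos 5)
Your proposal is correct and follows essentially the same route as the paper: both arguments reduce to the defining Friedrichs relation $\langle z,\widetilde{S}^{-1}y\rangle_1=\langle z,y\rangle$ (your Riesz--Fr\'echet criterion is exactly the statement that $\widetilde{D}$ is the image of $B=\widetilde{S}^{-1}$), unwind the definitions of $S^\sharp$, $c$, and $j^\ast$ on an arbitrary test element of $V^1$, and use the compatibility of conjugation with the inner products to close both inclusions. The only difference is presentational: the paper writes elements of $\widetilde{D}$ as $x=\widetilde{S}^{-1}x'$ and computes directly, while you phrase the same computation through the existence of the representing vector $v$.
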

\begin{proof} 
(We recall the proof from \cite{PBG} \S 11.2 and \S 9.2) By construction of the Friedrichs extension, its domain is exactly $\widetilde{D} = \widetilde{S}^{-1}V$. Thus, for $x = \widetilde{S}^{-1}x^\prime$ with $x^\prime \in V$, for all $y \in V^1$
$$
(S^\sharp x)(y) = (S^\sharp \widetilde{S}^{-1} x^\prime)(y) = \langle \tilde{S}^{-1} x^\prime, \bar{y} \rangle_{-1} = \langle x, \bar{y} \rangle = ((j^\ast \, \circ \, c)x^\prime)(y) = ((j^\ast \circ \, c \, \circ \, \widetilde{S})x)(y)
$$
Thus, the restriction of $S^\sharp$ to the domain $\widetilde{D}$ of $\widetilde{S}$ is essentially $\widetilde{S}$, namely,
$$
S^\sharp \big|_{\widetilde{D}} = (j^\ast \circ c \circ \widetilde{S}) \big|_{\widetilde{D}}
$$
Which implies that $S^\sharp : V^1 \to V^{-1}$ extends $\widetilde{S}$. However, we also have that for $S^\sharp x = (j^\ast \circ c)y$ with $y \in V$, this then implies that for all $z \in V^1$
$$
\langle z, \bar{x} \rangle_1 = (S^\sharp x)(z) = ((j^\ast \circ c)y)(z) = (\lambda y)(jz) = \langle jz, \bar{y} \rangle = \langle z, \widetilde{S}^{-1}\bar{y} \rangle_1
$$
giving $\bar{x} = \widetilde{S}^{-1}\bar{y}$ and the domain of $\widetilde{S}$ is as claimed. $\square$
\end{proof}
In the following, the development is made in the context of application to the case of pseudo-Eisenstein series $\Psi_\phi$ with $\phi \in C^\infty_c(a, \infty)$. Let $\Theta \subset D$ be stable under conjugation, and stable under $S$. Let $V_\Theta$ be the (closure of the) orthogonal complement to $\Theta$ in $V$. Let $S_\Theta$ be the  restriction of $S$ to $D_\Theta = D \cap V_\Theta$. The $S$-stability assumption on $\Theta$ gives $S(D_\Theta) \subset V_\Theta$. While $D_\Theta  = D \cap V_\Theta \subset V^1 \cap V_\Theta$, and since both $V^1$ and $V_\Theta$ are closed in $V$ (so their intersection is closed),  we have that the $V^1$ closure of $D_\Theta$ is a subset of $V^1 \cap V_\Theta$. However, the $V^1$-density of $D_\Theta$ in $V^1 \cap V_\Theta$ is not clear in general and we must assume that $D_\Theta$ is $V^1$-dense in $V^1 \cap V_\Theta$. This is shown under the hypotheses in the cases we are interested (see \hyperref[sec:Density-of-Automorphic-Test-Functions]{density of automorphic test functions} and also \cite{PBG} \S 10.3.1).

This density assumption legitimizes the natural sequel: $S_\Theta$ with domain $D_\Theta$ is densely defined and symmetric on $V_\Theta$, so it has a Friedrichs extension $\widetilde{S}_\Theta$, with domain $\widetilde{D}_\Theta$. The extension 
$$
(S_\Theta)^\sharp  :  V^1 \cap V_\Theta \to (V^1 \cap V_\Theta)^\ast
$$
is described by 
$$
(S_\Theta)^\ast(x)(y) = \langle x,y \rangle_1 \qquad (\text{for } x,y \in V^1 \cap V_\Theta)
$$
Let
$$
i_\Theta :  V^1 \cap V_\Theta \to V^1  \qquad i^\ast_\Theta :  V^{-1} = (V^1)^\ast \to (V^1 \cap V_\Theta)^\ast 
$$
be the inclusion and its adjoint, fitting into a diagram
\[
\xymatrix{
V^1  \ar[r]^j & V \ar[r]^{j^\ast \circ c} & V^{-1} \ar[d]^{i^\ast_\Theta} \\
V^1 \cap V_\Theta \ar[u]^{i_\Theta}  \ar[r] & V_\Theta \ar[u] & (V^1 \cap V_\Theta)^\ast}
\]
As in \cite{PBG} we have:
\begin{claim}
$(S_\Theta)^\sharp = i^\ast_\Theta \circ S^\sharp \circ i_\Theta$, and the domain of $\widetilde{S}_\Theta$ is
$$
D_\Theta = \{ x \in V^1 \cap V_\Theta \; : \; (S^\sharp \circ i_\Theta)x \in (j^\ast \circ c)V \; + \; \Theta \} = \{ x \in V^1 \cap V_\Theta \; : \; S^\sharp_\Theta x \in (i^\ast_\Theta \, \circ \, j^\ast \circ \, c)V \}
$$
and $\tilde{S}_\Theta x = y$, with $x \in V^1 \cap V_\Theta$ and $y \in V$, if and only if $(S^\sharp \circ i_\Theta)x = (j^\ast \circ c)y + \theta$ for some $\theta$ in the $V^{-1}$-closure of $(j^\ast \circ c)\Theta$.
\end{claim}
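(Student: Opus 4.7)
The plan is to decompose the claim into three pieces: the operator identity $(S_\Theta)^\sharp = i^\ast_\Theta \circ S^\sharp \circ i_\Theta$, the two equivalent descriptions of the extension's domain, and the explicit characterization of $\widetilde S_\Theta x = y$. The essential tool is the previous lemma applied to the restricted operator $S_\Theta$, which is densely-defined, symmetric, and semi-bounded on the Hilbert space $V_\Theta$ by the standing density hypothesis on $D_\Theta$ together with the $S$- and conjugation-stability of $\Theta$.

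The operator identity is immediate from the definitions: for $x, y \in V^1 \cap V_\Theta$,
\[
  (i^\ast_\Theta \circ S^\sharp \circ i_\Theta)(x)(y) \;=\; S^\sharp(i_\Theta x)(i_\Theta y) \;=\; \langle i_\Theta x, \overline{i_\Theta y}\rangle_1 \;=\; \langle x, \bar y\rangle_1 \;=\; (S_\Theta)^\sharp(x)(y),
\]
since $i_\Theta$ is an isometric inclusion commuting with conjugation. Applying the previous lemma to $S_\Theta$ on $V_\Theta$ (with evident analogues $k : V^1 \cap V_\Theta \hookrightarrow V_\Theta$ of $j$ and $c_\Theta : V_\Theta \to V_\Theta^\ast$ of $c$) produces the intrinsic description $\widetilde D_\Theta = \{ x \in V^1 \cap V_\Theta : (S_\Theta)^\sharp x \in (k^\ast \circ c_\Theta)V_\Theta \}$, together with $\widetilde S_\Theta x = y \in V_\Theta$ iff $(S_\Theta)^\sharp x = (k^\ast \circ c_\Theta)y$. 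A direct comparison of pairings shows $(i^\ast_\Theta \circ j^\ast \circ c)|_{V_\Theta} = k^\ast \circ c_\Theta$, and conjugation stability of $\Theta$ together with the orthogonal decomposition $V = V_\Theta \oplus \overline{\Theta}^V$ shows $(i^\ast_\Theta \circ j^\ast \circ c)\overline{\Theta}^V = 0$; hence $(i^\ast_\Theta \circ j^\ast \circ c)V = (k^\ast \circ c_\Theta)V_\Theta$, which already yields the second of the two domain descriptions.

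The bridge to the first (``extrinsic'') description and to the $\widetilde S_\Theta x = y$ characterization is the identification
\[
  \ker i^\ast_\Theta \;=\; \overline{(j^\ast \circ c)\Theta}^{V^{-1}},
\]
which I expect to be the main obstacle. The easy containment is that, for $\theta \in \Theta$ and $x \in V^1 \cap V_\Theta$, conjugation stability gives $\bar\theta \in \Theta$, and orthogonality $V_\Theta \perp \Theta$ in $V$ yields $((j^\ast \circ c)\theta)(x) = \langle jx, \bar\theta\rangle = 0$, so $(j^\ast \circ c)\Theta \subset \ker i^\ast_\Theta$ and the kernel is closed. The reverse inclusion is the substantive step: pass through the Riesz--Fr{\'e}chet identification $V^{-1} \cong V^1$, under which $\ker i^\ast_\Theta$ corresponds to $(V^1 \cap V_\Theta)^{\perp_1}$ and $(j^\ast \circ c)\theta$ corresponds to $\widetilde S^{-1}\bar\theta = \widetilde S^{-1}\theta$ via the Friedrichs characterization $\langle v, \widetilde S^{-1} h \rangle_1 = \langle v, h \rangle$ for $v \in V^1$, $h \in V$. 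A short check then shows $\widetilde S^{-1}\Theta \subset (V^1 \cap V_\Theta)^{\perp_1}$, and that any $\psi \in (V^1 \cap V_\Theta)^{\perp_1}$ which is $\langle \cdot, \cdot\rangle_1$-orthogonal to $\widetilde S^{-1}\Theta$ must satisfy $\langle \psi, \theta\rangle_V = 0$ for all $\theta \in \Theta$, forcing $\psi \in V_\Theta$ and hence $\psi \in V^1 \cap V_\Theta$; combined with $\psi \perp_1 V^1 \cap V_\Theta$, this gives $\psi = 0$.

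With this kernel identification in hand, the first domain description and the characterization of $\widetilde S_\Theta$ follow by elementary manipulation. The condition $(S^\sharp \circ i_\Theta)x \in (j^\ast \circ c)V + \overline{(j^\ast \circ c)\Theta}^{V^{-1}}$ is equivalent, upon applying $i^\ast_\Theta$ (whose kernel absorbs the second summand), to $(S_\Theta)^\sharp x \in (i^\ast_\Theta \circ j^\ast \circ c)V$, i.e.\ to the intrinsic condition already established. Likewise $\widetilde S_\Theta x = y$ for $y \in V_\Theta \subset V$ is equivalent to $(S_\Theta)^\sharp x = (k^\ast \circ c_\Theta)y$, and hence to $(S^\sharp \circ i_\Theta)x - (j^\ast \circ c)y \in \ker i^\ast_\Theta = \overline{(j^\ast \circ c)\Theta}^{V^{-1}}$, which is precisely the stated form.
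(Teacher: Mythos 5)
Your proposal is correct and follows essentially the same route as the paper: the operator identity is obtained by the same direct comparison of pairings, and the domain descriptions and the characterization of $\widetilde{S}_\Theta x = y$ are both reduced to the identification $\ker i^\ast_\Theta = \overline{(j^\ast \circ c)\Theta}^{V^{-1}}$ together with the previous lemma applied to $S_\Theta$. The only real difference is that the paper simply asserts this kernel identification in its final sentence, whereas you correctly single it out as the substantive step and supply a valid argument for the reverse inclusion via the Riesz--Fr{\'e}chet identification, the relation $\langle v, \widetilde{S}^{-1}h\rangle_1 = \langle v,h\rangle$, and the conjugation-stability of $\Theta$; the paper instead checks the ``if'' direction of $\widetilde{S}_\Theta x = y$ by a direct pairing computation against $z \in D_\Theta$, which your formal derivation from the intrinsic description subsumes.
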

\begin{proof}
The assumption of denseness of $D_\Theta$ in $V^1 \cap V_\Theta$ legitimizes formation of the Friedrichs extension as an unbounded self-adjoint operator (densely defined) on $V$. For $x, y \in V^1 \cap V_\Theta$
$$
(i^\ast_\Theta \circ S^\sharp \circ i_\Theta)(x)(y) = S^\sharp(x)(y) = \langle i_\Theta x, i_\Theta \bar{y} \rangle_1 = \langle x, \bar{y} \rangle_1 = (S_\Theta)^\sharp(x)(y)
$$
which is the first statement of the claim. From the above, the Friedrichs extension $\widetilde{S}_\Theta$ is characterized by
$$
\langle z, \widetilde{S}^{-1}_\Theta y \rangle_1 =  \langle z,y \rangle \qquad (\text{for } z \in D_\Theta \text{ and } y \in V_\Theta)
$$
so that, given $S^\sharp x = (j^\ast \circ c)y + \theta$ with $x \in V^1 \cap V_\Theta$, $y \in V$, and $\theta$ in the $V^{-1}$ closure of $(j^\ast \circ c)\Theta$, take $z \in D_\Theta$ and compute
\begin{align*}
\langle x, \bar{z} \rangle_1 =& (S^\sharp x)(z) = ((j^\ast \circ c)y + \theta)(z) = (j^\ast \bar{y})(z) + \theta(z) \\
=& \langle z, \bar{y} \rangle + 0 = \langle y, \widetilde{S}^{-1}_\Theta S \bar{z} \rangle = \langle \widetilde{S}^{-1}_\Theta y, S \bar{z} \rangle = \langle \widetilde{S}^{-1}_\Theta y, \bar{z} \rangle_1
\end{align*}
thus showing that $\widetilde{S}^{-1}_\Theta x = y$. However, $(S_\Theta)^\sharp x = (i^\ast_\Theta \circ j^\ast \circ c)y$ if and only if $(S^\sharp \circ i_\Theta)x = y + \theta$ for some $\theta \in \text{ker}i^\ast_\Theta$, and $\text{ker} i^\ast_\Theta$ is the closure of $\Theta$ in $V^{-1}$. $\square$ 
\end{proof}

%
%
%

\subsection{Re-characterizing pseudo-Laplacians in terms of distributions}\label{sec:Distl-Char-Pseudo-Laplacians}

The re-characterization of Friedrichs extensions in terms of distributions applies to pseudo-Laplacians $\tilde{\Delta}_a$ for $a \gg 1$ large enough so that the density results developed in \hyperref[sec:Density-of-Automorphic-Test-Functions]{density of automorphic test functions} legitimizes the discussion. This will be needed for meromorphic continuation of Eisenstein series beyond the critical line. Our discussion follows \cite{PBG}.

Refering to the notation of the previous section, take $V = L^2(\Gamma\backslash G/K)$, use the pointwise conjugation map $c : L^2(\Gamma\backslash G/K) \to L^2(\Gamma\backslash G/K)$, let $D = C^\infty_c(\Gamma\backslash G/K)$, put $S = (1 - \Delta) \big|_D$, and let $\Theta = \Theta_a$ be the space of pseudo-Eisenstein series $\Psi_\phi$ with $\phi \in C^\infty_c(a,+\infty)$ with $a \gg 1$ large enough so that the density discussion above 
holds. Let $V^1 = \frak{B}^1$ be the completion of $D$ with respect to the norm given by 
$$
|f|^2_{\frak{B}^1} = \int_{\Gamma\backslash G/K} (1 - \Delta)f \cdot \bar{f} = \langle (1 - \Delta)f,f \rangle
$$
Let $\frak{B}^{-1}$ be the Hilbert space dual of $\frak{B^1}$. Letting $j : \frak{B}^1 \to V$ be the inclusion and $j^\ast$ its adjoint, we have a picture
\[\xymatrix@R+1pc@C+1pc{
\frak{B}^1 \ar[r]^j & V \ar[r]^{\enspace j^\ast \, \circ \, c \quad} & \frak{B}^{-1} 
}\]
Letting $\eta_a$ be the functional on $D$ which evaluates constant terms at height a, 
 the proof of the following lemma uses the standard spectral theory on multi-tori:

\begin{lemma}
For $a \gg 1$ sufficiently large, $\eta_a \in \frak{B}^{-1}$.
\end{lemma}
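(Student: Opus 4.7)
The plan is to establish the pointwise bound $|\eta_a(f)| \le C_a\,|f|_{\frak{B}^1}$ for every $f \in D = C^\infty_c(\Gamma\backslash G/K)$; by density of $D$ in $\frak{B}^1$, such a bound extends $\eta_a$ to a continuous linear functional on $\frak{B}^1$, which is exactly the assertion $\eta_a \in \frak{B}^{-1}$. The strategy combines a Fourier expansion along the compact quotient $(N\cap\Gamma)\backslash N$ with a one-dimensional Sobolev embedding in the height variable $y$, matching the hint that the argument uses spectral theory on multi-tori.

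First I would localize to a cusp neighborhood. For $a \gg 1$, reduction theory (\S 2.2) furnishes an open neighborhood $Y_\infty$ of the cusp with the product decomposition $Y_\infty \approx (N\cap\Gamma)\backslash N \times (M^1\cap\Gamma)\backslash M^1/(M^1\cap K) \times (c,\infty)$ for some $c<a$, so that the basepoint $m_a$ lies inside $Y_\infty$ and the constant term of $f$ at $m_a$ is a genuine integral over the multi-torus $(N\cap\Gamma)\backslash N$. Expanding $f(n\cdot g) = \sum_\psi \widehat{f}(\psi,g)\,\psi(n)$ in Fourier modes along this multi-torus, the constant term is precisely the zeroth mode $c_P f(g)=\widehat{f}(1,g)$. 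Using the Iwasawa-coordinate expression of $\Omega_\frak{g}$ derived earlier in the paper, the part of $-\Delta$ acting in the $N$-directions is diagonalized by this Fourier expansion with non-negative eigenvalues that vanish on the $\psi=1$ mode, while the remaining parts of $-\Delta$ act only on the factor $A^+\times (M^1\cap\Gamma)\backslash M^1/(M^1\cap K)$ and contribute, up to lower-order terms, $-\partial_y^2$ together with an $M^1$-Laplacian. Parseval on the multi-torus, together with discarding positive contributions from the nonzero Fourier modes and the $M^1$-direction, gives an inequality of the form $|f|^2_{\frak{B}^1} \ge \int\bigl(|c_Pf|^2 + |\partial_y c_Pf|^2\bigr)\,d\mu$, where $d\mu$ is the invariant measure on the ray times the compact $M^1$-quotient, restricted to $y>c$.

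A standard one-dimensional Sobolev embedding on the ray, $|\varphi(a)|^2 \le C_a \int_{a-\delta}^{a+\delta}(|\varphi(y)|^2+|\varphi'(y)|^2)\,dy$ for smooth $\varphi$ and $\delta$ small, applied to $\varphi(y)=c_Pf(m_y)$, converts the integral bound into the pointwise bound $|c_Pf(m_a)|^2 \le C_a'\,|f|^2_{\frak{B}^1}$ and proves the lemma. The hard part will be bookkeeping the invariant-measure weights on the ray (which carry factors of $\delta_P$) so that the one-variable Sobolev embedding produces a finite constant at the prescribed height, and, when $M^1$ is non-compact, making precise that $\eta_a$ is defined by pairing $c_Pf(m_a)$ against a fixed smooth test function on the compact quotient $(M^1\cap\Gamma)\backslash M^1/(M^1\cap K)$ so that ``evaluation at height $a$'' genuinely produces a single scalar. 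The requirement $a\gg 1$ enters exactly to guarantee the cusp-neighborhood decomposition used above and to match the standing density hypothesis on $\mathscr{D}_a$ in the enclosing section.
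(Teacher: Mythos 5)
Your proposal is correct and follows essentially the same route as the paper: localize to the cusp so that the constant term becomes an integral over the multi-torus $(N\cap\Gamma)\backslash N$, observe that $\eta_a$ pairs only with the zero Fourier mode in the $N$-directions, and finish with a one-dimensional estimate in $y$ --- your Sobolev embedding $|\varphi(a)|^2 \ll \int(|\varphi|^2+|\varphi'|^2)\,dy$ is precisely the dual form of the paper's Fourier computation that integration along $\mathbb{T}^r\times\{0\}$ has finite $H^{-1}$ norm on $\mathbb{T}^{r+1}$. The only cosmetic difference is that the paper compactifies the finite $y$-interval into a circle and sums Fourier coefficients on $\mathbb{T}^{r+1}$ directly, while you keep the ray and bound the evaluation functional; the measure-weight comparability you flag as the "hard part" is handled in the paper by the same remark that the $\frak{B}^1$ and $L^2$ norms on the truncated cylinder are uniformly comparable to the Euclidean ones.
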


\begin{proof}
As expected, take $b^\prime \gg 1$ large enough so that the standard Siegel set $\frak{S}_{b^\prime}$ meets no translate $\gamma\frak{S}_{b^\prime}$ with $\gamma \in \Gamma$ unless $\gamma \in N \cap \Gamma$, so that the cylinder $C_{b^\prime} = (P \cap \Gamma)\backslash \frak{S}_{b^\prime}$ injects to $\Gamma\backslash G/K$. Take $a > b^\prime$. Since the support of $\eta_a$ is compact and properly inside $\frak{S}_{b^\prime}$, there is a test function $\psi$ identically $1$ on the support of $\eta_a$, and supported inside $\frak{S}_{b\prime}$. Then $\psi \cdot \eta_a = \eta_a$, in the sense that $\eta_a(f) = \eta_a(\psi f)$ for all test functions $f$. Thus, it suffices to consider test functions with support in a subset $X = (N \cap \Gamma)\backslash N \times (b^\prime, b^{\prime\prime})$ of the cylinder $C_{b\prime} = (N \cap \Gamma)\backslash N \times (b^\prime, +\infty) \approx (\mathbb{Z}\backslash \mathbb{R})^r \times (b^\prime, b^{\prime\prime})$, with $b^{\prime\prime} < + \infty$. 

Identifying the endpoints of the finite interval $(b^\prime, b^{\prime\prime}) \subset [b^\prime, b^{\prime\prime}]$ identifies it with another circle, thus imbedding $X \subset \mathbb{T}^{r+1}$. 
In this case, the $\frak{B}^1$ and $L^2$ norms on $X$ are uniformly comparable to those on $\mathbb{T}^{r + 1}$ descended from the Euclidean versions. Thus, to prove $\eta_a \in \frak{B}^{-1}$, it suffices to prove that the functional $\theta$ given by integration along $\mathbb{T}^r \times \{ 0 \}$ inside $\mathbb{T}^{r + 1}$ is in the corresponding $\frak{B}^{-1}$ space there. The advantage is that we can use Fourier series, since the spectral theory of $\mathbb{T}$ and $\mathbb{T}^n$ is already available (cf. \cite{PBG} \S 10.2 and \S 11.3).
%
 That is, parametrizing $\mathbb{T}^{r + 1}$ as $\mathbb{Z}^{r + 1}\backslash\mathbb{R}^{r + 1}$, let $\psi_\xi$ be $\psi(x) = e^{2 \pi i \xi \cdot x}$ for $\xi, x \in \mathbb{R}$ and $\xi \cdot x$ the usual inner product on $\mathbb{R}^{r + 1}$. Letting $\xi = (\xi_1, \ldots , \xi_{r + 1})$, the Fourier coefficients of $\theta$ are
$$
\theta(\xi) = \theta(\psi_\xi) = \int_{\mathbb{T}^r \times \{ 0 \}} \psi_\xi (x) \; dx = 
\begin{cases} 
0 & (\text{for } (\xi_1, \ldots , \xi_r) \neq (0 \ldots 0)) \\
1 & (\text{for } (\xi_1, \ldots , \xi_r) = (0 \ldots 0))
\end{cases}
$$
Thus, the $s^\text{th}$ Sobolev norm of $\theta$ is
$$
\sum_{\xi \in \mathbb{Z}^{r+1}}  |\theta(\xi)|^2 \cdot (1+|\xi|^2)^s = \sum_{\xi_{r+1} \in \mathbb{Z}}  1 \cdot (1+|\xi_{r+1}|^2)^s
$$
which is finite for $\text{Re}(s) < -\frac{1}{2}$ . Certainly it is finite for $s = -1$, giving the desired conclusion. $\square$
\end{proof}

In the previous lemma, on $\mathbb{T}^{r+1}$, $\theta$ is certainly the suitable Sobolev space limit of its finite subsums, which are smooth. This pulls back to an assertion that $\eta_a$ is in the $\frak{B}^1$ closure of test functions. We need a stronger assertion in order to use the re-characterization of the previous section. Following \cite{PBG}, we have:
\begin{lemma}
$\eta_a$ is in the $\frak{B}^{-1}$-closure of $\Theta$.
\end{lemma}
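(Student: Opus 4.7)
The approach is to extend the Fourier-theoretic argument of the preceding lemma by approximating $\eta_a$ by height-only smooth bumps supported just above $a$, each of which pulls back to a pseudo-Eisenstein series in $\Theta$. Retaining the notation of the previous lemma, I would fix $b^\prime$ with $b^\prime < a$ and a cutoff $\psi$ identically $1$ on a neighborhood of an interval $[a, b^{\prime\prime}]$ inside the cusp cylinder $C_{b^\prime}$. Under the identification of the compact slab $X = (N \cap \Gamma)\backslash N \times (b^\prime, b^{\prime\prime})$ with a subset of $\mathbb{T}^{r+1}$, with coordinates chosen so height $a$ corresponds to $0$ in the last variable, $\eta_a$ pulls back to $\theta$, the integration against $\mathbb{T}^r \times \{0\}$, and the $\frak{B}^{\pm 1}$ norms are uniformly comparable to the Euclidean $H^{\pm 1}$ norms on $\mathbb{T}^{r+1}$.

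Next I would pick a standard smooth approximate identity $\phi_\epsilon \in C^\infty_c(a, a + \epsilon) \subset C^\infty_c(a, b^{\prime\prime})$ with $\int \phi_\epsilon \, d\mu_+ = 1$ against the ray measure. For $a$ sufficiently large, the locally finite property from \S 2.4 gives $\Psi_{\phi_\epsilon}|_X = \phi_\epsilon(y)$, a function depending only on the height coordinate, so on the torus its Fourier expansion is concentrated on the ``vertical'' line $\{(0, \ldots, 0, \xi_{r+1}) : \xi_{r+1} \in \mathbb{Z}\}$, which is precisely the line on which $\widehat{\theta}$ is supported (and takes the constant value $1$). Consequently only terms on that line contribute to the difference, and
$$
\|\phi_\epsilon - \theta\|^2_{H^{-1}(\mathbb{T}^{r+1})} \; = \; \sum_{\xi_{r+1} \in \mathbb{Z}} |\widehat{\phi_\epsilon}(\xi_{r+1}) - 1|^2 \cdot (1 + \xi_{r+1}^2)^{-1}.
$$
Pointwise $\widehat{\phi_\epsilon}(\xi_{r+1}) \to 1$ as $\epsilon \to 0$ and $|\widehat{\phi_\epsilon}(\xi_{r+1}) - 1|^2 \leq 4$, so dominated convergence with the summable dominator $\sum 4 (1+\xi_{r+1}^2)^{-1}$ drives the right-hand side to zero.

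Transporting this convergence through the uniform norm comparability of the previous lemma yields $\Psi_{\phi_\epsilon} \to \eta_a$ in $\frak{B}^{-1}$, which places $\eta_a$ in the $\frak{B}^{-1}$-closure of $\Theta$ as claimed. The main obstacle I anticipate is purely bookkeeping around supports and uniformity: one must fix the cutoff $\psi$ and the choice of $b^{\prime\prime}$ once and for all so that $\psi \equiv 1$ on a neighborhood containing the supports of every $\phi_\epsilon$ for $\epsilon < (b^{\prime\prime} - a)/2$, verify that the embedding $X \hookrightarrow \mathbb{T}^{r+1}$ and the $\frak{B}^{\pm 1}$--$H^{\pm 1}$ comparability constant do not depend on $\epsilon$, and confirm that the locally finite property collapses $\Psi_{\phi_\epsilon}$ to $\phi_\epsilon$ on all of $X$ uniformly in $\epsilon$. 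Once these choices are locked in, the Fourier-side estimate applies to the whole family and gives the desired approximation.
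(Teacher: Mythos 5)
Your proof is correct and follows essentially the same route as the paper: both reduce the question to showing that a one-sided smooth approximate identity supported just above height $a$ converges to the Dirac mass in the $H^{-1}$ topology of the last (height) coordinate, established by a Fourier-side estimate against the weight $(1+\xi^2)^{-1}$. The only cosmetic differences are that you build the approximants directly as wound-up height-only bumps $\Psi_{\phi_\epsilon}\in\Theta$ and argue via Fourier series on $\mathbb{T}^{r+1}$ with dominated convergence, whereas the paper phrases the final estimate on $\mathbb{R}$ with the Fourier transform and an explicit two-region bound.
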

\begin{proof}
Again, by the previous lemma, $\eta_a$ is a $\frak{B}^{-1}$-limit of a sequence $\{ f_r \}$ of test functions on $\Gamma\backslash G/K$ or on the cylinder $C_{b^\prime}$. Following the approach in \cite{PBG} \S 10.3,
%
%
we show that suitable smooth truncations of the $f_r$, to put them into $\Theta$, still converge to $\eta_a$ in $\frak{B}^{-1}$. As in the previous proof, using $a \gg 1$, we can convert the question to one on $\mathbb{T}^{r+1}$ or on $\mathbb{T}^r \times \mathbb{R}$. Further, since nothing is happening in the first $r$ coordinates, it suffices to consider prove the following claim on $\mathbb{R}$.

That is, in the standard Sobolev spaces $H^s$ on $\mathbb{R}$ (see \cite{PBG} and references therein),
%
%
we claim that the standard Dirac $\delta$ on $\mathbb{R}$ is an $H^{-1}$ limit of a sequence of test functions supported in $[0, +\infty)$. Let $u$ be a test function on $\mathbb{R}$ which is $0$ in $(-\infty, 0]$, is non-negative with integral $1$ on $[0, +\infty)$. For $n = 1, 2, 3, \ldots$, let $u_r (t) = n \cdot u(nt)$. We claim that $u_r \to \delta$ in $H^{-1}$. Taking Fourier transforms,
$$
\widehat{u_r} (\xi) = \int_\mathbb{R} e^{- 2 \pi i \xi t} n \cdot u(nt) dt = \int_\mathbb{R} e^{- 2 \pi i \xi t/n } u(t) dt = \widehat{u} (\xi /n)
$$
The Fourier transform of $\delta$ is $1$, since $\delta(t \to e^{2 \pi i \xi t}) = 1$ for all $\xi \in \mathbb{R}$. The function $\widehat{u}$ is still a Schwartz function. We want to show that, as $n \to +\infty$,
$$
\int_\mathbb{R} \big| \widehat{u} (\xi /n) - 1 \big|^2 \cdot (1 + \xi^2 )^{-1} d\xi \to 0
$$
Certainly $\widehat{u}$ is bounded, so, given $\varepsilon > 0$, there is $N \gg 1$ such that for all $n$
$$
\int_{|\xi| \geq N} \big| \widehat{u} (\xi /n) - 1 \big|^2 \cdot (1 + \xi^2 )^{-1} d\xi < \varepsilon
$$
By the differentiability of $\widehat{u}$,
$$
\hat{u} (\xi /n) = \widehat{u}(0) + (\xi /n) \cdot \widehat{u}^{\, \prime} (t_o) \qquad (\text{for some } t_o \text{ between } 0 \text{ and } \xi/n)
$$
Since the integral of $u$ is $1$, $\hat{u}(0) = 1$. The derivative $u$ is continuous, so has a bound $B$ on $[-1,1]$. For $|\xi| \leq N$, take $n$ large enough so that $|\xi /n| < \varepsilon \leq 1$. Then
$$
 \int_{|\xi| \leq N} \big| \widehat{u} (\xi /n) - 1 \big|^2 \cdot (1 + \xi^2 )^{-1} d\xi = \int_{|\xi| \geq N} \big| (\xi /n) \cdot \widehat{u}^{\, \prime} (t_o) \big|^2 \cdot (1 + \xi^2 )^{-1} d\xi \\
$$
$$
\leq \int_{|\xi| \leq N} \varepsilon^2 \cdot B^2 \cdot (1 + \xi^2 )^{-1} d\xi \leq \varepsilon^2 \cdot B^2 \int_\mathbb{R}  (1 + \xi^2 )^{-1} d\xi \ll \varepsilon
$$
So that, in the spectral-side description of the topology on $H^{-1}$, we have the desired convergence. $\square$
\end{proof}
\begin{corollary}
$\widetilde{\Delta} u = f$ for $f \in L^2_a(\Gamma\backslash G/K)$ if and only if $u \in \frak{B}^1 \cap L^2_a(\Gamma\backslash G/K)$, and $\Delta u = f + c \cdot \eta_a$ for some constant $c$.
\end{corollary}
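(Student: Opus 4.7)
The plan is to instantiate the abstract characterization of Friedrichs extensions from \S\ref{sec:Distrl-Char-Friedrichs-Ext} with $V = L^2(\Gamma\backslash G/K)$, $D = C^\infty_c(\Gamma\backslash G/K)$, $S = 1 - \Delta$ (positive on $D$, since $-\Delta \geq 0$), and $\Theta = \Theta_a$. The adjunction $\langle \Psi_\phi, g\rangle = \langle \phi, c_P g\rangle$ identifies the orthogonal complement $V_\Theta$ with $L^2_a(\Gamma\backslash G/K)$, and the density hypothesis needed to invoke the claim of \S\ref{sec:Distrl-Char-Friedrichs-Ext} is exactly the automorphic-test-function density from \S\ref{sec:Density-of-Automorphic-Test-Functions}. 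Under these identifications $V^1 = \frak{B}^1$ and $\widetilde{S}_\Theta = 1 - \widetilde{\Delta}$, so the claim translates to: $\widetilde{\Delta} u = f$ for $f \in L^2_a$ is equivalent to $u \in \frak{B}^1 \cap L^2_a$ together with the distributional identity $\Delta u = f + \theta$ for some $\theta$ in the $\frak{B}^{-1}$-closure of $(j^\ast\circ c)\Theta_a$ (interpreting $S^\sharp u$ as $(1-\Delta)u$ in $\frak{B}^{-1}$ via integration by parts, then cancelling the~$1$ against $u$ on both sides).

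The task therefore reduces to showing that the admissible error terms $\theta$ are precisely scalar multiples of $\eta_a$. Any $\psi$ in this $\frak{B}^{-1}$-closure is a pseudo-Eisenstein-series-type distribution whose pairing against a test function $g$ depends only on $c_P g$ restricted to the ray coordinate $y \in (a, \infty)$; equivalently, the ``ray-profile'' $c_P \psi$ is a distribution supported in $[a, \infty)$. Applied to $\theta = \Delta u - f$: since $u, f \in L^2_a$, both $c_P u$ and $c_P f$ vanish on $(a, \infty)$, and because $\Delta$ is left $N$-invariant it descends through the constant-term averaging, giving $c_P(\Delta u - f) = \Delta\,c_P u - c_P f$, which is supported in $(0, a]$. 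Intersecting with the $[a, \infty)$ support condition, $c_P\theta$ must be supported at the single point $\{a\}$, hence a finite linear combination $\sum_{k=0}^{K} c_k\,\delta_a^{(k)}$.

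To conclude that only $k = 0$ can occur, I appeal to the Sobolev computation from the lemma $\eta_a \in \frak{B}^{-1}$: on the cylinder $\frak{B}^{1}$ and $L^2$ are uniformly comparable to the Euclidean versions pulled back from $\mathbb{T}^{r+1}$, so $\frak{B}^{-1}$ is uniformly comparable to $H^{-1}$. The $H^{-1}$-norm of $\delta_a^{(k)}$ on the ray, computed spectrally as in that lemma, is controlled by $\sum_{\xi \in \mathbb{Z}} |\xi|^{2k}(1+|\xi|^2)^{-1}$, which is finite only for $k = 0$. Thus $c_P\theta = c\,\delta_a$, and recognizing $\eta_a$ as the pseudo-Eisenstein-series distribution attached to $\delta_a$ (shown to lie in $\overline{\Theta_a}^{\,\frak{B}^{-1}}$ in the preceding lemma) yields $\theta = c\cdot\eta_a$. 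The converse direction is then immediate from the same claim, since every $c\cdot\eta_a$ lies in $\overline{\Theta_a}^{\,\frak{B}^{-1}}$. The main obstacle I anticipate is matching the abstract $\frak{B}^{-1}$-norm on $\Gamma\backslash G/K$ against the concrete Sobolev scale on the cusp cylinder precisely enough to exclude all derivatives $\delta_a^{(k)}$ with $k \geq 1$ while retaining $\delta_a$ itself.
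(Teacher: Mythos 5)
Your proposal is correct and follows essentially the same route as the paper's proof: reduce via the distributional characterization of Friedrichs extensions to identifying the admissible error terms $\theta$ in the $\frak{B}^{-1}$-closure of $\Theta_a$ that can arise as $\Delta u - f$, localize to the cusp cylinder to see these are $(N\cap\Gamma)\backslash N$-invariant distributions supported on $\{\eta = a\}$, and exclude the transverse derivatives $\delta_a^{(k)}$ with $k\geq 1$ by the same $H^{-1}$ Fourier computation that keeps $\delta_a$ itself. The only cosmetic difference is that you run the invariance step through the constant-term pairing rather than through uniqueness of invariant distributions on the cross-section $Z\approx(N\cap\Gamma)\backslash N$, which amounts to the same thing.
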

\begin{remark}
In particular, the proof mechanisms just above show that $u \in \frak{B}^1 \cap L^2_a(\Gamma\backslash G/K)$ implies that the constant term is in the Euclidean Sobolev space $H^1(\mathbb{R})$ as a function of the coordinate $y$. By Sobolev imbedding (see \cite{PBG} chapters 9 and 12), 
this implies continuity of the constant term, so vanishing in $\eta > a$ implies $\eta_a u = 0$. Conversely, if $u \in \frak{B}^1$ and $\eta_a u = 0$, we could truncate $u$ at height $a$ without disturbing the condition $u \in \frak{B}^1$, to put 
$\wedge^a u$ in $\frak{B}^1 \cap L^2_a(\Gamma\backslash G/K)$. In fact, after we have the meromorphic continuation of Eisenstein series in hand, and once we have a spectral form of global automorphic Sobolev spaces $\frak{B}^s$, one can easily prove that the conditions $(\Delta - \lambda) u = \eta_a$, $u \in \frak{B}^1$, and $\eta_a u = 0$ imply $\eta_{b^\prime} u = 0$ for all $b^\prime \geq a$.
\end{remark}
\begin{remark}
For $\lambda_w$ not the eigenvalue of a cuspform, the homogeneous equation $(\Delta - \lambda_w)u = 0$ has no non-zero solution, so the constant $c$ must be non-zero for non-zero $u$.
\end{remark}
\begin{proof}
Using the distributional characterization above for the Friedrichs extension, 
%
the previous lemma shows that $\eta_a$ is in the $\frak{B}^{-1}$ closure $\Theta_{-1}$ of $\Theta = \Theta_a$. Then, for $a \gg 1$, we must show that the intersection of that closure with the image $\Delta\frak{B}^1$ is at most $\mathbb{C} \cdot \eta_a$.

On one hand, because $a \gg 1$, $\Theta_{-1}$ consists of distributions which, on a Siegel set $\frak{S}_{b^\prime}$ with $b^\prime$ just slightly less than $a$, have support inside $\frak{S}_a \subset \frak{S}_{b^\prime}$. On the cylinder $C_{b^\prime} = \Gamma_\infty \backslash \frak{S}_{b^\prime}$, the product of circles $(N \cap \Gamma)\backslash N \approx \mathbb{T}^r$ acts by translations, descending to the quotient from $G/K$. By reduction theory, the restrictions to $C_{a^\prime}$ of every pseudo-Eisenstein series $\Psi_\phi$ with $\phi \in C^\infty_c [a, \infty)$ are invariant under $(N \cap \Gamma)\backslash N$, so anything in the $\frak{B}^{-1}$ closure is likewise invariant.

On the other hand, consider the possible images of $\frak{B}^1 \cap L^2_a(\Gamma\backslash G/K)$ by $\Delta$. Certainly $D \cap V_\Theta$ consists of functions with constant term vanishing in $\eta \geq a$, and taking $\frak{B}^1$ completion preserves this property. Since $\Theta_{-1}$ is $(N \cap \Gamma)\backslash N$-invariant and the Laplacian commutes with the group action, it suffices to look at $(N \cap \Gamma)\backslash N$-integral averages restricted to the cylinder $C_{b^\prime}$ . Such an integral is a restriction of the constant term $c_P v$ to $C_{b^\prime}$ , and vanishes in $\eta > a$.

Thus, the intersection of possible images by $\widetilde{\Delta}_a$ with $\Theta_{-1}$ consists of $(N \cap \Gamma)\backslash N$-invariant distributions in $\frak{B}^{-1}$ supported on $Z = \{ \eta \leq a \} \cap \{ \eta \geq a \} \approx (N \cap \Gamma)\backslash N$. Distributions supported on submanifolds (cf. \cite{PBG} \S 11.A) are obtained as compositions of derivatives transverse to $Z$ composed with a distribution supported on $Z$. By uniqueness of invariant distributions (cf. \cite{PBG} \S 14.4), the only $(N \cap \Gamma)\backslash N$-invariant distribution on $Z \approx (N \cap \Gamma)\backslash N$ is (a scalar multiple of) integration on $(N \cap \Gamma)\backslash N$.

Certainly $\eta_a$ itself is among these functionals. No higher-order derivative (composed with $\eta_a$) gives a functional in $\frak{B}^{-1}$, as is visible already on $\mathbb{R}$: computing the $s^\text{th}$ Sobolev norm of the $n^\text{th}$ derivative $\delta^{(n)}$ of the Euclidean Dirac $\delta$,
$$
\big| \delta^{(n)} \big|^2_{H^s} = \int_\mathbb{R} \big| \widehat{\delta^{(n)}}(\xi) \big|^2 \cdot (1 + \xi^2)^s d\xi = \int_\mathbb{R} \big| (-2 \pi i \xi)^n \big|^2 \cdot (1 + \xi^2)^s d\xi
$$
This is finite only for $s < -(1 + n)$. $\square$
\end{proof}

%
%

\subsection{Meromorphic continuation of Eisenstein series to $\text{Re}(s) > \frac{1}{2}$}\label{sec:Mero-Contn-Eis}
%
%
Following \cite{PBG}, let $\widetilde{\Delta}$ be the Friedrichs extension of the restriction of the Laplacian $\Delta$ to $C^\infty_c(\Gamma \backslash G/K)$. The Friedrichs construction shows that the domain of $\widetilde{\Delta}$ is contained in a Sobolev space
$$
\text{domain } \widetilde{\Delta} \subset \frak{B}^1 = \text{ completion of } C^\infty_c(\Gamma \backslash G/K) \text{ under } \langle v, w \rangle_{\frak{B}^1} = \langle (1 - \Delta)v, w \rangle
$$
We will see that the quotient $\Gamma \backslash G/K$ is a union of a compact part $Y_o$ and a non-compact part $Y_\infty$. 
Conveniently, $Y_\infty$ has a simple geometric form as a product of a compact manifold and a ray (i.e., a real interval $(0,\infty)$).
$$
\Gamma \backslash G/K = Y_o \cup Y_\infty \qquad (\text{compact } Y_o, \text{ cusp neighborhood } Y_\infty)
$$
where, with $a \gg 1$ as before, the normalized height function $\eta(n_x m_y k) = y^n$ provides a characterization  
$$
Y_\infty = \text{ image of } \{g \in G/K \; : \; \eta(g) \geq a \} = \Gamma_\infty \backslash \{ g \in G/K  \; : \; \eta(g) \geq a \} \approx X \times [a, \infty)
$$
Where $X$ is a compact manifold. In the first case considered, $O(r,1)$, where the unipotent radical is abelian, $X$ will be a multi-torus $\mathbb{Z}^n \backslash \mathbb{R}^n$ so that conventional Fourier methods can be used. In the somewhat more complicated cases of $U(r,1)$ and $Sp^\ast(r,1)$, the spectral theory of the Laplacian on compact manifolds will be needed.
Define a smooth cut-off function $\phi$ (these will be used extensively in the sections on the \hyperref[sec:Density-of-Automorphic-Test-Functions]{density of automorphic test functions} and \hyperref[sec:Global-B1a-bounds-of-smooth-trunc-L2a-tail-norms]{smooth truncations of tails}): fix $a < a^{\prime\prime} < a^\prime$ large enough so that the image of $\{ (x,y) \in G/K : y > a^{\prime\prime} \}$ in the quotient lies within $Y_\infty$, and let
$$
\begin{cases} 
0 = \phi (y) & (\text{for } y \leq a^{\prime\prime}) \\
0 \leq \phi (y) \leq 1  & (\text{for } a^{\prime\prime} \leq y \leq a^\prime) \\
1 =\phi (y)  & (\text{for } a^\prime \leq y) 
\end{cases}
$$
Form a pseudo-Eisenstein series $h_s$ by winding up the smoothly cut-off function $\phi(g) \cdot \eta(g)^s$:
$$
h_s (g) = \sum_{\gamma \in \Gamma_\infty \backslash \Gamma} \phi(\gamma g) \cdot \eta(\gamma g)^s
$$
Since $\phi$ is supported on $\eta \geq a^{\prime\prime}$ for large $a^{\prime\prime}$, for any $g \in G/K$ there is at most one non-vanishing summand in the expression for $h_s$, so that convergence is not an issue. Thus, the pseudo-Eisenstein series $h_s$ is entire as a function-valued function of $s$. Following \cite{CdV-Eis} and \cite{PBG}, let
$$
\widetilde{E_s} = h_s - (\widetilde{\Delta} - \lambda_s)^{-1}(\Delta - \lambda_s)h_s \qquad (\text{where } \lambda_s = (r-1)^2 \cdot s(s-1) \text{ with } r-1 = \text{dim N} ) 
$$
Now we can meromorphically continue from $\text{Re}(s) > 1$ up to the critical line.
\begin{lemma}
$\widetilde{E_s} - h_s$ is a holomorphic $\frak{B}^1$-valued function of $s$ for $\text{Re}(s) > \frac{1}{2}$ and $\text{Im}(s) \neq 0$.
\end{lemma}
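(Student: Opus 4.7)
The plan is to write $\widetilde{E_s} - h_s = -\widetilde{R}_{\lambda_s}(f_s)$, where $\widetilde{R}_\lambda := (\widetilde{\Delta} - \lambda)^{-1}$ and $f_s := (\Delta - \lambda_s)h_s$, and then to exhibit this as the composite of three holomorphic ingredients: an entire scalar-valued function $s \mapsto \lambda_s$, an entire $L^2$-valued function $s \mapsto f_s$, and an operator-valued function $\lambda \mapsto \widetilde{R}_\lambda$ that is holomorphic from $L^2$ \emph{into} $\frak{B}^1$ for $\lambda$ off the (real) spectrum of the self-adjoint $\widetilde{\Delta}$.

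First I would inspect $f_s$. Since $\eta^s$ is a genuine $\Delta$-eigenfunction with eigenvalue $\lambda_s = (r-1)^2\,s(s-1)$ on the cuspidal region, the product rule yields $(\Delta - \lambda_s)(\phi \cdot \eta^s) \equiv 0$ wherever $\phi$ is identically $0$ or identically $1$; hence the support of $(\Delta - \lambda_s)(\phi \eta^s)$ sits inside the compact transition strip $\{a'' \leq y \leq a'\}$, and the already-noted local finiteness of the sum defining $h_s$ forces $f_s \in C^\infty_c(\Gamma\backslash G/K)$, entire as a function of $s$ into $C^\infty_c \subset L^2$. Second, since $\widetilde{\Delta}$ is self-adjoint its spectrum is real, while
\[
\mathrm{Im}(\lambda_s) \;=\; (r-1)^2 \cdot \mathrm{Im}(s) \cdot (2\mathrm{Re}(s) - 1)
\]
is nonzero on $\{\mathrm{Re}(s) > \tfrac12,\ \mathrm{Im}(s) \neq 0\}$, so $\widetilde{R}_{\lambda_s}$ exists as a bounded operator on $L^2$ with $\|\widetilde{R}_{\lambda_s}\|_{L^2\to L^2} \leq |\mathrm{Im}(\lambda_s)|^{-1}$.

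The main step is to upgrade the routine $L^2 \to L^2$ holomorphicity of $\lambda \mapsto \widetilde{R}_\lambda$ to holomorphicity as an $L^2 \to \frak{B}^1$ operator-valued function. For continuity, the Friedrichs construction (Section 2.7) furnishes boundedness of $(1-\widetilde{\Delta})^{-1} : L^2 \to \frak{B}^1$ in the finer topology; rewriting $(\widetilde{\Delta} - \lambda)u = f$ as
\[
u \;=\; (1-\widetilde{\Delta})^{-1}\bigl[(1-\lambda)u - f\bigr]
\]
and combining the Friedrichs bound with $\|u\|_{L^2} \leq |\mathrm{Im}(\lambda)|^{-1}\|f\|_{L^2}$ yields $\|\widetilde{R}_\lambda\|_{L^2 \to \frak{B}^1} \leq C_\lambda$, locally bounded away from the real axis. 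For holomorphicity, Hilbert's identity
\[
\widetilde{R}_\lambda - \widetilde{R}_\mu \;=\; (\lambda - \mu)\,\widetilde{R}_\lambda \widetilde{R}_\mu
\]
exhibits the difference quotient as $\widetilde{R}_\lambda \circ \widetilde{R}_\mu$; composing the continuous $\widetilde{R}_\mu : L^2 \to L^2$ with $\widetilde{R}_\lambda : L^2 \to \frak{B}^1$ and letting $\lambda \to \mu$ gives convergence to $\widetilde{R}_\mu^{\,2}$ in the $L^2 \to \frak{B}^1$ operator norm, so $\lambda \mapsto \widetilde{R}_\lambda$ is Fr\'echet-differentiable (hence holomorphic) into $\frak{B}^1$ away from $\mathbb{R}$.

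Putting the three ingredients together, $s \mapsto \widetilde{E_s} - h_s = -\widetilde{R}_{\lambda_s}(f_s)$ is a holomorphic $\frak{B}^1$-valued function on $\{\mathrm{Re}(s) > \tfrac12,\ \mathrm{Im}(s) \neq 0\}$. The hard part is the third paragraph: lifting operator-norm holomorphicity from the $L^2$ topology to the strictly finer $\frak{B}^1$ topology. This is precisely where the distinguished nature of the Friedrichs extension is essential — an arbitrary self-adjoint extension of $\Delta\big|_{C^\infty_c}$ need not carry $L^2$ continuously into $\frak{B}^1$ — and the entire argument hinges on Hilbert's identity propagating the Section 2.7 Friedrichs continuity of $(1-\widetilde{\Delta})^{-1}$ to every resolvent $\widetilde{R}_\lambda$ with non-real $\lambda$.
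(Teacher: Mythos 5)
Your proof is correct and follows essentially the same route as the paper's: the paper's (very terse) proof likewise reduces everything to the Friedrichs construction providing $(\widetilde{\Delta}-\lambda_s)^{-1}$ as a continuous map from $L^2$ into $\frak{B}^1$ and to holomorphy of the resolvent, merely citing these facts where you supply the $(1-\widetilde{\Delta})^{-1}$ bootstrap and the Hilbert-identity details. The only minor difference is that the paper invokes the non-positivity of $\Delta$ to admit all $\lambda_s\notin(-\infty,0]$, whereas you use only self-adjointness together with $\mathrm{Im}(\lambda_s)\neq 0$; both cover the stated region $\mathrm{Re}(s)>\tfrac{1}{2}$, $\mathrm{Im}(s)\neq 0$.
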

\begin{proof} From Friedrichs' construction (\cite{Friedrichs}, \cite{PBG} \S 9.2), the resolvent $(\widetilde{\Delta} -\lambda_s)^{-1}$ exists as an everywhere-defined, continuous operator for $s \in \mathbb{C}$ as long as $\lambda_s$ is not a non-positive real number, because of the non-positiveness of $\Delta$. Further, for $\lambda_s$ not a non-positive real, this resolvent is a holomorphic operator-valued function. In fact, for such $\lambda_s$, the resolvent $(\widetilde{\Delta} - \lambda_s)^{-1}$ injects from $L^2(\Gamma\backslash G/K)$ to $\frak{B}^1$. $\square$
\end{proof}
%
%
\begin{remark}
	The smooth function $(\Delta - \lambda_s)h_s$ is supported on the image of $b \leq y \leq b^\prime$ in $\Gamma\backslash G/K$, which is compact. Thus, it is in $L^2(\Gamma\backslash G/K)$. $\widetilde{E_s}$ does not vanish, since the resolvent maps to the domain of $\widetilde{\Delta}$ inside $L^2(\Gamma\backslash G/K)$, and that $h_s$ is not in $L^2(\Gamma\backslash G/K)$ for $\text{Re}(s) > \frac{1}{2}$. Specifically, since $h_s$ is not in $L^2(\Gamma\backslash G/K)$ but $(\widetilde{\Delta} - \lambda_s)^{-1}(\Delta - \lambda_s)h_s$ is in $L^2(\Gamma\backslash G/K)$, the difference cannot vanish.
\end{remark}
\begin{theorem}
With $\lambda_s = s \cdot (s - 1)$ not non-positive real, $u = \widetilde{E_s} - h_s$ is the unique element of the domain of $\widetilde{\Delta}$ such that
$$
(\widetilde{\Delta} - \lambda_s)u = -(\Delta - \lambda_s)h_s
$$
Thus, $\widetilde{E_s}$ is the usual Eisenstein series $E_s$ for $\text{Re}(s) > 1$, and gives an analytic continuation of $E_s - h_s$
as  $\frak{B}^1$-valued function to $\text{Re}(s) > \frac{1}{2}$ with $s \notin (\frac{1}{2},1]$. 
\end{theorem}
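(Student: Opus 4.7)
The first move is to observe that by the very definition of $\widetilde{E_s}$ we have $u = -(\widetilde{\Delta}-\lambda_s)^{-1}(\Delta-\lambda_s)h_s$, so both existence and the stated equation $(\widetilde{\Delta}-\lambda_s)u = -(\Delta-\lambda_s)h_s$ will be tautological once we know the right-hand side lies in $L^2$ and the resolvent is defined. For the $L^2$ claim I would note that on $\{\eta \ge a'\}$ we have $h_s = \eta^s$, annihilated by $\Delta-\lambda_s$, and on $\{\eta \le a''\}$ we have $h_s = 0$; hence $(\Delta-\lambda_s)h_s$ is smooth and supported in the compact shell $\{a'' \le \eta \le a'\}$ in $\Gamma\backslash G/K$, and in particular in $L^2$. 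Under the hypothesis that $\lambda_s$ is not non-positive real, the non-positivity of $\Delta$ together with Friedrichs' theorem (\S 2.7) supplies the resolvent $(\widetilde{\Delta}-\lambda_s)^{-1}$ as a continuous map $L^2(\Gamma\backslash G/K) \to \mathrm{dom}(\widetilde{\Delta}) \subset \frak{B}^1$. Uniqueness of $u$ then reduces to injectivity of $\widetilde{\Delta}-\lambda_s$ on its domain, which is immediate from the same hypothesis since $\widetilde{\Delta}$ is self-adjoint with spectrum contained in $(-\infty,0]$.

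To identify $\widetilde{E_s}$ with the classical $E_s$ on $\mathrm{Re}(s) > 1$ I would invoke uniqueness: it suffices to exhibit $v_s := E_s - h_s$ as an element of $\mathrm{dom}(\widetilde{\Delta})$ satisfying the same equation. Since $\Delta E_s = \lambda_s E_s$, the identity $(\Delta-\lambda_s)v_s = -(\Delta-\lambda_s)h_s$ holds pointwise. Membership in $L^2(\Gamma\backslash G/K)$ follows from the theory of the constant term (cf.\ \S 2.4): the constant term of $E_s$ has the form $\eta^s + c_s\eta^{1-s}$ on $\{\eta \ge a'\}$, that of $h_s$ is $\eta^s$, so $c_P v_s = c_s\eta^{1-s}$ is square-integrable on the cusp for $\mathrm{Re}(s) > 1/2$, while $v_s - c_P v_s$ is of rapid decay on Siegel sets. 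To promote $v_s$ from the maximal $L^2$-Laplacian domain to the Friedrichs domain I would approximate $v_s$ by the smooth truncates $\wedge^T v_s$ of \S 2.4 and show $\wedge^T v_s \to v_s$ in $\frak{B}^1$ using the decay of $\eta^{1-s}$ and its first derivatives together with the defining formula $\langle\cdot,\cdot\rangle_{\frak{B}^1} = \langle(1-\Delta)\cdot,\cdot\rangle$. Uniqueness then forces $u = v_s$, so $\widetilde{E_s} = E_s$ on $\mathrm{Re}(s) > 1$.

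The analytic-continuation statement is immediate from the preceding lemma: $s \mapsto (\Delta - \lambda_s)h_s$ is entire as an $L^2$-valued function (a polynomial in $s$ with fixed compactly supported smooth coefficients), and $s \mapsto (\widetilde{\Delta}-\lambda_s)^{-1}$ is a holomorphic operator-valued function from $L^2$ into $\frak{B}^1$ off the non-positive real axis, which by the elementary computation $\lambda_s = \sigma^2 - \tau^2 - \sigma + i\tau(2\sigma-1)$ contains the region $\{\mathrm{Re}(s) > 1/2\} \setminus (1/2,1]$. So $\widetilde{E_s} - h_s$ is a holomorphic $\frak{B}^1$-valued extension of $E_s - h_s$ throughout this region. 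The principal obstacle in the whole argument is the step of promoting $v_s$ into the Friedrichs domain for $\mathrm{Re}(s) > 1$: the Friedrichs domain is characterized abstractly as a $\frak{B}^1$-closure rather than as ``$L^2$ functions with $L^2$ Laplacian'', so the natural $L^2$-bounds on $v_s$ and $\Delta v_s$ only a priori place $v_s$ in the maximal domain, and the Friedrichs identification requires the quantitative tail control supplied by the smooth-truncation machinery of \S 2.4 together with the decay of $\eta^{1-s}$ on the resolvent side of the critical line.
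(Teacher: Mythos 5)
Your argument is correct and follows exactly the Colin de Verdi\`ere resolvent strategy that the paper intends here (the paper's own ``proof'' is only a citation to \cite{CdV-Eis} and \cite{PBG}, so your write-up supplies the details it omits). You correctly isolate the one genuinely delicate step---placing $v_s = E_s - h_s$ in the Friedrichs domain rather than merely the maximal domain, via smooth truncation and the decay of $c_s\eta^{1-s}$ for $\mathrm{Re}(s)>1$---and the remaining steps (compact support of $(\Delta-\lambda_s)h_s$, injectivity of $\widetilde{\Delta}-\lambda_s$ off the non-positive reals, and the computation showing $\lambda_s$ is non-positive real in $\{\mathrm{Re}(s)>\tfrac12\}$ exactly on $(\tfrac12,1]$) are all sound.
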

\begin{proof}
See \cite{CdV-Eis} and  also \cite{PBG} chapter 11: to obtain the complete meromorphic continuation requires more.
$\square$
\end{proof}

%
%

\section{Some useful lemmas}

\subsection{Trapping a compact operator in maps between Hilbert spaces}\label{Trapping-Compact-Operator}

The following lemma from \cite{PBG} (\S 10.7.3) will repeatedly prove useful in the sequel and we repeat its proof here.
\begin{lemma}\label{trap-compact-hilbert}
Let $A, B, C, D$ be Hilbert spaces, with a commutative diagram of continuous linear maps
\[
\xymatrix{
A \ar[d]_R \ar[r] & B \ar[d]^T \\
C \ar[r]_S & D}
\]
with $T: B \to D$ compact, and $S:C \to D$ with constant $m>0$ such that $|v|_C \leq m·|Sv|_D$ for all $v \in C$. Then $R: A \to C$ is also compact.
\end{lemma}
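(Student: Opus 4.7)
The plan is to verify compactness of $R$ directly from the definition: show that any bounded sequence in $A$ has a subsequence whose $R$-image converges in $C$. The key observation is that the inequality $|v|_C \leq m\,|Sv|_D$ says $S$ is bounded below, so $S$ is injective and its inverse on the image $S(C)\subseteq D$ is continuous with norm $\leq m$. Thus $S$ ``reflects'' Cauchy sequences: if $\{Sv_n\}$ is Cauchy in $D$, then $\{v_n\}$ is Cauchy in $C$.

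Given a bounded sequence $\{a_n\}\subset A$, let $\alpha: A\to B$ denote the top arrow. Continuity of $\alpha$ gives that $\{\alpha(a_n)\}$ is bounded in $B$. Compactness of $T$ then produces a subsequence $\{a_{n_k}\}$ such that $\{T\alpha(a_{n_k})\}$ converges in $D$, in particular is Cauchy. By commutativity of the square,
\begin{equation*}
T\alpha(a_{n_k}) \;=\; S R(a_{n_k}),
\end{equation*}
so $\{SR(a_{n_k})\}$ is Cauchy in $D$. Applying the hypothesis
\begin{equation*}
|R(a_{n_k}) - R(a_{n_\ell})|_C \;\leq\; m\,|SR(a_{n_k}) - SR(a_{n_\ell})|_D,
\end{equation*}
we conclude $\{R(a_{n_k})\}$ is Cauchy in $C$, hence convergent by completeness of $C$. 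Therefore $R$ sends bounded sequences to sequences with convergent subsequences, which is exactly the definition of compactness.

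There is essentially no obstacle: the argument is a direct diagram chase once one notices that ``$S$ is bounded below'' is equivalent to ``$S^{-1}$ is Lipschitz on its image,'' which lets one transfer the Cauchy property from $D$ back to $C$. I would present the proof in the three-step order above (boundedness preserved by $\alpha$; compactness of $T$ extracts a convergent subsequence; lower bound on $S$ lifts convergence from $D$ back to $C$) without further commentary.
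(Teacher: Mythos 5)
Your proof is correct and follows essentially the same route as the paper's: push a bounded set through the top arrow, use compactness of $T$, and pull the resulting control back along the lower bound $|v|_C \leq m\,|Sv|_D$. The only difference is cosmetic — the paper phrases compactness via total boundedness (covering $SY$ by finitely many $\varepsilon/m$-balls in $D$ and pulling them back to $\varepsilon$-balls in $C$), while you use the equivalent sequential characterization.
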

%
%
%
%
\begin{proof}
Let $X$ be the closed unit ball in $A$, with image $Y$ in $C$. By continuity, the image of $X$ in $B$ is inside a finite-radius ball $Z$. By compactness of $T: B \to D$, given $\varepsilon > 0$, the image of $Z$ in $D$ is covered by finitely-many $\frac{\varepsilon}{m}$-balls $V_1, \ldots ,V_r$. The condition on $S$ assures that the inverse images $S^{-1}(SY \cap V_j)$ are contained in $\varepsilon$-balls in $C$. Thus, $Y$ is covered by finitely-many $\varepsilon$-balls in $C$. This holds for every $\varepsilon > 0$, so the image $Y$ is pre-compact, and $R: A \to C$ is compact. $\square$
\end{proof}

\subsection{Projections via smooth functions}\label{Projections-via-Smooth-Functions}

\noindent
The following simple lemma is used repeatedly.
\\
\begin{lemma}\label{sec:bdedness-smooth-projections}
Let $U \subset V$ be open subsets of $\mathbb{R}^n$. Let $\tau$ be a smooth, real-valued function that takes values in $[0,1]$ and which has compact support contained in $U$. Then the smooth projection given by domain truncation
$$
f \to \tau \cdot f \; : \; \frak{B}^1(V) \to \frak{B}^1(U)
$$
is continous and has bound depending only on $\tau$ and its derivatives.
\end{lemma}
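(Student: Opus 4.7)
The plan is to unwind the $\frak{B}^1$ norm into its constituent pieces, apply the Leibniz rule, and bound the result with constants depending only on the sup-norms of $\tau$ and its first derivatives. Recall that, at least on test functions, the $\frak{B}^1$ norm admits the explicit expression
$$
|f|^2_{\frak{B}^1} \;=\; \langle (1-\Delta) f, f\rangle \;=\; \int |f|^2 \;+\; \int |\nabla f|^2
$$
after integration by parts, which makes $\frak{B}^1$ on an open Euclidean set the usual $H^1$ Sobolev space. Since $\tau$ has compact support in $U$, the product $\tau f$ has compact support in $U$ for any $f \in C^\infty_c(V)$, so there is no ambiguity in passing from integrals over $V$ to integrals over $U$ on such functions.

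First I would establish the bound on the dense subspace $C^\infty_c(V)$. Write $\nabla(\tau f) = (\nabla \tau) f + \tau (\nabla f)$ and estimate pointwise, using $0 \le \tau \le 1$:
$$
|\tau f|^2 + |\nabla(\tau f)|^2 \;\le\; |f|^2 + 2\|\nabla \tau\|_\infty^2 |f|^2 + 2|\nabla f|^2.
$$
Integrating over $U$ (equivalently, over $\mathrm{supp}\,\tau \subset U \subset V$) gives
$$
|\tau f|^2_{\frak{B}^1(U)} \;\le\; C_\tau \cdot |f|^2_{\frak{B}^1(V)}, \qquad C_\tau = \max\bigl(1+2\|\nabla \tau\|_\infty^2,\; 2\bigr),
$$
so the constant depends only on $\tau$ and its first derivatives, as required.

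Next I would extend the inequality from $C^\infty_c(V)$ to all of $\frak{B}^1(V)$ by density. Given $f \in \frak{B}^1(V)$, choose $f_n \in C^\infty_c(V)$ with $f_n \to f$ in $\frak{B}^1(V)$. By the inequality just proved, the sequence $\tau f_n \in C^\infty_c(U)$ is Cauchy in $\frak{B}^1(U)$, hence converges to some element $g \in \frak{B}^1(U)$; a routine argument (e.g.\ identifying both as distributions on $U$, since convergence in $\frak{B}^1$ implies $L^2$-convergence and multiplication by a bounded smooth $\tau$ is continuous in $L^2$) identifies $g$ with $\tau f$. Passing to the limit in the norm inequality yields the same bound for arbitrary $f \in \frak{B}^1(V)$, establishing continuity of $f \mapsto \tau f$ with operator norm at most $\sqrt{C_\tau}$.

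There is no real obstacle here; the only slightly delicate point is the identification of the limit $g$ with $\tau f$, which amounts to observing that multiplication by the bounded smooth function $\tau$ is continuous on $L^2$ and that both $\frak{B}^1(V) \hookrightarrow L^2(V)$ and $\frak{B}^1(U) \hookrightarrow L^2(U)$ are continuous. The upshot is that the Leibniz rule together with the compact support of $\tau$ completely controls the derivative term, and the product bound $\tau \le 1$ handles the $L^2$ term, giving the claimed dependence on $\tau$ and its derivatives alone.
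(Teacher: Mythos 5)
Your proof is correct and follows essentially the same route as the paper: expand $\nabla(\tau f)$ by the Leibniz rule and bound each term using the sup-norms of $\tau$ and $\nabla\tau$. Your handling of the cross term via the elementary inequality $2|ab|\le a^2+b^2$ is in fact cleaner than the paper's, and the density argument you append to pass from $C^\infty_c(V)$ to all of $\frak{B}^1(V)$ is a standard step the paper leaves implicit.
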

\begin{proof}
This is a purely local result so we could take $U$ and $V$ to be concentric balls. Specifically, using the symmetry of $\langle , \rangle$
$$
|\tau \cdot f |_{\frak{B}^1(U)} = \int_U |\tau \cdot f|^2 + \langle \nabla (\tau \cdot f), \nabla (\tau \cdot f)\rangle =  \int_U |\tau \cdot f|^2 + |\tau|^2| \nabla f|^2 + |f|^2|\nabla \tau|^2  + 2\tau f \langle \nabla f, \nabla \tau\rangle  
$$
and
\begin{align*}
|\tau \cdot f|^2 \; \leq \; & |f|^2 \\
|\tau|^2| \nabla f|^2 \; \leq \; & \text{constant} \times | \nabla f|^2 \\
 |f|^2|\nabla \tau|^2 \; \leq \; & \text{constant} \times | f|^2 \qquad (\text{i.e., since $\tau$ is smooth and compactly supported}) \\ 
 2\tau f \langle \nabla f, \nabla \tau\rangle \; = \; & -2\tau^2 |\Delta f|^2 \; \leq \; \text{constant} \times |\nabla f|^2
\end{align*}
so the left-hand-side is bounded by a constants times $|f|^2 + |\nabla f|^2$. $\square$
\end{proof}

%
%
%
%
%
%
\section{Three groups: $O(r,1)$, $U(r,1)$ and $Sp^\ast(r,1)$}\label{three-groups}
%
%
%
%
%

\subsection{The Casimir operator for $O(r,1)$}\label{Casimir Operator for $O(r,1)$}

\subsubsection{Iwasawa coordinates on $N$, $M$ and $P \cap K$}\label{O(r,1) Iwasawa coords on N,M, PcapK}

Let $G \approx O(r,1)$: 
$$G =  \{ g \in GL_{r+1}(\mathbb{R}) \ | \ g^\top S g = S\}$$
where the form $S$ will be one of
$$
S = 
\renewcommand\arraystretch{1.25}
\begin{pmatrix}
0 & 0 & 1 \\
0 & 1_{r-1} & 0 \\
1 & 0 & 0
\end{pmatrix}
\qquad
\renewcommand\arraystretch{1.75}
S^\prime = 
\begin{pmatrix}
1_r & 0 \\
0 & -1	
\end{pmatrix}
$$ 
A Cayley element is
$$ 
\renewcommand\arraystretch{1.25}
C = 
\begin{pmatrix} 
\sqrt{\frac{1}{2}}  &      0          & -\sqrt{\frac{1}{2}} \\ 
           0               &   1_{r-1}    & 0                           \\
\sqrt{\frac{1}{2}}   &        0        & \sqrt{\frac{1}{2}}
\end{pmatrix}
\qquad
$$ 
We have
$$
C S^\prime C^{-1} = C S^\prime C^\top = S
$$
Thus, the two forms $S$ and $S^\prime$ give isomorphic isometry groups. The $S^\prime$ version is more convenient for identifying the maximal compact subgroup of $G$ but the $S$ version more clearly reveals the minimal parabolic $P$ as the stabilizer of a maximal isotropic flag as follows.

Let $e_1, \ldots e_{r+1}$ be the standard basis of $\mathbb{R}^{r+1}$ where $e_i$ is a column vector with a $1$ in the $i^\text{th}$ position and $0$'s elsewhere. With $S$ as above, we see that $e_1^\top S e_1 = 0 = e_{r+1}^\top S e_{r+1}$ while $e_i^\top S e_i = 1$ for $i \ne 1 \text{ or } r+1$. The two one-dimensional subspaces spanned by $e_1$ and $e_{r+1}$ are maximal isotropic. Additionally, since $e_{r+1}^\top S e_1 = 1 = e_{1}^\top S e_{r+1}$, $e_1$ and $e_{r+1}$ are a hyperbolic pair. We take $P$ as the subgroup of $G$ that fixes the (maximal isotropic) space spanned by $e_1$. 

Any linear map that sends $\langle e_1 \rangle$ to itself is automatically an isometry on the subspace spanned by $e_1$, since $e_1$ is isotropic, so by Witt's theorem can be extended to an isometry of $S$, and thus lies in $G$.  Thus $P$ consists of matrices of the form
$$
\begin{pmatrix}
\ast & \ast & \ast \\
0 & \ast & \ast \\
0 & 0 & \ast
\end{pmatrix}
$$
with appropriate relations, and with an $(r-1) \times (r-1)$ block in the middle.
A maximal compact, visible with the diagonal version of the form,  is:
$$
K \enspace = O(r) \times O(1) = G \cap O(r+1)  \qquad \text{(standard }O(r+1)\text{)}
$$
An Iwasawa decomposition is $G = PK$, with $P$ a minimal parabolic (i.e., the stabilizer of a maximal isotropic flag as above).
The maximal compact subgroup of $G$ will have non-trivial intersection with $P$, contained in its Levi component, given by
$$P \cap K = P \cap O(r+1) = M \cap O(r+1) \approx O(r-1) \times O(1)$$
which is visible as
$$
\begin{pmatrix}
\pm 1 & 0 & 0 \\
0 & h & 0 \\
0 & 0 & \pm 1
\end{pmatrix}
$$
with $h \in O_{r-1}(\mathbb{R})$. Since $N \cap K = \{1 \}$, we have that $N$ is of the shape
$$
\begin{pmatrix}
1 & u & a \\
0 & 1_{r-1} & v^\top \\
0 & 0 & 1
\end{pmatrix}
$$
Where $u,v \in \mathbb{R}^{r-1}$ and $a \in \mathbb{R}$. The relation $g^\top S g= S$ establishes $u = -v$ and $a = -\frac{|v|^2 }{2} = -\frac{|u|^2 }{2}$ so that elements of the standard unipotent radical $N$ of $P$ are
$$
N \enspace = \enspace \{ n_x \enspace = \enspace 
\renewcommand\arraystretch{1.5}
\begin{pmatrix}
1 & x & -\frac{1}{2}|x|^2 \\
0 & 1_{r-1} & -x^\top \\
0 & 0 & 1
\end{pmatrix}
\enspace : \enspace x \in \mathbb{R}^{r-1} \}
$$
The standard Levi component is
$$
M =  \{ m_y = 
\renewcommand\arraystretch{1.5}
\begin{pmatrix}
\pm y & 0 & 0 \\
0 & h & 0 \\
0 & 0 & \pm\frac{1}{y}
\end{pmatrix}
\ : \text{ for } y > 0 \text{ and } h \in O_{r-1}(\mathbb{R})\}
$$
and the split component of the Levi component is
$$
A^+  =  \{ m_y = 
\renewcommand\arraystretch{1.5}
\begin{pmatrix}
y & 0 & 0 \\
0 & 1_{r-1} & 0 \\
0 & 0 & \frac{1}{y}
\end{pmatrix}
\ : \text{ for } y > 0)\}
$$
The model of real hyperbolic $n$-space as
$$G/K \approx O(r,1)/(O(r) \times O(1)) \approx NM/(M \cap K) \approx N \times A^+$$
provides coordinates $x \in N \approx  \mathbb{R}^{r-1}$, $y \in A^+ \approx  \mathbb{R}^\times$ from the Iwasawa decomposition giving
$$
G/K \ni n_x m_y K \longleftrightarrow (x,y) \in \mathbb{R}^{r-1} \times (0,\infty)
$$
\subsubsection{Casimir in terms of the Lie algebra of $O(r,1)$}\label{Casimir in terms of the Lie algebra for O(r,1)}
The Lie algebra $\frak{g} \approx \frak{o}(r,1)$ is
$$
\frak{g} = \{ \gamma \in \frak{gl}_{r+1}(\mathbb{R}) \enspace : \enspace \textrm{exp}^{t\gamma} \in O(r,1), \; \forall t \in \mathbb{R} \}  = \{ \gamma \; : \; \gamma^\top = - S \gamma S^{-1} \}
$$
Thus a necessary condition is that since $(e^{\gamma})^\top S(e^{\gamma}) = S$
$$
\frac{d}{dt}\bigg|_{t=0}(e^{t\gamma})^\top S(e^{t\gamma}) = 0 = \gamma^\top S + S \gamma
$$
This implies $\gamma^\top = -S \gamma S^{-1}$ which also is sufficient since exponentiation respects transpose and conjugation:
$$
(e^{t\gamma})^\top = e^{t\gamma^\top} = e^{-tS \gamma S^{-1}} = S e^{-t \gamma}S^{-1}
$$
so that
$$
(e^{t\gamma})^\top S e^{t\gamma} = (S e^{-t \gamma}S^{-1}) S e^{t\gamma} = S
$$
To express the Casimir operator in a fashion consonant with the Iwasawa decomposition, note that the right action of $\frak{k}$ annihilates functions on $G/K$. Let $e_1, \ldots e_{r-1}$ be the standard (row-vector) basis for $\mathbb{R}^{r-1}$ so that $e_i = (0,\dots ,1, \ldots 0)$ with a ``$1$'' in the $i^\text{th}$ place. Let
$$
H=
\begin{pmatrix}
1 & 0 & 0 \\
0 & 0_{r-1} & 0 \\
0 & 0 & -1
\end{pmatrix}
\quad X_i =
\begin{pmatrix}
0 & e_i & 0 \\
0 & 0_{r-1} & -e_i^\top \\
0 & 0 & 0
\end{pmatrix}
\quad Y_i = X_i^\top =
\begin{pmatrix}
0 & 0 & 0 \\
e_i^\top & 0_{r-1} & 0 \\
0 & -e_i & 0
\end{pmatrix}
$$
This misses exactly the elements in $\text{Lie}(P \cap O(r+1)) \approx \frak{o}(r-1)$. These can be supplied by including
$$
\{
\begin{pmatrix}
0 & 0 & 0 \\
0 & \beta & 0 \\
0 & 0 & 0
\end{pmatrix}
\text{  with } -\beta = \beta^\top \} \approx \frak{o}(r-1) \subset \frak{k}
$$
Choose an orthonormal basis $\theta_i$ for this copy of $\frak{o}(r-1)$ with respect to $B( \cdot , \cdot)$, for instance, $\theta_i = \frac{\sqrt{2}}{2}(e_{ij} - e_{ji})$ where $e_{ij}$ is the matrix with a $1$ in the $ij^\text{th}$ spot for $i < j$, $i,j \in \{1, r-1\}$ and $0$ elsewhere. Using the trace pairing $B(X,Y) = \text{tr}(X,Y)$ (a scalar multiple of Killing) we have the following relations
$$B(H,H) = 2$$
$$B(H,X_i) = \text{tr}(e_{1i}) = 0 = B(H,Y_j) = \text{tr}(e_{nj}) = 0$$
$$B(X_i,Y_j) = 
\text{tr}
\begin{pmatrix}
\delta_{ij} & 0 & 0 \\
0 & \delta_{ij} & 0 \\
0 & 0 & 0
\end{pmatrix}
= 2\delta_{ij}
$$
$$B(\theta_i,\theta_j) = -\delta_{ij}$$
$$B(H,\theta_k) = B(X_i,\theta_k) = B(Y_j,\theta_k) = 0 \qquad \forall i,j,k$$
\noindent
With respect to the trace pairing (with $\prime$ denoting dual as above):
$$
H^\prime = \frac{1}{2}\cdot H \qquad X^\prime_i = \frac{1}{2}\cdot Y_i = \frac{1}{2}\cdot X^\top_i  \qquad [X_i,Y_i] = H \qquad \theta^\prime_j = -\theta_j
$$
Notably, the skew-symmetric $X_i - Y_i$ lies in $\frak{k}$ and thus acts by $0$ on the right on functions on $G/K$. For $f$ a right $K$-invariant function on $G$, it suffices to evaluate $\Omega f$ at group elements $n_x m_y$ since $\Omega$ preserves the right $K$-invariance. Thus
$$
\Omega = H \cdot H^\prime + \sum_i X_i \cdot X^\prime_i + \sum_i Y_i \cdot Y^\prime_i + \sum_j \theta_j \theta^\prime_j 
$$
$$
\Omega = \frac{1}{2}H^2 + \frac{1}{2}\sum_{i=1}^{r-1} X_i \cdot Y_i + \frac{1}{2}\sum_{i=1}^{r-1} Y_i \cdot X_i - \sum_j \theta_j^2 
$$
$$
= 	\frac{1}{2}H^2 + \frac{1}{2}\sum_{i=1}^{r-1} (X_i \cdot Y_i +  Y_i \cdot X_i) - \sum_j \theta_j^2  
$$
$$
= 	\frac{1}{2}H^2 + \frac{1}{2}\sum_{i=1}^{r-1} (2 X_i \cdot Y_i +  [Y_i, X_i]) - \sum_j \theta_j^2 
$$
$$
= 	\frac{1}{2}H^2 -  \frac{r-1}{2}H + \sum_{i=1}^{r-1} X_i \cdot Y_i - \sum_j \theta_j^2 
$$
$$
= 	\frac{1}{2}H^2 -  \frac{r-1}{2}H + \sum_{i=1}^{r-1} (X_i^2 + \underbrace{X_i(Y_i - X_i)}_\text{acts by 0}) - \sum_j \underbrace{\theta_j^2}_\text{acts by 0}  
$$
$$
= 	\frac{1}{2}H^2 -  \frac{r-1}{2}H + \sum_{i=1}^{r-1} X_i^2 + \text{(acts by 0)}
$$
\subsubsection{Casimir on $G/K$ in Iwasawa coordinates}\label{Casimir for O(r,1) in Iwasawa coords}
Exponentiating
$$
e^{tH}  \enspace =
\sum_{k=0}^{\infty} \frac{(tH)^k}{k!} \enspace =	
\begin{pmatrix}
e^t & 0 & 0 \\
0 & 1_{r-1} & 0 \\
0 & 0 & e^{-t}
\end{pmatrix}
 \enspace = m_{e^t}
$$
$$
e^{t X_i} =
\sum_{n=0}^{\infty} \frac{(tX_i)^k}{k!} \enspace =
\renewcommand\arraystretch{1.5}
\begin{pmatrix}
1 & t \cdot e_i & -\frac{t^2}{2} \\
0 \enspace & 1_{r-1} & -t \cdot e_i^\top \\
0 & 0 & 1
\end{pmatrix}
\enspace 
= n_{t \cdot e_i}
$$
So that $n_x m_y \cdot e^{tH} = n_x m_y m_{e^t} = n_x m_{y e^t}$ since multiplication in $M$ is homomorphic to multiplication in $\mathbb{R}^\times$.
\noindent
To determine $H$ as an operator on $G/K$, let $g \in G$ with corresponding $g =n_{x_0} m_{y_o}$
$$
H \cdot f(g) = \frac{d}{dt}\bigg|_{t=0} f(g \cdot m_{e^t}) = \frac{d}{dt}\bigg|_{t=0} f(n_{x_0} m_{y_0 e^t}) = y_0 \; \frac{\partial}{\partial y} \bigg|_{(n_{x_0} m_{y_0})} f
$$
so $H \text{ acts by } y \frac{\partial}{\partial y}$.
A convenient relation is
$$
m_y \cdot e^{t X_i} = m_y \cdot n_{te_i} =  
\begin{pmatrix}
y & 0 & 0 \\
0 & 1_{r-1} & 0 \\
0 & 0 & \frac{1}{y}
\end{pmatrix}
\begin{pmatrix}
1 & t \cdot e_i & -\frac{t^2}{2} \\
0 \enspace & 1_{r-1} & -t \cdot e_i^\top \\
0 & 0 & 1
\end{pmatrix}
=
\begin{pmatrix}
y & yt \cdot e_i & -\frac{yt^2}{2} \\
0 \enspace & 1_{r-1} & -t \cdot e_i^\top \\
0 & 0 & \frac{1}{y}
\end{pmatrix}
$$
$$
n_{yte_i} \cdot m_y =  
\begin{pmatrix}
1 & yt \cdot e_i & -\frac{y^2t^2}{2} \\
0 \enspace & 1_{r-1} & -yt \cdot e_i^\top \\
0 & 0 & 1
\end{pmatrix}
\begin{pmatrix}
y & 0 & 0 \\
0 & 1_{r-1} & 0 \\
0 & 0 & \frac{1}{y}
\end{pmatrix}
=
\begin{pmatrix}
y & yt \cdot e_i & -\frac{yt^2}{2} \\
0 \enspace & 1_{r-1} & -t \cdot e_i^\top \\
0 & 0 & \frac{1}{y}
\end{pmatrix}
$$
which gives
$$n_x \cdot m_y \cdot e^{t X_i} =n_x \cdot m_y \cdot n_{te_i} = n_x \cdot n_{yte_i} \cdot m_y = n_{x + yte_i} \cdot m_y$$
Using the same coordinates for $g$ to determine the operator $X_i$ and, thus, $X_i^2$
$$
X_i \cdot f(g) = \frac{d}{dt}\bigg|_{t=0} f(g \cdot n_{te_i}) =  \frac{d}{dt}\bigg|_{t=0} f(n_{x_0} m_{y_0} \cdot n_{te_i})
 = \frac{d}{dt}\bigg|_{t=0} f(n_{x_0 + y_0 te_i} m_{y_0}) = y_0 \; \frac{\partial}{\partial x_i} \bigg|_{(n_{x_0} m_{y_0})} f
$$
so that
$$
X_i = y \frac{\partial}{\partial x_i} \text{  and  } X_i^2 = X_i \circ X_i = y \frac{\partial}{\partial x_i} \big(y \frac{\partial}{\partial x_i} \big) = \bigg(y \frac{\partial}{\partial x_i}\bigg)^2 = y^2 \frac{\partial^2}{\partial x_i^2}
$$
Substituting appropriately in $\Omega$ on $G/K$
$$
\Omega = \frac{1}{2}(y \frac{\partial}{\partial y})^2 - \frac{1}{2}(r-1)y \frac{\partial}{\partial y} +y^2 \sum_{i=1}^{r-1} \frac{\partial^2}{\partial x_i^2} 
$$
$$
= \frac{1}{2}y^2 \frac{\partial^2}{\partial y^2} - \frac{1}{2}(r-2)y \frac{\partial}{\partial y} +y^2 \sum_{i=1}^{r-1} \frac{\partial^2}{\partial x_i^2} 
$$
To get rid of the mildly annoying break in symmetry due to the factor of $\frac{1}{2}$ in the ``$H$'' term, we could with hindsight renormalize coordinates as
$$
\mathbb{R}^{r-1} \times (0, +\infty) \ni (x,y) \to n_{x\sqrt{2}}m_y =
\begin{pmatrix}
1 & x \cdot \sqrt{2} & -|x|^2 \\
0 & 1_{r-1} & -x^\top  \cdot \sqrt{2}\\
0 & 0 & 1
\end{pmatrix}
\begin{pmatrix}
y & 0 & 0 \\
0 & 1_{r-1} & 0 \\
0 & 0 & \frac{1}{y}
\end{pmatrix}
$$
(i.e., replacing $X_i$ and $Y_i$ above with $\sqrt{2} \cdot X_i$ and $\sqrt{2} \cdot Y_i$ ) giving, on $G/K$
$$
2 \cdot \Omega = y^2(\frac{\partial^2}{\partial x_1^2} + \ldots \frac{\partial^2}{\partial x_{r-1}^2} + \frac{\partial^2}{\partial y^2}) - (r-2)y\frac{\partial}{\partial y}
$$
Ignoring the factor of $2$ and collecting terms
$$
\Omega = y^2(\Delta_x + \frac{\partial^2}{\partial y^2}) - (r-2)y\frac{\partial}{\partial y}
$$
%
%


%
%

\subsection{The Casimir operator for $U(r,1)$}\label{Casimir Operator for U(r,1)}

\subsubsection{Iwasawa coordinates on $N$, $M$ and $P \cap K$}\label{U(r,1) Iwasawa coords on N,M, PcapK}

$$G =  \{ g \in GL_{r+1}(\mathbb{C}) \ | \ g^\ast S g = S\} \approx U(r,1)$$
where the form $S$ will be one of
$$
S = 
\renewcommand\arraystretch{1.25}
\begin{pmatrix}
0 & 0 & 1 \\
0 & 1_{r-1} & 0 \\
1 & 0 & 0
\end{pmatrix}
\qquad
\renewcommand\arraystretch{1.75}
S^\prime = 
\begin{pmatrix}
1_r & 0 \\
0 & -1	
\end{pmatrix}
$$ 
A Cayley element is
$$ 
\renewcommand\arraystretch{1.25}
C = 
\begin{pmatrix} 
\sqrt{\frac{1}{2}}  &      0          & -\sqrt{\frac{1}{2}} \\ 
           0               &   1_{r-1}    & 0                           \\
\sqrt{\frac{1}{2}}   &        0        & \sqrt{\frac{1}{2}}
\end{pmatrix}
\qquad
$$ 
We have
$$
C S^\prime C^{-1} = C S^\prime C^\ast = S
$$
Thus, the two forms $S$ and $S^\prime$ give isomorphic isometry groups. The $S^\prime$ version is more convenient for identifying the maximal compact subgroup of $G$ but the $S$ version more clearly reveals the minimal parabolic $P$ as the stabilizer of a maximal isotropic flag as follows.

 Let $e_1, \ldots e_{r+1}$ be the standard basis of $\mathbb{C}^{r+1}$ where $e_i$ is a column vector with a $1$ in the $i^\text{th}$ position and $0$'s elsewhere. With $S$ as above, we see that $e_1^\ast S e_1 = 0 = e_{r+1}^\ast S e_{r+1}$ while $e_i^\ast S e_i = 1$ for $i \ne 1 \text{ or } r+1$. The two one-dimensional subspaces spanned by $e_1$ and $e_{r+1}$ are maximal isotropic. Additionally, since $e_{r+1}^\ast S e_1 = 1 = e_{1}^\ast S e_{r+1}$, $e_1$ and $e_{r+1}$ are a hyperbolic pair. We take $P$ as the subgroup of $G$ that fixes the (maximal isotropic) space spanned by $e_1$. 

Any linear map that sends $\langle e_1 \rangle$ to itself is automatically an isometry on the subspace spanned by $e_1$, since $e_1$ is isotropic, so by Witt's theorem can be extended to an isometry of $S$, and thus lies in $G$.  Thus $P$ consists of matrices of the form
$$
\begin{pmatrix}
\ast & \ast & \ast \\
0 & \ast & \ast \\
0 & 0 & \ast
\end{pmatrix}
$$
with appropriate relations, and with an $(r-1) \times (r-1)$ block in the middle.
A maximal compact, visible with the diagonal version of the form,  is
$$
K = U(n) \times U(1) = G \cap U(r+1)  \qquad \text{(standard }U(r+1)\text{)}
$$
An Iwasawa decomposition is $G = PK$, with $P$ a minimal parabolic (i.e., the stabilizer of a maximal isotropic flag as above).
The maximal compact subgroup of $G$ will have non-trivial intersection with $P$, contained in its Levi component M, given by
$$P \cap K = P \cap U(r+1) = M \cap U(r+1) \approx U(r-1) \times U(1)$$
which is visible as
$$
\begin{pmatrix}
\alpha & 0 & 0 \\
0 & h & 0 \\
0 & 0 & \bar{\alpha}^{-1}
\end{pmatrix}
$$
with $h \in U(r-1)$ and $|\alpha| = 1$, $\alpha \in \mathbb{C}^\times$. $N$ is of the shape
$$
N \enspace = \enspace \{ n_{z,x} \enspace = \enspace 
\renewcommand\arraystretch{1.5}
\begin{pmatrix}
1 & z & -\frac{1}{2}|z|^2 + ix \\
0 & 1_{r-1} & -z^\ast \\
0 & 0 & 1
\end{pmatrix}
\enspace : \enspace z \in \mathbb{C}^{r-1} \enspace z = u + iv \in \mathbb{R}^{r-1} \oplus \; i \mathbb{R}^{r-1} \enspace x \in \mathbb{R}\}
$$
The standard Levi component is
$$
M =  \{ m_\alpha = 
\renewcommand\arraystretch{1.5}
\begin{pmatrix}
\alpha & 0 & 0 \\
0 & h & 0 \\
0 & 0 & \bar{\alpha}^{-1}
\end{pmatrix}
\ : \text{ for } \alpha \in \mathbb{C}^\times \text{ and } h \in U(r-1)\}
$$
and the split component of the Levi component is
$$
A^+  =  \{ m_y = 
\renewcommand\arraystretch{1.5}
\begin{pmatrix}
y & 0 & 0 \\
0 & 1_{r-1} & 0 \\
0 & 0 & \frac{1}{y}
\end{pmatrix}
\ : \text{ for } y > 0)\}
$$
\noindent
The Lie algebra $\frak{g} \approx \frak{u}(r,1)$ is
$$
\frak{g} = \{ \gamma \in \frak{gl}_{r+1}(\mathbb{C}) \enspace : \enspace \textrm{exp}^{t\gamma} \in U(r,1), \; \forall t \in \mathbb{R} \} = \{ \gamma \; : \; \gamma^\ast = - S \gamma S^{-1} \}
$$
Specify a real basis for $\frak{u}(r,1)$ as:
$$
\renewcommand\arraystretch{1.5}
H =
\begin{pmatrix}
1 & 0 & 0 \\
0 & 0_{r-1} & 0 \\
0 & 0 & -1
\end{pmatrix}
\tilde{H} =
\begin{pmatrix}
i & 0 & 0 \\
0 & 0_{r-1} & 0 \\
0 & 0 & i
\end{pmatrix}
X_\ell =
\begin{pmatrix}
0 & e_\ell & 0 \\
0 & 0_{r-1} & -e_\ell^\ast \\
0 & 0 & 0
\end{pmatrix}
\tilde{X}_\ell =
\begin{pmatrix}
0 & ie_\ell & 0 \\
0 & 0_{r-1} & ie_\ell^\ast \\
0 & 0 & 0
\end{pmatrix}
$$
$$
\renewcommand\arraystretch{1.5}
Z =
\begin{pmatrix}
0 & 0 & 0 \\
0 & 0_{r-1} & 0 \\
i & 0 & 0
\end{pmatrix}
\tilde{Z} =
\begin{pmatrix}
0 & 0 & i \\
0 & 0_{r-1} & 0 \\
0 & 0 & 0
\end{pmatrix}
Y_\ell =
\begin{pmatrix}
0 & 0 & 0 \\
e_\ell^\ast & 0_{r-1} & 0 \\
0 & -e_\ell & 0 \\
\end{pmatrix}
\tilde{Y}_\ell =
\begin{pmatrix}
0 & 0 & 0 \\
ie_\ell^\ast & 0_{r-1} & 0 \\
0 & ie_\ell & 0 \\
\end{pmatrix}
$$
$$
\renewcommand\arraystretch{1.5}
\Theta_\ell =
\begin{pmatrix}
0 & 0 & 0 \\
0 & \theta_\ell & 0 \\
0 & 0 & 0
\end{pmatrix}
\qquad
\{ \theta_\ell \} \text{ basis of } \frak{u}(r-1)
$$

%
%

%
%
\subsubsection{Dual basis for $\frak{u}(r,1)$}\label{Dual basis for u(r,1)}
To compute a dual basis, use a trace form, which is necessarily a multiple of  Killing
$$
B(x,y) = \text{Re}(\text{tr}(xy)) \qquad x,y \in \frak{g} = \frak{u}(r,1)
$$
giving, with $\prime$ denoting dual as above
$$
H^\prime = \frac{1}{2}\cdot H \qquad X^\prime_\ell = \frac{1}{2}\cdot Y_\ell = \frac{1}{2}\cdot X^\ast_\ell  \qquad \tilde{X}^\prime_\ell = -\frac{1}{2}\cdot \tilde{Y}_\ell = -\frac{1}{2}\cdot \tilde{X}^\ast_\ell 
$$
$$
Z^\prime = -\tilde{Z} \qquad \tilde{H}^\prime = -\frac{1}{2}\tilde{H} \qquad \theta^\prime_j = -\theta_j 
$$
Conveniently, we have
$$
[X_\ell,Y_\ell] = H \qquad [\tilde{X}_\ell,\tilde{Y}_\ell] =-H \qquad [Z, \tilde{Z}] = H
$$
The skew-Hermitian symmetry of elements of $\frak{u}(r+1)$ and $\frak{u}(r-1) \oplus \frak{u}(1) = G \cap \frak{u}(r+1) = \frak{k}$ is used in conjunction with the fact that $\frak{k}$ acts trivially on functions on $G/K$, which shows  that the linear combinations of basis elements conveniently fall in $\frak{k}$. 
$$
\tilde{H} \qquad X_\ell - Y_\ell \qquad \tilde{X}_\ell + \tilde{Y}_\ell \qquad Z + \tilde{Z}
$$
%
%
%

%
\subsubsection{Rearrangement of Casimir operator on $G/K$}\label{Simplifying  Casimir G/K for U(r,1)}
The Casimir operator in coordinates is
$$
\Omega_\frak{g} = \sum_{x \in \frak{g}} x \cdot x^\prime 
$$
We wish to express the Casimir operator in a fashion consonant with the Iwasawa decomposition, taking advantage of the fact that the right action of $\frak{k}$ annihilates functions on $G/K$. 
$$
\Omega_\frak{g} = H \cdot H^\prime + \tilde{H} \cdot \tilde{H}^\prime + Z \cdot Z^\prime + \tilde{Z} \cdot \tilde{Z}^\prime + \sum^{r-1}_{\ell = 1} X_\ell \cdot X_\ell^\prime + \sum^{r-1}_{\ell = 1} Y_\ell \cdot Y_\ell^\prime + \sum^{r-1}_{\ell = 1} \tilde{X}_\ell \cdot \tilde{X}_\ell^\prime + \sum^{r-1}_{\ell = 1} \tilde{Y}_\ell \cdot \tilde{Y}_\ell^\prime + \sum^{r-1}_{j = 1} \theta_j \cdot \theta_j^\prime
$$
Ignoring terms involving  $\tilde{H}$ and the $\theta_j$, which act by $0$ on $G/K$, substitution of the expressions for dual elements in the above gives
$$
\Omega_\frak{g} = \frac{1}{2} H^2 - (Z \cdot \tilde{Z} + \tilde{Z} \cdot Z) + \frac{1}{2} \sum^{r-1}_{\ell = 1} \big( X_\ell \cdot Y_\ell + Y_\ell \cdot X_\ell \big) - \frac{1}{2} \sum^{r-1}_{\ell = 1} \big( \tilde{X}_\ell \cdot \tilde{Y}_\ell + \tilde{Y}_\ell \cdot \tilde{X}_\ell \big)+ \{\text{acts by zero}\}
$$
Since $Z + \tilde{Z} \in \frak{k}$ and $[Z, \tilde{Z}] = H$
$$
Z \cdot \tilde{Z} + \tilde{Z} \cdot Z = ([Z, \tilde{Z}] + \tilde{Z} \cdot Z) + \tilde{Z} \cdot Z = H + 2 \cdot \tilde{Z} \cdot Z = H + 2 \cdot \big( \underbrace{\tilde{Z}(Z + \tilde{Z})}_\text{acts by 0} - \tilde{Z}^2 \big) = H - 2 \tilde{Z}^2
$$
so
$$
\Omega_\frak{g} = \frac{1}{2} H^2 - H + 2 \tilde{Z}^2 + \frac{1}{2} \sum^{r-1}_{\ell = 1} \big( X_\ell \cdot Y_\ell + Y_\ell \cdot X_\ell \big) - \frac{1}{2} \sum^{r-1}_{\ell = 1} \big( \tilde{X}_\ell \cdot \tilde{Y}_\ell + \tilde{Y}_\ell \cdot \tilde{X}_\ell \big)+ \{\text{acts by zero}\}
$$
Rewriting $X_\ell \cdot Y_\ell + Y_\ell \cdot X_\ell =  2 \cdot X_\ell \cdot Y_\ell + [Y_\ell, X_\ell] $
$$
\Omega_\frak{g} = \frac{1}{2} H^2 - H + 2 \tilde{Z}^2 + \frac{1}{2} \sum^{r-1}_{\ell = 1} \big( 2 \cdot X_\ell \cdot Y_\ell + [Y_\ell, X_\ell] \big) - \frac{1}{2} \sum^{r-1}_{\ell = 1} \big( 2 \cdot \tilde{X}_\ell \cdot \tilde{Y}_\ell + [\tilde{Y}_\ell , \tilde{X}_\ell] \big) + \{\text{acts by zero}\}
$$
Using $[X_\ell,Y_\ell] = H$ and $[\tilde{X}_\ell,\tilde{Y}_\ell] =-H$ gives
\begin{align*}
\Omega_\frak{g} =& \frac{1}{2} H^2 - r \cdot H + 2 \tilde{Z}^2 + \sum^{r-1}_{\ell = 1} X_\ell \cdot Y_\ell - \sum^{r-1}_{\ell = 1} \tilde{X}_\ell \cdot \tilde{Y}_\ell + \{\text{acts by zero}\} \\
=& \frac{1}{2} H^2 - r \cdot H + 2 \tilde{Z}^2 + \sum^{r-1}_{\ell = 1} \big( X_\ell^2 - \underbrace{X_\ell \cdot (X_\ell - Y_\ell)}_\text{acts by 0} \big) + \sum^{r-1}_{\ell = 1}\big( \tilde{X}_\ell^2 - \underbrace{\tilde{X}_\ell \cdot (\tilde{X}_\ell + \tilde{Y}_\ell)}_\text{acts by 0} \big) \\&+ \{\text{acts by zero}\} \\
=&\frac{1}{2} H^2 - r \cdot H + 2 \tilde{Z}^2 + \sum^{r-1}_{\ell = 1} X_\ell^2 + \sum^{r-1}_{\ell = 1} \tilde{X}_\ell^2 + \{\text{acts by zero}\} \\
\end{align*}

%
%

\subsubsection{Casimir on $G/K$ in Iwasawa coordinates}\label{Casimir in Iwasawa coords on U(r,1)}
A model for complex hyperbolic space is
$$
G/K \approx U(r,1)/(U(r) \times U(1)) \approx NM/(M \cap K) \approx N \times A^+
$$
with coordinates $z, x \in N \approx  \mathbb{C}^{r-1} \times \mathbb{R}$, $y \in A^+ \approx  \mathbb{R}^\times$ from the Iwasawa decomposition giving
$$
G/K \ni n_{z,x} m_y K \longleftrightarrow (z,x,y) \in \mathbb{C}^{r-1} \times \mathbb{R} \times (0,\infty)
$$
$$
  z \in \mathbb{C}^{r-1} = \{ z : u + iv \in \mathbb{R}^{r-1} \oplus \; i \mathbb{R}^{r-1} \} \qquad x \in \mathbb{R} \qquad y \in \mathbb{R}^+ = (0,\infty)
$$
Let $e_\ell$ be the unit real vector so that
\begin{align*}
	e^{tH} =& m_{e^t} \\
	e^{tX_\ell} =& n_{t e_\ell, 0} = 
\begin{pmatrix}
1 & t \cdot e_\ell & -\frac{t^2}{2} \\
0 & 1_{r-1} & -t \cdot e^\ast_\ell \\
0 & 0 & 1
\end{pmatrix} \\
	e^{t\tilde{X}_\ell} =& n_{t (i e_\ell), 0} = 
\begin{pmatrix}
1 & t \cdot ie_\ell & -\frac{t^2}{2} \\
0 & 1_{r-1} & -t \cdot (ie_\ell)^\ast \\
0 & 0 & 1
\end{pmatrix}
\end{align*}
To determine the operators on $G/K$, let $G \ni g =n_{z_0,x_0} m_{y_o}$
\begin{align*}
H \cdot f(g) =& \frac{d}{dt}\bigg|_{t=0} f(g \cdot m_{e^t}) =  \frac{d}{dt}\bigg|_{t=0} f(n_{z_0,x_0} m_{y_0} \cdot m_{e^t}) = \frac{d}{dt}\bigg|_{t=0} f(n_{z_0,x_0} m_{y_0 e^t}) \\
=& y_0\frac{d}{dt}\bigg|_{t=0} f(n_{z_0, x_0} m_{y_0 e^t}) = y_0 \; \frac{\partial f}{\partial y}(n_{z_0,x_0} m_{y_0})
\end{align*}
so $H$ acts by $y \frac{\partial}{\partial y}$ and $H^2 = H(H) = y \frac{\partial}{\partial y}(y \frac{\partial}{\partial y}) = y^2 \frac{\partial^2}{\partial y^2} + y \frac{\partial}{\partial y}$.
We have convenient relations, similar to those useful for $O(r,1)$
$$
m_y \cdot e^{t X_\ell} = m_y \cdot n_{te_\ell,0} =  
\begin{pmatrix}
y & 0 & 0 \\
0 & 1_{r-1} & 0 \\
0 & 0 & \frac{1}{y}
\end{pmatrix}
\begin{pmatrix}
1 & t \cdot e_\ell & -\frac{t^2}{2} \\
0 \enspace & 1_{r-1} & -t \cdot e_\ell^\ast \\
0 & 0 & 1
\end{pmatrix}
=
\begin{pmatrix}
y & yt \cdot e_\ell & -\frac{yt^2}{2} \\
0 \enspace & 1_{r-1} & -t \cdot e_\ell^\ast \\
0 & 0 & \frac{1}{y}
\end{pmatrix}
$$
$$
n_{yte_\ell, 0} \cdot m_y =  
\begin{pmatrix}
1 & yt \cdot e_\ell & -\frac{y^2t^2}{2} \\
0 \enspace & 1_{r-1} & -yt \cdot e_\ell^\ast \\
0 & 0 & 1
\end{pmatrix}
\begin{pmatrix}
y & 0 & 0 \\
0 & 1_{r-1} & 0 \\
0 & 0 & \frac{1}{y}
\end{pmatrix}
=
\begin{pmatrix}
y & yt \cdot e_\ell & -\frac{yt^2}{2} \\
0 \enspace & 1_{r-1} & -t \cdot e_\ell^\ast \\
0 & 0 & \frac{1}{y}
\end{pmatrix}
$$
%
%
%
%
%
%
$$
m_y \cdot e^{t \tilde{X}_\ell} = m_y \cdot n_{t(ie_\ell),0} =  
\begin{pmatrix}
y & 0 & 0 \\
0 & 1_{r-1} & 0 \\
0 & 0 & \frac{1}{y}
\end{pmatrix}
\begin{pmatrix}
1 & t \cdot i e_\ell & -\frac{t^2}{2} \\
0 \enspace & 1_{r-1} & -t \cdot (i e_\ell)^\ast \\
0 & 0 & 1
\end{pmatrix}
=
\setlength{\arraycolsep}{2pt}
\begin{pmatrix}
y & yt \cdot (i e_\ell) & -\frac{yt^2}{2} \\
0 \enspace & 1_{r-1} & -t \cdot (i e_\ell)^\ast \\
0 & 0 & \frac{1}{y}
\end{pmatrix}
$$
$$
n_{yt(i e_\ell), 0} \cdot m_y =  
\begin{pmatrix}
1 & yt \cdot (i e_\ell) & -\frac{y^2t^2}{2} \\
0 \enspace & 1_{r-1} & -yt \cdot (i e_\ell)^\ast \\
0 & 0 & 1
\end{pmatrix}
\begin{pmatrix}
y & 0 & 0 \\
0 & 1_{r-1} & 0 \\
0 & 0 & \frac{1}{y}
\end{pmatrix}
=
\begin{pmatrix}
y & yt \cdot (i e_\ell) & -\frac{yt^2}{2} \\
0 \enspace & 1_{r-1} & -t \cdot (i e_\ell)^\ast \\
0 & 0 & \frac{1}{y}
\end{pmatrix}
$$
so that
$$
m_y \cdot e^{t X_\ell} = m_y \cdot n_{te_\ell,0} =  n_{yte_\ell, 0} \cdot m_y
$$
and
$$
m_y \cdot e^{t \tilde{X}_\ell} = m_y \cdot n_{t (i e_\ell),0} =  n_{yt(i e_\ell), 0} \cdot m_y
$$
which give
$$
n_{x,u} \cdot m_y \cdot e^{t X_\ell} =n_{x,u} \cdot m_y \cdot n_{te_\ell, 0} = n_{x, u} \cdot n_{yt e_\ell, 0} \cdot m_y = n_{x + yt e_\ell, u} \cdot m_y
$$
and
$$
n_{x, u} \cdot m_y \cdot e^{t \tilde{X}_\ell} =n_{x,u} \cdot m_y \cdot n_{t (i e_\ell), 0} = n_{x, u} \cdot n_{yt(i e_\ell), 0} \cdot m_y = n_{x + yt( i e_\ell), u} \cdot m_y
$$
Use real coordinates $z = u + iv$ for $g$ to determine the operator $X_i$
\begin{align*}
X_\ell \cdot f(g) =& \frac{d}{dt}\bigg|_{t=0} f(g \cdot e^{tX_\ell}) = \frac{d}{dt}\bigg|_{t=0} f(g \cdot n_{te_\ell, 0}) =  \frac{d}{dt}\bigg|_{t=0} f(n_{z_0, x_0} m_{y_0} \cdot n_{te_\ell, 0}) \\
 =& \frac{d}{dt}\bigg|_{t=0} f(n_{z_0 + y_0 te_\ell, x_0} m_{y_0}) = y_0 \; \frac{\partial f}{\partial u_\ell}(n_{z_0, x_0} m_{y_0})\end{align*}
giving
$$
X_\ell = y \frac{\partial}{\partial u_\ell} \text{  and  } X_\ell^2 = X_\ell(X_\ell) = y \frac{\partial}{\partial u_\ell} \big(y \frac{\partial}{\partial u_\ell} \big) = \bigg(y \frac{\partial}{\partial u_\ell}\bigg)^2 = y^2 \frac{\partial^2}{\partial u_\ell^2}
$$
Analogously, $\tilde{X}_\ell$ is the imaginary direction $v_\ell \approx i \cdot e_\ell$
\begin{align*}
\tilde{X}_\ell \cdot f(g) =& \frac{d}{dt}\bigg|_{t=0} f(g \cdot e^{t\tilde{X}_\ell}) = \frac{d}{dt}\bigg|_{t=0} f(g \cdot n_{t i e_\ell, 0}) =  \frac{d}{dt}\bigg|_{t=0} f(n_{z_0, x_0} m_{y_0} \cdot n_{t i e_\ell, 0}) \\
 =& \frac{d}{dt}\bigg|_{t=0} f(n_{z_0 + y_0 t i e_\ell, x_0} m_{y_0}) = y_0 \; \frac{\partial f}{\partial v_\ell}(n_{z_0, x_0} m_{y_0})
\end{align*}
so correspondingly
$$
\tilde{X}_\ell = y \frac{\partial}{\partial v_\ell} \text{  and  } \tilde{X}_\ell^2 = \tilde{X}_\ell(\tilde{X}_\ell) = y \frac{\partial}{\partial v_\ell} \big( y \frac{\partial}{\partial v_\ell} \big) = \bigg( y \frac{\partial}{\partial v_\ell}\bigg)^2 = y^2 \frac{\partial^2}{\partial v_\ell^2}
$$
Last, for the $\tilde{Z}$ term:
$$
e^{t \tilde{Z}} = n_{0,t} =
\begin{pmatrix}
1 & 0 & i t \\
0 & 1_{r-1} & 0 \\
0 & 0 & 1
\end{pmatrix} \\
$$
\begin{align*}
\tilde{Z} \cdot f(g) =& \frac{d}{dt}\bigg|_{t=0} f(g \cdot e^{t\tilde{Z}}) = \frac{d}{dt}\bigg|_{t=0} f(g \cdot n_{0,t}) =  \frac{d}{dt}\bigg|_{t=0} f(n_{z_0, x_0} m_{y_0} \cdot n_{0, t}) \\
=& 
\frac{d}{dt}\bigg|_{t=0} f \Bigg(
\begin{pmatrix}
1 & z_0 & -\frac{|z_0|^2}{2} + i x_0 \\
0 \enspace & 1_{r-1} & z_0^\ast \\
0 & 0 & 1
\end{pmatrix}
\begin{pmatrix}
y_0 & 0 & 0 \\
0 & 1_{r-1} & 0 \\
0 & 0 & \frac{1}{y_0}
\end{pmatrix}
\begin{pmatrix}
1 & 0 &  i t \\
0 & 1_{r-1} & 0 \\
0 & 0 & 1
\end{pmatrix}
\Bigg) \\
=& 
\frac{d}{dt}\bigg|_{t=0} f \Bigg(
\begin{pmatrix}
1 & z_0 & -\frac{|z_0|^2}{2} + i x_0 \\
0 \enspace & 1_{r-1} & z_0^\ast \\
0 & 0 & 1
\end{pmatrix}
\begin{pmatrix}
y_0 & 0 & i y_0 t \\
0 & 1_{r-1} & 0 \\
0 & 0 & \frac{1}{y_0}
\end{pmatrix}
\Bigg) \\
=& 
\frac{d}{dt}\bigg|_{t=0} f \Bigg(
\begin{pmatrix}
y_0 & z_0 & -\frac{|z_0|^2}{2 y_0} + i (\frac{x_0}{y_0} + y_0 t) \\
0 \enspace & 1_{r-1} & \frac{1}{y_0}z_0^\ast \\
0 & 0 & \frac{1}{y_0}
\end{pmatrix}
\Bigg) \\
=&  
\frac{d}{dt}\bigg|_{t=0} f \Bigg(
\begin{pmatrix}
1 & z_0 & -\frac{|z_0|^2}{2} + i (x_0 + y_0^2 t) \\
0 \enspace & 1_{r-1} & z_0^\ast \\
0 & 0 & 1
\end{pmatrix}
\begin{pmatrix}
y_0 & 0 & 0 \\
0 & 1_{r-1} & 0 \\
0 & 0 & \frac{1}{y_0}
\end{pmatrix}
\Bigg) \\
=& 
\frac{d}{dt}\bigg|_{t=0} f (z_0, (x_0 + y_0^2 t), y_0) = y_0^2 \frac{\partial f}{\partial x}(z_0, x_0, y_0)
\end{align*}
so correspondingly
$$
\tilde{Z} = y^2 \frac{\partial}{\partial x} \text{  and  } \tilde{Z}^2 = \tilde{Z} \circ \tilde{Z} = y^2 \frac{\partial}{\partial x} \big( y^2 \frac{\partial}{\partial x} \big) = \bigg( y^2 \frac{\partial}{\partial x}\bigg)^2 = y^4 \frac{\partial^2}{\partial x^2}
$$

\noindent
Substituting appropriately in $\Omega$ on $G/K$
\begin{align*}
\Omega_\frak{g} =& \frac{1}{2} H^2 - r \cdot H + 2 \tilde{Z}^2 + \sum^{r-1}_{\ell = 1} X_\ell^2 + \sum^{r-1}_{\ell = 1} \tilde{X}_\ell^2 + \{\text{acts by zero}\}  \\
=& \frac{1}{2} \big( y^2 \frac{\partial^2}{\partial y^2} + y \frac{\partial}{\partial y} \big) - r \cdot \big(y \frac{\partial}{\partial y} \big) + 2 y^4 \frac{\partial^2}{\partial x^2} + y^2 \sum^{r-1}_{\ell = 1} \frac{\partial^2}{\partial u_\ell^2} + y^2 \sum^{r-1}_{\ell = 1} \frac{\partial^2}{\partial v_\ell^2}  + \{\text{acts by zero}\} \\
=& \frac{1}{2} y^2 \frac{\partial^2}{\partial y^2} - \frac{2r-1}{2} y \frac{\partial}{\partial y} +  2y^4 \frac{\partial^2}{\partial x^2} + y^2 \big( \sum^{r-1}_{\ell = 1} \frac{\partial^2}{\partial u_\ell^2} + \sum^{r-1}_{\ell = 1} \frac{\partial^2}{\partial v_\ell^2}  \big) + \{\text{acts by zero}\} \\
=& y^2 \bigg( \sum^{r-1}_{\ell = 1} \frac{\partial^2}{\partial u_\ell^2} + \sum^{r-1}_{\ell = 1} \frac{\partial^2}{\partial v_\ell^2} + \frac{1}{2} \frac{\partial^2}{\partial y^2}+ 3y^2 \frac{\partial^2}{\partial x^2} \bigg) - \frac{2r-1}{2} y \frac{\partial}{\partial y} + \{\text{acts by zero}\} \\
=& y^2 \bigg( \Delta_u + \Delta_v + \frac{1}{2} \frac{\partial^2}{\partial y^2}+ 2y^2 \frac{\partial^2}{\partial x^2} \bigg) - \frac{2r-1}{2} y \frac{\partial}{\partial y} + \{\text{acts by zero}\}
\end{align*}

To get remove the factor of $\frac{1}{2}$ in the ``$H$'' term, we could renormalize coordinates as
$$
G/K \approx U(r,1)/(U(r) \times U(1)) \approx NM/(M \cap K) \approx N \times A^+
$$
with coordinates $z, x \in N \approx  \mathbb{C}^{r-1} \times \mathbb{R}$, $y \in A^+ \approx  \mathbb{R}^\times$ from the Iwasawa decomposition giving
$$
G/K \ni n_{z,x} m_y K \longleftrightarrow (z,x,y) \in \mathbb{C}^{r-1} \times \mathbb{R} \times (0,\infty)
$$
$$
  z \in \mathbb{C}^{r-1} = \{ z : u + iv \in \mathbb{R}^{r-1} \oplus \; i \mathbb{R}^{r-1} \} \qquad x \in \mathbb{R} \qquad y \in \mathbb{R}^+ = (0,\infty)
$$
$$
\mathbb{C}^{r-1} \times \mathbb{R} \times (0, +\infty) \ni (x,y) \to n_{z\cdot \sqrt{2},\frac{x}{4}}m_{y} =
\renewcommand\arraystretch{1.5}
\begin{pmatrix}
1 & z\cdot \sqrt{2} & -|z|^2 + \frac{ix}{4} \\
0 & 1_{r-1} & -z^\ast \cdot \sqrt{2} \\
0 & 0 & 1
\end{pmatrix}
\begin{pmatrix}
y & 0 & 0 \\
0 & 1_{r-1} & 0 \\
0 & 0 & \frac{1}{y}
\end{pmatrix}
$$
giving, on $G/K$
$$
2 \cdot \Omega = y^2 \bigg( \Delta_u + \Delta_v + \frac{\partial^2}{\partial y^2}+ y^2 \frac{\partial^2}{\partial x^2} \bigg) - (2r-1) y \frac{\partial}{\partial y}
$$
Ignoring the factor of $2$ and collecting terms
$$
\Omega = y^2 \bigg( \Delta_u + \Delta_v + \frac{\partial^2}{\partial y^2}+ y^2 \frac{\partial^2}{\partial x^2} \bigg) - (2r-1) y \frac{\partial}{\partial y}
$$
%
%


%
%

\subsection{The Casimir operator for $Sp^\ast(r,1)$}\label{Casimir Operator for Sp*(r,1)}

%
%

\subsubsection{Iwasawa coordinates on $N$, $M$ and $P \cap K$}\label{Sp*(r,1) Iwasawa coords on N,M, PcapK}

%
%
%
%
$$G =  \{ g \in GL_{r+1}(\mathbb{H}) \ | \ g^\ast S g = S\} \approx Sp^\ast (r,1)$$
where the form $S$ will be one of
$$
S = 
\renewcommand\arraystretch{1.25}
\begin{pmatrix}
0 & 0 & 1 \\
0 & 1_{r-1} & 0 \\
1 & 0 & 0
\end{pmatrix}
\qquad
\renewcommand\arraystretch{1.75}
S^\prime = 
\begin{pmatrix}
1_r & 0 \\
0 & -1	
\end{pmatrix}
$$ 
A Cayley element is
$$ 
\renewcommand\arraystretch{1.25}
C = 
\begin{pmatrix} 
\sqrt{\frac{1}{2}}  &      0          & -\sqrt{\frac{1}{2}} \\ 
           0               &   1_{r-1}    & 0                           \\
\sqrt{\frac{1}{2}}   &        0        & \sqrt{\frac{1}{2}}
\end{pmatrix}
\qquad
$$ 
We have
$$
C S^\prime C^{-1} = C S^\prime C^\ast = S
$$
Thus, the two forms $S$ and $S^\prime$ give isomorphic isometry groups. The $S^\prime$ version is more convenient for identifying the maximal compact subgroup of $G$ but the $S$ version more clearly reveals the minimal parabolic $P$ as the stabilizer of a maximal isotropic flag as follows.

 Let $e_1, \ldots e_{r+1}$ be the standard basis of $\mathbb{H}^{r+1}$ where $e_i$ is a column vector with a $1$ in the $i^\text{th}$ position and $0$'s elsewhere. With $S$ as above, we see that $e_1^\ast S e_1 = 0 = e_{r+1}^\ast S e_{r+1}$ while $e_i^\ast S e_i = 1$ for $i \ne 1 \text{ or } r+1$. The two one-dimensional subspaces spanned by $e_1$ and $e_{r+1}$ are maximal isotropic. Additionally, since $e_{r+1}^\ast S e_1 = 1 = e_{1}^\ast S e_{r+1}$, $e_1$ and $e_{r+1}$ are a hyperbolic pair. We take $P$ as the subgroup of $G$ that fixes the (maximal isotropic) space spanned by $e_1$. 

Any linear map that sends $\langle e_1 \rangle$ to itself is automatically an isometry on the subspace spanned by $e_1$, since $e_1$ is isotropic, so by Witt's theorem can be extended to an isometry of $S$, and thus lies in $G$.  Thus $P$ consists of matrices of the form
$$
\begin{pmatrix}
\ast & \ast & \ast \\
0 & \ast & \ast \\
0 & 0 & \ast
\end{pmatrix}
$$
with appropriate relations, and with an $(r-1) \times (r-1)$ block in the middle.
A maximal compact, visible with the diagonal version of the form,  is
$$
K = Sp^\ast(r) \times Sp^\ast(1) = G \cap Sp^\ast(r+1)  \qquad \text{(standard }Sp^\ast(r+1)\text{)}
$$
An Iwasawa decomposition is $G = PK$, with $P$ a minimal parabolic (i.e., the stabilizer of a maximal isotropic flag as above).
The maximal compact subgroup of $G$ will have non-trivial intersection with $P$, contained in its Levi component M, given by
$$P \cap K = P \cap Sp^\ast(r+1) = M \cap Sp^\ast(r+1) \approx Sp^\ast(r-1) \times Sp^\ast(1)$$
which is visible as
$$
\begin{pmatrix}
\alpha & 0 & 0 \\
0 & h & 0 \\
0 & 0 & \bar{\alpha}^{-1}
\end{pmatrix}
$$
with $h \in Sp^\ast(r-1)$ and $|\alpha| = 1$, $\alpha \in \mathbb{H}^\times$. 
$N$ is of the shape
$$
\begin{pmatrix}
1 & u & a \\
0 & 1_{r-1} & v^\top \\
0 & 0 & 1
\end{pmatrix}
$$
where $u,v \in \mathbb{H}^{r-1}$ and $a \in \mathbb{H}$. To determine $u, v$ and $a$, for $T \in N$
$$
T^\ast S T= 
\renewcommand\arraystretch{1.25}
\begin{pmatrix}
1 & 0 & 0 \\
u^\ast & 1_{r-1} & 0 \\
\bar{a} & \bar{v} & 1
\end{pmatrix}
\begin{pmatrix}
0 & 0 & 1 \\
0 & 1_{r-1} & 0 \\
1 & 0 & 0
\end{pmatrix}
\begin{pmatrix}
1 & u & a \\
0 & 1_{r-1} & v^\top \\
0 & 0 & 1
\end{pmatrix}
= S =
\begin{pmatrix}
0 & 0 & 1 \\
0 & 1_{r-1} & 0 \\
1 & 0 & 0
\end{pmatrix}
$$
$$
= 
\renewcommand\arraystretch{1.25}
\begin{pmatrix}
0 & 0 & 1 \\
0 & 1_{r-1} & u^\ast \\
1 & \bar{v} & \bar{a}
\end{pmatrix}
\begin{pmatrix}
1 & u & a \\
0 & 1_{r-1} & v^\top \\
0 & 0 & 1
\end{pmatrix}
=
\begin{pmatrix}
0 & 0 & 1 \\
0 & 1_{r-1} & u^\ast + v^\top \\
1 & u + \bar{v} & |v|^2 + 2\text{Re}(a)
\end{pmatrix}
= S =
\begin{pmatrix}
0 & 0 & 1 \\
0 & 1_{r-1} & 0 \\
1 & 0 & 0
\end{pmatrix}
$$
giving relations $u = -\bar{v}$ and $\text{Re}(a) = -\frac{|v|^2 }{2} = -\frac{|u|^2 }{2}$ so that elements of the standard unipotent radical $N$ of $P$ are
$$
N \enspace = \enspace \{ n_{z,p,q,r} \enspace = \enspace 
\renewcommand\arraystretch{1.5}
\begin{pmatrix}
1 & z & -\frac{1}{2}|z|^2 + ip + jq + kr \\
0 & 1_{r-1} & -z^\ast \\
0 & 0 & 1
\end{pmatrix}
\}
$$
where
$$
z = x+ iw + ju + kv \in \mathbb{R}^{r-1} \oplus \; i \mathbb{R}^{r-1} \oplus \; j \mathbb{R}^{r-1} \oplus \; k \mathbb{R}^{r-1} \approx \mathbb{H}^{r-1} \quad  p,q,r \in \mathbb{R}
$$
The standard Levi component is
$$
M =  \{ m_\alpha = 
\renewcommand\arraystretch{1.5}
\begin{pmatrix}
\alpha & 0 & 0 \\
0 & h & 0 \\
0 & 0 & \bar{\alpha}^{-1}
\end{pmatrix}
\ : \text{ for } \alpha \in \mathbb{H}^\times \text{ and } h \in Sp^\ast(r-1)\}
$$
and the split component of the Levi component is
$$
A^+  =  \{ m_y = 
\renewcommand\arraystretch{1.5}
\begin{pmatrix}
y & 0 & 0 \\
0 & 1_{r-1} & 0 \\
0 & 0 & \frac{1}{y}
\end{pmatrix}
\ : \text{ for } y > 0)\}
$$
The Lie algebra of $G = Sp^\ast(r,1)$ is
$$
\frak{g} = \frak{sp}^\ast(r,1) = \{ \gamma \in \frak{gl}_{r+1}(\mathbb{H}) \enspace : \enspace \textrm{exp}^{t\gamma} \in Sp^\ast(r,1), \; \forall t \in \mathbb{R} \} = \{ \gamma \; : \; \gamma^\ast = - S \gamma S^{-1} \}
$$
Since $(e^{\gamma})^\ast S(e^{\gamma}) = S$
$$
\frac{d}{dt}\bigg|_{t=0}(e^{t\gamma})^\ast S(e^{t\gamma}) = 0 = \gamma^\ast S + S \gamma
$$
This implies $\gamma^\ast = -S \gamma S^{-1}$ which also is sufficient since exponentiation respects transpose and conjugation.
%
%
%
%
%
$$
\renewcommand\arraystretch{1.25}
\frak{sp}^\ast(r,1) = \Bigg\{ 
\begin{pmatrix}
a & b &  c \\
-d^\ast & e & -b^\ast \\
g & d & -\bar{a}
\end{pmatrix}
\Bigg|
\enspace a \in \mathbb{H},  \;g, c \in i\mathbb{R} \oplus j\mathbb{R} \oplus k\mathbb{R}, \; b, d \in \mathbb{H}^{r-1}, \; e \in \frak{sp}^\ast(r-1) \Bigg\}
$$
Specify a real basis for $\frak{sp}^\ast(r,1)$ as:
$$
\renewcommand\arraystretch{1.5}
H =
\begin{pmatrix}
1 & 0 & 0 \\
0 & 0_{r-1} & 0 \\
0 & 0 & -1
\end{pmatrix}
H_i =
\begin{pmatrix}
i & 0 & 0 \\
0 & 0_{r-1} & 0 \\
0 & 0 & i
\end{pmatrix}
H_j =
\begin{pmatrix}
j & 0 & 0 \\
0 & 0_{r-1} & 0 \\
0 & 0 & j
\end{pmatrix}
H_k =
\begin{pmatrix}
k & 0 & 0 \\
0 & 0_{r-1} & 0 \\
0 & 0 & k
\end{pmatrix}
$$
$$
\renewcommand\arraystretch{1.5}
X_\ell =
{\scriptstyle
\begin{pmatrix}
0 & e_\ell & 0 \\
0 & 0_{r-1} & -e_\ell^\ast \\
0 & 0 & 0
\end{pmatrix}}
X_{i,\ell} =
{\scriptstyle
\begin{pmatrix}
0 & ie_\ell & 0 \\
0 & 0_{r-1} & ie_\ell^\ast \\
0 & 0 & 0
\end{pmatrix}}
X_{j,\ell} =
{\scriptstyle
\begin{pmatrix}
0 & je_\ell & 0 \\
0 & 0_{r-1} & je_\ell^\ast \\
0 & 0 & 0
\end{pmatrix}}
X_{k,\ell} =
{\scriptstyle
\begin{pmatrix}
0 & ke_\ell & 0 \\
0 & 0_{r-1} & ke_\ell^\ast \\
0 & 0 & 0
\end{pmatrix}}
$$
$$
\renewcommand\arraystretch{1.5}
Y_\ell =
{\scriptstyle
\begin{pmatrix}
0 & 0 & 0 \\
e_\ell^\ast & 0_{r-1} & 0 \\
0 & -e_\ell & 0 \\
\end{pmatrix}}
Y_{i,\ell} =
{\scriptstyle
\begin{pmatrix}
0 & 0 & 0 \\
ie_\ell^\ast & 0_{r-1} & 0 \\
0 & ie_\ell & 0 \\
\end{pmatrix}}
Y_{j,\ell} =
{\scriptstyle
\begin{pmatrix}
0 & 0 & 0 \\
je_\ell^\ast & 0_{r-1} & 0 \\
0 & je_\ell & 0 \\
\end{pmatrix}}
Y_{k,\ell} =
{\scriptstyle
\begin{pmatrix}
0 & 0 & 0 \\
ke_\ell^\ast & 0_{r-1} & 0 \\
0 & ke_\ell & 0 \\
\end{pmatrix}}
$$
$$
\renewcommand\arraystretch{1.5}
Z_{i^-} =
{\scriptstyle
\begin{pmatrix}
0 & 0 & 0 \\
0 & 0_{r-1} & 0 \\
i & 0 & 0
\end{pmatrix}}
Z_{i^+} =
{\scriptstyle
\begin{pmatrix}
0 & 0 & i \\
0 & 0_{r-1} & 0 \\
0 & 0 & 0
\end{pmatrix}}
Z_{j^-} =
{\scriptstyle
\begin{pmatrix}
0 & 0 & 0 \\
0 & 0_{r-1} & 0 \\
j & 0 & 0
\end{pmatrix}}
Z_{j^+} =
{\scriptstyle
\begin{pmatrix}
0 & 0 & j \\
0 & 0_{r-1} & 0 \\
0 & 0 & 0
\end{pmatrix}}
$$
$$
\renewcommand\arraystretch{1.5}
Z_{k^-} =
{\scriptstyle
\begin{pmatrix}
0 & 0 & 0 \\
0 & 0_{r-1} & 0 \\
k & 0 & 0
\end{pmatrix}}
Z_{k^+} =
{\scriptstyle
\begin{pmatrix}
0 & 0 & k \\
0 & 0_{r-1} & 0 \\
0 & 0 & 0
\end{pmatrix}}
\theta_\ell =
\begin{pmatrix}
0 & 0 & 0 \\
0 & \theta_\ell & 0 \\
0 & 0 & 0
\end{pmatrix}
\qquad
\{ \theta_\ell \} \text{ basis of } \frak{sp}^\ast(r-1)
$$
Where we mildly abuse notation by using $\theta_\ell$ both as the inner and outer elements.
%
%
%

\subsubsection{Dual basis for $\frak{sp}^\ast(r,1)$}\label{Dual basis for sp*(r,1)}

Using a trace pairing, which is a multiple of Killing, association of dual elements is as follows ($\prime$ denotes dual element):

$$
H^\prime = \frac{1}{2} H \quad H_i^\prime = -\frac{1}{2} H_i \quad H_j^\prime = -\frac{1}{2} H_j \quad H_k^\prime = -\frac{1}{2} H_k
$$
$$
X^\prime_\ell = \frac{1}{2}\cdot Y_\ell = \frac{1}{2}\cdot X^\ast_\ell \qquad X^\prime_{i,\ell} = -\frac{1}{2}\cdot Y_{i,\ell} = -\frac{1}{2}\cdot X^\ast_{i,\ell}
$$
$$
X^\prime_{j,\ell} = -\frac{1}{2}\cdot Y_{j,\ell} = -\frac{1}{2}\cdot X^\ast_{j,\ell} \qquad X^\prime_{k,\ell} = -\frac{1}{2}\cdot Y_{k,\ell} = -\frac{1}{2}\cdot X^\ast_{k,\ell}
$$
$$
Z^\prime_{i^-} = -Z^\prime_{i^+} \qquad Z^\prime_{j^-} = -Z^\prime_{j^+} \qquad Z^\prime_{k^-} = -Z^\prime_{k^+} \qquad \theta^\prime_\ell = - \theta_\ell
$$
Conveniently, we have
$$
H = [X_\ell,Y_\ell] = [Z_{i^-}, Z_{i^+}] = [Z_{j^-}, Z_{j^+}] = [Z_{k^-}, Z_{k^+}]
$$
$$
-H = [X_{i,\ell},Y_{i,\ell}] = [X_{j,\ell},Y_{j,\ell}] = [X_{k,\ell},Y_{k,\ell}]
$$
The skew-Hermitian symmetry of elements of $\frak{sp}(r+1)$ and $\frak{sp}(r-1) \oplus \frak{sp}(1) = G \cap \frak{u}(r+1) = \frak{k}$ is used in conjunction with the fact that $\frak{k}$ acts trivially on functions on $G/K$, which shows  that the linear combinations of basis elements conveniently fall in $\frak{k}$. 
$$
H_i, H_j, H_k \quad X_\ell - Y_\ell \quad X_{i,\ell} + Y_{i,\ell}, X_{j,\ell} + Y_{j,\ell}, X_{k,\ell} + Y_{k,\ell} \quad Z_{i^-} + Z_{i^+}, Z_{j^-} + Z_{j^+}, Z_{k^-} + Z_{k^+}
$$
\subsubsection{Rearrangement of Casimir operator on $G/K$}\label{Simplifying  Casimir G/K for Sp*(r,1)}
\begin{align*}
\Omega_\frak{g} =& H \cdot H^\prime + H_i \cdot H_i^\prime + H_j \cdot H_j^\prime + H_k \cdot H_k^\prime \\
 &+ Z_{i^-} \cdot Z^\prime_{i^-} + Z_{i^+} \cdot Z^\prime_{i^+} + Z_{j^-} \cdot Z^\prime_{j^-} + Z_{j^+} \cdot Z^\prime_{j^+} + Z_{k^-} \cdot Z^\prime_{k^-} + Z_{k^+} \cdot Z^\prime_{k^+} \\
&+ \sum^{r-1}_{\ell = 1} X_\ell \cdot X_\ell^\prime + \sum^{r-1}_{\ell = 1} Y_\ell \cdot Y_\ell^\prime + \sum^{r-1}_{\ell = 1} X_{i,\ell} \cdot X_{i,\ell}^\prime + \sum^{r-1}_{\ell = 1} Y_{i,\ell} \cdot Y_{i,\ell}^\prime + \sum^{r-1}_{\ell = 1} X_{j,\ell} \cdot X_{j,\ell}^\prime \\
 &+ \sum^{r-1}_{\ell = 1} Y_{j,\ell} \cdot Y_{j,\ell}^\prime + \sum^{r-1}_{\ell = 1} X_{k,\ell} \cdot X_{k,\ell}^\prime + \sum^{r-1}_{\ell = 1} Y_{k,\ell} \cdot Y_{k,\ell}^\prime + \sum^{r-1}_{j = 1} \theta_j \cdot \theta_j^\prime
\end{align*}
Ignoring terms involving  $H_i$, $H_j$, $H_k$ and the $\theta_j$, which act by $0$ on $G/K$, substitution of the expressions for dual elements in the above gives
\begin{align*}
\Omega_\frak{g} =& \frac{1}{2}H^2 - (Z_{i^-} \cdot Z_{i^+} + Z_{i^+} \cdot Z_{i^-} + Z_{j^-} \cdot Z_{j^+} + Z_{j^+} \cdot Z_{j^-} + Z_{k^-} \cdot Z_{k^+} + Z_{k^+} \cdot Z_{k^-}) \\
&+ \frac{1}{2}\sum^{r-1}_{\ell = 1} (X_\ell \cdot Y_\ell +  Y_\ell \cdot X_\ell) - \frac{1}{2}\sum^{r-1}_{\ell = 1} (X_{i,\ell} \cdot Y_{i,\ell}+ Y_{i,\ell} \cdot X_{i,\ell}) \\
&- \frac{1}{2}\sum^{r-1}_{\ell = 1} (X_{j,\ell} \cdot Y_{j,\ell} + Y_{j,\ell} \cdot X_{j,\ell}) - \frac{1}{2}\sum^{r-1}_{\ell = 1} (X_{k,\ell} \cdot Y_{k,\ell} + Y_{k,\ell} \cdot X_{k,\ell}) + (\text{acts by zero})
\end{align*}
Similar to the case of $\frak{u}(r,1)$, we have the equivalences
\begin{itemize}
	\item[] $Z_{i^-} \cdot Z_{i^+} + Z_{i^+} \cdot Z_{i^-} = H - 2 Z_{i^+}^2$ 
	\item[] $Z_{j^-} \cdot Z_{j^+} + Z_{j^+} \cdot Z_{j^-} = H - 2 Z_{j^+}^2$ 
	\item[] $Z_{k^-} \cdot Z_{k^+} + Z_{k^+} \cdot Z_{k^-} = H - 2 Z_{k^+}^2$
\end{itemize}
Substituting these in the previous expression
\begin{align*}
\Omega_\frak{g} =& \frac{1}{2}H^2 - 3H + 2 Z_{i^+}^2 + 2 Z_{j^+}^2 + 2 Z_{k^+}^2 \\
&+ \frac{1}{2}\sum^{r-1}_{\ell = 1} (X_\ell \cdot Y_\ell +  Y_\ell \cdot X_\ell) \\
&- \frac{1}{2}\sum^{r-1}_{\ell = 1} (X_{i,\ell} \cdot Y_{i,\ell}+ Y_{i,\ell} \cdot X_{i,\ell}) \\
&- \frac{1}{2}\sum^{r-1}_{\ell = 1} (X_{j,\ell} \cdot Y_{j,\ell} + Y_{j,\ell} \cdot X_{j,\ell}) \\
&- \frac{1}{2}\sum^{r-1}_{\ell = 1} (X_{k,\ell} \cdot Y_{k,\ell} + Y_{k,\ell} \cdot X_{k,\ell}) + \{\text{acts by zero}\}
\end{align*}
\begin{align*}
\Omega_\frak{g} =& \frac{1}{2}H^2 - 3H + 2 Z_{i^+}^2 + 2 Z_{j^+}^2 + 2 Z_{k^+}^2  \\
&+ \frac{1}{2}\sum^{r-1}_{\ell = 1} (2 \cdot X_\ell \cdot Y_\ell +  [Y_\ell , X_\ell]) \\
&- \frac{1}{2}\sum^{r-1}_{\ell = 1} (2 \cdot X_{i,\ell} \cdot Y_{i,\ell}+ [Y_{i,\ell} , X_{i,\ell}]) \\
&- \frac{1}{2}\sum^{r-1}_{\ell = 1} (2 \cdot X_{j,\ell} \cdot Y_{j,\ell} + [Y_{j,\ell} , X_{j,\ell}]) \\
&- \frac{1}{2}\sum^{r-1}_{\ell = 1} (2 \cdot X_{k,\ell} \cdot Y_{k,\ell} + [Y_{k,\ell} , X_{k,\ell}]) + \{\text{acts by zero}\}
\end{align*}
using the bracket relationships above and continuing
\begin{align*}
\Omega_\frak{g} =& \frac{1}{2}H^2  - (2r+1) H + 2 Z_{i^+}^2 + 2 Z_{j^+}^2 + 2 Z_{k^+}^2\\
&+ \sum^{r-1}_{\ell = 1} X_\ell \cdot Y_\ell - \sum^{r-1}_{\ell = 1} X_{i,\ell} \cdot Y_{i,\ell} \\
&- \sum^{r-1}_{\ell = 1} X_{j,\ell} \cdot Y_{j,\ell} - \sum^{r-1}_{\ell = 1} X_{k,\ell} \cdot Y_{k,\ell} + \{\text{acts by zero}\}
\end{align*}
\begin{align*}
\Omega_\frak{g} =& \frac{1}{2}H^2  - (2r+1) H + 2 Z_{i^+}^2 + 2 Z_{j^+}^2 + 2 Z_{k^+}^2\\
&+ \sum^{r-1}_{\ell = 1} (X_\ell^2 - \underbrace{X_\ell(X_\ell - Y_\ell)}_\text{acts by 0} ) \\
&+ \sum^{r-1}_{\ell = 1} (X_{i,\ell}^2  - \underbrace{X_{i,\ell}(X_{i,\ell} + Y_{i,\ell})}_\text{acts by 0} ) \\
&+ \sum^{r-1}_{\ell = 1} (X_{j,\ell}^2 - \underbrace{X_{j,\ell}(X_{j,\ell} + Y_{j,\ell})}_\text{acts by 0} ) \\
&+ \sum^{r-1}_{\ell = 1} (X_{k,\ell}^2 - \underbrace{X_{k,\ell}(X_{k,\ell} + Y_{k,\ell})}_\text{acts by 0} ) + \{\text{acts by zero}\}
\end{align*}
$$
\Omega_\frak{g} = \frac{1}{2}H^2  - (2r+1) H + 2 Z_{i^+}^2 + 2 Z_{j^+}^2 + 2 Z_{k^+}^2 + \sum^{r-1}_{\ell = 1} X_\ell^2 + \sum^{r-1}_{\ell = 1} X_{i,\ell}^2 + \sum^{r-1}_{\ell = 1} X_{j,\ell}^2 + \sum^{r-1}_{\ell = 1} X_{k,\ell}^2 + \{\text{acts by zero}\}
$$
\subsubsection{Casimir for $G/K$ in Iwasawa coordinates}\label{Casimir in Iwasawa coords on Sp*(r,1)}
A model for quaternionic hyperbolic space is
$$
G/K \approx Sp^\ast (r,1)/(Sp^\ast (r) \times Sp^\ast (1)) \approx NM/(M \cap K) \approx N \times A^+
$$
with coordinates $z, p, q, r \in N \approx  \mathbb{H}^{r-1} \times \mathbb{R}^3$, $y \in A^+ \approx  \mathbb{R}^\times$ from the Iwasawa decomposition giving
$$
G/K \ni n_{z,q,q^\prime,q^{\prime\prime}} m_y K \longleftrightarrow (z,q,q^\prime,q^{\prime\prime},y) \in \mathbb{H}^{r-1} \times \mathbb{R}^3 \times (0,\infty)
$$
$$
z = x+ iw + ju + kv \in \mathbb{R}^{r-1} \oplus \; i \mathbb{R}^{r-1} \oplus \; j \mathbb{R}^{r-1} \oplus \; k \mathbb{R}^{r-1} \approx \mathbb{H}^{r-1} \quad  q,q^\prime,q^{\prime\prime} \in \mathbb{R}
$$
Let $e_\ell$ be the unit real vector so that
\begin{align*}
	e^{tH} =& m_{e^t} \\
	e^{tX_\ell} =& n_{t e_\ell, 0,0,0} = 
\begin{pmatrix}
1 & t \cdot e_\ell & -t^2 \\
0 & 1_{r-1} & -t \cdot e^\ast_\ell \\
0 & 0 & 1
\end{pmatrix} \\
	e^{t X_{i,\ell}} =& n_{t i e_\ell, 0,0,0} = 
\begin{pmatrix}
1 & t \cdot ie_\ell & -t^2 \\
0 & 1_{r-1} & -t \cdot ie_\ell^\ast \\
0 & 0 & 1
\end{pmatrix} \qquad (\text{similarly for } j \text{ and } k)\\
	e^{t Z_{i^+}} =& n_{0,t,0,0} = 
\begin{pmatrix}
1 & 0 & t i \\
0 & 1_{r-1} & 0 \\
0 & 0 & 1
\end{pmatrix} \qquad (\text{similarly for } j \text{ and } k)
\end{align*}
To determine the operators on $G/K$, let $G \ni g =n_{z_0,q_0,q^\prime_0,q^{\prime\prime}_0} m_{y_o}$
\begin{align*}
H \cdot f(g) =& \frac{d}{dt}\bigg|_{t=0} f(g \cdot m_{e^t}) =  \frac{d}{dt}\bigg|_{t=0} f(n_{z_0,q_0,q^\prime_0,q^{\prime\prime}_0} m_{y_0} \cdot m_{e^t}) = \frac{d}{dt}\bigg|_{t=0} f(n_{z_0,q_0,q^\prime_0,q^{\prime\prime}_0} m_{y_0 e^t}) \\
=& y_0\frac{d}{dt}\bigg|_{t=0} f(n_{z_0,q_0,q^\prime_0,q^{\prime\prime}_0} m_{y_0 e^t}) = y_0 \; \frac{\partial f}{\partial y}n_{z_0,q_0,q^\prime_0,q^{\prime\prime}_0} m_{y_0})
\end{align*}
so $H$ acts by $y \frac{\partial}{\partial y}$ and $H^2 = H \circ H = y \frac{\partial}{\partial y}(y \frac{\partial}{\partial y}) = y^2 \frac{\partial^2}{\partial y^2} + y \frac{\partial}{\partial y}$.
We have convenient relations, similar to those useful for $O(r,1)$
$$
m_y \cdot e^{t X_\ell} = m_y \cdot n_{te_\ell,0,0,0} =  
\begin{pmatrix}
y & 0 & 0 \\
0 & 1_{r-1} & 0 \\
0 & 0 & \frac{1}{y}
\end{pmatrix}
\begin{pmatrix}
1 & t \cdot e_\ell & -\frac{t^2}{2} \\
0 \enspace & 1_{r-1} & -t \cdot e_\ell^\ast \\
0 & 0 & 1
\end{pmatrix}
=
\begin{pmatrix}
y & yt \cdot e_\ell & -\frac{yt^2}{2} \\
0 \enspace & 1_{r-1} & -t \cdot e_\ell^\ast \\
0 & 0 & \frac{1}{y}
\end{pmatrix}
$$
$$
n_{yte_\ell, 0,0,0} \cdot m_y =  
\begin{pmatrix}
1 & yt \cdot e_\ell & -\frac{y^2t^2}{2} \\
0 \enspace & 1_{r-1} & -yt \cdot e_\ell^\ast \\
0 & 0 & 1
\end{pmatrix}
\begin{pmatrix}
y & 0 & 0 \\
0 & 1_{r-1} & 0 \\
0 & 0 & \frac{1}{y}
\end{pmatrix}
=
\begin{pmatrix}
y & yt \cdot e_\ell & -\frac{yt^2}{2} \\
0 \enspace & 1_{r-1} & -t \cdot e_\ell^\ast \\
0 & 0 & \frac{1}{y}
\end{pmatrix}
$$
%
%
%
%
$$
m_y \cdot e^{t X_{i,\ell}} = m_y \cdot n_{tie_\ell,0,0,0} =  
\begin{pmatrix}
y & 0 & 0 \\
0 & 1_{r-1} & 0 \\
0 & 0 & \frac{1}{y}
\end{pmatrix}
\begin{pmatrix}
1 & t \cdot i e_\ell & -\frac{t^2}{2} \\
0 \enspace & 1_{r-1} & -t \cdot i e_\ell^\ast \\
0 & 0 & 1
\end{pmatrix}
=
\begin{pmatrix}
y & yt \cdot i e_\ell & -\frac{yt^2}{2} \\
0 \enspace & 1_{r-1} & -t \cdot i e_\ell^\ast \\
0 & 0 & \frac{1}{y}
\end{pmatrix}
$$
$$
n_{yt(i e_\ell), 0,0,0} \cdot m_y =  
\begin{pmatrix}
1 & yt \cdot i e_\ell & -\frac{y^2t^2}{2} \\
0 \enspace & 1_{r-1} & -yt \cdot i e_\ell^\ast \\
0 & 0 & 1
\end{pmatrix}
\begin{pmatrix}
y & 0 & 0 \\
0 & 1_{r-1} & 0 \\
0 & 0 & \frac{1}{y}
\end{pmatrix}
=
\begin{pmatrix}
y & yt \cdot i e_\ell & -\frac{yt^2}{2} \\
0 \enspace & 1_{r-1} & -t \cdot i e_\ell^\ast \\
0 & 0 & \frac{1}{y}
\end{pmatrix}
$$
so that
$$
m_y \cdot e^{t X_\ell} = m_y \cdot n_{te_\ell,0,0,0} =  n_{yte_\ell, 0,0,0} \cdot m_y
$$
and
$$
m_y \cdot e^{t X_{i,\ell}} = m_y \cdot n_{t i e_\ell,0,0,0} =  n_{yti e_\ell, 0,0,0} \cdot m_y
$$
which give
$$
n_{z_0,q_0,q^\prime_0,q^{\prime\prime}_0} \cdot m_y \cdot e^{t X_\ell} =n_{z_0,q_0,q^\prime_0,q^{\prime\prime}_0} \cdot m_y \cdot n_{te_\ell, 0,0,0} = n_{z_0,q_0,q^\prime_0,q^{\prime\prime}_0} \cdot n_{yt e_\ell, 0,0,0} \cdot m_y = n_{z_0 + yt e_\ell,q_0,q^\prime_0,q^{\prime\prime}_0} \cdot m_y
$$
and (similarly for $j$ and $k$ operators)
$$
n_{z_0,q_0,q^\prime_0,q^{\prime\prime}_0} \cdot m_y \cdot e^{t X_{i,\ell}} =n_{z_0,q_0,q^\prime_0,q^{\prime\prime}_0} \cdot m_y \cdot n_{t i e_\ell, 0,0,0} = n_{z_0,q_0,q^\prime_0,q^{\prime\prime}_0} \cdot n_{yti e_\ell, 0,0,0} \cdot m_y = n_{z_0 + yt i e_\ell,q_0,q^\prime_0,q^{\prime\prime}_0} \cdot m_y
$$
Use real coordinates $z = x+ iw + ju + kv$ for $g$ to determine the operators $X_\ell$, $X_{i,\ell}$, $X_{j,\ell}$ and $X_{k,\ell}$
\begin{align*}
X_\ell \cdot f(g) =& \frac{d}{dt}\bigg|_{t=0} f(g \cdot e^{tX_\ell}) = \frac{d}{dt}\bigg|_{t=0} f(g \cdot n_{te_\ell, 0}) =  \frac{d}{dt}\bigg|_{t=0} f(n_{z_0,q_0,q^\prime_0,q^{\prime\prime}_0} m_{y_0} \cdot n_{te_\ell, 0,0,0}) \\
 =& \frac{d}{dt}\bigg|_{t=0} f(n_{z_0 + y_0 te_\ell,q_0,q^\prime_0,q^{\prime\prime}_0} m_{y_0}) = y_0 \; \frac{\partial f}{\partial x_\ell}(n_{z_0,q_0,q^\prime_0,q^{\prime\prime}_0} m_{y_0})\end{align*}
giving
$$
X_\ell = y \frac{\partial}{\partial x_\ell} \text{  and  } X_\ell^2 = X_\ell(X_\ell) = y \frac{\partial}{\partial x_\ell} \big(y \frac{\partial}{\partial x_\ell} \big) = \bigg(y \frac{\partial}{\partial x_\ell}\bigg)^2 = y^2 \frac{\partial^2}{\partial x_\ell^2}
$$
Analogously, $X_{i,\ell}$ is the imaginary direction $w_\ell \approx i \cdot e_\ell$, $X_{j,\ell}$ is the imaginary direction $u_\ell \approx j \cdot e_\ell$ and $X_{k,\ell}$ is the imaginary direction $v_\ell \approx k \cdot e_\ell$ so
$$
X_{i,\ell} \cdot f(g) =  \frac{d}{dt}\bigg|_{t=0} f(n_{z_0 + y_0 t i e_\ell,q_0,q^\prime_0,q^{\prime\prime}_0} m_{y_0}) = y_0 \; \frac{\partial f}{\partial w_\ell}(n_{z_0,q_0,q^\prime_0,q^{\prime\prime}_0} m_{y_0})
$$
$$
X_{i,\ell} = y \frac{\partial}{\partial w_\ell} \text{  and  } X_{i,\ell}^2 = X_{i,\ell}(X_{i,\ell}) = y \frac{\partial}{\partial w_\ell} \big( y \frac{\partial}{\partial w_\ell} \big) = \bigg( y \frac{\partial}{\partial w_\ell}\bigg)^2 = y^2 \frac{\partial^2}{\partial w_\ell^2}
$$
$$
X_{j,\ell} \cdot f(g) =  \frac{d}{dt}\bigg|_{t=0} f(n_{z_0 + y_0 t j e_\ell,q_0,q^\prime_0,q^{\prime\prime}_0} m_{y_0}) = y_0 \; \frac{\partial f}{\partial u_\ell}(n_{z_0,q_0,q^\prime_0,q^{\prime\prime}_0} m_{y_0})
$$
$$
X_{j,\ell} = y \frac{\partial}{\partial u_\ell} \text{  and  } X_{j,\ell}^2 = X_{j,\ell}(X_{j,\ell}) = y \frac{\partial}{\partial u_\ell} \big( y \frac{\partial}{\partial u_\ell} \big) = \bigg( y \frac{\partial}{\partial u_\ell}\bigg)^2 = y^2 \frac{\partial^2}{\partial u_\ell^2}
$$
$$
X_{k,\ell} \cdot f(g) =  \frac{d}{dt}\bigg|_{t=0} f(n_{z_0 + y_0 t k e_\ell,q_0,q^\prime_0,q^{\prime\prime}_0} m_{y_0}) = y_0 \; \frac{\partial f}{\partial v_\ell}(n_{z_0,q_0,q^\prime_0,q^{\prime\prime}_0} m_{y_0})
$$
$$
X_{k,\ell} = y \frac{\partial}{\partial v_\ell} \text{  and  } X_{k,\ell}^2 = X_{k,\ell}(X_{k,\ell}) = y \frac{\partial}{\partial v_\ell} \big( y \frac{\partial}{\partial v_\ell} \big) = \bigg( y \frac{\partial}{\partial v_\ell}\bigg)^2 = y^2 \frac{\partial^2}{\partial v_\ell^2}
$$
Last, for the $Z$ terms, worked out for $Z_{i^+}$ (the $q$ direction): $Z_{j^+}$ and $Z_{k^+}$ are analogous (i.e., the $q^\prime$ and $q^{\prime\prime}$ directions, respectively).
$$
e^{t Z_{i^+}} = n_{0,t,0,0} =
\begin{pmatrix}
1 & 0 & i t \\
0 & 1_{r-1} & 0 \\
0 & 0 & 1
\end{pmatrix} \\
$$
\begin{align*}
Z_{i^+} \cdot f(g) =& \frac{d}{dt}\bigg|_{t=0} f(g \cdot e^{tZ_{i^+}}) = \frac{d}{dt}\bigg|_{t=0} f(g \cdot n_{0,t,0,0}) =  \frac{d}{dt}\bigg|_{t=0} f(n_{z_0, q_0,q^\prime_0,q^{\prime\prime}_0} m_{y_0} \cdot n_{0,t,0,0}) \\
=& 
\frac{d}{dt}\bigg|_{t=0} f \Big(
\begin{pmatrix}
1 & z_0 & -\frac{|z_0|^2}{2} + i q_0 + j q^\prime_0 + k q^{\prime\prime}_0 \\
0 \enspace & 1_{r-1} & z_0^\ast \\
0 & 0 & 1
\end{pmatrix}
\begin{pmatrix}
y_0 & 0 & 0 \\
0 & 1_{r-1} & 0 \\
0 & 0 & \frac{1}{y_0}
\end{pmatrix}
\begin{pmatrix}
1 & 0 &  i t \\
0 & 1_{r-1} & 0 \\
0 & 0 & 1
\end{pmatrix}
\Big) \\
=& 
\frac{d}{dt}\bigg|_{t=0} f \Big(
\begin{pmatrix}
1 & z_0 & -\frac{|z_0|^2}{2} + i q_0 + j q^\prime_0 + k q^{\prime\prime}_0 \\
0 \enspace & 1_{r-1} & z_0^\ast \\
0 & 0 & 1
\end{pmatrix}
\begin{pmatrix}
y_0 & 0 & i y_0 t \\
0 & 1_{r-1} & 0 \\
0 & 0 & \frac{1}{y_0}
\end{pmatrix}
\Big) \\
=& 
\frac{d}{dt}\bigg|_{t=0} f \Big(
\begin{pmatrix}
y_0 & z_0 & -\frac{|z_0|^2}{2 y_0} + \frac{i(q_0 + y_0^2 t)  + j q^\prime_0 + k q^{\prime\prime}_0}{y_0} \\
0 \enspace & 1_{r-1} & \frac{1}{y_0}z_0^\ast \\
0 & 0 & \frac{1}{y_0}
\end{pmatrix}
\Big) \\
=& 
\frac{d}{dt}\bigg|_{t=0} f \Big(
\begin{pmatrix}
1 & z_0 & -\frac{|z_0|^2}{2} + i(q_0 + y_0^2 t)  + j q^\prime_0 + k q^{\prime\prime}_0 \\
0 \enspace & 1_{r-1} & z_0^\ast \\
0 & 0 & 1
\end{pmatrix}
\begin{pmatrix}
y_0 & 0 & 0 \\
0 & 1_{r-1} & 0 \\
0 & 0 & \frac{1}{y_0}
\end{pmatrix}
\Big) \\
=& 
\frac{d}{dt}\bigg|_{t=0} f (z_0, (q_0 + y_0^2 t),q^\prime_0,q^{\prime\prime}_0, y_0) = y_0^2 \frac{\partial f}{\partial q}(z_0, q_0, q^\prime_0, q^{\prime\prime}_0, y_0)
\end{align*}
so correspondingly
$$
Z_{i^+} = y^2 \frac{\partial}{\partial q} \text{  and  } Z_{i^+}^2 = Z_{i^+} \circ Z_{i^+} = y^2 \frac{\partial}{\partial q} \big( y^2 \frac{\partial}{\partial q} \big) = \bigg( y^2 \frac{\partial}{\partial q}\bigg)^2 = y^4 \frac{\partial^2}{\partial q^2}
$$
and
$$
Z_{j^+} = y^4 \frac{\partial^2}{\partial q^{\prime 2}} \qquad Z_{k^+} = y^4 \frac{\partial^2}{\partial q^{\prime\prime 2}}
$$
\noindent
Substituting appropriately in $\Omega$ on $G/K$
\begin{align*}
\Omega_\frak{g} = \; & \frac{1}{2}H^2  - (2r+1) H + 2 Z_{i^+}^2 + 2 Z_{j^+}^2 + 2 Z_{k^+}^2 + \sum^{r-1}_{\ell = 1} X_\ell^2 + \sum^{r-1}_{\ell = 1} X_{i,\ell}^2 + \sum^{r-1}_{\ell = 1} X_{j,\ell}^2 + \sum^{r-1}_{\ell = 1} X_{k,\ell}^2 \\
= \; & \frac{1}{2}\big( y^2 \frac{\partial^2}{\partial y^2} + y \frac{\partial}{\partial y} \big) - (2r+1) \big( y \frac{\partial}{\partial y} \big) + 2 y^4 \frac{\partial^2}{\partial q^2} + 2 y^4 \frac{\partial^2}{\partial q^{\prime 2}} + 2 y^4 \frac{\partial^2}{\partial q^{\prime\prime 2}} \\
	& \; + y^2 \sum^{r-1}_{\ell = 1} \frac{\partial^2}{\partial x_\ell^2} + y^2 \sum^{r-1}_{\ell = 1} \frac{\partial^2}{\partial w_\ell^2} + y^2 \sum^{r-1}_{\ell = 1} \frac{\partial^2}{\partial u_\ell^2} + y^2 \sum^{r-1}_{\ell = 1} \frac{\partial^2}{\partial v_\ell^2} \\
= \; & \frac{1}{2} y^2 \frac{\partial^2}{\partial y^2}  - \frac{4r+1}{2} \big( y \frac{\partial}{\partial y} \big) + 2 y^4 \frac{\partial^2}{\partial q^2} + 2 y^4 \frac{\partial^2}{\partial q^{\prime 2}} + 2 y^4 \frac{\partial^2}{\partial q^{\prime\prime 2}} \\
	& \; + y^2 \big( \sum^{r-1}_{\ell = 1} \frac{\partial^2}{\partial x_\ell^2} + \sum^{r-1}_{\ell = 1} \frac{\partial^2}{\partial w_\ell^2} +  \sum^{r-1}_{\ell = 1} \frac{\partial^2}{\partial u_\ell^2} + \sum^{r-1}_{\ell = 1} \frac{\partial^2}{\partial v_\ell^2} \big) \\
=\; &  y^2 \big(    \sum^{r-1}_{\ell = 1} \frac{\partial^2}{\partial x_\ell^2} + \sum^{r-1}_{\ell = 1} \frac{\partial^2}{\partial w_\ell^2} +  \sum^{r-1}_{\ell = 1} \frac{\partial^2}{\partial u_\ell^2} + \sum^{r-1}_{\ell = 1} \frac{\partial^2}{\partial v_\ell^2}  + \frac{1}{2}\frac{\partial^2}{\partial y^2} + 2 y^2 \frac{\partial^2}{\partial q^2} + 2 y^2 \frac{\partial^2}{\partial q^{\prime 2}} + 2 y^2 \frac{\partial^2}{\partial q^{\prime\prime 2}} ) \\
	& \; - \frac{4r+1}{2}  y \frac{\partial}{\partial y}   \\
=&  y^2 \big(  \Delta_x + \Delta_w + \Delta_u + \Delta_v + \frac{1}{2}\frac{\partial^2}{\partial y^2} + 2 y^2 \frac{\partial^2}{\partial q^2} + 2 y^2 \frac{\partial^2}{\partial q^{\prime 2}} + 2 y^2 \frac{\partial^2}{\partial q^{\prime\prime 2}} \big) - \frac{4r+1}{2}  y \frac{\partial}{\partial y} 
\end{align*}
To clear coefficients, we could renormalize the coordinates $z, p, q, r \in N \approx  \mathbb{H}^{r-1} \times \mathbb{R}^3$, $y \in A^+ \approx  \mathbb{R}^\times$ from the Iwasawa decomposition
$$
G/K \ni n_{z,q,q^\prime,q^{\prime\prime}} m_y K \longleftrightarrow (z,q,q^\prime,q^{\prime\prime},y) \in \mathbb{H}^{r-1} \times \mathbb{R}^3 \times (0,\infty)
$$
by
\begin{align*}
& \mathbb{H}^{r-1} \times \mathbb{R}^3 \times (0, +\infty) \ni  (z,q,q^\prime,q^{\prime\prime},y) \to n_{z\cdot \sqrt{2},\frac{p}{4},\frac{q}{4},\frac{r}{4}}m_{y} = \\
&
\renewcommand\arraystretch{1.5}
\begin{pmatrix}
1 & z\cdot \sqrt{2} & -|z|^2 + \frac{ip}{4} + \frac{iq}{4} + \frac{ir}{4} \\
0 & 1_{r-1} & -z^\ast \cdot \sqrt{2} \\
0 & 0 & 1
\end{pmatrix}
\begin{pmatrix}
y & 0 & 0 \\
0 & 1_{r-1} & 0 \\
0 & 0 & \frac{1}{y}
\end{pmatrix}
\end{align*}
$$
2 \Omega_\frak{g} = y^2 \big(  \Delta_x + \Delta_w + \Delta_u + \Delta_v + \frac{\partial^2}{\partial y^2} + y^2 \frac{\partial^2}{\partial q^2} + y^2 \frac{\partial^2}{\partial q^{\prime 2}} + y^2 \frac{\partial^2}{\partial q^{\prime\prime 2}} \big) - (4r+1)  y \frac{\partial}{\partial y} 
$$
Ignoring the factor of $2$
$$
\Omega_\frak{g} = y^2 \big(  \Delta_x + \Delta_w + \Delta_u + \Delta_v + \frac{\partial^2}{\partial y^2} + y^2 \frac{\partial^2}{\partial q^2} + y^2 \frac{\partial^2}{\partial q^{\prime 2}} + y^2 \frac{\partial^2}{\partial q^{\prime\prime 2}} \big) - (4r+1)  y \frac{\partial}{\partial y} 
$$
%
%
%
%
%
%
%
%

\section{Positivity of fragments of $-\Delta$}\label{Postivity-of-fragments-of-Delta}

%
%
%
%

\subsection{$O(r,1)$}\label{Postivity-of-fragments-of-Delta-O(r,1)}

The Lax-Phillips argument requires not only that $-\Delta$ itself be non-negative but that the two natural summands of $-\Delta$ in Iwasawa coordinates are both non-negative. For instance, the first-order term $(r-2)y\frac{\partial}{\partial y}$ will be essential to cancel terms occuring in integration by parts.

As before, $G \approx O(r,1)$ with maximal compact $K$ and minimal parabolic subgroup $P = NA^+ (P \cap K)$, $N \approx \mathbb{R}^{r-1}$ and $A^+ \approx \mathbb{R}^+ = (0, \infty)$. We have an Iwasawa decomposition $G = NA^+ K$.
Let $\Gamma$ be an arithmetic subgroup of $G$ where we grant that $\Gamma$ has finitely-many cusps by reduction theory \cite{Borel-1965-66b} \cite{Borel-HarishChandra-1962}. Assume $\Gamma \cap N$ acts on $\mathbb{R}^{r-1}$ (i.e., $N$) by translation by $\mathbb{Z}^{r-1}$.

Examining the possible components of $-\Delta$, the fragment $-y^2\Delta_x$ is non-negative since derivatives in $x$ do not interact with the coefficient $y^2$ or the $y^{-r}$ in the measure.
Thus we must show that, for $f \in C^\infty_c(\mathbb{R}^{r-1} \times \mathbb{R}^\times)$
$$
\int_{(N \cap \Gamma) \backslash N} \int_{y > c} -(y^2\frac{\partial^2}{\partial y^2} - (r-2)y\frac{\partial}{\partial y})f \cdot \bar{f} \ \frac{dx\, dy}{y^r} \geq 0
$$
Integrating by parts once in $y$ on the $\frac{\partial^2}{\partial y^2}$ term gives
\begin{align*}
& \int_{(N \cap \Gamma) \backslash N} \int_{y > c} -y^2\frac{\partial^2}{\partial y^2} f \cdot \bar{f} \ \frac{dx\, dy}{y^r} \\
=& \int_{(N \cap \Gamma) \backslash N} \int_{y > c} -\frac{\partial^2}{\partial y^2}f \cdot y^{2-r}\bar{f} \ dx \, dy 
= \int_{(N \cap \Gamma) \backslash N} \int_{y > c} y^{2-r}\bar{f} \cdot (-\frac{\partial^2}{\partial y^2}f)\ dx\, dy \\
=& \enspace \cancelto{0}{(y^{2-r} \bar{f})(-\frac{\partial}{\partial y}f) \bigg|_\partial} \quad - \int_{(N \cap \Gamma) \backslash N} \int_{y > c} -\frac{\partial}{\partial y}f \cdot \frac{\partial}{\partial y}(y^{2-r} \bar{f}) \ dx \, dy \\
=& \int_{(N \cap \Gamma) \backslash N} \int_{y > c} \frac{\partial}{\partial y}f \cdot \frac{\partial}{\partial y}(y^{2-r} \bar{f}) \ dx \, dy 
= \int_{(N \cap \Gamma) \backslash N} \int_{y > c}	\frac{\partial}{\partial y}f \cdot ((2-r)y^{1-r} + y^{2-r}\frac{\partial}{\partial y})\bar{f} \ dx \, dy
\end{align*}
The $\frac{\partial}{\partial y}f \cdot (2-r)y^{1-r}\bar{f}$ cancels the corresponding term in the original expression so that
\begin{align*}
& \int_{(N \cap \Gamma) \backslash N} \int_{y > c}  -(y^2\frac{\partial^2}{\partial y^2} - (r-2)y\frac{\partial}{\partial y})f \cdot \bar{f} \ \frac{dx \, dy}{y^r} \\
=& \int_{(N \cap \Gamma) \backslash N} \int_{y > c}  y\frac{\partial}{\partial y}f \cdot y\frac{\partial}{\partial y}\bar{f} \ \frac{dx \, dy}{y^r} = \int_{(N \cap \Gamma) \backslash N} \int_{y > c}  y^2 \bigg| \frac{\partial f}{\partial y} \bigg|^2 \ \frac{dx \, dy}{y^r} \geq 0
\end{align*}
So that, with the invariant Laplacian $\Delta$
$$
\int_{(N \cap \Gamma) \backslash N} \int_{y > c}   -\Delta f \cdot \bar{f} \frac{dx \, dy}{y^r} = \int_{(N \cap \Gamma) \backslash N} \int_{y > c}   (y\nabla_x f)^2 + y^2\bigg|\frac{\partial f}{\partial y}\bigg|^2 \frac{dx \, dy}{y^r} \geq 0
$$
and the invariant Laplacian $-\Delta$, along with its components corresponding to the Iwasawa decomposition, are non-negative.

%
%

\subsection{$U(r,1)$}\label{Postivity-of-fragments-of-Delta-U(r,1)}

\begin{align*}
\Omega =& y^2 \bigg( \Delta_u + \Delta_v + \frac{\partial^2}{\partial y^2}+ y^2 \frac{\partial^2}{\partial x^2} \bigg) - (2r-1) y \frac{\partial}{\partial y} \\
-\Delta =& -y^2 \bigg( \Delta_u + \Delta_v + y^2 \frac{\partial^2}{\partial x^2} \bigg) - \bigg(y^2 \frac{\partial^2}{\partial y^2} - (2r-1) y \frac{\partial}{\partial y} \bigg)
\end{align*}
Examining the possible components of $-\Delta$, the fragment $-(\Delta_u + \Delta_v + y^2 \frac{\partial^2}{\partial x^2})$ is seen to be non-negative since derivatives in $u$, $v$ and $x$ do not interact with the coefficient $y^2$ or the $y^{-(2r+1)}$ in the measure, though the coefficient $y^2$ of $\frac{\partial^2}{\partial x^2}$ calls for further examination. For $f \in C^\infty_c((N \cap \Gamma) \backslash N \times (a, \infty)$
\begin{align*}
& \int_{(N \cap \Gamma) \backslash N} \int_{y > c}  -(\Delta_u + \Delta_v + y^2 \frac{\partial^2}{\partial x^2})f(n_{z,x} m_y) \cdot \overline{f(n_{z,x} m_y)} \ \frac{dz\, dx\, dy}{y^{2r+1}} \\
=& \underbrace{\int_{(N \cap \Gamma) \backslash N} \int_{y > c}  -\Delta_u f(n_{z,x} m_y) \cdot \overline{f(n_{z,x} m_y)} \ \frac{dz\, dx\, dy}{y^{2r+1}}}_\text{does not interact with y - so positive} \\
&+ \underbrace{\int_{(N \cap \Gamma) \backslash N} \int_{y > c}  -\Delta_v f(n_{z,x} m_y) \cdot \overline{f(n_{z,x} m_y)} \ \frac{dz\, dx\, dy}{y^{2r+1}} }_\text{does not interact with y - so positive} \\
&+ \int_{(N \cap \Gamma) \backslash N} \int_{y > c} \bigg(-y^2 \frac{\partial^2}{\partial x^2} \bigg)f(n_{z,x} m_y) \cdot \overline{f(n_{z,x} m_y)} \ \frac{dz\, dx\, dy}{y^{2r+1}} \\
\end{align*}
The third term is shown positive via integration by parts
\begin{align*}
& \int_{(N \cap \Gamma) \backslash N} \int_{y > c} \bigg(-y^2 \frac{\partial^2}{\partial x^2} \bigg)f(n_{z,x} m_y) \cdot \overline{f(n_{z,x} m_y)} \ \frac{dz\, dx\, dy}{y^{2r+1}} \\
=& \int_{(N \cap \Gamma) \backslash N} \int_{y > c} - \frac{\partial^2}{\partial x^2} f(n_{z,x} m_y) \cdot y^{1 - 2n} \cdot \overline{f(n_{z,x} m_y)} \ dz\, dx\, dy \\
=& \int_{(N \cap \Gamma) \backslash N} \int_{y > c}  y^{1 - 2n} \cdot \overline{f(n_{z,x} m_y)} cdot \bigg(  - \frac{\partial^2}{\partial x^2} f(n_{z,x} m_y) \bigg)  \ dz\, dx\, dy \\
=& \enspace \cancelto{0}{\bigg( y^{1 - 2n} \overline{f} \bigg) \cdot \bigg(  - \frac{\partial}{\partial x} f \bigg) \bigg|_\partial} -  \int_{(N \cap \Gamma) \backslash N} \int_{y > c} \bigg(  - \frac{\partial}{\partial x} f \bigg) \cdot \underbrace{\bigg(  y^{1 - 2n} \cdot \frac{\partial}{\partial x} \overline{f} \bigg) }_\text{ $\frac{\partial}{\partial x}$ ignores $y$}   \ dz\, dx\, dy \\
=&  \int_{(N \cap \Gamma) \backslash N} \int_{y > c} y^2 \cdot \bigg| \frac{\partial f}{\partial x} \bigg|^2 \frac{ \ dz\, dx\, dy}{y^{2r+1}} \geq 0
\end{align*}
For later use, rewrite this fragment of $\Delta$ as
$$
-(\Delta_u + \Delta_v + y^2 \frac{\partial^2}{\partial x^2}) = -(\Delta_u + \Delta_v + a^2 \frac{\partial^2}{\partial x^2}) -((y^2 - a^2) \frac{\partial^2}{\partial x^2})
$$
Note that, for $c \gg a$, the first term is clearly positive as there is no dependence on $y$ and the second, perturbed expression is also non-negative
\begin{align*}
& \int_{(N \cap \Gamma) \backslash N} \int_{y > c} \bigg(-(y^2 - a^2) \frac{\partial^2}{\partial x^2} \bigg)f(n_{z,x} m_y) \cdot \overline{f(n_{z,x} m_y)} \ \frac{dz\, dx\, dy}{y^{2r+1}} \\
=&  \int_{(N \cap \Gamma) \backslash N} \int_{y > c} (y^2 - a^2) \cdot \bigg| \frac{\partial f}{\partial x} \bigg|^2 \frac{ \ dz\, dx\, dy}{y^{2r+1}} \geq 0
\end{align*}
Last, we must show that, for $f \in C^\infty_c(\mathbb{C}^{r-1} \times \mathbb{R} \times \mathbb{R}^\times)$
$$
\int_{(N \cap \Gamma) \backslash N} \int_{y > c}  -(y^2 \frac{\partial^2}{\partial y^2} - (2r-1) y \frac{\partial}{\partial y})f(n_{z,x} m_y) \cdot \overline{f(n_{z,x} m_y)} \ \frac{dz\, dx\, dy}{y^{2r+1}} \geq 0
$$
Integrating by parts once in the $\frac{\partial^2}{\partial y^2}$ term gives
\begin{align*}
& \int_{(N \cap \Gamma) \backslash N} \int_{y > c}  -y^2 \frac{\partial^2}{\partial y^2} f(n_{z,x} m_y) \cdot \overline{f(n_{z,x} m_y)} \ \frac{dz\, dx\, dy}{y^{2r+1}} \\
=& \int_{(N \cap \Gamma) \backslash N} \int_{y > c}  - \frac{\partial^2}{\partial y^2} f(n_{z,x} m_y) \cdot y^{1-2n} \overline{f(n_{z,x} m_y)} \ dz\, dx\, dy \\
=& \int_{(N \cap \Gamma) \backslash N} \int_{y > c} y^{1-2n} \overline{f(n_{z,x} m_y)} \cdot \bigg( - \frac{\partial^2}{\partial y^2} f(n_{z,x} m_y) \bigg)\ dz\, dx\, dy \\
=& \enspace \cancelto{0}{(y^{1-2n} \bar{f})(-\frac{\partial}{\partial y}f) \bigg|_\partial} \enspace - \int_{(N \cap \Gamma) \backslash N} \int_{y > c} (-\frac{\partial}{\partial y}f) \cdot \frac{\partial}{\partial y}(y^{1-2n} \bar{f}) \; dz\, dx\, dy \\
=& \int_{(N \cap \Gamma) \backslash N} \int_{y > c} \frac{\partial}{\partial y}f \cdot \bigg( (1 -2n)y^{-2n} \bar{f} + y^{1-2n} \frac{\partial}{\partial y}\bar{f} \bigg)  \; dz\, dx\, dy\\
=& \int_{(N \cap \Gamma) \backslash N} \int_{y > c} (1 -2n)y^{-2n} \frac{\partial}{\partial y}f \bar{f} \; dz\, dx\, dy \\
& + \int_{(N \cap \Gamma) \backslash N} \int_{y > c} \frac{\partial}{\partial y}f y^{1-2n} \frac{\partial}{\partial y}\bar{f}   \; dz\, dx\, dy\\
=& \int_{(N \cap \Gamma) \backslash N} \int_{y > c} (1 -2n) y \frac{\partial}{\partial y}f \bar{f} \; \frac{dz\, dx\, dy}{y^{2r+1}} \\
& + \int_{(N \cap \Gamma) \backslash N} \int_{y > c} y^2 \frac{\partial}{\partial y}f  \frac{\partial}{\partial y}\bar{f}   \; \frac{dz\, dx\, dy}{y^{2r+1}}\\
\end{align*}
The first term cancels the corresponding term involving $y\frac{\partial}{\partial y}$ in the original expression, giving
\begin{align*}
& \int_{(N \cap \Gamma) \backslash N} \int_{y > c}  -(y^2 \frac{\partial^2}{\partial y^2} - (2r-1) y \frac{\partial}{\partial y})f \cdot \overline{f} \ \frac{dz\, dx\, dy}{y^{2r+1}} \\
=& \int_{(N \cap \Gamma) \backslash N} \int_{y > c} y^2 \frac{\partial}{\partial y}f  \frac{\partial}{\partial y}\bar{f}   \; \frac{dz\, dx\, dy}{y^{2r+1}} \\
=& \int_{(N \cap \Gamma) \backslash N} \int_{y > c} y^2 \bigg|\frac{\partial f}{\partial y} \bigg|^2   \; \frac{dz\, dx\, dy}{y^{2r+1}} \geq 0 \qquad \square
\end{align*}
%
%

%
%

\subsection{$Sp^\ast(r,1)$}\label{Postivity-of-fragments-of-Delta-Sp*(r,1)}
The Casimir operator for $Sp^\ast(r,1)$ is
\begin{align*}
\Omega_\frak{g} = \; & y^2 \big(  \Delta_x + \Delta_w + \Delta_u + \Delta_v + \frac{\partial^2}{\partial y^2} + y^2 \frac{\partial^2}{\partial p^2} + y^2 \frac{\partial^2}{\partial q^2} + y^2 \frac{\partial^2}{\partial r^2} ) - (4r+1)  y \frac{\partial}{\partial y}  \\
-\Delta = \; & - y^2 \big(  \underbrace{\Delta_x + \Delta_w + \Delta_u + \Delta_v}_\text{does not interact with y - so positive} +  y^2 \frac{\partial^2}{\partial p^2} + y^2 \frac{\partial^2}{\partial q^2} + y^2 \frac{\partial^2}{\partial r^2} ) \\
&+ \big(- y^2\frac{\partial^2}{\partial y^2} + (4r+1)  y \frac{\partial}{\partial y} \big)
\end{align*}
Similar to $U(r,1)$, in the first sum, only the terms with a coefficient of $y^2$ need be shown positive; the calculation is done for  the $p$ variable as the other two are similar
\begin{align*}
& \int_{(N \cap \Gamma) \backslash N} \int_{y > c} \bigg(-y^2 \frac{\partial^2}{\partial p^2} \bigg)f(n_{z,q,q^\prime,q^{\prime\prime}} m_y) \cdot \overline{f(n_{z,q,q^\prime,q^{\prime\prime}} m_y)} \ \frac{dz\, dq\, dq^\prime\, dq^{\prime\prime}\, dy}{y^{4r+3}} \\
=& \int_{(N \cap \Gamma) \backslash N} \int_{y > c} - \frac{\partial^2}{\partial p^2} f(n_{z,q,q^\prime,q^{\prime\prime}} m_y) \cdot y^{-1 - 4r} \cdot \overline{f(n_{z,q,q^\prime,q^{\prime\prime}} m_y)} \ dz\, dq\, dq^\prime\, dq^{\prime\prime}\, dy \\
=& \int_{(N \cap \Gamma) \backslash N} \int_{y > c}  y^{-1 - 4r} \cdot \overline{f(n_{z,q,q^\prime,q^{\prime\prime}} m_y)} cdot \bigg(  - \frac{\partial^2}{\partial p^2} f(n_{z,q,q^\prime,q^{\prime\prime}} m_y) \bigg)  \ dz\, dq\, dq^\prime\, dq^{\prime\prime}\, dy \\
=& \cancelto{0}{\bigg( y^{-1 - 4r} \overline{f} \bigg) \cdot \bigg(  - \frac{\partial}{\partial p} f \bigg) \bigg|_\partial} -  \int_{(N \cap \Gamma) \backslash N} \int_{y > c} \bigg(  - \frac{\partial}{\partial p} f \bigg) \cdot \underbrace{\bigg(  y^{-1 - 4r} \cdot \frac{\partial}{\partial p} \overline{f} \bigg) }_\text{ $\frac{\partial}{\partial p}$ ignores $y$}   \ dz\, dq\, dq^\prime\, dq^{\prime\prime}\, dy \\
=&  \int_{(N \cap \Gamma) \backslash N} \int_{y > c} y^2 \bigg| \frac{\partial f}{\partial p} \bigg|^2 \frac{ \ dz\, dq\, dq^\prime\, dq^{\prime\prime}\, dy}{y^{4r+3}} \geq 0
\end{align*}
We also have the variation similar to the $U(r,1)$ case that will be useful:
\begin{align*}
& \int_{(N \cap \Gamma) \backslash N} \int_{y > c} -(y^2 -a^2) \frac{\partial^2}{\partial p^2} f \; \cdot \; \bar{f} \;  \frac{dz\, dq\, dq^\prime\, dq^{\prime\prime}\, dy}{y^{4r+3}} \\
&= \int_{(N \cap \Gamma) \backslash N} \int_{y > c} (y^2 - a^2)  \bigg| \frac{\partial f}{\partial p} \bigg|^2 \frac{ \ dz\, dq\, dq^\prime\, dq^{\prime\prime}\, dy}{y^{4r+3}} \geq 0
\end{align*}

This leaves the positivity of the derivatives in $y$. The pattern occurring previously is again present: the coefficient of $\frac{\partial}{\partial y}$ is equal to the exponent of $y$ in the denominator of the measure minus $2$. We must show that, for $f \in C^\infty_c(\mathbb{C}^{r-1} \times \mathbb{R} \times \mathbb{R}^\times)$
$$
\int_{(N \cap \Gamma) \backslash N} \int_{y > c}  -(y^2 \frac{\partial^2}{\partial y^2} - (4r+1) y \frac{\partial}{\partial y})f(n_{z,q,q^\prime,q^{\prime\prime}} m_y) \cdot \overline{f(n_{z,q,q^\prime,q^{\prime\prime}} m_y)} \ \frac{dz\, dq\, dq^\prime\, dq^{\prime\prime}\, dy}{y^{4r+3}} \geq 0
$$
Integrating by parts once in the $\frac{\partial^2}{\partial y^2}$ term gives
\begin{align*}
& \int_{(N \cap \Gamma) \backslash N} \int_{y > c}  -y^2 \frac{\partial^2}{\partial y^2} f(n_{z,q,q^\prime,q^{\prime\prime}} m_y) \cdot \overline{f(n_{z,q,q^\prime,q^{\prime\prime}} m_y)} \ \frac{dz\, dq\, dq^\prime\, dq^{\prime\prime}\, dy}{y^{4r+3}} \\
=& \int_{(N \cap \Gamma) \backslash N} \int_{y > c}  - \frac{\partial^2}{\partial y^2} f(n_{z,q,q^\prime,q^{\prime\prime}} m_y) \cdot y^{-1-4r} \overline{f(n_{z,q,q^\prime,q^{\prime\prime}} m_y)} \ dz\, dq\, dq^\prime\, dq^{\prime\prime}\, dy \\
=& \int_{(N \cap \Gamma) \backslash N} \int_{y > c} y^{-1-4r} \overline{f(n_{z,q,q^\prime,q^{\prime\prime}} m_y)} \cdot \bigg( - \frac{\partial^2}{\partial y^2} f(n_{z,q,q^\prime,q^{\prime\prime}} m_y) \bigg)\ dz\, dq\, dq^\prime\, dq^{\prime\prime}\, dy \\
=& \enspace \cancelto{0}{(y^{1-2n} \bar{f})(-\frac{\partial}{\partial y}f) \bigg|_\partial} \enspace - \int_{(N \cap \Gamma) \backslash N} \int_{y > c} (-\frac{\partial}{\partial y}f) \cdot \frac{\partial}{\partial y}(y^{-1-4r} \bar{f}) \; dz\, dq\, dq^\prime\, dq^{\prime\prime}\, dy \\
=& \int_{(N \cap \Gamma) \backslash N} \int_{y > c} \frac{\partial}{\partial y}f \cdot \bigg( (-1-4r)y^{-2-4r} \bar{f} + y^{-1-4r} \frac{\partial}{\partial y}\bar{f} \bigg)  \; dz\, dq\, dq^\prime\, dq^{\prime\prime}\, dy\\
=& \int_{(N \cap \Gamma) \backslash N} \int_{y > c} (-1-4r)y^{-2-4r} \frac{\partial}{\partial y}f \bar{f} \; dz\, dq\, dq^\prime\, dq^{\prime\prime}\, dy \\
& + \int_{(N \cap \Gamma) \backslash N} \int_{y > c} y^{-1-4r} \frac{\partial}{\partial y}f  \frac{\partial}{\partial y}\bar{f}   \; dz\, dq\, dq^\prime\, dq^{\prime\prime}\, dy\\
=& \int_{(N \cap \Gamma) \backslash N} \int_{y > c} -(4r+1) y \frac{\partial}{\partial y}f \bar{f} \; \frac{dz\, dq\, dq^\prime\, dq^{\prime\prime}\, dy}{y^{4r+3}} \\
& + \int_{(N \cap \Gamma) \backslash N} \int_{y > c} y^2 \frac{\partial}{\partial y}f  \frac{\partial}{\partial y}\bar{f}   \; \frac{dz\, dq\, dq^\prime\, dq^{\prime\prime}\, dy}{y^{4r+3}}\\
\end{align*}
The first term cancels the corresponding term involving $y\frac{\partial}{\partial y}$ in the original expression, giving
\begin{align*}
&\int_{(N \cap \Gamma) \backslash N} \int_{y > c}  -(y^2 \frac{\partial^2}{\partial y^2} - (4r+1) y \frac{\partial}{\partial y})f \cdot \overline{f} \ \frac{dz\, dq\, dq^\prime\, dq^{\prime\prime}\, dy}{y^{4r+3}} \\
=& \int_{(N \cap \Gamma) \backslash N} \int_{y > c} y^2 \frac{\partial}{\partial y}f  \frac{\partial}{\partial y}\bar{f}   \; \frac{dz\, dq\, dq^\prime\, dq^{\prime\prime}\, dy}{y^{4r+3}} \\
=& \int_{(N \cap \Gamma) \backslash N} \int_{y > c} y^2 \bigg|\frac{\partial f}{\partial y} \bigg|^2   \; \frac{dz\, dq\, dq^\prime\, dq^{\prime\prime}\, dy}{y^{4r+3}} \geq 0 \qquad \square
\end{align*}

%
%
%
%
%
%
%
%

\section{For $a \gg 1 \; \mathscr{D}_a \text{ is dense in } L^2_a(\Gamma \backslash G /K)$ }\label{sec:Density-of-Automorphic-Test-Functions}

To establish notation:
\begin{align*}
C^\infty_c(\Gamma \backslash G /K) =&  \text{ right } K \text{-invariant functions in } C^\infty_c(\Gamma \backslash G) = C^\infty_c(\Gamma \backslash G)^K \\
L^2(\Gamma \backslash G /K) =& \text{ right } K \text{-invariant functions in }L^2(\Gamma \backslash G) = L^2(\Gamma \backslash G)^K \\
|f|^2_{\frak{B}^1} = & \langle (1 - \Delta)f , f \rangle = \langle f ,f \rangle + \langle (-\Delta)f,f \rangle \\
\frak{B}^1 = & \text{ completion of } C^\infty_c(\Gamma \backslash G / K) \text{ with respect to the } \frak{B}_1 \text{ norm} \\
\eta(n m_y k) = & \; y^r (\text{In some circumstances, which will be clear, this may be } r-1)\\
L^2_a(\Gamma \backslash G /K) = & \; \{ f \in L^2(\Gamma \backslash G /K) \; : \; c_P f (g) = 0 \text{ for } \eta(g) \geq a \} \\
 =& \text{ {\it pseudo-cuspforms} with cut-off height } a \\
\mathscr{D}_a = & \; C^\infty_c(\Gamma \backslash G /K) \cap L^2_a(\Gamma \backslash G /K) \\
\Delta_a = &\; \Delta \text{ restricted to } \mathscr{D}_a \\
\frak{B}^1_a = & \text{ closure of } \mathscr{D}_a \text{ in } L^2_a(\Gamma \backslash G /K) \\
\end{align*}
%
%

%
%

\subsection{$O(r,1)$}\label{Density-of-Automorphic-Test-Functions-O(r,1)}

%
%
\begin{lemma}\label{density-of-auto-test-fns-on1}
	For $a \gg 1 \; \mathscr{D}_a \text{ is dense in } L^2_a(\Gamma \backslash G /K)$
\end{lemma}
\begin{proof}
Recall that a (standard) Siegel set is a subset of $G$ given by a compact set $C \subset N$ and a (height) parameter $t$
$$
\frak{S}_{t,C} = \{ na_y k \; : \; n \in C, k \in K, y \geq t \}
$$
Let $N$ be the unipotent radical of the standard minimal parabolic $P$. For $g \in G$, let $g = n_g m_g k_g$ be $g$'s Iwasawa coordinates, with $m_g \in A^+$. 
 By reduction theory (cf. \S [3.3] in \cite{PBG}) there is a sufficiently small $t_o > 0$ and compact $C \subset N$ such that the standard Siegel set
$$
\frak{S} = \frak{S}_{t_o, C} = \{G \ni g = nmk \; : \; n \in C \subset N, m \in A^+, k \in K, \eta(g) = \eta(m) \geq t_o \}
$$
satisfies $\Gamma \cdot \frak{S} = G$. Fix such a $\frak{S}_{t_o, C}$ and let $\frak{S}_a$ be the subset of $\frak{S}_{t_o, C}$ given by
$$
\frak{S}_a = \big\{ g \in \frak{S}_{t_o, C} \big| \enspace \eta(g) = \eta(n_g a_g) = \eta(a_y) > a \big\}
$$
By reduction theory, there is a height $a \gg 1$ so that $\frak{S}_a$ satisfies $\frak{S}_a \cap \gamma \frak{S}_a \ne \emptyset$ implies $\gamma \in \Gamma \cap P = \Gamma_\infty$. In the following, $f \in L^2_a(\Gamma \backslash G /K)$ is first approximated by test functions $f_r \in C^\infty_c(\Gamma \backslash G /K)$ by general methods, and then the condition $a \gg 1$ is used in conjunction with a family of smooth cut-off functions of the constant near height $a$, with the width of cut-off region shrinking to $0$.

Per the above, take $a \gg 1$ so that $\frak{S}_{a - \frac{1}{t}}$ meets its translates $\gamma \frak{S}_{a - \frac{1}{t}}$ only for $\gamma \in \Gamma_\infty$, for all sufficiently large $t$. This allows separation of variables in $\frak{S}_{a - \frac{1}{t}}$ in the sense that the cylinder $C_{a - \frac{1}{t}} =  (\Gamma \cap P) \backslash \frak{S}_{a - \frac{1}{t}}$ injects to $\Gamma \backslash G /K$. Let
$$
|f|^2_{a - \frac{1}{t}} = \int_{C_{a - \frac{1}{t}}} |f(n_x m_y)|^2 \frac{dx \; dy}{y^r} \leq  \int_{\Gamma \backslash G /K} |f(n_x m_y)|^2 \frac{dx \; dy}{y^r} = |f|^2_{L^2}
$$
Let $f_r \in C^\infty_c(\Gamma \backslash G / K)$ with $f_r \to f$ in $L^2(\Gamma \backslash G / K)$. However, while $f \in L^2_a(\Gamma \backslash G / K)$, one would suspect that the constant terms of the $f_r$ are not too far from that of $f$ (and the difference must be going to zero, even in $L^2(\Gamma \backslash G / K)$), so that a smooth truncation of the constant terms of the $f_r$ should produce functions also approaching $f$. Namely, our strategy is to start with a general, and generic, approximating sequence in $L^2$ and remove the part that is keeping this sequence from being in $L^2_a$.

Using Iwasawa coordinates $n_x$ and $m_y$ with $x \in \mathbb{R}^{r-1}$ and $y \in (0, \infty)$, the height is $\eta(x,y) = y^r$. Let $\beta$ be a smooth function on $\mathbb{R}$ such that
$$
\begin{cases} 
0 = \beta (y) & (\text{for } 0 \leq y < -1) \\
0 \leq \beta (y) \leq 1  & (\text{for } -1 \leq y \leq 0) \\
1 =\beta (y)  & (\text{for } 0 \leq y) 
\end{cases}
$$
For $t > 1$, put $\beta_t(y) = \beta(t(y - a))$, and define a smooth function on $N \backslash G /K$ by
$$
\phi_{r,t}(x,y) = 
\begin{cases} 
\beta_t(y^r)\cdot c_P f_r(y) = \beta_t(\eta(x,y))\cdot c_P f_r(y) & (\text{for } y^r \geq a - \frac{1}{t}) \\
0   & (\text{for } y^r < a - \frac{1}{t})
\end{cases}
$$
For $t > 0$ large enough so that $\frak{S}_{a - \frac{1}{t}}$ meets its translates $\gamma \frak{S}_{a - \frac{1}{t}}$ only for $\gamma \in \Gamma_\infty$, let $\Psi_{r,t} = \Psi_{\phi_{r,t}}$ be the pseudo-Eisenstein series made from $\phi_{r,t}$:
$$
\Psi_{r,t}(x,y) = \sum_{\gamma \in \Gamma_\infty \backslash \Gamma} \phi_{r,t}(\gamma \cdot n_x m_y)
$$
The assumption on $t$ assures that in the region $y^r > a - \frac{1}{t}$ we have $\Psi_{r,t} = c_P \Psi_{r,t} = \phi_{n.t}$. Thus $c_P(f_r - \Psi_{r,t})$ vanishes in $y \geq a$, so $f_r - \Psi_{r,t} \in L^2_a(\Gamma \backslash G /K)$, as desired.
By the triangle inequality
$$
|f - (f_r -\Psi_{r,t})|_{L^2} \leq |f - f_r|_{L^2}  + |\Psi_{r,t}|_{L^2} 
$$
where by assumption $|f -f_r|_{L^2} \to 0$. Thus it suffices to show that the $L^2$ norm of $\Psi_{r,t}$ goes to $0$ for large $n$ and $t$. Since $a \gg 1$,
$$
|\Psi_{r,t}|_{L^2}  = |\Psi_{r,t}|_{C_{a - \frac{1}{t}}} = |\phi_{r,t}|_{C_{a - \frac{1}{t}}} = |\beta(t(y-a)) \cdot c_P f_r|_{C_{a - \frac{1}{t}}} \leq |c_P f_r|_{C_{a - \frac{1}{t}}}
$$
The cylinder $C_{a - \frac{1}{t}}$ admits a natural translation action of the product of circle groups (i.e., $\mathbb{T}^{r-1} \approx (N \cap \Gamma) \backslash N$), inherited from the translation of the $x$-component in Iwasawa coordinates $x,y$. This induces an action of $\mathbb{T}^{r-1}$ on $L^2(C_{a - \frac{1}{t}})$ with the norm $|\cdot |_{C_{a - \frac{1}{t}}}$. Thus, the map $f \to c_P f$ is given by a continuous, compactly-supported, $L^2(C_{a - \frac{1}{t}})$-valued integrand, which exists as a Gelfand-Pettis integral (cf. \S [14.1] in \cite{PBG}). This implies that the restriction of $c_P f_r$ to $C_{a - \frac{1}{t}}$ goes to $c_P f$ in $L^2(C_{a - \frac{1}{t}})$. As $c_P f$ is supported in the range $\eta(g) \leq a$ and the measure of $C_a - C_{a - \frac{1}{t}}$ goes to $0$ as $t \to +\infty$, the $C_{a - \frac{1}{t}}$-norm of $c_P f$ also goes to $0$ as $t \to +\infty$, since $c_P f$ is locally integrable. In particular, this implies the diagonal terms $\Psi_{n.n}$ go to $0$ in $L^2$ norm, so that $f_r - \Psi_{r,n}$ go to $f$ in $L^2$ norm, proving the density of $\mathscr{D}_a$ in $L^2_a$. $\square$
\end{proof}

%
%
\newpage
\subsection{$U(r,1)$}\label{Density-of-Automorphic-Test-Functions-U(r,1)}

\begin{lemma}\label{density-of-auto-test-fns-un1}
	For $a \gg 1 \; \mathscr{D}_a \text{ is dense in } L^2_a(\Gamma \backslash G /K)$
\end{lemma}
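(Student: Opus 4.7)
The plan is to mimic closely the argument given in \cref{density-of-auto-test-fns-on1} for $O(r,1)$, adapting only the coordinates and the height function to fit the $U(r,1)$ Iwasawa decomposition $G/K\approx N\times A^+$ with $N\ni n_{z,x}$ having $z\in\mathbb{C}^{r-1}$, $x\in\mathbb{R}$, and $A^+\ni m_y$ with $y\in(0,\infty)$, and with the normalized height $\eta(n_{z,x}m_y)=y^{2r}$ consistent with the invariant measure $dz\,dx\,dy/y^{2r+1}$ computed earlier. By reduction theory, fix a standard Siegel set $\frak{S}_{t_o,C,D}$ that surjects to $\Gamma\backslash G$, and choose $a\gg 1$ so large that, for all sufficiently large $t$, the truncated Siegel set $\frak{S}_{a-\frac{1}{t}}$ meets its $\Gamma$-translates only in $\Gamma_\infty$. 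Then the cylinder $C_{a-\frac{1}{t}}=(\Gamma\cap P)\backslash\frak{S}_{a-\frac{1}{t}}$ injects into $\Gamma\backslash G/K$, and the restriction norm $|\cdot|_{a-\frac{1}{t}}$ is dominated by $|\cdot|_{L^2}$.

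Given $f\in L^2_a(\Gamma\backslash G/K)$, pick a sequence $f_r\in C^\infty_c(\Gamma\backslash G/K)$ with $f_r\to f$ in $L^2$; generically $c_P f_r$ does not vanish in $\eta\geq a$, so $f_r\notin L^2_a$. The remedy is to subtract a pseudo-Eisenstein series that absorbs the offending part of the constant term near $\eta=a$. Let $\beta$ be the standard smooth step function that is $0$ for $y\leq-1$, rises monotonically on $[-1,0]$, and equals $1$ for $y\geq 0$; set $\beta_t(y)=\beta(t(y-a))$ and define on $N\backslash G/K$
$$
\phi_{r,t}(n_{z,x}m_y)=\begin{cases} \beta_t(\eta(m_y))\cdot c_Pf_r(m_y) & (\eta(m_y)\geq a-\tfrac{1}{t}) \\ 0 & (\text{otherwise})\end{cases}
$$
and let $\Psi_{r,t}=\Psi_{\phi_{r,t}}$ be the associated pseudo-Eisenstein series. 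For $t$ large enough the reduction-theoretic choice of $a$ guarantees the sum defining $\Psi_{r,t}$ is locally finite and that $\Psi_{r,t}=\phi_{r,t}=c_P\Psi_{r,t}$ on the region $\eta>a-\tfrac{1}{t}$; hence $c_P(f_r-\Psi_{r,t})$ vanishes for $\eta\geq a$, placing $f_r-\Psi_{r,t}\in\mathscr{D}_a$.

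It remains to show $|\Psi_{r,t}|_{L^2}\to 0$ along a suitable diagonal $r,t\to\infty$. Since $a\gg 1$, $\Psi_{r,t}$ is supported in the injected cylinder, so
$$
|\Psi_{r,t}|_{L^2}=|\phi_{r,t}|_{C_{a-\frac{1}{t}}}\leq|c_Pf_r|_{C_{a-\frac{1}{t}}}.
$$
The compact quotient $(N\cap\Gamma)\backslash N$ (a compact nilmanifold rather than a torus now, reflecting the Heisenberg structure) acts by left translation on $L^2(C_{a-\frac{1}{t}})$, and the constant-term map $f\mapsto c_Pf$ is realized as integration against a continuous compactly supported $L^2(C_{a-\frac{1}{t}})$-valued integrand, hence exists as a Gelfand-Pettis integral and is continuous from $L^2(\Gamma\backslash G/K)$ (cf.\ \cite{PBG} \S 14.1). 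Thus $c_Pf_r\to c_Pf$ in $L^2(C_{a-\frac{1}{t}})$ as $r\to\infty$, uniformly in $t$. Because $f\in L^2_a$ its constant term vanishes in $\eta\geq a$, so $|c_Pf|_{C_{a-\frac{1}{t}}}\to 0$ as $t\to\infty$ by local integrability. Choosing $r=t\to\infty$ diagonally then yields $|\Psi_{r,t}|_{L^2}\to 0$, so $f_r-\Psi_{r,t}\to f$ in $L^2$, proving density. The main point where the $U(r,1)$ case is substantively different from $O(r,1)$ is the non-abelian structure of $N$, but since the argument only uses compactness of $(N\cap\Gamma)\backslash N$ and the Gelfand-Pettis realization of the constant term, this difference is absorbed without change to the logic. $\square$
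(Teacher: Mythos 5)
Your proof is correct and follows essentially the same route as the paper's: reduction-theoretic choice of $a$ so the truncated Siegel set injects, smooth truncation $\beta_t$ of the constant term of an approximating sequence $f_r$, subtraction of the resulting pseudo-Eisenstein series, and the Gelfand--Pettis/compactness argument to show $|\Psi_{r,t}|_{L^2}\to 0$ along the diagonal. The only differences are cosmetic (the normalization of the height exponent and your explicit remark that $(N\cap\Gamma)\backslash N$ is a nilmanifold rather than a torus, which the paper's argument likewise absorbs without change).
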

\begin{proof}
Recall that a (standard) Siegel set is a subset of $G$ given by a compact set $C \subset N$ and a (height) parameter $\ell$:
$$
\frak{S}_{\ell,C} = \{ na_y k \; : \; n \in C, k \in K, y \geq \ell \}
$$
Where $N$ is the unipotent radical of the standard minimal parabolic $P$. By reduction theory, take $C$ large enough and $\ell$ small enough so that $\frak{S} = \frak{S}_{\ell,C}$ surjects to $\Gamma\backslash G$. For $g \in G$, let $g = n_g m_g k_g$ be $g$'s Iwasawa coordinates, with $m_g \in A^+$. 
 By reduction theory (cf. \S [3.3] in \cite{PBG}), given $C$, there is a sufficiently large height $a \gg 1$ so that $\frak{S}_{a,C}$ has the property that $\frak{S}_{\ell,C} \cap \gamma \frak{S}_{a,C} \ne \emptyset$ implies $\gamma \in \Gamma \cap P = \Gamma_\infty$. In the following, $f \in L^2_a(\Gamma \backslash G /K)$ is first approximated by test functions $f_r \in C^\infty_c(\Gamma \backslash G /K)$ by general methods, and then the condition $a \gg 1$ is used in conjunction with a family of smooth cut-off functions of the constant term near height $a$, with the width of the cut-off region shrinking to $0$. Since $K$ has no role in the following, the Siegel sets can be construed as lying in $G/K$ and have Iwasawa coordinates $n_g a_g$:
$$G/K \approx U(r,1)/(U(n) \times U(1)) \approx NM/(M \cap K) \approx N \times A^+$$
\\
\\
By reduction theory, take $a \gg 1$ sufficiently large so that $\frak{S}_{a - \frac{1}{t}}$ meets its translates $\gamma \frak{S}_{a - \frac{1}{t}}$ only for $\gamma \in \Gamma \cap P$, for all sufficiently large $t$. Then the cylinder $C_{a - \frac{1}{t}} =  (\Gamma \cap P) \backslash \frak{S}_{a - \frac{1}{t}}$ injects to $\Gamma \backslash G /K$ which allows separation of variables in $\frak{S}_{a - \frac{1}{t}}$ in the sense that the cylinder $C_{a - \frac{1}{t}} =  (\Gamma \cap P) \backslash \frak{S}_{a - \frac{1}{t}}$ injects to $\Gamma \backslash G /K$:
$$
C_{a - \frac{1}{t}} =  (\Gamma \cap P) \backslash \frak{S}_{a - \frac{1}{t}} \; \approx \; (\Gamma \cap N) \backslash N \times (a - \frac{1}{t}, \infty)
$$
Let
$$
|f|^2_{a - \frac{1}{t}} = \int_{C_{a - \frac{1}{t}}} |f(n_{z,x} m_y)|^2 \frac{dz \; dx \; dy}{y^{2r+1}} \leq  \int_{\Gamma \backslash G /K} |f(n_{z,x} m_y)|^2 \frac{dz \; dx \; dy}{y^{2r+1}} = |f|^2_{L^2}
$$
Let $f_r \in C^\infty_c(\Gamma \backslash G / K)$ with $f_r \to f$ in $L^2(\Gamma \backslash G / K)$. However, while $f \in L^2_a(\Gamma \backslash G / K)$, it would seem likely that the constant terms of the $f_i$ are not too far from that of $f$. That is, by assumption,
\begin{align*}
|f - f_i|^2_{L^2} =& \int_{\Gamma \backslash G /K} |f(n_{z,x} m_y) - f_i(n_{z,x} m_y)|^2 \; \frac{dz \; dx \; dy}{y^{2r+1}}  \\
=& \int_{(\Gamma \cap N) \backslash N} \int_{A^+} |f(n_{z,x} m_y) - f_i(n_{z,x} m_y)|^2 \; \frac{dz \; dx \; dy}{y^{2r+1}} \\
=& \int_{A^+} \int_{(\Gamma \cap N) \backslash N}  |f(n_{z,x} m_y) - f_i(n_{z,x} m_y)|^2 \; \frac{dz \; dx \; dy}{y^{2r+1}} \\
=& \int_0^\infty \int_{(\Gamma \cap N) \backslash N}  |f(n_{z,x} m_y) - f_i(n_{z,x} m_y)|^2 \; \frac{dz \; dx \; dy}{y^{2r+1}} \\
\geq & \int_{y > a - \frac{1}{t}} \int_{(\Gamma \cap N) \backslash N}  |f(n_{z,x} m_y) - f_i(n_{z,x} m_y)|^2 \; \frac{dz \; dx \; dy}{y^{2r+1}} \\
= & \int_{y > a - \frac{1}{t}} \bigg( \int_{(\Gamma \cap N) \backslash N}  |f(n_{z,x} m_y) - f_i(n_{z,x} m_y)|^2 \; dz \; dx \bigg) \; \frac{dy}{y^{2r+1}} \\
= & \int_{y > a - \frac{1}{t}} |f - f_i|^2_{L^2((\Gamma \cap N) \backslash N)}(m_y) \; \frac{dy}{y^{2r+1}}
\end{align*}
That is, the integral 
$|f - f_i|^2$ on the compact manifold $(N \cap \Gamma)\backslash N$, as a function of the ray (i.e., $A^+$) variable  must be going to zero, implying that the integrand
$$
|f - f_i|^2_{L^2((\Gamma \cap N) \backslash N)}(m_y)
$$
considered as a function of the coordinate $y$ (and correspondingly constant on the transverse compact manifold {\it leaves}), must be going to zero in $L^2(\Gamma \backslash G / K)$. Thus, a smooth truncation of the constant terms of the $f_r$ should produce functions also approaching $f$. We start with a general, and generic, approximating sequence in $L^2(\Gamma \backslash G / K)$ and remove the part that is keeping this sequence from being in $L^2_a(\Gamma \backslash G / K)$.

Using Iwasawa coordinates $n_{z,x}$ and $m_y$ with $z \in \mathbb{C}^{r-1}$, $x \in \mathbb{R}$ and $y \in (0, \infty)$, the height is $\eta(n_{z,x} m_y) = \eta(z,x,y) = y^{2r+1}$. Let $\beta$ be a smooth function on $\mathbb{R}$ such that
$$
\begin{cases} 
0 = \beta (y) & (\text{for } 0 \leq y < -1) \\
0 \leq \beta (y) \leq 1  & (\text{for } -1 \leq y \leq 0) \\
1 =\beta (y)  & (\text{for } 1 \leq y) 
\end{cases}
$$
For $t > 1$, put $\beta_t(h) = \beta(t(h - a))$, and define a smooth function on $N \backslash G /K$ by
$$
\phi_{i,t}(n_{z,x} m_y) = 
\begin{cases} 
\beta_t(\eta(n_{z,x} m_y))\cdot c_P f_i(y) = \beta_t(y^{2r+1})\cdot c_P f_i(y)  & (\text{for } y^{2r+1} \geq a - \frac{1}{t}) \\
0   & (\text{for } y^{2r+1} < a - \frac{1}{t})
\end{cases}
$$
For $t > 0$ large enough so that $\frak{S}_{a - \frac{1}{t}}$ meets its translates $\gamma \frak{S}_{a - \frac{1}{t}}$ only for $\gamma \in \Gamma \cap P$, let $\Psi_{i,t} = \Psi_{\phi_{i,t}}$ be the pseudo-Eisenstein series:
$$
\Psi_{i,t}(n_{z,x} m_y) = \sum_{\gamma \in \Gamma_\infty \backslash \Gamma} \phi_{i,t}(\gamma \cdot n_{z,x} m_y)
$$
The assumption on $t$ assures that in the region $y^{2r+1} > a - \frac{1}{t}$ we have 
$$\Psi_{i,t} = c_P \Psi_{i,t} = \phi_{i,t}$$
Thus $c_P(f_i - \Psi_{i,t})$ vanishes in $y \geq a$, so $f_i - \Psi_{i,t} \in L^2_a(\Gamma \backslash G /K)$, as desired. By the triangle inequality
$$
|f - (f_i -\Psi_{i,t})|_{L^2} \leq |f - f_i|_{L^2}  + |\Psi_{i,t}|_{L^2} 
$$
where by assumption $|f -f_i|_{L^2} \to 0$. Thus it suffices to show that the $L^2$ norm of $\Psi_{i,t}$ goes to $0$ for large $i$ and $t$. Since $a \gg 1$,
$$
|\Psi_{i,t}|_{L^2}  = |\Psi_{i,t}|_{C_{a - \frac{1}{t}}} = |\phi_{i,t}|_{C_{a - \frac{1}{t}}} = |\beta(t(y-a)) \cdot c_P f_i|_{C_{a - \frac{1}{t}}} \leq |c_P f_i|_{C_{a - \frac{1}{t}}}
$$

The cylinder $C_{a - \frac{1}{t}} \approx (N \cap \Gamma) \backslash N \times (a - \frac{1}{t}, \infty)$ is isomorphic to the product of a compact manifold $(N \cap \Gamma) \backslash N$ times the ray $(a - \frac{1}{t}, \infty)$. In $L^2(C_{a - \frac{1}{t}})$ with the norm $|\cdot |_{C_{a - \frac{1}{t}}}$, since the integral of the $L^2((N \cap \Gamma)\backslash N)$ norm-squared goes to zero by the above, the map $f \to c_P f$ is given by a continuous, compactly-supported, $L^2(C_{a - \frac{1}{t}})$-valued integrand, which exists as a Gelfand-Pettis integral (cf. \S [14.1] in \cite{PBG}). This implies that the restriction of $c_P f_r$ to $C_{a - \frac{1}{t}}$ goes to $c_P f$ in $L^2(C_{a - \frac{1}{t}})$. As $c_P f$ is supported in the range $\eta(g) \leq a$ and the measure of $C_a - C_{a - \frac{1}{t}}$ goes to $0$ as $t \to +\infty$, the $C_{a - \frac{1}{t}}$-norm of $c_P f$ also goes to $0$ as $t \to +\infty$, since $c_P f$ is locally integrable. In particular, this implies the diagonal terms $\Psi_{i,i}$ go to $0$ in $L^2$ norm, so that $f_r - \Psi_{i,i}$ go to $f$ in $L^2$ norm, proving the density of $\mathscr{D}_a$ in $L^2_a$. $\square$
\end{proof}

%
%

\subsection{$Sp^\ast(r,1)$}\label{Density-of-Automorphic-Test-Functions-Sp*(r,1)}

\begin{lemma}\label{density-of-auto-test-fns-un1}
For $a \gg 1 \; \mathscr{D}_a \text{ is dense in } L^2_a(\Gamma \backslash G /K)$
\end{lemma}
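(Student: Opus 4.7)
The plan is to follow the same template used for $O(r,1)$ and $U(r,1)$, adapting it to the $Sp^\ast(r,1)$ Iwasawa coordinates $n_{z,q,q',q''}m_y$ with $z\in\mathbb{H}^{r-1}$, $q,q',q''\in\mathbb{R}$, $y\in(0,\infty)$, invariant measure $\frac{dz\,dq\,dq'\,dq''\,dy}{y^{4r+3}}$, and height $\eta(n_{z,q,q',q''}m_y)=y^{4r+1}$. First, I would invoke reduction theory to choose a standard Siegel set $\frak{S}_{\ell,C}$ surjecting to $\Gamma\backslash G$, and then pick $a\gg 1$ large enough that, for all sufficiently large $t$, the truncated Siegel set $\frak{S}_{a-\frac{1}{t}}$ meets its $\Gamma$-translates only via $\Gamma_\infty=\Gamma\cap P$. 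This gives the injection of the cylinder $C_{a-\frac{1}{t}}=(\Gamma\cap P)\backslash\frak{S}_{a-\frac{1}{t}}\approx (\Gamma\cap N)\backslash N\times(a-\frac{1}{t},\infty)$ into $\Gamma\backslash G/K$, allowing separation of variables in the cusp collar.

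Next, given $f\in L^2_a(\Gamma\backslash G/K)$, choose a generic approximating sequence $f_i\in C^\infty_c(\Gamma\backslash G/K)$ with $f_i\to f$ in $L^2$. The $f_i$ need not lie in $L^2_a$ because their constant terms need not vanish in $\eta\geq a$, but, exactly as in the $U(r,1)$ argument, Fubini applied to $|f-f_i|^2_{L^2}$ against the cylinder slicing forces the slicewise $L^2((\Gamma\cap N)\backslash N)$ norms of $f-f_i$ in the $y$-variable to go to zero. The obstruction to membership in $L^2_a$ is therefore small and can be removed by winding up a smoothly truncated copy of the offending constant term.

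Concretely, take a smooth cut-off $\beta$ on $\mathbb{R}$ with $\beta(u)=0$ for $u\leq-1$ and $\beta(u)=1$ for $u\geq 0$, set $\beta_t(h)=\beta(t(h-a))$, and define on $N\backslash G/K$
\[
\phi_{i,t}(n_{z,q,q',q''}m_y)=\begin{cases}\beta_t(\eta(n_{z,q,q',q''}m_y))\cdot c_Pf_i(m_y)&(y^{4r+1}\geq a-\tfrac{1}{t})\\0&(y^{4r+1}<a-\tfrac{1}{t})\end{cases}
\]
and form the pseudo-Eisenstein series $\Psi_{i,t}=\sum_{\gamma\in\Gamma_\infty\backslash\Gamma}\phi_{i,t}(\gamma\cdot g)$. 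The disjointness hypothesis on the Siegel set translates makes the winding-up a locally finite sum with $\Psi_{i,t}=\phi_{i,t}$ on $\{\eta\geq a-\frac{1}{t}\}$ and $c_P\Psi_{i,t}=\phi_{i,t}$ there as well, so $c_P(f_i-\Psi_{i,t})$ vanishes for $\eta\geq a$ and $f_i-\Psi_{i,t}\in L^2_a$.

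Finally, by the triangle inequality $|f-(f_i-\Psi_{i,t})|_{L^2}\leq|f-f_i|_{L^2}+|\Psi_{i,t}|_{L^2}$, and since $\Psi_{i,t}$ is supported inside the embedded cylinder one has $|\Psi_{i,t}|_{L^2}=|\phi_{i,t}|_{C_{a-\frac{1}{t}}}\leq|c_Pf_i|_{C_{a-\frac{1}{t}}}$. The constant term map $f\mapsto c_Pf$ restricted to $C_{a-\frac{1}{t}}$ realizes as a compactly supported Gelfand-Pettis integral, hence is continuous $L^2(\Gamma\backslash G/K)\to L^2(C_{a-\frac{1}{t}})$, so $c_Pf_i\to c_Pf$ in $L^2(C_{a-\frac{1}{t}})$. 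Because $c_Pf$ vanishes on $\eta\geq a$ and the measure of $C_a\setminus C_{a-\frac{1}{t}}$ tends to $0$ as $t\to\infty$, local integrability gives $|c_Pf|_{C_{a-\frac{1}{t}}}\to 0$, so a diagonal choice $\Psi_{i,i}\to 0$ in $L^2$ yields $f_i-\Psi_{i,i}\to f$ in $L^2$ with $f_i-\Psi_{i,i}\in\mathscr{D}_a$. The main obstacle compared to the earlier cases is purely bookkeeping: the nilpotent radical $N$ is now the quaternionic Heisenberg-type group with extra coordinates $q,q',q''$, but since the argument uses only compactness of $(N\cap\Gamma)\backslash N$, the cylinder product structure guaranteed by reduction theory, and continuity of the constant term against Gelfand-Pettis integration, the two-step nilpotence causes no genuine new difficulty. $\square$
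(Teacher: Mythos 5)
Your proposal is correct and follows essentially the same route as the paper: reduction theory to embed the cylinder $C_{a-\frac{1}{t}}$, a generic $L^2$-approximating sequence, smooth truncation of the constant terms wound up into pseudo-Eisenstein series, and the Gelfand--Pettis/shrinking-measure argument with the diagonal choice $\Psi_{i,i}$. The only discrepancy is your normalization of the height exponent ($y^{4r+1}$ versus the paper's $y^{4r+3}$), which is immaterial since the height is just a monotone function of $y$.
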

\begin{proof}
Recall that a (standard) Siegel set is a subset of $G$ given by a compact set $C \subset N$ and a (height) parameter $\ell$:
$$
\frak{S}_{\ell,C} = \{ na_y k \; : \; n \in C, k \in K, y \geq \ell \}
$$
Where $N$ is the unipotent radical of the standard minimal parabolic $P$. By reduction theory, take $C$ large enough and $\ell$ small enough so that $\frak{S} = \frak{S}_{\ell,C}$ surjects to $\Gamma\backslash G$. For $g \in G$, let $g = n_g m_g k_g$ be $g$'s Iwasawa coordinates, with $m_g \in A^+$. 
By reduction theory (cf. \S [3.3] in \cite{PBG}), given $C$, there is a sufficiently large height $a \gg 1$ so that $\frak{S}_{a,C}$ has the property that $\frak{S}_{\ell,C} \cap \gamma \frak{S}_{a,C} \ne \emptyset$ implies $\gamma \in \Gamma \cap P = \Gamma_\infty$. In the following, $f \in L^2_a(\Gamma \backslash G /K)$ is first approximated by test functions $f_r \in C^\infty_c(\Gamma \backslash G /K)$ by general methods, and then the condition $a \gg 1$ is used in conjunction with a family of smooth cut-off functions of the constant term near height $a$, with the width of the cut-off region shrinking to $0$. Since $K$ has no role in the following, the Siegel sets can be construed as lying in $G/K$ and have Iwasawa coordinates $n_g a_g$:
$$G/K \approx Sp^\ast(r,1)/(Sp^\ast(r) \times Sp^\ast(1)) \approx NM/(M \cap K) \approx N \times A^+$$

By reduction theory, take $a \gg 1$ sufficiently large so that $\frak{S}_{a - \frac{1}{t}}$ meets its translates $\gamma \frak{S}_{a - \frac{1}{t}}$ only for $\gamma \in \Gamma \cap P$, for all sufficiently large $t$. Then the cylinder $C_{a - \frac{1}{t}} =  (\Gamma \cap P) \backslash \frak{S}_{a - \frac{1}{t}}$ injects to $\Gamma \backslash G /K$ which allows separation of variables in $\frak{S}_{a - \frac{1}{t}}$ in the sense that the cylinder $C_{a - \frac{1}{t}} =  (\Gamma \cap P) \backslash \frak{S}_{a - \frac{1}{t}}$ injects to $\Gamma \backslash G /K$:
$$
C_{a - \frac{1}{t}} =  (\Gamma \cap P) \backslash \frak{S}_{a - \frac{1}{t}} \; \approx \; (\Gamma \cap N) \backslash N \times (a - \frac{1}{t}, \infty)
$$
Let
$$
|f|^2_{a - \frac{1}{t}} = \int_{C_{a - \frac{1}{t}}} |f(n_{z,q,q^\prime,q^{\prime\prime}} m_y)|^2 \frac{dx \; dy}{y^{4r+3}} \leq  \int_{\Gamma \backslash G /K} |f(n_{z,q,q^\prime,q^{\prime\prime}} m_y)|^2 \frac{dx \; dy}{y^{4r+3}} = |f|^2_{L^2}
$$
Let $f_r \in C^\infty_c(\Gamma \backslash G / K)$ with $f_r \to f$ in $L^2(\Gamma \backslash G / K)$. However, while $f \in L^2_a(\Gamma \backslash G / K)$, it would seem likely that the constant terms of the $f_i$ are not too far from that of $f$. That is, by assumption,
\begin{align*}
|f - f_i|^2_{L^2} =& \int_{\Gamma \backslash G /K} |f(n_{z,q,q^\prime,q^{\prime\prime}} m_y) - f_i(n_{z,q,q^\prime,q^{\prime\prime}} m_y)|^2 \; \frac{dz\, dq\, dq^\prime\, dq^{\prime\prime}\, dy}{y^{4r+3}}  \\
=& \int_{(\Gamma \cap N) \backslash N} \int_{A^+} |f(n_{z,q,q^\prime,q^{\prime\prime}} m_y) - f_i(n_{z,q,q^\prime,q^{\prime\prime}} m_y)|^2 \; \frac{dz\, dq\, dq^\prime\, dq^{\prime\prime}\, dy}{y^{4r+3}} \\
=& \int_{A^+} \int_{(\Gamma \cap N) \backslash N}  |f(n_{z,q,q^\prime,q^{\prime\prime}} m_y) - f_i(n_{z,q,q^\prime,q^{\prime\prime}} m_y)|^2 \; \frac{dz\, dq\, dq^\prime\, dq^{\prime\prime}\, dy}{y^{4r+3}} \\
=& \int_0^\infty \int_{(\Gamma \cap N) \backslash N}  |f(n_{z,q,q^\prime,q^{\prime\prime}} m_y) - f_i(n_{z,q,q^\prime,q^{\prime\prime}} m_y)|^2 \; \frac{dz\, dq\, dq^\prime\, dq^{\prime\prime}\, dy}{y^{4r+3}} \\
\geq & \int_{y > a - \frac{1}{t}} \int_{(\Gamma \cap N) \backslash N}  |f(n_{z,q,q^\prime,q^{\prime\prime}} m_y) - f_i(n_{z,q,q^\prime,q^{\prime\prime}} m_y)|^2 \; \frac{dz\, dq\, dq^\prime\, dq^{\prime\prime}\, dy}{y^{4r+3}} \\
= & \int_{y > a - \frac{1}{t}} \bigg( \int_{(\Gamma \cap N) \backslash N}  |f(n_{z,q,q^\prime,q^{\prime\prime}} m_y) - f_i(n_{z,q,q^\prime,q^{\prime\prime}} m_y)|^2 \; dz\, dq\, dq^\prime\, dq^{\prime\prime}\bigg) \; \frac{dy}{y^{4r+3}} \\
= & \int_{y > a - \frac{1}{t}} |f - f_i|^2_{L^2((\Gamma \cap N) \backslash N)}(m_y) \; \frac{dy}{y^{4r+3}}
\end{align*}
That is, the integral $|f - f_i|^2$ on the compact manifold $(N \cap \Gamma)\backslash N$, as a function of the ray (i.e., $A^+$) variable  must be going to zero, implying that the integrand
$$
|f - f_i|^2_{L^2((\Gamma \cap N) \backslash N)}(m_y)
$$
considered as a function of the 
coordinate $y$ (and correspondingly constant on the transverse compact manifold {\it leaves}), must be going to zero in $L^2(\Gamma \backslash G / K)$. Thus, a smooth truncation of the constant terms of the $f_r$ should produce functions also approaching $f$. We start with a general, and generic, approximating sequence in $L^2(\Gamma \backslash G / K)$ and remove the part that is keeping this sequence from being in $L^2_a(\Gamma \backslash G / K)$.

Using Iwasawa coordinates with coordinates $z,q,q^\prime,q^{\prime\prime} \in N \approx  \mathbb{H}^{r-1} \times \mathbb{R}^3$, $y \in A^+ \approx  \mathbb{R}^\times \approx (0, \infty)$, the height is $\eta(n_{z,q,q^\prime,q^{\prime\prime}} m_y) = \eta(z,q,q^\prime,q^{\prime\prime},y) = y^{4r+3}$. Let $\beta$ be a smooth function on $\mathbb{R}$ such that
$$
\begin{cases} 
0 = \beta (y) & (\text{for } 0 \leq y < -1) \\
0 \leq \beta (y) \leq 1  & (\text{for } -1 \leq y \leq 0) \\
1 =\beta (y)  & (\text{for } 1 \leq y) 
\end{cases}
$$
For $t > 1$, put $\beta_t(h) = \beta(t(h - a))$, and define a smooth function on $N \backslash G /K$ by
$$
\phi_{i,t}(n_{z,q,q^\prime,q^{\prime\prime}} m_y) = 
\begin{cases} 
\beta_t(\eta(n_{z,q,q^\prime,q^{\prime\prime}} m_y))\cdot c_P f_i(y) = \beta_t(y^{4r+3})\cdot c_P f_i(y)  & (\text{for } y^{4r+3} \geq a - \frac{1}{t}) \\
0   & (\text{for } y^{4r+3} < a - \frac{1}{t})
\end{cases}
$$
For $t > 0$ large enough so that $\frak{S}_{a - \frac{1}{t}}$ meets its translates $\gamma \frak{S}_{a - \frac{1}{t}}$ only for $\gamma \in \Gamma \cap P$, let $\Psi_{i,t} = \Psi_{\phi_{i,t}}$ be the pseudo-Eisenstein series:
$$
\Psi_{i,t}(n_{z,x} m_y) = \sum_{\gamma \in \Gamma_\infty \backslash \Gamma} \phi_{i,t}(\gamma \cdot n_{z,x} m_y)
$$
The assumption on $t$ assures that in the region $y^{2r+1} > a - \frac{1}{t}$ we have 
$$\Psi_{i,t} = c_P \Psi_{i,t} = \phi_{i,t}$$
Thus $c_P(f_i - \Psi_{i,t})$ vanishes in $y \geq a$, so $f_i - \Psi_{i,t} \in L^2_a(\Gamma \backslash G /K)$, as desired. By the triangle inequality
$$
|f - (f_i -\Psi_{i,t})|_{L^2} \leq |f - f_i|_{L^2}  + |\Psi_{i,t}|_{L^2} 
$$
where by assumption $|f -f_i|_{L^2} \to 0$. Thus it suffices to show that the $L^2$ norm of $\Psi_{i,t}$ goes to $0$ for large $i$ and $t$. Since $a \gg 1$,
$$
|\Psi_{i,t}|_{L^2}  = |\Psi_{i,t}|_{C_{a - \frac{1}{t}}} = |\phi_{i,t}|_{C_{a - \frac{1}{t}}} = |\beta(t(y-a)) \cdot c_P f_i|_{C_{a - \frac{1}{t}}} \leq |c_P f_i|_{C_{a - \frac{1}{t}}}
$$
The cylinder $C_{a - \frac{1}{t}} \approx (N \cap \Gamma) \backslash N \times (a - \frac{1}{t}, \infty)$ is isomorphic to the product of a compact manifold $(N \cap \Gamma) \backslash N$ times the ray $(a - \frac{1}{t}, \infty)$. In $L^2(C_{a - \frac{1}{t}})$ with the norm $|\cdot |_{C_{a - \frac{1}{t}}}$, since the integral of the $L^2((N \cap \Gamma)\backslash N)$ norm-squared goes to zero by the above, the map $f \to c_P f$ is given by a continuous, compactly-supported, $L^2(C_{a - \frac{1}{t}})$-valued integrand, which exists as a Gelfand-Pettis integral (cf. \S [14.1] in \cite{PBG}). This implies that the restriction of $c_P f_r$ to $C_{a - \frac{1}{t}}$ goes to $c_P f$ in $L^2(C_{a - \frac{1}{t}})$. As $c_P f$ is supported in the range $\eta(g) \leq a$ and the measure of $C_a - C_{a - \frac{1}{t}}$ goes to $0$ as $t \to +\infty$, the $C_{a - \frac{1}{t}}$-norm of $c_P f$ also goes to $0$ as $t \to +\infty$, since $c_P f$ is locally integrable. In particular, this implies the diagonal terms $\Psi_{i,i}$ go to $0$ in $L^2$ norm, so that $f_r - \Psi_{i,i}$ go to $f$ in $L^2$ norm, proving the density of $\mathscr{D}_a$ in $L^2_a$. $\square$
\end{proof}

%
%

\section{$L^2$ norms of truncated tails go to $0$ strongly}\label{sec:L2-norms-zero-strongly}
Recall that
\begin{align*}
|f|^2_{\frak{B}^1} = & \langle (1 - \Delta)f , f \rangle = \langle f ,f \rangle + \langle (-\Delta)f,f \rangle \\
\frak{B}^1 \equiv & \text{ completion of } C^\infty_c(\Gamma \backslash G / K) \text{ with respect to the } \frak{B}_1 \text{ norm} \\
L^2_a(\Gamma \backslash G /K) = & \; \{ f \in L^2(\Gamma \backslash G /K) \; : \; c_P f (g) = 0 \text{ for } \eta(g) \geq a \} \\
\mathscr{D}_a = & \; C^\infty_c(\Gamma \backslash G /K) \cap L^2_a(\Gamma \backslash G /K) 
\end{align*}
%

%
%

\subsection{$O(r,1)$}\label{L2-norms-zero-strongly-O(r,1)}

\begin{lemma}\label{sec:L2a-tail-norm-bded-by-B1a-norm}
Let $B$ be the unit ball in $\frak{B}^1_a(\Gamma \backslash G /K)$ then, given $\varepsilon > 0$, a cutoff $c \gg a$ can be made sufficiently large so that the image of $B$ in $L^2_a(\Gamma \backslash G /K)$ lies in a single $\varepsilon$-ball in $L^2_a(\Gamma \backslash G /K)$. That is, for $f \in \frak{B}^1_a(\Gamma \backslash G /K)$
$$
\lim_{c \to \infty}  \int_{N_\mathbb{Z} \backslash N_\mathbb{R}} \int_{y > c} |f(n_x m_y)|^2 \; \frac{dx \; dy}{y^r} \longrightarrow 0 \enspace (\text{uniformly for } |f|_{\frak{B}^1_a} \leq 1)
$$
The following stronger estimate will also be shown: that for suitably large $y > c \gg 1$
$$
\int_{(N \cap \Gamma) \backslash N} \int_{y > c} |f(n_x m_y)|^2 \; \frac{dx \; dy}{y^r} \ll \frac{1}{c^2} \cdot |f|^2_{\frak{B}^1_a}
$$
\end{lemma}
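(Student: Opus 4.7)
The plan is to reduce to a direct Fourier-series estimate on the cusp cylinder. By the density result of the preceding section, $\mathscr{D}_a$ is dense in both $L^2_a$ and in $\frak{B}^1_a$ with respect to the $\frak{B}^1$ norm, so it suffices to establish the bound
$$
\int_{(N\cap\Gamma)\backslash N}\int_{y>c} |f(n_x m_y)|^2 \,\frac{dx\,dy}{y^r} \;\leq\; \frac{1}{(2\pi)^2 c^2}\,|f|^2_{\frak{B}^1_a}
$$
for $f\in\mathscr{D}_a$ and then pass to the limit; the inequality is preserved under $\frak{B}^1$-limits.

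First I would use reduction theory to choose $c\gg a$ large enough that $\{g\in G/K : \eta(g)>c\}$ injects into $\Gamma\backslash G/K$ via the natural map from the cylinder $C_c=(N\cap\Gamma)\backslash N\times(c,\infty)$, where $(N\cap\Gamma)\backslash N\cong\mathbb T^{r-1}$. On this cylinder, for each fixed $y$ we may expand $f(\,\cdot\,,y)$ in a Fourier series
$$
f(x,y)=\sum_{\xi\in\mathbb Z^{r-1}}\widehat f(\xi,y)\,e^{2\pi i\xi\cdot x},
$$
and Plancherel gives $\int_{\mathbb T^{r-1}}|f|^2\,dx=\sum_\xi|\widehat f(\xi,y)|^2$ as well as $\int_{\mathbb T^{r-1}}|\nabla_x f|^2\,dx=(2\pi)^2\sum_\xi|\xi|^2|\widehat f(\xi,y)|^2$.

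The key input is the pseudo-cuspform condition: for $f\in\mathscr{D}_a\subset L^2_a$, the constant term $c_Pf(m_y)=\widehat f(0,y)$ vanishes identically in $y>a$, so in the tail $y>c$ we have $\widehat f(0,y)=0$. Thus only $\xi\neq 0$ contribute, and since $|\xi|^2\geq 1$ for such $\xi$,
$$
\int_{\mathbb T^{r-1}} |f(x,y)|^2\,dx \;=\;\sum_{\xi\neq 0}|\widehat f(\xi,y)|^2\;\leq\;\sum_{\xi\neq 0}|\xi|^2\,|\widehat f(\xi,y)|^2\;=\;\frac{1}{(2\pi)^2}\int_{\mathbb T^{r-1}}|\nabla_x f|^2\,dx.
$$
Multiplying by $y^{-r}$ and integrating over $y>c$, together with the trivial bound $1\leq y^2/c^2$ in the tail,
$$
\int_{y>c}\!\int_{\mathbb T^{r-1}}|f|^2\,\frac{dx\,dy}{y^r} \;\leq\;\frac{1}{(2\pi)^2c^2}\int_{y>c}\!\int_{\mathbb T^{r-1}} y^2|\nabla_x f|^2\,\frac{dx\,dy}{y^r}.
$$
The right-hand integrand is exactly the tangential piece of $\langle-\Delta f,f\rangle$ computed in the positivity section, which is dominated by $\langle(1-\Delta)f,f\rangle=|f|^2_{\frak{B}^1_a}$; this yields the desired estimate.

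The main obstacle is purely logical rather than technical: verifying that the pointwise Fourier argument, valid for $f\in\mathscr{D}_a$, extends to general $f\in\frak{B}^1_a$. This is handled by the density lemma of Section~\ref{sec:Density-of-Automorphic-Test-Functions} together with the observation that both sides of the final inequality are continuous in the $\frak{B}^1$-topology (the left side by continuity of $L^2$-restriction to the tail, the right side by definition of the $\frak{B}^1$-norm). Taking $c\to\infty$ then yields the uniform convergence to $0$ on the unit ball of $\frak{B}^1_a$.
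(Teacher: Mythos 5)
Your proposal is correct and follows essentially the same route as the paper: Fourier expansion on the torus $(N\cap\Gamma)\backslash N$, vanishing of the zero mode from the pseudo-cuspform condition, the bound $1\le|\xi|^2$ for the remaining frequencies, the factor $y^2/c^2\ge 1$ in the tail, and finally domination of the tangential piece $\int y^2|\nabla_x f|^2\,dx\,dy/y^r$ by the full $\frak{B}^1_a$ norm via the positivity of the other fragments of $-\Delta$. Your explicit reduction to $\mathscr{D}_a$ and passage to the limit is a slightly more careful handling of the density issue that the paper leaves implicit, but it is not a different argument.
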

\begin{proof}
%
%
Let $\xi$ run over characters of $N_\mathbb{Z} \backslash N_\mathbb{R} \approx \mathbb{T}^{r-1}$ and take height $c \geq c_0 \geq a \gg 1$. With Iwasawa coordinates $x,y$, write the Fourier expansion in $x$ as 
$$
f(x,y) = \sum_\xi \hat{f}(\xi, y) \xi(x) \enspace \bigg(\; = \sum_{\xi \in \mathbb{Z}^{r-1}} \hat{f}(\xi, y) e^{2 \pi i \xi \cdot x} \bigg)
$$
Since $f \in \frak{B}^1_a$, in Iwasawa coordinates $n_x , m_y$
$$
c_P f(x,y) = \int_{(N \cap \Gamma)\backslash N} f(x,y) dn_x \; = \int_{((N \cap \Gamma))\backslash N} f(x,y) e^{2 \pi i \; 0 \cdot x}dn_x \; = \;\hat{f}(0,y) \;\; ( 0 \in \mathbb{Z}^{r-1} )
$$
so that $\hat{f}(0, y) = 0$ when $y \geq c \gg a$. By Plancherel in $x$ 
$$
\int_{N_\mathbb{Z} \backslash N_\mathbb{R}} \int_{y \geq c} |f|^2 \ \frac{dx \; dy}{y^r} = \int_{(N \cap \Gamma) \backslash N} \int_{y > c} |f(n_x m_y)|^2 \; \frac{dx \; dy}{y^r} = \sum_{\xi \in \mathbb{Z}^{r-1}} \int_{y \geq c} |\hat{f}(\xi, y)|^2 \ \frac{dy}{y^r}
$$
Since $\hat{f}(0, y) = 0$ when $y \geq c$, the sum is over $\xi \neq 0$, so that $|\xi| \geq 1$ and
$$
\sum_\xi \int_{y \geq c} |\hat{f}(\xi, y)|^2 \ \frac{dy}{y^r} \; \ll \; \sum_\xi \int_{y \geq c} |\xi|^2 \cdot|\hat{f}(\xi, y)|^2 \ \frac{dy}{y^r}
$$
With $\Delta_x$ the Euclidean Laplacian in $x$,
$$
|\xi|^2 \cdot \hat{f}(\xi,y) = \frac{1}{4 \pi^2} (- \Delta_x f)\widehat{\;\;}(\xi,y) \ll (- \Delta_x f)\widehat{\;\;}(\xi,y)
$$
Substituting this back and applying Plancherel
\begin{align*}
\sum_\xi \int_{y \geq c} |\xi|^2 \cdot|\hat{f}(\xi, y)|^2 \ \frac{dy}{y^r} \; \ll & \; \sum_\xi \int_{y \geq c} (- \Delta_x f)\widehat{\;\;}(\xi,y) \overline{\widehat{f}}(\xi, y)  \frac{dy}{y^r} \\
=& \; \int_{(N \cap \Gamma) \backslash N} \int_{y > c} - \Delta_x f \cdot \bar{f} \;\; \frac{dx \; dy}{y^r}
\end{align*}
Again using that $y > c \gg a \gg 1$
$$
\int_{(N \cap \Gamma) \backslash N} \int_{y > c} - \Delta_x f \cdot \bar{f} \;\; \frac{dx \; dy}{y^r} \; \leq \; \frac{1}{c^2} \int_{(N \cap \Gamma) \backslash N} \int_{y > c} - y^2 \Delta_x f \cdot \bar{f} \;\; \frac{dx \; dy}{y^r}
$$
Recall the positivity result
$$
0 \leq \int -\Big(y^2\frac{\partial^2}{\partial y^2} - (r-2)y\frac{\partial}{\partial y} \Big)f \cdot \bar{f} \ \frac{dx \, dy}{y^r} 
$$
so that
\begin{align*}
& \frac{1}{c^2} \int_{(N \cap \Gamma) \backslash N} \int_{y > c} - y^2 \Delta_x f \cdot \bar{f} \;\; \frac{dx \; dy}{y^r} \\
& \leq \frac{1}{c^2} \int_{(N \cap \Gamma) \backslash N} \int_{y > c} \bigg( - y^2 \Delta_x f - y^2\frac{\partial^2}{\partial y^2} + (r-2)y\frac{\partial}{\partial y})f \bigg) \cdot \bar{f} \;\; \frac{dx \; dy}{y^r}
\end{align*}
Substituting back, for smooth $f$ with support in $y \geq c \gg a$ 
$$
\int_{(N \cap \Gamma) \backslash N} \int_{y > c} |f(n_x m_y)|^2 \; \frac{dx \; dy}{y^r} \;\; \ll \;\;  \frac{1}{c^2} \int_{(N \cap \Gamma) \backslash N} \int_{y > c} -\Delta f \cdot \bar{f} \;\; \frac{dx \; dy}{y^r} 
$$
Of course, also
$$
0 \leq \frac{1}{c^2} \int_{(N \cap \Gamma) \backslash N} \int_{y > c} |f(x,y)|^2 \; \frac{dx \; dy}{y^r}
$$
So adding this to the right side above gives
$$
\int_{(N \cap \Gamma) \backslash N} \int_{y > c} |f(n_x m_y)|^2 \; \frac{dx \; dy}{y^r} \ll  \frac{1}{c^2} \int_{(N \cap \Gamma) \backslash N} \int_{y > c} (1 -\Delta) f \cdot \bar{f} \;\; \frac{dx \; dy}{y^r}  \leq \frac{1}{c^2} \cdot |f|^2_{\frak{B}^1_a}
$$
as claimed. $\square$
\end{proof}

%
%

\subsection{$U(r,1)$}\label{L2-norms-zero-strongly-U(r,1)}

\begin{lemma}\label{sec:L2a-tail-norm-bded-by-B1a-norm-U(r,1)}
Let $B$ be the unit ball in $\frak{B}^1_a(\Gamma \backslash G /K)$ then, given $\varepsilon > 0$, a cutoff $c \gg a$ can be made sufficiently large so that the image of $B$ in $L^2_a(\Gamma \backslash G /K)$ lies in a single $\varepsilon$-ball in $L^2_a(\Gamma \backslash G /K)$. That is, for $f \in \frak{B}^1_a(\Gamma \backslash G /K)$
$$
\lim_{c \to \infty}  \int_{(N \cap \Gamma)\backslash N} \int_{y > c} |f(n_{z,x} m_y)|^2 \; \frac{dz \; dx \; dy}{y^{2r+1}} \longrightarrow 0 \enspace (\text{uniformly for } |f|_{\frak{B}^1_a} \leq 1)
$$
The following stronger estimate will also be shown: that for suitably large $y > c \gg 1$
$$
\int_{(N \cap \Gamma) \backslash N} \int_{y > c} |f(n_{z,x} m_y)|^2 \; \frac{dz \; dx \; dy}{y^{2r+1}} \ll \frac{1}{c^2} \cdot |f|^2_{\frak{B}^1_a}
$$
\end{lemma}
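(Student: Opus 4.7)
The plan is to mirror the $O(r,1)$ proof just above, with the Fourier decomposition on the torus $(N\cap\Gamma)\backslash N\approx\mathbb{T}^{r-1}$ replaced by spectral theory on the compact (Heisenberg) nilmanifold $(N\cap\Gamma)\backslash N$. Since $f\in\frak{B}^1_a$, the constant term $c_P f(n_{z,x}m_y)$ vanishes for $y\ge a$, so for each height $y\ge c\gg a$ the slice $n\mapsto f(n,y)$ has mean zero on $(N\cap\Gamma)\backslash N$. The latter is compact, so any positive $N$-left-invariant elliptic operator on it — for instance the sum of squares of a frame of left-invariant vector fields plus the square of the central generator, suitably normalized — has discrete non-negative spectrum whose kernel is the constants. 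Letting $\lambda_1>0$ denote its smallest eigenvalue, I obtain a Poincaré-type inequality
$$\int_{(N\cap\Gamma)\backslash N}|f(\,\cdot\,,y)|^2\,dn\;\le\;\frac{1}{\lambda_1}\int_{(N\cap\Gamma)\backslash N}\bigl(-\Delta_u-\Delta_v-\partial_x^2\bigr)\,f\cdot\bar f\,dn,$$
up to replacing $-\Delta_u-\Delta_v-\partial_x^2$ by an equivalent left-invariant elliptic operator in the same coordinates.

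Next I would invoke $y^2\ge 1$ to dominate $-\partial_x^2$ by $-y^2\partial_x^2$ in the integrand, and then multiply through by the factor $y^2/c^2\ge 1$ (valid since $y\ge c$) to obtain
$$\int_{(N\cap\Gamma)\backslash N}|f|^2\,dn\;\le\;\frac{1}{\lambda_1 c^2}\int_{(N\cap\Gamma)\backslash N}\bigl(-y^2\Delta_u-y^2\Delta_v-y^4\partial_x^2\bigr)f\cdot\bar f\,dn,$$
whose right-hand integrand is precisely the tangential piece of $-\Delta$ read off from the Casimir expression of Section~5.2.4. Integrating in $y>c$ against the invariant measure $dy/y^{2r+1}$ and adding the non-negative contribution
$$\int_{(N\cap\Gamma)\backslash N}\int_{y>c}-\bigl(y^2\partial_y^2-(2r-1)y\,\partial_y\bigr)\,f\cdot\bar f\,\frac{dz\,dx\,dy}{y^{2r+1}}\;\ge\;0$$
established in Section~6.2 promotes the tangential operator to the full $-\Delta$. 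Finally absorbing an extra $\int\!\!\int|f|^2$ on the right into the $(1-\Delta)$ form yields
$$\int_{(N\cap\Gamma)\backslash N}\int_{y>c}|f|^2\;\frac{dz\,dx\,dy}{y^{2r+1}}\;\ll\;\frac{1}{c^2}\,\langle(1-\Delta)f,f\rangle\;=\;\frac{1}{c^2}\,|f|^2_{\frak{B}^1_a},$$
giving simultaneously the uniform vanishing on the $\frak{B}^1_a$-unit ball and the sharper $1/c^2$ rate.

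The hard part is the Poincaré inequality on the Heisenberg nilmanifold: unlike the abelian torus in the $O(r,1)$ case, $L^2((N\cap\Gamma)\backslash N)$ decomposes via Stone--von Neumann according to the characters of the one-dimensional center, so there is no literal Fourier-series description with eigenvalues $4\pi^2|\xi|^2$. However, since $(N\cap\Gamma)\backslash N$ is a compact manifold and an $N$-left-invariant second-order operator built from left-invariant fields (with the central generator included so as to be genuinely elliptic) descends to the quotient, standard spectral theory on compact Riemannian manifolds supplies the positive gap $\lambda_1>0$ on the $L^2$-orthogonal complement of constants without recourse to the finer representation-theoretic structure. A secondary technical point is that the ``coordinate'' operators $\Delta_u,\Delta_v,\partial_x^2$ appearing in the Casimir expression must be identified with (scaled) squares of the left-invariant vector fields $X_\ell,\tilde X_\ell,\tilde Z$ of Section~5.2.2 in order for the comparison between the Poincaré operator and the tangential part of $-\Delta$ to be literal, after which the argument goes through exactly as in the abelian case.
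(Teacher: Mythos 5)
Your proposal is correct and follows essentially the same route as the paper's own proof: a Poincar\'e/spectral-gap inequality for the tangential part of $-\Delta$ on the compact nilmanifold $(N\cap\Gamma)\backslash N$ applied to the mean-zero slices, with the gap made uniform in $y$ by comparing the $y$-dependent tangential operator to a fixed one (the paper uses $S_a=\Delta_u+\Delta_v+a^2\partial_x^2$ where you use the $y=1$ normalization, the difference being non-negative in both cases), followed by extraction of $1/c^2$ from $y\ge c$, reassembly of $-\Delta$ via the positivity of the $y$-derivative fragment, and absorption of $|f|^2$ into $\langle(1-\Delta)f,f\rangle$. The technical caveats you flag (no literal Fourier series, identification of the coordinate operators with the invariant frame) are exactly the ones the paper addresses by the same compact-manifold spectral theory.
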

\begin{proof}
Since $N \subset U(r,1)$ is no longer commutative, we cannot follow the approach as in $O(r,1)$ where a literal Fourier series on $(N \cap \Gamma)\backslash N) \approx \mathbb{Z}^{r-1}\backslash \mathbb{R}^{r-1} \approx \mathbb{T}^{r-1}$ was used. However, $(N \cap \Gamma)\backslash N$ is still a (smooth) compact manifold. Recall the coordinates used on $N$:
$$
N \enspace = \enspace \{ n_{z,x} \enspace = \enspace 
\renewcommand\arraystretch{1.5}
\begin{pmatrix}
1 & z & -\frac{1}{2}|z|^2 + ix \\
0 & 1_{r-1} & -z^\ast \\
0 & 0 & 1
\end{pmatrix}
\enspace : \enspace z \in \mathbb{C}^{r-1} \enspace z = u + iv \in \mathbb{R}^{r-1} \oplus \; i \mathbb{R}^{r-1} \enspace x \in \mathbb{R}\}
$$
and that the Laplacian of $G/K$ in these coordinates is
$$
\Delta = y^2 \bigg( \Delta_u + \Delta_v + \frac{\partial^2}{\partial y^2}+ y^2 \frac{\partial^2}{\partial x^2} \bigg) - (2r-1) y \frac{\partial}{\partial y}
$$
The non-compact component of the quotient corresponding to the height is
$$
 Y_\infty =  \big\{ \Gamma\backslash U(r,1) / K \; : \; \eta(g) = \eta(n_{z,x}m_y) = y^{2r+1} > a \big\} \approx (N \cap \Gamma) \backslash N \times (a,\infty)
$$
While $Y_\infty$ is homeomorphic to the product $(N \cap \Gamma) \backslash N \times (a,\infty)$, the implicit geometry of the ``leaves'' (i.e., corresponding to $(N \cap \Gamma) \backslash N$) includes a dependency on the $y$ parameter from the complementary ray $(a, \infty) \approx \mathbb{R}^+$. 
Denote the compact manifold $(N \cap \Gamma) \backslash N$ by $X$.  $\Delta$ decomposes into a sum of components tangential to the factors so that by re-arranging terms in $\Delta$ we can express it in terms of derivatives tangential and transverse to $N$ (and thus also to $(N \cap \Gamma)\backslash N = X$), giving
$$
\Delta = y^2 \underbrace{\bigg( \Delta_u + \Delta_v + y^2 \frac{\partial^2}{\partial x^2} \bigg)}_\text{$S_y$ tangential to $N$} + \underbrace{\bigg(y^2\frac{\partial^2}{\partial y^2} - (2r-1) y \frac{\partial}{\partial y} \bigg)}_\text{$\partial^2_y$ transverse to $N$}
$$
For each $y \in (a,\infty)$, $S_y$ is a symmetric semi-bounded operator and has compact resolvent. The dependence of $S_y$ on the coordinate $y$ from $M$ requires some attention to make sure eigenvalues of $S_y$ are uniformly bounded away from zero for all $y \in (a, \infty)$. 
To this end, let $\lambda$ be the greatest number such that $\langle -S_y v,v\rangle \ge \lambda \langle v,v \rangle$:
$$
\lambda = \sup_c \enspace \langle -S_y v,v\rangle \ge c \langle v,v \rangle \text{ for } v \in C^\infty_c(X) \text{ and } v \perp 1
$$
Since $-S_y$ has positive discrete spectrum, $\lambda$ is well-defined and we claim that $\lambda$ is a lower bound for the non-zero eigenvalues of $-S_y$.
On the collar $X \times (a, \infty)$, rewrite $S_y$ as the sum of a symmetric operator independent of $y$ and a non-negative symmetric operator
$$
S_y = \underbrace{(\Delta_u + \Delta_v + a^2 \frac{\partial^2}{\partial x^2})}_\text{$S_a$ independent of $y$} + \underbrace{(y^2 - a^2)\frac{\partial^2}{\partial x^2}}_\text{$T$} = S_a + T
$$ 
Substituting $S_a$ and $T$
$$
\langle -S_y v,v\rangle = \langle -(S_a + T) v, v\rangle = \langle -S_a v, v\rangle + \langle -Tv, v\rangle \geq \langle -S_a v, v \rangle
$$
by the non-negativity of $-T$.
The operator $-S_a$ is also a positive, symmetric operator and thus $L^2_a$ decomposes purely discretely for $-S_a$. Label the eigenfunctions and eigenvalues of $-S_a$ as $\widetilde{\lambda}_j$ and $\widetilde{\phi}_j$. The Gelfand condition for $y \gg a$ implies the $0^\text{th}$ coefficient $\widetilde{c}_0$ in the expansion relative to $S_a$ vanishes uniformly (also true for $S_y$). That is, the compactness of $X$ implies $1 \in L^2(X)$, and since $S_a$ has no constant term,  the constant function $1$ is an eigenfunction of $S_a$ corresponding to the first eigenvalue $\lambda_0 = 0$ since the inner-product of the restriction of $f$ to $(N \cap \Gamma)\backslash N = X$ with the constant function $1 = \widetilde{\phi_0}$
\begin{align*}
c_P f(m_y) =& \; 0 = \int_{(N \cap \Gamma) \backslash N} f(n_{w,t} \cdot n_{z,x} m_y) \; dn_{w,t}  = \int_{(N \cap \Gamma) \backslash N} f(n_{z,x} m_y) \; dn_{z,x} \\
=& \int_{(N \cap \Gamma) \backslash N} f(n_{z,x} m_y) \cdot \bar{1} \; dn_{z,x} = \widetilde{c_0}(m_y) \quad (\text{i.e., for } y \gg a)
\end{align*}
 Similarly define $\widetilde{\lambda}$ by
$$
\widetilde{\lambda} = \sup_c \enspace \langle -S_a v,v\rangle \ge c \langle v,v \rangle \text{ for } v \in C^\infty_c(X) \text{ and } v \perp 1
$$
In particular, $\widetilde{\lambda}$ is independent of $y$ since $S_a$ is. We then have for all $v \in C^\infty_c(X)$ and $v \perp 1$:
$$
\langle -S_y v,v\rangle = \langle -(S_a + T) v, v\rangle = \langle -S_a v, v\rangle + \langle -Tv, v\rangle \geq \langle -S_a v, v \rangle \geq \widetilde{\lambda} \langle v, v\rangle
$$
but $\lambda$ was the greatest value satisfying this bound for $S_y$ so we must have 
$$
\lambda \geq \widetilde{\lambda} > 0
$$
independent of the $y$ coordinate. For $f \in C^\infty_c(\Gamma\backslash G/K) \cap L^2_a(\Gamma\backslash G/K)$, on the collar $X \times [a, \infty)$:
\begin{align*}	
\int_{(N \cap \Gamma) \backslash N} \int_{y \geq c} |f|^2 & \ \frac{dz \; dx \; dy}{y^{2r+1}} = \int_{y \geq c} \bigg( \int_{(N \cap \Gamma) \backslash N} f \cdot \bar{f} \, dz \; dx \bigg) \;\frac{ dy}{y^{2r+1}}  = \int_{y \geq c} \langle f, f \rangle_X \; \frac{dy}{y^{2r+1}} \\
\leq &   \int_{y \geq c}  \frac{1}{\lambda} \langle -S_y f, f \rangle_X \; \frac{dy}{y^{2r+1}} \leq \int_{y \geq c}  \frac{1}{\widetilde{\lambda}} \langle -S_y f, f \rangle_X \; \frac{dy}{y^{2r+1}}\\
=& \frac{1}{\widetilde{\lambda} } \int_{(N \cap \Gamma) \backslash N} \int_{y \geq c} -S_y f \cdot \bar{f} \; \; \frac{dz \; dx \; dy}{y^{2r+1}} \\
 \leq & \; \frac{1}{c^2} \frac{1}{\widetilde{\lambda} }  \int_{(N \cap \Gamma) \backslash N} \int_{y \geq c} - y^2 S_y f \cdot \bar{f} \; \; \frac{dz \; dx \; dy}{y^{2r+1}} \qquad (y > c )\\
\leq & \; \frac{1}{c^2} \frac{1}{\widetilde{\lambda}}  \int_{(N \cap \Gamma) \backslash N} \int_{y \geq c} - y^2 S_y f \cdot \bar{f} - \partial_y^2 f \cdot \bar{f} \; \; \frac{dz \; dx \; dy}{y^{2r+1}} \qquad ({\scriptstyle\text{positivity of fragments}})\\
=& \; \frac{1}{c^2} \frac{1}{\widetilde{\lambda} } \int_{(N \cap \Gamma) \backslash N} \int_{y \geq c} - (y^2 S_y + \partial_y^2) f \cdot \bar{f}  \; \; \frac{dz \; dx \; dy}{y^{2r+1}}   \\
=& \; \frac{1}{c^2} \frac{1}{\widetilde{\lambda}}  \int_{(N \cap \Gamma) \backslash N} \int_{y \geq c} - \Delta f \cdot \bar{f}  \; \; \frac{dz \; dx \; dy}{y^{2r+1}} \\
\leq & \; \frac{1}{c^2} \frac{1}{\widetilde{\lambda}} \int_{(N \cap \Gamma) \backslash N} \int_{y \geq c} - \Delta f \cdot \bar{f} + f\cdot \bar{f}  \; \; \frac{dz \; dx \; dy}{y^{2r+1}} \qquad (|f|^2 \geq 0)\\
=& \; \frac{1}{c^2} \frac{1}{\widetilde{\lambda}} \int_{(N \cap \Gamma) \backslash N} \int_{y \geq c} (1 - \Delta) f \cdot \bar{f}  \; \; \frac{dz \; dx \; dy}{y^{2r+1}} = \frac{1}{c^2} \big( \frac{1}{\widetilde{\lambda}} |f|^2_{\frak{B}^1_a} \big) \ll \frac{1}{c^2} |f|^2_{\frak{B}^1_a}
\end{align*}
Which is the claimed bound on the $L^2$ tails. $\square$
\end{proof}

%
%

\subsection{$Sp^\ast(r,1)$}\label{L2-norms-zero-strongly-Sp*(r,1)}

\begin{lemma}\label{sec:L2a-tail-norm-bded-by-B1a-norm}
Let $B$ be the unit ball in $\frak{B}^1_a(\Gamma \backslash G /K)$ then, given $\varepsilon > 0$, a cutoff $c \gg a$ can be made sufficiently large so that the image of $B$ in $L^2_a(\Gamma \backslash G /K)$ lies in a single $\varepsilon$-ball in $L^2_a(\Gamma \backslash G /K)$. That is, for $f \in \frak{B}^1_a(\Gamma \backslash G /K)$
$$
\lim_{c \to \infty}  \int_{N_\mathbb{Z} \backslash N_\mathbb{R}} \int_{y > c} |f(n_{z,q,q^\prime,q^{\prime\prime}} m_y)|^2 \; \frac{dz \; dq\, dq^\prime\, dq^{\prime\prime}\, dy}{y^{4r+3}} \longrightarrow 0 \enspace (\text{uniformly for } |f|_{\frak{B}^1_a} \leq 1)
$$
The following stronger estimate will also be shown: that for suitably large $y > c \gg 1$
$$
\int_{(N \cap \Gamma) \backslash N} \int_{y > c} |f(n_{z,q,q^\prime,q^{\prime\prime}} m_y)|^2 \; \frac{dz \; dq\, dq^\prime\, dq^{\prime\prime}\, dy}{y^{4r+3}} \ll \frac{1}{c^2} \cdot |f|^2_{\frak{B}^1_a}
$$
\end{lemma}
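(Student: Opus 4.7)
The plan is to imitate the $U(r,1)$ argument essentially verbatim, with the only substantive change being that the Heisenberg-like unipotent radical is replaced by the quaternionic analogue. Since $N \subset Sp^\ast(r,1)$ is two-step nilpotent with three-dimensional center (the $q,q',q''$ directions), a naive Fourier expansion on $(N \cap \Gamma)\backslash N$ is unavailable, but this compact nilmanifold still carries a self-adjoint tangential operator coming from the Casimir, which is all we shall need.

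Using the coordinates $(z,q,q',q'',y) \in \mathbb{H}^{r-1}\times \mathbb{R}^3 \times (0,\infty)$ established above, decompose the Laplacian into a piece tangential to $N$ and a piece transverse to $N$:
\begin{equation*}
\Delta \;=\; y^2\underbrace{\Bigl(\Delta_x+\Delta_w+\Delta_u+\Delta_v+y^2\tfrac{\partial^2}{\partial q^2}+y^2\tfrac{\partial^2}{\partial q'^2}+y^2\tfrac{\partial^2}{\partial q''^2}\Bigr)}_{S_y} \;+\; \underbrace{\Bigl(y^2\tfrac{\partial^2}{\partial y^2}-(4r+1)y\tfrac{\partial}{\partial y}\Bigr)}_{\partial^2_y}.
\end{equation*}
For each fixed $y > a$, $S_y$ is a symmetric, non-positive operator on the compact manifold $X=(N\cap\Gamma)\backslash N$ with compact resolvent, hence with discrete spectrum. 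To handle the $y$-dependence of the coefficients of the central derivatives, write $S_y = S_a + T_y$ where
\begin{equation*}
S_a \;=\; \Delta_x+\Delta_w+\Delta_u+\Delta_v + a^2\bigl(\tfrac{\partial^2}{\partial q^2}+\tfrac{\partial^2}{\partial q'^2}+\tfrac{\partial^2}{\partial q''^2}\bigr), \qquad T_y \;=\; (y^2-a^2)\bigl(\tfrac{\partial^2}{\partial q^2}+\tfrac{\partial^2}{\partial q'^2}+\tfrac{\partial^2}{\partial q''^2}\bigr),
\end{equation*}
so that $S_a$ is independent of $y$ and $-T_y \geq 0$ for $y \geq a$ by the positivity calculation in \cref{Postivity-of-fragments-of-Delta-Sp*(r,1)} (applied separately to each of $q,q',q''$).

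The key step is then to identify a uniform spectral gap: define
\begin{equation*}
\widetilde{\lambda} \;=\; \sup\bigl\{c \,:\, \langle -S_a v,v\rangle \geq c\langle v,v\rangle \text{ for all } v \in C^\infty(X),\; v \perp 1\bigr\}.
\end{equation*}
Because $-S_a$ is a non-negative elliptic operator on the compact manifold $X$ with one-dimensional kernel spanned by the constants, $\widetilde{\lambda}$ is strictly positive and independent of $y$. The Gelfand vanishing $c_P f(m_y) = 0$ for $y \geq a$ says precisely that the component of $f(\cdot,y)$ along the constant function on $X$ vanishes, so the Bessel-type inequality $\langle -S_y f,f\rangle_X \geq \langle -S_a f,f\rangle_X \geq \widetilde{\lambda}\langle f,f\rangle_X$ applies pointwise in $y$ to every $f \in \mathscr{D}_a$ with support in $y > c \gg a$.

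From here the estimate follows exactly as in the $U(r,1)$ case: integrate in $y$, absorb a factor $y^2 \geq c^2$, append the non-negative contribution $-\partial_y^2 f\cdot\bar f$ (whose non-negativity was verified in \cref{Postivity-of-fragments-of-Delta-Sp*(r,1)}), and finally add $|f|^2$ to recognise the $\frak{B}^1_a$ norm:
\begin{equation*}
\int_{(N\cap\Gamma)\backslash N}\!\int_{y\geq c}\! |f|^2\,\frac{dz\,dq\,dq'\,dq''\,dy}{y^{4r+3}} \;\leq\; \frac{1}{c^2\widetilde{\lambda}}\int\!\!\int_{y\geq c}(1-\Delta)f\cdot\bar f\,\frac{dz\,dq\,dq'\,dq''\,dy}{y^{4r+3}} \;\leq\; \frac{1}{c^2\widetilde{\lambda}}|f|^2_{\frak{B}^1_a}.
\end{equation*}
A density argument using \cref{sec:Density-of-Automorphic-Test-Functions} extends this from $\mathscr{D}_a$ to all of $\frak{B}^1_a$. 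The main obstacle, and the only place care is needed beyond the $U(r,1)$ template, is the uniform gap: one must verify that splitting off the $(y^2-a^2)$ coefficient on all three central directions $q,q',q''$ simultaneously still leaves a non-negative remainder, which is precisely what the variant positivity computation already in \cref{Postivity-of-fragments-of-Delta-Sp*(r,1)} provides.
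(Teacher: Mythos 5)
Your proposal is correct and follows essentially the same route as the paper's own proof: the same decomposition of $\Delta$ into the tangential operator $S_y$ and the transverse $y$-derivative part, the same splitting $S_y = S_a + T$ to obtain a $y$-independent spectral gap $\widetilde{\lambda} > 0$ on the compact nilmanifold $(N\cap\Gamma)\backslash N$, the same use of the Gelfand condition to discard the constant mode, and the same chain of inequalities absorbing $y^2 \geq c^2$ and the non-negative transverse term to reach $\frac{1}{c^2}|f|^2_{\frak{B}^1_a}$. No substantive difference from the paper's argument.
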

\begin{proof}
%
%
%
%
As with $U(r,1)$, since $N \subset Sp^\ast(r,1)$ is no longer commutative, we cannot follow the approach as in $O(r,1)$ where a literal Fourier series on $(N \cap \Gamma)\backslash N) \approx \mathbb{Z}^{r-1}\backslash \mathbb{R}^{r-1} \approx \mathbb{T}^{r-1}$ was used. However, $(N \cap \Gamma)\backslash N)$ is again a compact Riemannian manifold; recall the coordinates used on $N$:
$$
N \enspace = \enspace \{ n_{z,q,q^\prime,q^{\prime\prime}} \enspace = \enspace 
\renewcommand\arraystretch{1.5}
\begin{pmatrix}
1 & z & -\frac{1}{2}|z|^2 + iq + jq^\prime + kq^{\prime\prime} \\
0 & 1_{r-1} & -z^\ast \\
0 & 0 & 1
\end{pmatrix}
\}
$$
where
$$
z = x+ iw + ju + kv \in \mathbb{R}^{r-1} \oplus \; i \mathbb{R}^{r-1} \oplus \; j \mathbb{R}^{r-1} \oplus \; k \mathbb{R}^{r-1} \approx \mathbb{H}^{r-1} \quad  q,q^\prime,q^{\prime\prime} \in \mathbb{R}
$$
and that the Laplacian in these coordinates is
$$
\Delta = \Omega_\frak{g} =  y^2 \big(  \Delta_x + \Delta_w + \Delta_u + \Delta_v + \frac{\partial^2}{\partial y^2} + y^2 \frac{\partial^2}{\partial q^2} + y^2 \frac{\partial^2}{\partial q^{\prime \, 2}} + y^2 \frac{\partial^2}{\partial q^{\prime\prime \, 2}} ) - (4r+1)  y \frac{\partial}{\partial y} 
$$
The quotient $\Gamma\backslash U(r,1) / K \approx (N \cap \Gamma) \backslash N \times (0,\infty)$ and the product implies that $\Delta$ decomposes into a sum of components tangential to the factors. By re-arranging terms in $\Delta$ and expressing it in terms of derivatives tangential and transverse to $N$ (and thus also to $(N \cap \Gamma)\backslash N$), we have an expression for the Laplacian:
$$
\Delta =  y^2 \underbrace{\bigg(\Delta_x + \Delta_w + \Delta_u + \Delta_v + y^2 \frac{\partial^2}{\partial q^2} + y^2 \frac{\partial^2}{\partial q^{\prime \, 2}} + y^2 \frac{\partial^2}{\partial q^{\prime\prime \, 2}} \bigg)}_\text{$S_y$ tangential to $N$} + \underbrace{\bigg(\frac{\partial^2}{\partial y^2}  - (4r+1) y \frac{\partial}{\partial y} \bigg)}_\text{$\partial^2_y$ transverse to $N$}
$$

For each $y \in (a,\infty)$, $S_y$ is a symmetric semi-bounded operator and has compact resolvent. The dependence of $S_y$ on the coordinate $y$ from $M$ requires some attention to make sure eigenvalues of $S_y$ are uniformly bounded away from zero for all $y \in (a, \infty)$. 
To this end, let $\lambda$ be the greatest number such that $\langle -S_y v,v\rangle \ge \lambda \langle v,v \rangle$:
$$
\lambda = \sup_c \enspace \langle -S_y v,v\rangle \ge c \langle v,v \rangle \text{ for } v \in C^\infty_c(X) \text{ and } v \perp 1
$$
Since $-S_y$ has positive discrete spectrum, $\lambda$ is well-defined and we claim that $\lambda$ is a lower bound for the non-zero eigenvalues of $-S_y$.
On the collar $X \times (a, \infty)$, rewrite $S_y$ as the sum of a symmetric operator independent of $y$ and a non-negative symmetric operator
\begin{align*}
S_y =& \underbrace{(\Delta_x + \Delta_w + \Delta_u + \Delta_v +a^2 \frac{\partial^2}{\partial q^2} + a^2 \frac{\partial^2}{\partial q^{\prime \, 2}} + a^2 \frac{\partial^2}{\partial q^{\prime\prime \, 2}} )}_\text{$S_a$ independent of $y$} + \underbrace{(y^2 - a^2)( \frac{\partial^2}{\partial q^2} + \frac{\partial^2}{\partial q^{\prime \, 2}} + \frac{\partial^2}{\partial q^{\prime\prime \, 2}} )}_\text{$T$} \\
=& S_a + T
\end{align*}
Substituting $S_a$ and $T$
$$
\langle -S_y v,v\rangle = \langle -(S_a + T) v, v\rangle = \langle -S_a v, v\rangle + \langle -Tv, v\rangle \geq \langle -S_a v, v \rangle
$$
by the non-negativity of $-T$.
The operator $-S_a$ is also a positive, symmetric operator and thus $L^2_a$ decomposes purely discretely for $-S_a$. Label the eigenfunctions and eigenvalues of $-S_a$ as $\widetilde{\lambda}_j$ and $\widetilde{\phi}_j$. The Gelfand condition for $y \gg a$ implies the $0^\text{th}$ coefficient $\widetilde{c}_0$ in the expansion relative to $S_a$ vanishes uniformly (also true for $S_y$). That is, the compactness of $X$ implies $1 \in L^2(X)$, and since $S_a$ has no constant term,  the constant function $1$ is an eigenfunction of $S_a$ corresponding to the first eigenvalue $\lambda_0 = 0$ since the inner-product of the restriction of $f$ to $(N \cap \Gamma)\backslash N = X$ with the constant function $1 = \widetilde{\phi_0}$
\begin{align*}
c_P f(m_y) =& \; 0 = \int_{(N \cap \Gamma) \backslash N} f(n_{w,f,g,h,t} \cdot n_{z,q,q^\prime,q^{\prime\prime}} m_y) \; dn_{w,f,g,h,t}  = \int_{(N \cap \Gamma) \backslash N} f(n_{z,q,q^\prime,q^{\prime\prime}} m_y) \; dn_{z,q,q^\prime,q^{\prime\prime}} \\
=& \int_{(N \cap \Gamma) \backslash N} f(n_{z,q,q^\prime,q^{\prime\prime}} m_y) \cdot \bar{1} \; dn_{z,q,q^\prime,q^{\prime\prime}} = \widetilde{c_0}(m_y) \quad (\text{i.e., for } y \gg a)
\end{align*}
 Similarly define $\widetilde{\lambda}$ by
$$
\widetilde{\lambda} = \sup_c \enspace \langle -S_a v,v\rangle \ge c \langle v,v \rangle \text{ for } v \in C^\infty_c(X) \text{ and } v \perp 1
$$
In particular, $\widetilde{\lambda}$ is independent of $y$ since $S_a$ is. We then have for all $v \in C^\infty_c(X)$ and $v \perp 1$:
$$
\langle -S_y v,v\rangle = \langle -(S_a + T) v, v\rangle = \langle -S_a v, v\rangle + \langle -Tv, v\rangle \geq \langle -S_a v, v \rangle \geq \widetilde{\lambda} \langle v, v\rangle
$$
but $\lambda$ was the greatest value satisfying this bound for $S_y$ so we must have 
$$
\lambda \geq \widetilde{\lambda} > 0
$$
independent of the $y$ coordinate. For $f \in C^\infty_c(\Gamma\backslash G/K) \cap L^2_a(\Gamma\backslash G/K)$, on the collar $X \times [a, \infty)$:
\begin{align*}	
\int_{(N \cap \Gamma) \backslash N} & \int_{y \geq c} |f|^2 \ \frac{dz \; dq\, dq^\prime\, dq^{\prime\prime}\, dy}{y^{4r+3}} = \int_{y \geq c} \bigg( \int_{(N \cap \Gamma) \backslash N} f \cdot \bar{f} \, dz \; dq\, dq^\prime\, dq^{\prime\prime} \bigg) \;\frac{ dy}{y^{4r+3}}  \\
=& \int_{y \geq c} \langle f, f \rangle_X \; \frac{dy}{y^{4r+3}} \\
\leq &   \int_{y \geq c}  \frac{1}{\lambda} \langle -S_y f, f \rangle_X \; \frac{dy}{y^{4r+3}} \leq \int_{y \geq c}  \frac{1}{\widetilde{\lambda}} \langle -S_y f, f \rangle_X \; \frac{dy}{y^{4r+3}}\\
=& \frac{1}{\widetilde{\lambda} } \int_{(N \cap \Gamma) \backslash N} \int_{y \geq c} -S_y f \cdot \bar{f} \; \; \frac{dz \; dq\, dq^\prime\, dq^{\prime\prime}\, dy}{y^{4r+3}} \\
 \leq & \; \frac{1}{c^2} \frac{1}{\widetilde{\lambda} }  \int_{(N \cap \Gamma) \backslash N} \int_{y \geq c} - y^2 S_y f \cdot \bar{f} \; \; \frac{dz \; dq\, dq^\prime\, dq^{\prime\prime}\, dy}{y^{4r+3}} \qquad (y > c )\\
\leq & \; \frac{1}{c^2} \frac{1}{\widetilde{\lambda}}  \int_{(N \cap \Gamma) \backslash N} \int_{y \geq c} - y^2 S_y f \cdot \bar{f} - \partial_y^2 f \cdot \bar{f} \; \; \frac{dz \; dq\, dq^\prime\, dq^{\prime\prime}\, dy}{y^{4r+3}} \enspace ({\scriptstyle\text{positivity of fragments}})\\
=& \; \frac{1}{c^2} \frac{1}{\widetilde{\lambda} } \int_{(N \cap \Gamma) \backslash N} \int_{y \geq c} - (y^2 S_y + \partial_y^2) f \cdot \bar{f}  \; \; \frac{dz \; dq\, dq^\prime\, dq^{\prime\prime}\, dy}{y^{4r+3}}   \\
=& \; \frac{1}{c^2} \frac{1}{\widetilde{\lambda}}  \int_{(N \cap \Gamma) \backslash N} \int_{y \geq c} - \Delta f \cdot \bar{f}  \; \; \frac{dz \; dq\, dq^\prime\, dq^{\prime\prime}\, dy}{y^{4r+3}} \\
\leq & \; \frac{1}{c^2} \frac{1}{\widetilde{\lambda}} \int_{(N \cap \Gamma) \backslash N} \int_{y \geq c} - \Delta f \cdot \bar{f} + f\cdot \bar{f}  \; \; \frac{dz \; dq\, dq^\prime\, dq^{\prime\prime}\, dy}{y^{4r+3}} \qquad (|f|^2 \geq 0)\\
=& \; \frac{1}{c^2} \frac{1}{\widetilde{\lambda}} \int_{(N \cap \Gamma) \backslash N} \int_{y \geq c} (1 - \Delta) f \cdot \bar{f}  \; \; \frac{dz \; dq\, dq^\prime\, dq^{\prime\prime}\, dy}{y^{4r+3}} = \frac{1}{c^2} \big( \frac{1}{\widetilde{\lambda}} |f|^2_{\frak{B}^1_a} \big) \ll \frac{1}{c^2} |f|^2_{\frak{B}^1_a}
\end{align*}
Which is the claimed bound on the $L^2$ tails. $\square$
\end{proof}

%
%

\section{$\frak{B}^1$ norms of tails are bounded by global $\frak{B}^1$ norms}\label{sec:Global-B1a-bounds-of-smooth-trunc-L2a-tail-norms}
The previous inequality did not directly apply to smooth truncations of $f$ in $\frak{B}^1_a$ near height $c > a$, nor establish that a collection of smooth truncations $\phi_\infty \cdot f$ over all heights $c > a$ can be chosen with $\frak{B}^1$-norms uniformly bounded for $f \in B$.

The following conventions will be used in this section: for fixed height $\eta$, for $t \geq 1$, the smoothly cut-off tail $f^{[t]}$ is described as follows. Let $\phi$ be a smooth function such that $0 \leq \phi \leq 1$ on $(0,\infty)$
$$
\begin{cases} 
0 = \phi (y) & (\text{for } 0 \leq y \leq 1) \\
0 \leq \phi (y) \leq 1  & (\text{for } 1 \leq y \leq 2) \\
1 =\phi (y)  & (\text{for } 2 \leq y) 
\end{cases}
$$
Since $\phi$ is smooth and constant outside the compact interval $[1,2]$, there is a common pointwise bound $C_\phi < \infty$ for $|\phi|$, $|\phi^\prime|$ and $|\phi^{\prime\prime}|$. For $t > 0$, define a smooth cut-off function by
$$
\phi_t(y) = \phi(y/t)
$$
so that $\phi_t(y) \to 0 \enspace \forall y \text{ as } t \to \infty$.
%
%

%
%

\subsection{$O(r,1)$}\label{Global-B1a-bounds-of-smooth-trunc-L2a-tail-norms-O(r,1)}

For $f \in H^1 = H^1(\Gamma \backslash G/K)$, let $f^{[t]}(n_x m_y) = \phi_t(y)\cdot f(n_x m_y)$. Use Iwasawa coordinates $n_x m_y K \longleftrightarrow (x,y) \in \mathbb{R}^{r-1} \times (0,\infty)$.
\begin{lemma}
$|f^{[t]}|_{H^1} \ll |f|_{H^1} \; (\text{implied constant independent of } f \text{ and } t \geq 1)$
\end{lemma}
\begin{proof}
We have
$$
\Omega |_{G/K} = \Delta = y^2(\Delta_x + \frac{\partial^2}{\partial y^2}) - (r-2)y\frac{\partial}{\partial y}
$$

Since $\Delta$ has real coefficients, it suffices to treat real-valued $f$. Since $0 \leq \phi_t \leq 1$, clearly $|f^{[t]}|_{L^2} = |\phi_t \cdot f|_{L^2} \leq |f|_{L^2}$. For the other part of the $H^1$ norm
\begin{align*}
	\langle -\Delta f^{[t]}, f^{[t]} \rangle =& \int_{(N \cap \Gamma) \backslash N} \int_{y \geq c} -( y^2(\Delta_x + \frac{\partial^2}{\partial y^2}) - (r-2)y\frac{\partial}{\partial y}) f^{[t]} f^{[t]} \frac{dx \, dy}{y^r} \\
	=& \int_{(N \cap \Gamma) \backslash N} \int_{y \geq c} -(\phi_t(y)^2)( y^2  \Delta_x f(x, y)) f(x, y) \frac{dx \, dy}{y^r} \\
	-&  \int_{(N \cap \Gamma) \backslash N} \int_{y \geq c} (y^2\frac{\partial^2}{\partial y^2} - (r-2)y\frac{\partial}{\partial y}) f^{[t]} f^{[t]} d\frac{dx \, dy}{y^r}
\end{align*}
Since the $\Delta_x$ factor treats $y$ as a constant, temporarily ignore the first term in $\Delta_x$ to expand the term with the derivatives in $y$
\begin{align*}
	=&  -\int_{(N \cap \Gamma) \backslash N} \int_{y \geq c} (y^2\frac{\partial^2}{\partial y^2} - (r-2)y\frac{\partial}{\partial y}) f^{[t]} f^{[t]} \frac{dx \, dy}{y^r}\\
	=&  -\int_{(N \cap \Gamma) \backslash N} \int_{y \geq c} \Bigg(\bigg(y^2\frac{\partial^2}{\partial y^2} - (r-2)y\frac{\partial}{\partial y}\bigg)\big(\phi(\frac{y}{t})f(x,y)\big)\Bigg) (\phi(\frac{y}{t})f(x,y)) \frac{dx \, dy}{y^r}\\
	=&  -\int_{(N \cap \Gamma) \backslash N} \int_{y \geq c} y^2\Bigg( \bigg(\frac{1}{t^2}\frac{\partial^2 \phi}{\partial y^2}(\frac{y}{t})\bigg)f(x,y) + \frac{2}{t}\bigg(\frac{\partial \phi}{\partial y}(\frac{y}{t})\bigg)\bigg(\frac{\partial f}{\partial y}(x,y)\bigg) \\
	& + \phi(\frac{y}{t})\frac{\partial^2 f}{\partial y^2}(x,y) \Bigg) \phi(\frac{y}{t})f(x,y) \frac{dx \, dy}{y^r}\\	
	+& (r-2)\int_{(N \cap \Gamma) \backslash N} \int_{y \geq c}  y \Bigg(\frac{1}{t}\frac{\partial \phi}{\partial y}(\frac{y}{t})f(x,y) + \phi(\frac{y}{t})\frac{\partial f}{\partial y}(x,y) \Bigg)\phi(\frac{y}{t})f(x,y) \frac{dx \, dy}{y^r}
\end{align*}
Collecting, and again including the $\Delta_x$ term
\begin{flalign*}
    =& \int_{(N \cap \Gamma) \backslash N} \int_{y \geq c} -(\phi_t(y)^2)( y^2  \Delta_x f(x, y)) f(x, y) \frac{dx \, dy}{y^r} && \text{(A 1)}\\
	-& \int_{(N \cap \Gamma) \backslash N} \int_{y \geq c} \frac{1}{t^2} \Bigg( y^2 \frac{\partial^2 \phi}{\partial y^2}(\frac{y}{t}) \phi(\frac{y}{t}) \Bigg)\bigg( f(x,y) \bigg)^2\frac{dx \, dy}{y^r} && \text{(B 1)}\\
	-&  \int_{(N \cap \Gamma) \backslash N} \int_{y \geq c}  \frac{2}{t}\Bigg(y^2 \frac{\partial \phi}{\partial y}(\frac{y}{t})  \phi(\frac{y}{t}) \Bigg)\Bigg(\frac{\partial f}{\partial y}(y) f(x,y) \Bigg)   \frac{dx \, dy}{y^r} && \text{(C 1)}\\
	-&  \int_{(N \cap \Gamma) \backslash N} \int_{y \geq c} y^2  \bigg( \phi(\frac{y}{t}) \bigg)^2 \bigg( \frac{\partial^2 f}{\partial y^2}(y)  f(x,y) \bigg) \frac{dx \, dy}{y^r} && \text{(D 1)}\\
	+&  (r-2)\int_{(N \cap \Gamma) \backslash N} \int_{y \geq c}  \frac{1}{t} \bigg( y \frac{\partial \phi}{\partial y}(\frac{y}{t})  \phi(\frac{y}{t}) \bigg) \bigg( f(x,y) \bigg)^2 \frac{dx \, dy}{y^r} && \text{(E 1)}\\
	+&  (r-2)\int_{(N \cap \Gamma) \backslash N} \int_{y \geq c}  y \bigg( \phi(\frac{y}{t}) \bigg)^2 \bigg(  \frac{\partial f}{\partial y}(y) f(x,y) \bigg) \frac{dx \, dy}{y^r} && \text{(F 1)}
\end{flalign*}
Combining (A 1), (D 1) and (F 1) into (A2)
\begin{align*}
    =& \int_{(N \cap \Gamma) \backslash N} \int_{y \geq c} -(\phi_t(y)^2)\Bigg( y^2 \bigg( \Delta_x  + \frac{\partial^2}{\partial y^2}\bigg)  -  (r-2) y \frac{\partial}{\partial y}\Bigg) f(x,y) f(x, y)  \frac{dx \, dy}{y^r} && \text{(A 2)}\\
	-& \int_{(N \cap \Gamma) \backslash N} \int_{y \geq c} \frac{1}{t^2} \Bigg( y^2 \frac{\partial^2 \phi}{\partial y^2}(\frac{y}{t}) \phi(\frac{y}{t}) \Bigg)\bigg( f(x,y) \bigg)^2\frac{dx \, dy}{y^r}  && \text{(B 2)}\\
	-&  \int_{(N \cap \Gamma) \backslash N} \int_{y \geq c}  \frac{2}{t}\Bigg(y^2 \frac{\partial \phi}{\partial y}(\frac{y}{t})  \phi(\frac{y}{t}) \Bigg)\Bigg(\frac{\partial f}{\partial y}(y) f(x,y) \Bigg)   \frac{dx \, dy}{y^r}  && \text{(C 2)}\\
	+&  (r-2)\int_{(N \cap \Gamma) \backslash N} \int_{y \geq c}  \frac{1}{t} \bigg( y \frac{\partial \phi}{\partial y}(\frac{y}{t})  \phi(\frac{y}{t}) \bigg) \bigg( f(x,y) \bigg)^2 \frac{dx \, dy}{y^r} && \text{(D 2)}
\end{align*}
Note that $\frac{\partial f}{\partial y} f = \frac{1}{2}\frac{\partial}{\partial y}(f)^2$ and use integration by parts on (C 2) to move the derivative from $f$ (changing the sign of the term) giving (C3), noting also that (A 2) is $\Delta$ to get
\begin{align*}
    =& \int_{(N \cap \Gamma) \backslash N} \int_{y \geq c} \phi_t(y)^2\bigg( -\Delta f(x,y) f(x, y) \bigg)  \frac{dx \, dy}{y^r}  && \text{(A 3)}\\
	-& \int_{(N \cap \Gamma) \backslash N} \int_{y \geq c} \frac{1}{t^2} \Bigg( y^2 \frac{\partial^2 \phi}{\partial y^2}(\frac{y}{t}) \phi(\frac{y}{t}) \Bigg)\bigg( f(x,y) \bigg)^2\frac{dx \, dy}{y^r} && \text{(B 3)}\\
	+&  \int_{(N \cap \Gamma) \backslash N} \int_{y \geq c}  \frac{2}{t}\Bigg(2y  \frac{\partial \phi}{\partial y}(\frac{y}{t})  \phi(\frac{y}{t}) \\
	+& y^2 \frac{1}{t} \frac{\partial^2 \phi}{\partial y^2}(\frac{y}{t}) \phi(\frac{y}{t}) 
+	y^2 \frac{1}{t} \bigg( \frac{\partial \phi}  {\partial y}(\frac{y}{t})\bigg)^2 \Bigg) \bigg(f(x,y)\bigg)^2    \frac{dx \, dy}{y^r} && \text{(C 3)}\\
	+&  (r-2)\int_{(N \cap \Gamma) \backslash N} \int_{y \geq c}  \frac{1}{t} \bigg( y \frac{\partial \phi}{\partial y}(\frac{y}{t})  \phi(\frac{y}{t}) \bigg) \bigg( f(x,y) \bigg)^2 \frac{dx \, dy}{y^r} && \text{(D 3)}
\end{align*}
Combining (B 3), (C 3) and (D 3) on the common term of $\big( f(x,y) \big)^2$
\begin{align*}
    =& \int_{(N \cap \Gamma) \backslash N} \int_{y \geq c} \phi_t(y)^2\bigg( -\Delta f(x,y) f(x, y) \bigg)   \frac{dx \, dy}{y^r} \\
+& \int_{(N \cap \Gamma) \backslash N} \int_{y \geq c} \Bigg( \frac{y^2}{t^2} \frac{\partial^2 \phi}{\partial y^2}(\frac{y}{t}) \phi(\frac{y}{t})  +  \frac{2y^2}{t^2}\bigg( \frac{\partial \phi}  {\partial y}(\frac{y}{t})\bigg)^2  + (r+2) \frac{y}{t} \frac{\partial \phi}{\partial y}(\frac{y}{t})  \phi(\frac{y}{t}) \Bigg)\bigg( f(x,y) \bigg)^2 \frac{dx \, dy}{y^r}
\end{align*}
By assumption $0 \leq \phi \leq 1$, which applies also to $\phi_t$, so the first integral is bounded by $\int -\Delta f \cdot f$. The second expression is bounded in terms of $|f|_{L^2}^2$ as follows: $\phi^\prime$ and $\phi^{\prime\prime}$ are supported in $[1,2]$, so $\phi_t^\prime$ and $\phi_t^{\prime\prime}$ are supported in $[t,2t]$. Using the earlier pointwise bound $C_\phi < \infty$ on $\phi^\prime$ and $\phi^{\prime\prime}$, gives the estimate the second integral (no loss in generality assuming $c \leq t$):
\begin{align*}
& \int_{(N \cap \Gamma) \backslash N} \int_{y \geq c} \Bigg( \frac{y^2}{t^2} \frac{\partial^2 \phi}{\partial y^2}(\frac{y}{t}) \phi(\frac{y}{t})  +  \frac{2y^2}{t^2}\bigg( \frac{\partial \phi}  {\partial y}(\frac{y}{t})\bigg)^2  + (r+2) \frac{y}{t} \frac{\partial \phi}{\partial y}(\frac{y}{t})  \phi(\frac{y}{t}) \Bigg) ( f(x,y) )^2 \frac{dx \, dy}{y^r} \\
=& \int_{(N \cap \Gamma) \backslash N} \int_t^{2t} \Bigg( \frac{y^2}{t^2} \frac{\partial^2 \phi}{\partial y^2}(\frac{y}{t}) \phi(\frac{y}{t})  +  \frac{2y^2}{t^2}\bigg( \frac{\partial \phi}  {\partial y}(\frac{y}{t})\bigg)^2  + (r+2) \frac{y}{t} \frac{\partial \phi}{\partial y}(\frac{y}{t})  \phi(\frac{y}{t}) \Bigg) ( f(x,y) )^2 \frac{dx \, dy}{y^r} \\ \\
\leq& \int_{(N \cap \Gamma) \backslash N} \int_t^{2t} \Bigg| \frac{y^2}{t^2} \frac{\partial^2 \phi}{\partial y^2}(\frac{y}{t}) \phi(\frac{y}{t})  +  \frac{2y^2}{t^2}\bigg( \frac{\partial \phi}  {\partial y}(\frac{y}{t})\bigg)^2  + (r+2) \frac{y}{t} \frac{\partial \phi}{\partial y}(\frac{y}{t})  \phi(\frac{y}{t}) \Bigg| ( f(x,y) )^2 \frac{dx \, dy}{y^r} \\ \\
\end{align*}
since $y < 2t$
\begin{align*}
\leq & \int_{(N \cap \Gamma) \backslash N} \int_t^{2t} \Bigg| 4 \frac{\partial^2 \phi}{\partial y^2}(\frac{y}{t}) \phi(\frac{y}{t})  +  8 \bigg( \frac{\partial \phi}  {\partial y}(\frac{y}{t})\bigg)^2  + 2(r+2) \frac{\partial \phi}{\partial y}(\frac{y}{t})  \phi(\frac{y}{t}) \Bigg| ( f(x,y) )^2 \frac{dx \, dy}{y^r} \\ \\
\end{align*}
using $|\phi|, |\phi^\prime|, |\phi^{\prime\prime}| < C_\phi$
\begin{align*}
\leq & \int_{(N \cap \Gamma) \backslash N} \int_t^{2t} \Bigg( 4 C_\phi^2  +  8 C_\phi^2  + 2(r+2) C_\phi^2 \Bigg) ( f(x,y) )^2 \; \frac{dx \, dy}{y^r} \\
\ll_\phi & \enspace |f|_{L^2}^2 = c_\phi |f|_{L^2}^2
\end{align*}
Where $c_\phi$ depends only on $\phi$, $\phi^\prime$ and $\phi^{\prime\prime}$. Combining the estimates gives
$$
|f^{[t]}|_{H^1}^2  \ll_\phi |f|_{H^1}^2
$$
as claimed. $\square$
\end{proof}

%
%

\subsection{$U(r,1)$}\label{Global-B1a-bounds-of-smooth-trunc-L2a-tail-norms-U(r,1)}

Use Iwasawa coordinates 
$$
G/K \ni n_{z,x} m_y K \longleftrightarrow (z,x,y) \in \mathbb{C}^{r-1} \times \mathbb{R} \times (0,\infty)
$$
$$
  z \in \mathbb{C}^{r-1} = \{ z : u + iv \in \mathbb{R}^{r-1} \oplus \; i \mathbb{R}^{r-1} \} \qquad x \in \mathbb{R} \qquad y \in \mathbb{R}^+ = (0,\infty)
$$
For fixed height $\eta$, for $t \geq 1$, let $f^{[t]}$ be the smoothly cut-off tail of $f$ as follows. Let $\phi$ be a smooth function such that $0 \leq \phi \leq 1$ on $(0,\infty)$
$$
\begin{cases} 
0 = \phi (y) & (\text{for } 0 \leq y \leq 1) \\
0 \leq \phi (y) \leq 1  & (\text{for } 1 \leq y \leq 2) \\
1 =\phi (y)  & (\text{for } 2 \leq y) 
\end{cases}
$$
Since $\phi$ is smooth and constant outside the compact interval $[1,2]$, there is a common pointwise bound $C_\phi < \infty$ for $\phi$, $\phi^\prime$ and $\phi^{\prime\prime}$. For $t > 0$, define a smooth cut-off function by
$$
\phi_t(y) = \phi(y/t)
$$
so that $\phi_t(y) \to 0 \enspace \forall y \text{ as } t \to \infty$.

For $f \in H^1 = H^1(\Gamma \backslash G/K)$, let $f^{[t]}(n_{z,x} m_y) = \phi_t(y)\cdot f(n_{z,x} m_y)$.

\begin{lemma}
$|f^{[t]}|_{H^1} \ll |f|_{H^1} \; (\text{implied constant independent of } f \text{ and } t \geq 1)$
\end{lemma}
\begin{proof}
We have
$$
\Omega |_{G/K} = \Delta = y^2 \bigg( \Delta_u + \Delta_v + \frac{\partial^2}{\partial y^2}+ y^2 \frac{\partial^2}{\partial x^2} \bigg) - (2r-1) y \frac{\partial}{\partial y}
$$
Since $\Delta$ has real coefficients, it suffices to treat real-valued $f$. Since $0 \leq \phi_t \leq 1$, clearly $|f^{[t]}|_{L^2} = |\phi_t \cdot f|_{L^2} \leq |f|_{L^2}$. For the other part of the $H^1$ norm
\begin{align*}
	\langle -\Delta f^{[t]}, f^{[t]} \rangle =& \int_{(N \cap \Gamma) \backslash N} \int_{y \geq c} -(y^2 \bigg( \Delta_u + \Delta_v + \frac{\partial^2}{\partial y^2}+ y^2 \frac{\partial^2}{\partial x^2} \bigg) - (2r-1) y \frac{\partial}{\partial y}) f^{[t]} f^{[t]} \frac{dz \, dx \, dy}{y^{2r+1}} \\
	=& \int_{(N \cap \Gamma) \backslash N} \int_{y \geq c} -\phi_t(y)^2 y^2 \bigg( \big( \Delta_u + \Delta_v + y^2 \frac{\partial^2}{\partial x^2} \big)  f(n_{z,x} m_y) \bigg) f(n_{z,x} m_y) \frac{dz \, dx \, dy}{y^{2r+1}} \\
	-& \int_{(N \cap \Gamma) \backslash N} \int_{y \geq c} (y^2 \frac{\partial^2}{\partial y^2} - (2r-1) y \frac{\partial}{\partial y}) f^{[t]} f^{[t]} \frac{dz \, dx \, dy}{y^{2r+1}}
\end{align*}
\noindent
Since the variable $y$ is treated a constant by the operators in the first integral, temporarily ignore the first terms and expand the second term containing derivatives in $y$, writing $f(z,x,y)$ for $f(n_{z,x},m_y)$:
\begin{align*}
	& - \int_{(N \cap \Gamma) \backslash N} \int_{y \geq c} \bigg(y^2 \frac{\partial^2}{\partial y^2} - (2r-1) y \frac{\partial}{\partial y} \bigg) f^{[t]} f^{[t]} \frac{dz \, dx \, dy}{y^{2r+1}} \\
	=&  -\int_{(N \cap \Gamma) \backslash N} \int_{y \geq c} \Bigg(\bigg(y^2\frac{\partial^2}{\partial y^2} - (2r-1)y\frac{\partial}{\partial y}\bigg)\big(\phi(\frac{y}{t})f(z,x,y)\big)\Bigg) (\phi(\frac{y}{t})f(z,x,y)) \frac{dz \, dx \, dy}{y^{2r+1}}\\
	=&  -\int_{(N \cap \Gamma) \backslash N} \int_{y \geq c} y^2\Bigg( \bigg(\frac{1}{t^2}\frac{\partial^2 \phi}{\partial y^2}(\frac{y}{t})\bigg)f(z,x,y) + \frac{2}{t}\bigg(\frac{\partial \phi}{\partial y}(\frac{y}{t})\bigg)\bigg(\frac{\partial f}{\partial y}(z,x,y)\bigg) \\
	+& \; \phi(\frac{y}{t})\frac{\partial^2 f}{\partial y^2}(z,x,y) \Bigg) \phi(\frac{y}{t})f(z,x,y) \frac{dz \, dx \, dy}{y^{2r+1}}\\	
	+&  \; (2r-1)\int_{(N \cap \Gamma) \backslash N} \int_{y \geq c}  y \Bigg(\frac{1}{t}\frac{\partial \phi}{\partial y}(\frac{y}{t})f(z,x,y) + \phi(\frac{y}{t})\frac{\partial f}{\partial y}(z,x,y) \Bigg)\phi(\frac{y}{t})f(z,x,y) \frac{dz \, dx \, dy}{y^{2r+1}} \\
	=&  -\int_{(N \cap \Gamma) \backslash N} \int_{y \geq c} \Bigg( \frac{y^2}{t^2} \phi(\frac{y}{t}) \frac{\partial^2 \phi}{\partial y^2}(\frac{y}{t}) \big( f(z,x,y) \big)^2 + \frac{2 y^2}{t} \phi(\frac{y}{t}) \frac{\partial \phi}{\partial y}(\frac{y}{t})\frac{\partial f}{\partial y}(z,x,y) f(z,x,y) \\
	& + y^2 \big(\phi(\frac{y}{t}) \big)^2\frac{\partial^2 f}{\partial y^2}(z,x,y) f(z,x,y) \Bigg) \frac{dz \, dx \, dy}{y^{2r+1}}\\	
	+& (2r-1)\int_{(N \cap \Gamma) \backslash N} \int_{y \geq c}   \Bigg(\frac{y}{t}\phi(\frac{y}{t})\frac{\partial \phi}{\partial y}(\frac{y}{t}) \big( f(z,x,y) \big)^2 + y \big( \phi(\frac{y}{t}) \big)^2 \frac{\partial f}{\partial y}(z,x,y) f(z,x,y) \Bigg) \frac{dz \, dx \, dy}{y^{2r+1}}
\end{align*}
\noindent
Expanding, collecting like terms and re-adding the terms including other derivatives
\begin{flalign*}
    =&  \int_{(N \cap \Gamma) \backslash N} \int_{y \geq c} -\phi_t(y)^2 y^2 \bigg( \big( \Delta_u + \Delta_v + y^2 \frac{\partial^2}{\partial x^2} \big)  f(n_{z,x} m_y) \bigg) f(z,x,y) \frac{dz \, dx \, dy}{y^{2r+1}} && \text{(A 1)}\\
    -& \int_{(N \cap \Gamma) \backslash N} \int_{y \geq c} \frac{y^2}{t^2} \phi(\frac{y}{t}) \frac{\partial^2 \phi}{\partial y^2}(\frac{y}{t}) \big( f(z,x,y) \big)^2 \frac{dz \, dx \, dy}{y^{2r+1}} && \text{(B 1)}\\
    -& \int_{(N \cap \Gamma) \backslash N} \int_{y \geq c} \frac{2 y^2}{t} \phi(\frac{y}{t}) \frac{\partial \phi}{\partial y}(\frac{y}{t}) \frac{\partial f}{\partial y}(z,x,y) f(z,x,y) \frac{dz \, dx \, dy}{y^{2r+1}} && \text{(C 1)}\\
    -& \int_{(N \cap \Gamma) \backslash N} \int_{y \geq c} y^2 \big(\phi(\frac{y}{t}) \big)^2\frac{\partial^2 f}{\partial y^2}(z,x,y) f(z,x,y) \frac{dz \, dx \, dy}{y^{2r+1}} && \text{(D 1)}\\
    +& (2r-1) \int_{(N \cap \Gamma) \backslash N} \int_{y \geq c} \frac{y}{t}\phi(\frac{y}{t})\frac{\partial \phi}{\partial y}(\frac{y}{t}) \big( f(z,x,y) \big)^2 \frac{dz \, dx \, dy}{y^{2r+1}} && \text{(E 1)}\\
    +& (2r-1) \int_{(N \cap \Gamma) \backslash N} \int_{y \geq c} y \big( \phi(\frac{y}{t}) \big)^2 \frac{\partial f}{\partial y}(z,x,y) f(z,x,y) \frac{dz \, dx \, dy}{y^{2r+1}} &&  \text{(F 1)}
\end{flalign*}
(A 1), (D 1) and (F 1) have a common factor of $\phi_t^2$ and combine into (A2)
\begin{flalign*}
    =&  \int_{(N \cap \Gamma) \backslash N} \int_{y \geq c} -\phi_t(y)^2 y^2 \bigg( \big( \Delta_u + \Delta_v + y^2 \frac{\partial^2}{\partial x^2} + \frac{\partial^2}{\partial y^2} \\
    -& (2r-1)y \frac{\partial }{\partial y}\big)  f(z,x,y) \bigg) f(z,x,y) \frac{dz \,dx \, dy}{y^{2r+1}} && \text{(A 2)}\\
    -& \int_{(N \cap \Gamma) \backslash N} \int_{y \geq c} \frac{y^2}{t^2} \phi(\frac{y}{t}) \frac{\partial^2 \phi}{\partial y^2}(\frac{y}{t}) \big( f(z,x,y) \big)^2 \frac{dz \, dx \, dy}{y^{2r+1}} && \text{(B 2)}\\
    -& \int_{(N \cap \Gamma) \backslash N} \int_{y \geq c} \frac{2 y^2}{t} \phi(\frac{y}{t}) \bigg(\frac{\partial \phi}{\partial y}(\frac{y}{t})\bigg)\bigg(\frac{\partial f}{\partial y}(z,x,y)\bigg) f(z,x,y) \frac{dz \, dx \, dy}{y^{2r+1}} && \text{(C 2)}\\
    +& (2r-1) \int_{(N \cap \Gamma) \backslash N} \int_{y \geq c} \frac{y}{t}\phi(\frac{y}{t})\frac{\partial \phi}{\partial y}(\frac{y}{t}) \big( f(z,x,y) \big)^2 \frac{dz \, dx \, dy}{y^{2r+1}} && \text{(D 2)}
\end{flalign*}
Note that $\frac{\partial f}{\partial y} f = \frac{1}{2}\frac{\partial}{\partial y}(f)^2$ and use integration by parts (changing the sign of the term) on (C 2) to move the derivative from $f$ giving (C3), noting also that (A 2) is $\Delta$ to get
\begin{flalign*}
    =&  \int_{(N \cap \Gamma) \backslash N} \int_{y \geq c} -\phi_t(y)^2 \Delta  f(z,x,y) f(z,x,y) \frac{dz \,dx \, dy}{y^{2r+1}} && \text{(A 3)}\\
    -& \int_{(N \cap \Gamma) \backslash N} \int_{y \geq c} \frac{y^2}{t^2} \phi(\frac{y}{t}) \frac{\partial^2 \phi}{\partial y^2}(\frac{y}{t}) \big( f(z,x,y) \big)^2 \; \frac{dz \, dx \, dy}{y^{2r+1}} && \text{(B 3)}\\
    +& \int_{(N \cap \Gamma) \backslash N} \int_{y \geq c} \Bigg( \frac{4y}{t} \phi(\frac{y}{t}) \frac{\partial \phi}{\partial y}(\frac{y}{t}) + \frac{2 y^2}{t^2} \bigg(\frac{\partial \phi}{\partial y}(\frac{y}{t})\bigg)^2 \\
    +& \frac{2y^2}{t^2} \phi(\frac{y}{t}) \frac{\partial^2 \phi}{\partial y^2}(\frac{y}{t}) \Bigg) \big( f(z,x,y) \big)^2 \; \frac{dz \, dx \, dy}{y^{2r+1}} && \text{(C 3)}\\
    +& (2r-1) \int_{(N \cap \Gamma) \backslash N} \int_{y \geq c} \frac{y}{t}\phi(\frac{y}{t})\frac{\partial \phi}{\partial y}(\frac{y}{t}) \big( f(z,x,y) \big)^2 \; \frac{dz \, dx \, dy}{y^{2r+1}} && \text{(D 3)}
\end{flalign*}
Combining (B3), (C3) and (D3) on the common factor of $\big( f(z,x,y) \big)^2$
\begin{flalign*}
    =&  \int_{(N \cap \Gamma) \backslash N} \int_{y \geq c} -\phi_t(y)^2 \Delta  f(z,x,y) f(z,x,y) \frac{dz \,dx \, dy}{y^{2r+1}} && \text{(A 4)}\\
    +& \int_{(N \cap \Gamma) \backslash N} \int_{y \geq c} \Bigg( -\frac{y^2}{t^2} \phi(\frac{y}{t}) \frac{\partial^2 \phi}{\partial y^2}(\frac{y}{t}) 
    +  \frac{4y}{t} \phi(\frac{y}{t}) \frac{\partial \phi}{\partial y}(\frac{y}{t}) + \frac{2 y^2}{t^2} \bigg(\frac{\partial \phi}{\partial y}(\frac{y}{t})\bigg)^2 \\
    +& \frac{2y^2}{t^2} \phi(\frac{y}{t}) \frac{\partial^2 \phi}{\partial y^2}(\frac{y}{t}) 
    + (2r-1) \frac{y}{t}\phi(\frac{y}{t})\frac{\partial \phi}{\partial y}(\frac{y}{t}) \Bigg) \big( f(z,x,y) \big)^2 \; \frac{dz \, dx \, dy}{y^{2r+1}} && \text{(B 4)}
\end{flalign*}
Consolidating, suppressing the argument $(\frac{y}{t})$ for clarity, and writing derivatives in $\phi$ as primed ($\prime$)
\begin{flalign*}
    =&  \int_{(N \cap \Gamma) \backslash N} \int_{y \geq c} -\phi_t(y)^2 \Delta  f(z,x,y) f(z,x,y) \frac{dz \,dx \, dy}{y^{2r+1}} && \text{(A 4)}\\
    +& \int_{(N \cap \Gamma) \backslash N} \int_{y \geq c} \Bigg( \frac{y^2}{t^2} \phi \, \phi^{\prime\prime} + \frac{2 y^2}{t^2} \big(\phi^\prime\big)^2 + (2r+3) \frac{y}{t}\phi \, \phi^\prime \Bigg) \big( f(z,x,y) \big)^2 \; \frac{dz \, dx \, dy}{y^{2r+1}} && \text{(B 4)}
\end{flalign*}
By assumption $0 \leq \phi \leq 1$, which applies also to $\phi_t$, so the first integral is bounded by $\int -\Delta f \cdot f$. The second expression is bounded in terms of $|f|_{L^2}^2$ as follows: $\phi^\prime$ and $\phi^{\prime\prime}$ are supported in $[1,2]$, so $\phi_t^\prime$ and $\phi_t^{\prime\prime}$ are supported in $[t,2t]$ and thus $y < 2t$ so $\frac{y}{t} < 2$.
\begin{align*}
& \int_{(N \cap \Gamma) \backslash N} \int_{y \geq c} \Bigg( \frac{y^2}{t^2} \phi \, \phi^{\prime\prime} + \frac{2 y^2}{t^2} \big(\phi^\prime\big)^2 + (2r+3) \frac{y}{t}\phi \, \phi^\prime \Bigg) \big( f(z,x,y) \big)^2 \; \frac{dz \, dx \, dy}{y^{2r+1}}  \\
&\leq \Bigg| \int_{(N \cap \Gamma) \backslash N} \int_{y \geq c} \Bigg( \frac{y^2}{t^2} \phi \, \phi^{\prime\prime} + \frac{2 y^2}{t^2} \big(\phi^\prime\big)^2 + (2r+3) \frac{y}{t}\phi \, \phi^\prime \Bigg) \big( f(z,x,y) \big)^2 \; \frac{dz \, dx \, dy}{y^{2r+1}} \Bigg|  \\
&\leq \int_{(N \cap \Gamma) \backslash N} \int_{y \geq c} \Bigg|  \frac{y^2}{t^2} \phi \, \phi^{\prime\prime} + \frac{2 y^2}{t^2} \big(\phi^\prime\big)^2 + (2r+3) \frac{y}{t}\phi \, \phi^\prime \Bigg| \big( f(z,x,y) \big)^2 \; \frac{dz \, dx \, dy}{y^{2r+1}}   \\
&\leq \int_{(N \cap \Gamma) \backslash N} \int_{y \geq c} \Bigg( \bigg| \frac{y^2}{t^2} \phi \, \phi^{\prime\prime} \bigg| + \bigg|\frac{2 y^2}{t^2} \big(\phi^\prime\big)^2\bigg| + (2r+3) \bigg|\frac{y}{t}\phi \, \phi^\prime \bigg| \Bigg) \big( f(z,x,y) \big)^2 \; \frac{dz \, dx \, dy}{y^{2r+1}} 
\end{align*}
Using the earlier pointwise bound $C_\phi < \infty$ on $\phi$, $\phi^\prime$ and $\phi^{\prime\prime}$:
\begin{align*}
&\leq \int_{(N \cap \Gamma) \backslash N} \int_{y \geq c} \bigg( 4 C_\phi^2 + 8  C_\phi^2 + (2r+3) C_\phi \bigg) \big( f(z,x,y) \big)^2 \; \frac{dz \, dx \, dy}{y^{2r+1}} 
\end{align*}
So that, combining (A 4) and (B 4) above, for $C_\phi^\prime = \text{ polynomial in } C_\phi$
$$
\ll_\phi C_\phi^\prime \int_{(N \cap \Gamma) \backslash N} \int_{y \geq c} \big( 1 - \Delta \big)f(z,x,y) \cdot f(z,x,y) \; \frac{dz \, dx \, dy}{y^{2r+1}} \ll_\phi |f|_{H^1}^2
$$
as claimed. $\square$
\end{proof}

%
%

\subsection{$Sp^\ast(r,1)$}\label{Global-B1a-bounds-of-smooth-trunc-L2a-tail-norms-Sp*(r,1)}

A model for quaternionic hyperbolic space is
$$
G/K \approx Sp^\ast (r,1)/(Sp^\ast (r) \times Sp^\ast (1)) \approx NM/(M \cap K) \approx N \times A^+
$$
with coordinates $z, p, q, r \in N \approx  \mathbb{H}^{r-1} \times \mathbb{R}^3$, $y \in A^+ \approx  \mathbb{R}^\times$ from the Iwasawa decomposition giving
$$
G/K \ni n_{z,q,q^\prime,q^{\prime\prime}} m_y K \longleftrightarrow (z,q,q^\prime,q^{\prime\prime},y) \in \mathbb{H}^{r-1} \times \mathbb{R}^3 \times (0,\infty)
$$
$$
z = x+ iw + ju + kv \in \mathbb{R}^{r-1} \oplus \; i \mathbb{R}^{r-1} \oplus \; j \mathbb{R}^{r-1} \oplus \; k \mathbb{R}^{r-1} \approx \mathbb{H}^{r-1} \quad  q,q^\prime,q^{\prime\prime} \in \mathbb{R}
$$
For fixed height $\eta$, for $t \geq 1$, let $f^{[t]}$ be the smoothly cut-off tail of $f$ as follows. Let $\phi$ be a smooth function such that $0 \leq \phi \leq 1$ on $(0,\infty)$
$$
\begin{cases} 
0 = \phi (y) & (\text{for } 0 \leq y \leq 1) \\
0 \leq \phi (y) \leq 1  & (\text{for } 1 \leq y \leq 2) \\
1 =\phi (y)  & (\text{for } 2 \leq y) 
\end{cases}
$$
Since $\phi$ is smooth and constant outside the compact interval $[1,2]$, there is a common pointwise bound $C_\phi < \infty$ for $\phi$, $\phi^\prime$ and $\phi^{\prime\prime}$. For $t > 0$, define a smooth cut-off function by
$$
\phi_t(y) = \phi(y/t)
$$
so that $\phi_t(y) \to 0 \enspace \forall y \text{ as } t \to \infty$. For $f \in H^1 = H^1(\Gamma \backslash G/K)$, set 
$$
f^{[t]}(n_{z,q,q^\prime,q^{\prime\prime}} \, m_y) = \phi_t(y)\cdot f(n_{z,q,q^\prime,q^{\prime\prime}} \, m_y) = \phi(\frac{y}{t})\cdot f(z,q,q^\prime,q^{\prime\prime},y)
$$

\begin{lemma}
$|f^{[t]}|_{H^1} \ll |f|_{H^1} \; (\text{implied constant independent of } f \text{ and } t \geq 1)$
\end{lemma}
\begin{proof}
We have
$$
\Omega |_{G/K} = \Delta = y^2 \big(  \Delta_x + \Delta_w + \Delta_u + \Delta_v + \frac{\partial^2}{\partial y^2} + y^2 \frac{\partial^2}{\partial q^2} + y^2 \frac{\partial^2}{\partial q^{\prime 2}} + y^2 \frac{\partial^2}{\partial q^{\prime\prime 2}} \big) - (4r+1)  y \frac{\partial}{\partial y} 
$$
Since $\Delta$ has real coefficients, it suffices to treat real-valued $f$. Since $0 \leq \phi_t \leq 1$, clearly $|f^{[t]}|_{L^2} = |\phi_t \cdot f|_{L^2} \leq |f|_{L^2}$. For the other part of the $H^1$ norm,  first gather terms using that the variable $y$ is treated as a constant by the non-$y$ derivatives:
\begin{flalign*}
	\langle -\Delta f^{[t]}, f^{[t]} \rangle =& \int_{(N \cap \Gamma) \backslash N} \int_{y \geq c} -\Bigg( y^2 \big(  \Delta_x + \Delta_w + \Delta_u + \Delta_v + \frac{\partial^2}{\partial y^2} + y^2 \frac{\partial^2}{\partial q^2} + y^2 \frac{\partial^2}{\partial q^{\prime 2}} + y^2 \frac{\partial^2}{\partial q^{\prime\prime 2}} \big) && \\
	&- (4r+1)  y \frac{\partial}{\partial y} \Bigg) f^{[t]} f^{[t]} \; \frac{dz \; dq\, dq^\prime\, dq^{\prime\prime}\, dy}{y^{4r+3}}  &&\text{(1)}\\
	=& \int_{(N \cap \Gamma) \backslash N} \int_{y \geq c} -\phi_t(y)^2 y^2 \Bigg( \Delta_x + \Delta_w + \Delta_u + \Delta_v &&\\
	&+ y^2 \frac{\partial^2}{\partial q^2} + y^2 \frac{\partial^2}{\partial q^{\prime 2}} + y^2 \frac{\partial^2}{\partial q^{\prime\prime 2}} \Bigg)  f(n_{z,q,q^\prime,q^{\prime\prime}} \, m_y) f(n_{z,q,q^\prime,q^{\prime\prime}} \, m_y) \; \frac{dz \; dq\, dq^\prime\, dq^{\prime\prime}\, dy}{y^{4r+3}}  && \text{(2)}\\
	-& \int_{(N \cap \Gamma) \backslash N} \int_{y \geq c} \big( y^2 \frac{\partial^2}{\partial y^2} - (4r+1) y \frac{\partial}{\partial y} \big) f^{[t]} f^{[t]} \; \frac{dz \; dq\, dq^\prime\, dq^{\prime\prime}\, dy}{y^{4r+3}} && \text{(3)}
\end{flalign*}
 Temporarily ignore the first terms and expand the second term containing derivatives in $y$ (i.e., term (3)). To lighten notation, we suppress the arguments of $f$ as only the $y$-derivative will appear.
\begin{align*}
	& - \int_{(N \cap \Gamma) \backslash N} \int_{y \geq c} \big( y^2 \frac{\partial^2}{\partial y^2} - (4r+1) y \frac{\partial}{\partial y} \big) f^{[t]} f^{[t]} \; \frac{dz \; dq\, dq^\prime\, dq^{\prime\prime}\, dy}{y^{4r+3}}  \\
	=& - \int_{(N \cap \Gamma) \backslash N} \int_{y \geq c} \Bigg( \big( y^2 \frac{\partial^2}{\partial y^2} - (4r+1) y \frac{\partial}{\partial y} \big) \big( \phi(\frac{y}{t})\cdot f \big) \Bigg) \; \cdot  \phi(\frac{y}{t}) \cdot  f \; \frac{dz \; dq\, dq^\prime\, dq^{\prime\prime}\, dy}{y^{4r+3}}  
\end{align*}
\begin{align*}
	=& - \int_{(N \cap \Gamma) \backslash N} \int_{y \geq c} \Bigg( \big( y^2 \frac{\partial^2}{\partial y^2} - (4r+1) y \frac{\partial}{\partial y} \big) \big( \phi(\frac{y}{t})\cdot f \big) \Bigg) \; \phi(\frac{y}{t}) \cdot  f \; \frac{dz \; dq\, dq^\prime\, dq^{\prime\prime}\, dy}{y^{4r+3}}  \\
	=& - \int_{(N \cap \Gamma) \backslash N} \int_{y \geq c} \Bigg( y^2 \frac{\partial^2}{\partial y^2} \big( \phi(\frac{y}{t})\cdot f \big)  - (4r+1) y \frac{\partial}{\partial y} \big( \phi(\frac{y}{t})\cdot f \big) \Bigg) \; \phi(\frac{y}{t}) \cdot  f \; \frac{dz \; dq\, dq^\prime\, dq^{\prime\prime}\, dy}{y^{4r+3}}  \\
	=&  -\int_{(N \cap \Gamma) \backslash N} \int_{y \geq c} y^2\Bigg( \bigg(\frac{1}{t^2}\frac{\partial^2 \phi}{\partial y^2}(\frac{y}{t})\bigg)f + \frac{2}{t}\bigg(\frac{\partial \phi}{\partial y}(\frac{y}{t})\bigg)\bigg(\frac{\partial f}{\partial y}\bigg) 
	+ \; \phi(\frac{y}{t})\frac{\partial^2 f}{\partial y^2} \Bigg) \cdot \\
	& \phi(\frac{y}{t}) \cdot f  \; \frac{dz \; dq\, dq^\prime\, dq^{\prime\prime}\, dy}{y^{4r+3}} \\	
	+&  \; (4r+1)\int_{(N \cap \Gamma) \backslash N} \int_{y \geq c}  y \Bigg(\frac{1}{t}\frac{\partial \phi}{\partial y}(\frac{y}{t})f + \phi(\frac{y}{t})\frac{\partial f}{\partial y} \Bigg) \cdot \phi(\frac{y}{t}) \cdot f \; \frac{dz \; dq\, dq^\prime\, dq^{\prime\prime}\, dy}{y^{4r+3}}
\end{align*}
Writing derivatives as primes ($\prime$)
\begin{align*}
	=&  -\int_{(N \cap \Gamma) \backslash N} \int_{y \geq c} \Bigg( \frac{y^2}{t^2} \phi^{\prime\prime} f + \frac{2y^2}{t}\phi^\prime f^\prime + y^2\phi f^{\prime\prime} \Bigg) \cdot \phi \cdot f  \; \frac{dz \; dq\, dq^\prime\, dq^{\prime\prime}\, dy}{y^{4r+3}} \\	
	+&  \; (4r+1)\int_{(N \cap \Gamma) \backslash N} \int_{y \geq c}  \Bigg(\frac{y}{t} \phi^\prime f + y \,\phi f^\prime \Bigg) \cdot \phi \cdot f \; \frac{dz \; dq\, dq^\prime\, dq^{\prime\prime}\, dy}{y^{4r+3}} \\
	=&  -\int_{(N \cap \Gamma) \backslash N} \int_{y \geq c} \Bigg( y^2\phi_t^{\prime\prime} f + 2y^2\phi_t^\prime f^\prime + y^2\phi_t f^{\prime\prime} \Bigg) \cdot \phi_t \cdot f  \; \frac{dz \; dq\, dq^\prime\, dq^{\prime\prime}\, dy}{y^{4r+3}} \\
	+&  \; (4r+1)\int_{(N \cap \Gamma) \backslash N} \int_{y \geq c}  \Bigg(y \phi_t^\prime f + y \,\phi_t f^\prime \Bigg) \cdot \phi_t \cdot f \; \frac{dz \; dq\, dq^\prime\, dq^{\prime\prime}\, dy}{y^{4r+3}} \\
\end{align*}
Multiplying out and collecting like terms in $\phi$ 
\begin{align*}
	=&  \int_{(N \cap \Gamma) \backslash N} \int_{y \geq c} \Bigg( - \phi_t^2 \big( y^2 \cdot f^{\prime\prime} - (4r+1)y \cdot f^\prime \big) \cdot f \\
	 &+ \big((4r+1)y \phi_t^\prime\cdot \phi_t - y^2 \phi_t^{\prime\prime} \cdot \phi_t \big) \cdot f^2 - 2y^2 \phi_t^\prime \cdot \phi_t \cdot (f^\prime \cdot f) \Bigg)  \; \frac{dz \; dq\, dq^\prime\, dq^{\prime\prime}\, dy}{y^{4r+3}} 
\end{align*}
Returning the earlier terms in other derivatives
\begin{flalign*}	
=& \int_{(N \cap \Gamma) \backslash N} \int_{y \geq c} -\phi_t(y)^2 y^2 \bigg( \Delta_x + \Delta_w + \Delta_u + \Delta_v 
	+ \; y^2 \frac{\partial^2}{\partial q^2} \\
	&+ y^2 \frac{\partial^2}{\partial q^{\prime 2}} + y^2 \frac{\partial^2}{\partial q^{\prime\prime 2}} \bigg)  f \cdot f \; \frac{dz \; dq\, dq^\prime\, dq^{\prime\prime}\, dy}{y^{4r+3}}  && \text{(A 1)}\\
    &- \int_{(N \cap \Gamma) \backslash N} \int_{y \geq c} \phi_t^2 \cdot \bigg( y^2 \cdot \frac{\partial^2 f}{\partial y^2} \cdot f - (4r+1) y \,\frac{\partial f}{\partial y} \cdot f\bigg)  \; \frac{dz \; dq\, dq^\prime\, dq^{\prime\prime}\, dy}{y^{4r+3}}  && \text{(B 1)}\\
    &+ \int_{(N \cap \Gamma) \backslash N} \int_{y \geq c} \bigg(  \bigg( (4r+1) y \cdot \frac{\partial \phi_t}{\partial y} \cdot \phi_t - \; y^2 \cdot \frac{\partial^2 \phi_t}{\partial y^2} \cdot \phi_t \bigg)  \cdot f^2 \\
&- 2y^2 \frac{\partial f}{\partial y} \cdot f  \cdot \frac{\partial \phi_t}{\partial y} \cdot \phi_t \bigg) \; \frac{dz \; dq\, dq^\prime\, dq^{\prime\prime}\, dy}{y^{4r+3}} && \text{(C 1)}\\
\end{flalign*}
(A 1) and (B 1) combine to $\Delta$ so the above equals
\begin{flalign*}	
    =& \int_{(N \cap \Gamma) \backslash N} \int_{y \geq c} - \Delta f \cdot f \cdot \phi_t(y)^2 \; \frac{dz \; dq\, dq^\prime\, dq^{\prime\prime}\, dy}{y^{4r+3}}  && \text{(A 2)}\\
    +& \int_{(N \cap \Gamma) \backslash N} \int_{y \geq c} \bigg( \bigg( (4r+1) y \cdot \frac{\partial \phi_t}{\partial y} \cdot \phi_t - \; y^2 \cdot \frac{\partial^2 \phi_t}{\partial y^2} \cdot \phi_t \bigg)  \cdot f^2 \bigg) \; \frac{dz \; dq\, dq^\prime\, dq^{\prime\prime}\, dy}{y^{4r+3}} && \text{(B 2)}\\
    +& \int_{(N \cap \Gamma) \backslash N} \int_{y \geq c} \bigg( - 2y^2 \cdot \frac{\partial \phi_t}{\partial y} \cdot \phi_t \cdot \frac{\partial f}{\partial y} \cdot f \bigg) \; \frac{dz \; dq\, dq^\prime\, dq^{\prime\prime}\, dy}{y^{4r+3}} && \text{(C 2)}\\
\end{flalign*}
Note that $\frac{\partial f}{\partial y} f = \frac{1}{2}\frac{\partial}{\partial y}(f)^2$ and use integration by parts (changing the sign of the term) on (C 2) to move the derivative from $f$ giving (C3), factor out the common term of $|f|^2$, and expand terms in $\phi_t(y) = \phi(\frac{y}{t})$
\begin{flalign*}	
    =& \int_{(N \cap \Gamma) \backslash N} \int_{y \geq c} - \Delta f \cdot f \cdot \phi(\frac{y}{t})^2 \; \frac{dz \; dq\, dq^\prime\, dq^{\prime\prime}\, dy}{y^{4r+3}}  && \text{(A 3)}\\
    +& \int_{(N \cap \Gamma) \backslash N} \int_{y \geq c}  \Bigg( (4r-7) \frac{y}{t} \frac{\partial \phi}{\partial y}(\frac{y}{t}) \cdot \phi(\frac{y}{t}) \enspace - \enspace \frac{5y^2}{t^2} \cdot \frac{\partial^2 \phi}{\partial y^2}(\frac{y}{t}) \cdot \phi(\frac{y}{t})  \\
    & \qquad  - \frac{y^2}{t^2} \big( \frac{\partial \phi}{\partial y}(\frac{y}{t}) \big)^2 \Bigg) \cdot f^2 \enspace \frac{dz \; dq\, dq^\prime\, dq^{\prime\prime}\, dy}{y^{4r+3}} && \text{(B 3)}\\
\end{flalign*}

Since $0 \leq \phi \leq 1$, (A 2) is positive and clearly bounded by $\langle -\Delta f, f \rangle$. By assumption $0 \leq \phi \leq 1$, which applies also to $\phi_t$, so the first integral is bounded by $\int -\Delta f \cdot f$. The second expression is bounded in terms of $|f|_{L^2}^2$ as follows: $\phi^\prime$ and $\phi^{\prime\prime}$ are supported in $[1,2]$, so $\phi_t^\prime$ and $\phi_t^{\prime\prime}$ are supported in $[t,2t]$ and thus $y < 2t$ so $\frac{y}{t} < 2$. This along with the pointwise bound $C_\phi < \infty$ on $|\phi|$, $|\phi^\prime|$ and $|\phi^{\prime\prime}|$ estimate the the second integral (B3):
\begin{align*}
    \leq & \; \int_{(N \cap \Gamma) \backslash N} \int_{y \geq c} \Bigg( \; \bigg|  (4r-7) \frac{y}{t} \frac{\partial \phi}{\partial y}(\frac{y}{t}) \cdot \phi(\frac{y}{t})\, \bigg|\;  \;  \\
    & \enspace \quad +  \bigg| \; \frac{5y^2}{t^2} \cdot \frac{\partial^2 \phi}{\partial y^2}(\frac{y}{t}) \cdot \phi(\frac{y}{t}) \; \bigg| \;  \\
    & \qquad \quad + \bigg| \; \frac{y^2}{t^2} \bigg( \frac{\partial \phi}{\partial y}(\frac{y}{t}) \bigg)^2  \; \bigg| \; \Bigg) \cdot |f|^2 \;\;\frac{dz \; dq\, dq^\prime\, dq^{\prime\prime}\, dy}{y^{4r+3}} \\
    &\leq \int_{(N \cap \Gamma) \backslash N} \int_{y \geq c} \bigg( \text{ multiple of } C_\phi^2 \bigg) \cdot  |f|^2 \; \frac{dz \; dq\, dq^\prime\, dq^{\prime\prime}\, dy}{y^{4r+3}} = \text{constant} \times |f|^2_{L^2}
\end{align*}
as claimed $\square$
\end{proof}


%
%

\section{A gradient operator $\nabla$ on functions on $\Gamma \backslash G /K$}\label{Gradient-operator}
%
%
%
%
Following \cite{PBG}, to facilitate estimates using local Iwasawa coordinates, we define a type of gradient operator $\nabla$ on functions on $\Gamma \backslash G/K$ with a form of integration by parts property

$$
\int_{\Gamma \backslash G/K} -\Delta f \cdot \bar{f} =  \int_{\Gamma \backslash G/K} \langle \nabla f, \nabla f \rangle_\frak{s}
$$
where the inner product is taken on the vector space $\frak{s}$ in which $\nabla$ takes its values.

Let $\gamma \to \gamma^\theta$ be an involutive automorphism (e.g., a Cartan involution) on the Lie algebra of $G$ as $\frak{g} = \frak{s} + \frak{k}$ such that $\frak{k}$, the Lie algebra of the maximal compact subgroup $K$, is the $+1$ eigenspace, letting $\frak{s}$ be the $-1$ eigenspace. Here, for $O(r,1)$, we can take $\gamma^\theta = - \gamma^\top$. Let $\langle , \rangle_\frak{s}$ be a positive-definite real-valued inner product on $\frak{s}$, invariant under the action of $K$:
$$
\langle k \alpha k^{-1} , \; k \beta k^{-1} \rangle_\frak{s} = \langle  \alpha , \beta \rangle_\frak{s} \qquad (\text{for all } \alpha, \beta \in \frak{s} \text{ and } k \in K)
$$
The form $\langle, \rangle_\frak{s}$ can be given by restricing the trace to a {\it reduced} trace form which is proportional to the Cartan-Killing form
$$
\langle \alpha , \beta \rangle_\frak{s} = \langle \alpha , \beta \rangle_\text{trace} = \text{tr}(\alpha \cdot \beta) \qquad (\text{for } \alpha, \beta \in \frak{g})
$$
where tr is matrix trace. Extend scalars on $\frak{s}$ (i.e., by abuse of terminology, call $\frak{s} = \frak{s} \otimes_\mathbb{R} \mathbb{C}$) so that $\langle , \rangle_\frak{s}$ is positive-definite hermitian (i.e., take the second argument as complex-conjugate). Since $\Gamma$ is discrete in $G$ and differentiation is local, differentiating of right translation $X_x$ for $x \in \frak{g}$ acts on functions on $G$ or $\Gamma \backslash G$.
$$
(X_x f)(g) = \frac{d}{dt}\bigg|_{t=0} f(g \cdot e^{tx}) \qquad (\text{for } f \in C^1 \text{ and } g \in G \text{ or } \Gamma \backslash G)
$$
Name this map $\rho(x): x \to X_x$; $\rho$ is a $K$-equivariant $\frak{g}$-valued function since for $f \in C^1$ (i.e., considered either on $G$ or $\Gamma \backslash G$)
\begin{align*}
\big( k \cdot \rho(x) \cdot k^{-1} \cdot f\big)(g) =& \; k \cdot X_x \cdot k^{-1} \cdot f(g) = (k ( X_x (k^{-1} f)))(g)  = ( X_x (k^{-1} f))(gk) \\
=&  \frac{d}{dt}\bigg|_{t=0} ( (k^{-1} f)(g \; k \; e^{tx}) = \frac{d}{dt}\bigg|_{t=0}  f(g \; k \; e^{tx} \; k^{-1}) = \frac{d}{dt}\bigg|_{t=0} f (g e^{tk xk^{-1}}) \\
=& X_{kxk^{-1}} f(g)
\end{align*}
 The operator $\nabla$ is an $\frak{s}$-valued operator on functions on either $G$ or $\Gamma \backslash G$. Similar to the characterization of the Casimir operator, the commutativity of the diagram
$$
\xymatrix{
\mathrm{End}_\mathbb{R}(\frak{s})  \ar@{.>}[d] \ar[r]^\approx & \frak{s} \otimes_\mathbb{R} \frak{s}^\ast \ar[r]^{\langle, \rangle_\frak{s}} & \frak{s} \otimes_\mathbb{R} \frak{s} \ar[r]^{\rho \otimes_\mathbb{R} 1_\frak{s}} & \rho(\frak{s}) \otimes_\mathbb{R} \frak{s} \ar@{.>}[d] \\
1_\frak{s} \ar[rrr]                          &&&                      \nabla 
}
$$
exhibits the $K$-equivariance of $\nabla$.

Identify $\frak{s}^\ast$ with $\frak{s}$ via $x \to \langle -, x \rangle_\frak{s}$ to express $\nabla$ in coordinates in terms of a $\langle , \rangle_\frak{s}$-orthonormal basis of $\frak{s}$
$$
\nabla = \sum_j X_{x_j} \cdot x_j \qquad \nabla f = \sum_j X_{x_j}f \cdot x_j \qquad (\text{for } f \in C^\infty)
$$
the $K$-equivariance of the diagram shows this expession of $\nabla$ is independent of the basis of $\frak{s}$ 
%
%
\begin{lemma}\label{grad-int-by-parts}
	For $f \in C^\infty_c(\Gamma \backslash G/K) = C^\infty_c(\Gamma \backslash G)^K$,
$$
\int_{\Gamma \backslash G} - \Delta f \cdot \bar{f} = \int_{\Gamma \backslash G} \langle \nabla f, \nabla f \rangle_\frak{s}
$$	
\end{lemma}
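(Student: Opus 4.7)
The plan is to reduce both sides to the common sum $\sum_j \int_{\Gamma\backslash G}|X_{x_j}f|^2$ taken over a $\langle\cdot,\cdot\rangle_\frak{s}$-orthonormal basis $\{x_j\}$ of $\frak{s}$. Two ingredients drive the reduction. First, $X_y f = 0$ for any $y\in\frak{k}$ whenever $f$ is right-$K$-invariant, which kills the $\frak{k}$-part of the Casimir. Second, the unimodularity of $G$ (it is semisimple) makes each right-translation derivation $X_x$ skew-adjoint on $L^2(\Gamma\backslash G)$.

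For the left-hand side, I would pick $\{x_j\}\subset\frak{s}$ orthonormal under $\langle\cdot,\cdot\rangle_\frak{s}$ (equivalently, under the reduced trace form, since $\langle\cdot,\cdot\rangle_\frak{s}$ is its restriction to $\frak{s}$) and $\{y_\ell\}\subset\frak{k}$ with $B(y_\ell,y_m)=-\delta_{\ell m}$ (possible because $B$ is negative-definite on $\frak{k}$). Since $\frak{s}\perp\frak{k}$ under $B$, the Killing-duals relative to the combined basis are $x_j'=x_j$ and $y_\ell'=-y_\ell$, giving
$$\Omega_\frak{g}\;=\;\sum_j x_j\,x_j'+\sum_\ell y_\ell\,y_\ell'\;=\;\sum_j x_j^{\,2}\,-\,\sum_\ell y_\ell^{\,2}.$$
On $f\in C^\infty_c(\Gamma\backslash G)^K$ the $\frak{k}$-terms vanish, so $\Delta f=\sum_j X_{x_j}^2 f$. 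The unimodularity of $G$ then yields $\int_{\Gamma\backslash G} X_x(f\bar f)\,d\mu=0$, hence $X_x^*=-X_x$, and one integration by parts gives
$$\int_{\Gamma\backslash G}-\Delta f\cdot\bar f\;=\;-\sum_j\int_{\Gamma\backslash G}X_{x_j}^2 f\cdot\bar f\;=\;\sum_j\int_{\Gamma\backslash G}|X_{x_j}f|^2.$$

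For the right-hand side, the orthonormality of $\{x_j\}$ in $\langle\cdot,\cdot\rangle_\frak{s}$ gives pointwise
$$\langle \nabla f,\nabla f\rangle_\frak{s}\;=\;\Big\langle \sum_j(X_{x_j}f)\,x_j,\ \sum_k(X_{x_k}f)\,x_k\Big\rangle_\frak{s}\;=\;\sum_j |X_{x_j}f|^2,$$
so the two integrals agree. The $K$-equivariance established for $\nabla$ via the commutative diagram above ensures this sum does not depend on the choice of orthonormal basis.

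The main bookkeeping point to pin down is that the two identifications of vector spaces with their duals are consistent: $\frak{g}\cong\frak{g}^\ast$ via $B$ is used in defining $\Omega$, while $\frak{s}\cong\frak{s}^\ast$ via $\langle\cdot,\cdot\rangle_\frak{s}$ is used in defining $\nabla$. This is exactly the content of taking $\langle\cdot,\cdot\rangle_\frak{s}$ to be the restriction of the reduced trace form to $\frak{s}$; any positive proportionality constant appears on both sides and cancels. With that matching in place, the sign flip $y_\ell'=-y_\ell$ on $\frak{k}$ is precisely what conspires with right-$K$-invariance to leave only the manifestly non-negative contribution coming from $\frak{s}$, which is the identity to be proved.
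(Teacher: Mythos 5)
Your proposal is correct and follows essentially the same route as the paper: write $\Omega$ as $\sum_j x_j^2 - \sum_\ell \theta_\ell^2$ relative to the Cartan decomposition with the trace form, drop the $\frak{k}$-terms on right $K$-invariant functions, integrate by parts once, and identify the resulting sum of squares with $\langle \nabla f, \nabla f\rangle_{\frak{s}}$ via orthonormality. The only difference is that you spell out the justification for the integration by parts (unimodularity giving skew-adjointness of the $X_{x_j}$) and the consistency of the two dualizations, both of which the paper leaves implicit.
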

\begin{proof}
Since $\langle , \rangle_\frak{s}$ is proportional to the trace form, write $x_i$ for $X_{x_i} = \rho(x_i)$ for $x_i \in \frak{s}$, and let $\theta_j$ be a basis for $\frak{k}$ such that $\langle \theta_i, \theta_j\rangle_\text{trace} = -\delta_{ij}$ with Kronecker and the trace pairing. As before, the Casimir operator is then the image of $\sum_j x_j^2 - \sum_i \theta_i^2$ in the universal enveloping algebra. On right $K$-invariant functions, the $\theta$ terms vanish and the Casimir operator reduces to $\sum_j x_j^2$. Integration by parts gives
\begin{align*}
\int_{\Gamma \backslash G} -\Delta f \cdot \bar{f} =& \int_{\Gamma \backslash G} -\sum_j x_j^2 f \cdot \bar{f} = \int_{\Gamma \backslash G} \sum_j x_j f \cdot \overline{x_j f} \\
=& \int_{\Gamma \backslash G} \sum_j \langle x_j f \cdot x_j, x_j f \cdot x_j \rangle_\frak{s} = \int_{\Gamma \backslash G} \langle \nabla f , \nabla f \rangle_\frak{s} 
\end{align*}
$\square$
\end{proof}

%
%
%
%
%
%

\section{With $a \gg 1$, the inclusion $\frak{B}^1_a(\Gamma \backslash G/K) \to L^2_a(\Gamma \backslash G/K)$ is compact.}\label{Rellich-compact-Inclusion-FrakBa-to-L2a}

%
%

We follow the approach of \cite{PBG} which elaborates on the developement treated in \cite{LP}. The idea used in all three cases, for $O(r,1)$, $U(r,1)$ and $Sp^\ast(r,1)$, is that the usual Rellich compactness lemma, asserting compactness of proper inclusions of Sobolev spaces on compact Riemannian symmetric spaces, as proven by localizing to multi-tori $\mathbb{T}^r$ (cf. \S\S 9.5.12 and 9.5.15 in \cite{PBG}, and also the appendix), reduces the issue to the estimates established above on $L^2$ and smooth cut-off truncations of the tails.

The total boundedness criterion for pre-compactness 
requires that, given $\varepsilon > 0$, the image in $L^2_a(\Gamma \backslash G/K)$ of the unit ball $B$ in $\frak{B}^1_a(\Gamma \backslash G/K)$ can be covered by finitely-many balls of radius $\varepsilon$ (cf. \S 14.7 in \cite{PBG}). We show this by using the estimates on the tails of smooth cutoffs developed previously. 

\subsection{$O(r,1)$}\label{Rellich-compact-Inclusion-FrakBa-to-L2a-O(r,1)}

\begin{theorem}
	For $G = O(r,1)$, $K = O(r) \times O(1)$ and arithmetic subgroup $\Gamma \subset G$, then with $a \gg 1$ the inclusion $\frak{B}^1_a(\Gamma \backslash G/K) \to L^2_a(\Gamma \backslash G/K)$ is compact
\end{theorem}

\begin{proof}
Let $(x,y)$ be coordinates from the Iwasawa decomposition: $x \in N \approx  \mathbb{R}^{r-1}$, $y \in A^+ \approx  \mathbb{R}^\times$, by reduction theory, let $\frak{S}$ be a fixed Siegel set that surjects to the quotient $\Gamma \backslash G$. Then, given $c \geq a$, let $Y_o$ and $Y_\infty$ be the respective images of $\{g \in \frak{S} \; : \; \eta(g) = \eta(x,y) = y^r  \leq c+1\}$ and $\{g \in \frak{S} \; : \; \eta(g) = y^r \geq c \}$ in $\Gamma \backslash G/K$. By construction, the interiors of $Y_o$ and $Y_\infty$ cover $\Gamma \backslash G/K$. 

From general considerations we can take a smooth partition of unity $\{ \phi_i \}$ 
so 	 $\sum_i \phi_i = 1$ on $Y_o$. Let $\phi_\infty$ be a smooth function that is identically 1 for $\eta \geq c$ and that $\text{support}(\phi_\infty) \subset U_\infty$. The smooth function $\phi_\infty$ should be chosen in a fashion similar to the smooth cut-offs used previously: i.e., $\phi_\infty$ is $0$ for $0 \leq \eta \leq c-1$, $\phi_\infty$ is $1$ for $\eta \geq c$ and all values of $\phi_\infty$ lie between $0$ and $1$. In particular, $\phi_\infty$ is zero on a non-empty open contained in $Y_o$.
It will also be useful later that $\phi_\infty$ and $1 - \phi_\infty$ form a two-element partition of unity.

Iwasawa coordinates provide an explicit way of separating $\Gamma \backslash G/K$ into a cusp part $Y_\infty$ and a compact body $Y_o$.
Let $B$ be the unit ball in $\frak{B}^1_a(\Gamma\backslash G/K)$. Given $\varepsilon > 0$, take $c - 1 > c^\prime \gg a$ sufficiently large (with associated $Y_o$, $Y_\infty$ and smooth cut-off $\phi_\infty$ determined by $c$) so that $\phi_\infty \cdot B$ lies in a single $\varepsilon \backslash 2$-ball in $L^2_a(\Gamma \backslash G /K)$. That is, using \cref{sec:L2a-tail-norm-bded-by-B1a-norm}, choose $c^\prime \gg a$ so that
$$
\int_{(N \cap \Gamma) \backslash N} \int_{y > c^\prime} |f|^2 \; \frac{dx \; dy}{y^r} \ll \frac{1}{(c^\prime)^2} \cdot |f|^2_{\frak{B}^1_a(\Gamma \backslash G /K)}
$$
Choose $\phi_\infty$, such that $0 \leq \phi_\infty \leq 1$ and $\phi_\infty = 0$ for $y \leq c-1$:
$$
 \int_{(N \cap \Gamma) \backslash N} \int_{y \geq a} |\phi_\infty \cdot f|^2 \; \frac{dx \; dy}{y^r} \leq \int_{(N \cap \Gamma) \backslash N} \int_{y > c^\prime} |f|^2 \; \frac{dx \; dy}{y^r} \ll \frac{1}{(c^\prime)^2} \cdot |f|^2_{\frak{B}^1_a(\Gamma \backslash G /K)}
$$
That is,
$$
|\phi_\infty \cdot f|^2_{L^2_a(\Gamma \backslash G /K)} \ll \frac{1}{(c^\prime)^2} \cdot |f|^2_{\frak{B}^1_a(\Gamma \backslash G /K)}
$$
Specifically, choose large $c^\prime$ so that
$$
|\phi_\infty \cdot f|^2_{L^2_a(\Gamma \backslash G /K)}  < \frac{\varepsilon}{2} <  \frac{1}{(c^\prime)^2} \cdot |f|^2_{\frak{B}^1_a(\Gamma \backslash G /K)}
$$
Next, note that $(1 - \phi_\infty) \cdot f$ has support in $Y_o$ for any $f \in \frak{B}^1_a(\Gamma \backslash G /K)$ and the map
$$
(1 - \phi_\infty) \cdot \; : \: \frak{B}^1_a(\Gamma \backslash G /K) \to \frak{B}^1_a(Y_o)
$$
is continuous and therefore bounded by \cref{sec:bdedness-smooth-projections}.

Thus, the image of $B$ under the restriction of this map
$$
B \to (1 -\phi_\infty) \cdot B \to \frak{B}^1_a(Y_o) \to L^2(Y_o) \subset L^2(\Gamma \backslash G /K)
$$
is pre-compact and can be covered by finitely many $\varepsilon/2$ balls in $L^2(\Gamma \backslash G /K)$. Finally, $B = \phi_\infty \cdot B + (1 - \phi_\infty) \cdot B$ is therefore covered by finitely-many $\varepsilon/2$ balls in $L^2_a(\Gamma \backslash G /K)$ so the image of $B$ in $L^2(\Gamma \backslash G/K)$ is pre-compact. $\square$
\end{proof}
%
%
%
%
%

\subsection{$U(r,1)$}\label{Rellich-compact-Inclusion-FrakBa-to-L2a-U(r,1)}

\begin{theorem}
	For $G = U(r,1)$, $K = U(r) \times U(1)$ and arithmetic subgroup $\Gamma \subset G$, then with $a \gg 1$ the inclusion $\frak{B}^1_a(\Gamma \backslash G/K) \to L^2_a(\Gamma \backslash G/K)$ is compact
\end{theorem}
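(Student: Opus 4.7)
The plan is to mirror the proof given for $O(r,1)$ essentially verbatim, substituting the $U(r,1)$ analogues of the two key ingredients: the tail estimate from \cref{sec:L2a-tail-norm-bded-by-B1a-norm-U(r,1)}, which gives
$$\int_{(N \cap \Gamma)\backslash N} \int_{y > c^\prime} |f|^2 \; \frac{dz\,dx\,dy}{y^{2r+1}} \;\ll\; \frac{1}{(c^\prime)^2}\,|f|^2_{\frak{B}^1_a}$$
for $f \in \frak{B}^1_a(\Gamma\backslash G/K)$, together with the $\frak{B}^1$-boundedness of smooth truncations established in \hyperref[sec:Global-B1a-bounds-of-smooth-trunc-L2a-tail-norms-U(r,1)]{\S 9.2}. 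The total-boundedness criterion for pre-compactness (cf. \cite{PBG} \S 14.7) requires that, given $\varepsilon>0$, the image of the unit ball $B \subset \frak{B}^1_a$ in $L^2_a$ can be covered by finitely many $\varepsilon$-balls.

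First I would invoke reduction theory (\cref{sec:Reduction-Theory-Siegel-Sets}) to obtain a Siegel set $\frak{S}$ surjecting onto $\Gamma\backslash G/K$ and, with Iwasawa coordinates $(z,x,y) \in \mathbb{C}^{r-1}\times\mathbb{R}\times(0,\infty)$ and height $\eta(n_{z,x}m_y) = y^{2r+1}$, decompose the quotient as $Y_o \cup Y_\infty$ where $Y_o$ is the image of $\{\eta \leq c+1\}$ (compact) and $Y_\infty$ the image of $\{\eta \geq c\}$ (cuspidal collar). Fix a smooth cut-off $\phi_\infty$ with $0 \leq \phi_\infty \leq 1$, vanishing on $\{y \leq c-1\}$ and identically $1$ on $\{y \geq c\}$, so that $\phi_\infty$ and $1-\phi_\infty$ form a two-element partition of unity.

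Given $\varepsilon>0$, I would use the tail estimate to choose $c^\prime \gg a$ (and correspondingly $c$) large enough that
$$|\phi_\infty \cdot f|^2_{L^2_a(\Gamma\backslash G/K)} \;\leq\; \int_{(N \cap \Gamma)\backslash N}\int_{y>c^\prime}|f|^2\,\frac{dz\,dx\,dy}{y^{2r+1}} \;\ll\; \frac{1}{(c^\prime)^2}|f|^2_{\frak{B}^1_a} \;<\; \frac{\varepsilon}{2}$$
uniformly for $f \in B$. Thus the entire set $\phi_\infty \cdot B$ sits inside a single $\varepsilon/2$-ball in $L^2_a$. Separately, by \cref{sec:bdedness-smooth-projections}, the multiplication-by-$(1-\phi_\infty)$ map carries $\frak{B}^1_a(\Gamma\backslash G/K)$ continuously into $\frak{B}^1(Y_o)$, with $Y_o$ a relatively compact piece of the quotient.

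Finally, I would apply the classical Rellich lemma on the compact body $Y_o$: because $Y_o$ can be locally modeled on bounded pieces of Euclidean space (or, after identifications coming from Siegel-set geometry, on multi-tori), the inclusion $\frak{B}^1(Y_o) \hookrightarrow L^2(Y_o)$ is compact (cf. \cite{PBG} \S\S 9.5.12, 9.5.15 and the appendix). Hence the image of $(1-\phi_\infty)\cdot B$ in $L^2(\Gamma\backslash G/K)$ is pre-compact and admits a finite cover by $\varepsilon/2$-balls. Writing $f = \phi_\infty f + (1-\phi_\infty)f$ and adding the two coverings produces a finite $\varepsilon$-cover of the image of $B$ in $L^2_a$, proving pre-compactness. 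The main obstacle---already resolved in \hyperref[L2-norms-zero-strongly-U(r,1)]{\S 8.2}---is the non-commutativity of $N$, which rules out a literal Fourier-series argument and forces the spectral bound for the tangential operator $S_y$ to be compared to the $y$-independent operator $S_a$; once that uniform-in-$y$ positivity is in hand, the argument proceeds as in the $O(r,1)$ case. $\square$
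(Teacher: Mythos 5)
Your proposal is correct and follows essentially the same route as the paper's own proof: the same $Y_o \cup Y_\infty$ decomposition via a Siegel set, the same smooth cut-off $\phi_\infty$ with the tail estimate of \cref{sec:L2a-tail-norm-bded-by-B1a-norm-U(r,1)} trapping $\phi_\infty\cdot B$ in a single $\varepsilon/2$-ball, the boundedness of $f \mapsto (1-\phi_\infty)f$ from \cref{sec:bdedness-smooth-projections}, classical Rellich on the compact body, and the final finite covering. The remarks about the non-abelian $N$ and the $S_y$ versus $S_a$ comparison correctly locate where the real work was done (in the tail lemma), so nothing is missing.
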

\begin{proof}
Let  $(z,x,y)$ be coordinates from the Iwasawa decomposition: $z,x \in N \approx  \mathbb{C}^{r-1} \times \mathbb{R}$, $y \in A^+ \approx  \mathbb{R}^\times$, by reduction theory, let $\frak{S}$ be a fixed Siegel set that surjects to the quotient $\Gamma \backslash G$. Then, given $c \geq a$, let $Y_o$ and $Y_\infty$ be the respective images of $\{g \in \frak{S} \; : \; \eta(g) = \eta(z,x,y) = y^{2r+1}  \leq c+1\}$ and $\{g \in \frak{S} \; : \; \eta(g) = y^{2r+1} \geq c \}$ in $\Gamma \backslash G/K$. By construction, the interiors of $Y_o$ and $Y_\infty$ cover $\Gamma \backslash G/K$. 

From general considerations we can take a smooth partition of unity $\{ \phi_i \}$ 
so 	 $\sum_i \phi_i = 1$ on $Y_o$. Let $\phi_\infty$ be a smooth function that is identically 1 for $\eta \geq c$ and that $\text{support}(\phi_\infty) \subset U_\infty$. The smooth function $\phi_\infty$ should be chosen in a fashion similar to the smooth cut-offs used previously: i.e., $\phi_\infty$ is $0$ for $0 \leq \eta \leq c-1$, $\phi_\infty$ is $1$ for $\eta \geq c$ and all values of $\phi_\infty$ lie between $0$ and $1$. In particular, $\phi_\infty$ is zero on a non-empty open contained in $Y_o$.
It will also be useful later that $\phi_\infty$ and $1 - \phi_\infty$ form a two-element partition of unity.

Iwasawa coordinates provide an explicit way of separating $\Gamma \backslash G/K$ into a cusp part $Y_\infty$ and a compact body $Y_o$.
Let $B$ be the unit ball in $\frak{B}^1_a(\Gamma\backslash G/K)$. Given $\varepsilon > 0$, take $c - 1 > c^\prime \gg a$ sufficiently large (with associated $Y_o$, $Y_\infty$ and smooth cut-off $\phi_\infty$ determined by $c$) so that $\phi_\infty \cdot B$ lies in a single $\varepsilon \backslash 2$-ball in $L^2_a(\Gamma \backslash G /K)$. That is, using \cref{sec:L2a-tail-norm-bded-by-B1a-norm}, choose $c^\prime \gg a$ so that
$$
\int_{(N \cap \Gamma) \backslash N} \int_{y > c^\prime} |f|^2 \; \frac{dz \; dx \; dy}{y^{2r+1}} \ll \frac{1}{(c^\prime)^2} \cdot |f|^2_{\frak{B}^1_a(\Gamma \backslash G /K)}
$$
Choose $\phi_\infty$, such that $0 \leq \phi_\infty \leq 1$ and $\phi_\infty = 0$ for $y \leq c-1$:
$$
 \int_{(N \cap \Gamma) \backslash N} \int_{y \geq a} |\phi_\infty \cdot f|^2 \; \frac{dz \; dx \; dy}{y^{2r+1}} \leq \int_{(N \cap \Gamma) \backslash N} \int_{y > c^\prime} |f|^2 \; \frac{dz \; dx \; dy}{y^{2r+1}} \ll \frac{1}{(c^\prime)^2} \cdot |f|^2_{\frak{B}^1_a(\Gamma \backslash G /K)}
$$
That is,
$$
|\phi_\infty \cdot f|^2_{L^2_a(\Gamma \backslash G /K)} \ll \frac{1}{(c^\prime)^2} \cdot |f|^2_{\frak{B}^1_a(\Gamma \backslash G /K)}
$$
Specifically, choose large $c^\prime$ so that
$$
|\phi_\infty \cdot f|^2_{L^2_a(\Gamma \backslash G /K)}  < \frac{\varepsilon}{2} <  \frac{1}{(c^\prime)^2} \cdot |f|^2_{\frak{B}^1_a(\Gamma \backslash G /K)}
$$
Next, note that $(1 - \phi_\infty) \cdot f$ has support in $Y_o$ for any $f \in \frak{B}^1_a(\Gamma \backslash G /K)$ and the map
$$
(1 - \phi_\infty) \cdot \; : \: \frak{B}^1_a(\Gamma \backslash G /K) \to \frak{B}^1_a(Y_o)
$$
is continuous and therefore bounded by \cref{sec:bdedness-smooth-projections}.

Thus, the image of $B$ under the restriction of this map
$$
B \to (1 -\phi_\infty) \cdot B \to \frak{B}^1_a(Y_o) \to L^2(Y_o) \subset L^2(\Gamma \backslash G /K)
$$
is pre-compact and can be covered by finitely many $\varepsilon/2$ balls in $L^2(\Gamma \backslash G /K)$. Finally, $B = \phi_\infty \cdot B + (1 - \phi_\infty) \cdot B$ is therefore covered by finitely-many $\varepsilon/2$ balls in $L^2_a(\Gamma \backslash G /K)$ so the image of $B$ in $L^2(\Gamma \backslash G/K)$ is pre-compact. $\square$
\end{proof}
%
%
%

%
%

\subsection{$Sp^\ast(r,1)$}\label{Rellich-compact-Inclusion-FrakBa-to-L2a-Sp*(r,1)}

\begin{theorem}
	For $G = O(r,1)$, $K = O(r) \times O(1)$ and arithmetic subgroup $\Gamma \subset G$, then with $a \gg 1$ the inclusion $\frak{B}^1_a(\Gamma \backslash G/K) \to L^2_a(\Gamma \backslash G/K)$ is compact
\end{theorem}
\begin{proof}
	Let $z,q,q^\prime,q^{\prime\prime}$ be coordinates from the Iwasawa decomposition: $z,q,q^\prime,q^{\prime\prime} \in N \approx  \mathbb{H}^{r-1} \times \mathbb{R}^3$, $y \in A^+ \approx  \mathbb{R}^\times$, by reduction theory, let $\frak{S}$ be a fixed Siegel set that surjects to the quotient $\Gamma \backslash G$. Then, given $c \geq a$, let $Y_o$ and $Y_\infty$ be the respective images of $\{g \in \frak{S} \; : \; \eta(g) = \eta(z,q,q^\prime,q^{\prime\prime},y) = y^{4r+3}  \leq c+1\}$ and $\{g \in \frak{S} \; : \; \eta(g) = y^{4r+3} \geq c \}$ in $\Gamma \backslash G/K$. By construction, the interiors of $Y_o$ and $Y_\infty$ cover $\Gamma \backslash G/K$. 

	From general considerations we can take a smooth partition of unity $\{ \phi_i \}$ 
	so 	 $\sum_i \phi_i = 1$ on $Y_o$. Let $\phi_\infty$ be a smooth function that is identically 1 for $\eta \geq c$ and that $\text{support}(\phi_\infty) \subset U_\infty$. The smooth function $\phi_\infty$ should be chosen in a fashion similar to the smooth cut-offs used previously: i.e., $\phi_\infty$ is $0$ for $0 \leq \eta \leq c-1$, $\phi_\infty$ is $1$ for $\eta \geq c$ and all values of $\phi_\infty$ lie between $0$ and $1$. In particular, $\phi_\infty$ is zero on a non-empty open contained in $Y_o$.
	It will also be useful later that $\phi_\infty$ and $1 - \phi_\infty$ form a two-element partition of unity.

	Iwasawa coordinates provide an explicit way of separating $\Gamma \backslash G/K$ into a cusp part $Y_\infty$ and a compact body $Y_o$.
	Let $B$ be the unit ball in $\frak{B}^1_a(\Gamma\backslash G/K)$. Given $\varepsilon > 0$, take $c - 1 > c^\prime \gg a$ sufficiently large (with associated $Y_o$, $Y_\infty$ and smooth cut-off $\phi_\infty$ determined by $c$) so that $\phi_\infty \cdot B$ lies in a single $\varepsilon \backslash 2$-ball in $L^2_a(\Gamma \backslash G /K)$. That is, using \cref{sec:L2a-tail-norm-bded-by-B1a-norm}, choose $c^\prime \gg a$ so that
	$$
	\int_{(N \cap \Gamma) \backslash N} \int_{y > c^\prime} |f|^2 \; \frac{dz \, dq\, dq^\prime\, dq^{\prime\prime}\, dy}{y^{4r+3}} \ll \frac{1}{(c^\prime)^2} \cdot |f|^2_{\frak{B}^1_a(\Gamma \backslash G /K)}
	$$
	Choose $\phi_\infty$, such that $0 \leq \phi_\infty \leq 1$ and $\phi_\infty = 0$ for $y \leq c-1$:
	$$
	 \int_{(N \cap \Gamma) \backslash N} \int_{y \geq a} |\phi_\infty \cdot f|^2 \; \frac{dz \, dq\, dq^\prime\, dq^{\prime\prime}\, dy}{y^{4r+3}} \leq \int_{(N \cap \Gamma) \backslash N} \int_{y > c^\prime} |f|^2 \; \frac{dz \, dq\, dq^\prime\, dq^{\prime\prime}\, dy}{y^{4r+3}} \ll \frac{1}{(c^\prime)^2} \cdot |f|^2_{\frak{B}^1_a(\Gamma \backslash G /K)}
	$$
	That is,
	$$
	|\phi_\infty \cdot f|^2_{L^2_a(\Gamma \backslash G /K)} \ll \frac{1}{(c^\prime)^2} \cdot |f|^2_{\frak{B}^1_a(\Gamma \backslash G /K)}
	$$
	Specifically, choose large $c^\prime$ so that
	$$
	|\phi_\infty \cdot f|^2_{L^2_a(\Gamma \backslash G /K)}  < \frac{\varepsilon}{2} <  \frac{1}{(c^\prime)^2} \cdot |f|^2_{\frak{B}^1_a(\Gamma \backslash G /K)}
	$$
	Next, note that $(1 - \phi_\infty) \cdot f$ has support in $Y_o$ for any $f \in \frak{B}^1_a(\Gamma \backslash G /K)$ and the map
	$$
	(1 - \phi_\infty) \cdot \; : \: \frak{B}^1_a(\Gamma \backslash G /K) \to \frak{B}^1_a(Y_o)
	$$
	is continuous and therefore bounded by \cref{sec:bdedness-smooth-projections}.

	Thus, the image of $B$ under the restriction of this map
	$$
	B \to (1 -\phi_\infty) \cdot B \to \frak{B}^1_a(Y_o) \to L^2(Y_o) \subset L^2(\Gamma \backslash G /K)
	$$
	is pre-compact and can be covered by finitely many $\varepsilon/2$ balls in $L^2(\Gamma \backslash G /K)$. Finally, $B = \phi_\infty \cdot B + (1 - \phi_\infty) \cdot B$ is therefore covered by finitely-many $\varepsilon/2$ balls in $L^2_a(\Gamma \backslash G /K)$ so the image of $B$ in $L^2(\Gamma \backslash G/K)$ is pre-compact. $\square$
	\end{proof}
	%
	%
	%

%
%
%
%
%
%
%
%

\section{$\tilde{\Delta}_a$ has purely discrete spectrum}\label{Delta-a-has-purely-discrete-spectrum}

The Rellich-like result showing the compactness of the inclusion $\frak{B}^1_a(\Gamma\backslash G/K) \to L^2_a(\Gamma\backslash G/K)$ for $a \gg 1$ established for $O(r,1)$, $U(r,1)$ and $Sp^\ast(r,1)$, is used to show that the Friedrichs self-adjoint extension $\widetilde{\Delta}_a$ of the restriction $\Delta_a$ of $\Delta$ to test functions $\mathscr{D}_a$ in $L^2_a$ has compact resolvent, thus establishing that $\widetilde{\Delta}_a$ has purely discrete spectrum.

We briefly recap the exposition in the section \hyperref[sec:Friedrichs-extension-defn]{Friedrichs' canonical self-adjoint extensions}. Let $T: V \to V$ be a positive, semi-bounded operator on a Hilbert space $V$ with dense domain $D$ and define a hermitian form $\langle ,\rangle_1$ and corresponding norm $| \cdot |_1$ by
$$
\langle v,w \rangle_1 = \langle v,w\rangle + \langle Tv,w\rangle = \langle v,(1+T)w \rangle = \langle (1+T)v,w \rangle \quad (\text{for } v,w \in D)
$$
The symmetry and positivity of $T$ make $\langle , \rangle_1$ positive-definite hermitian on $D$, and $\langle v, w \rangle_1$ is defined if at least one of $v$, $w$ is in $D$. Let Let $V^1$ be the Sobolev-like Hilbert-space defined by the completion of $D$ with respect to the metric induced by the norm $| \cdot |_1$ on $D$ (the definition of $V^1$ is analogous to standard definitions of $L^2$ Sobolev spaces via $\Delta$ or the Fourier transform). Since the norm $| \cdot |_1$ dominates the norm on $V$ (by positivity of $T$), the completion $V^1$ maps continuously to $V$. Since these are Hilbert spaces, the map is injective. Friedrichs' Theorem then tells us there is a positive self-adjoint extension $\widetilde{T}$ with domain $\widetilde{D} \subset V^1$. In particular, there is an important corollary: When the inclusion $V^1 \to V$ is compact, the resolvent $(1 + \widetilde{T})^{-1} : V \to V$ is compact. Substituting $L^2_a(\Gamma\backslash G/K)$ for $V$, $\mathscr{D}_a$ for $D$ and $\frak{B}^1_a$ for $V^1$ in the above, we have that the resolvent $(1 - \widetilde{\Delta}_a)^{-1}$ is compact.

Then, using the result \hyperref[sec:spectrum-T-spectrum-compact-resolvent]{recovering the spectrum of an operator from its resolvent} described earlier, we have that $\widetilde{\Delta}_a$ has discrete spectrum and the spectrum of $\widetilde{\Delta}_a$ matches that of its resovlent $(\lambda - \widetilde{\Delta}_a)^{-1}$.  We follow the development in \cite{PBG} \S\S 10.9 and 9.4: for $\lambda$ off a discrete set $X$ in $\mathbb{C}$, the inverse $(\widetilde{\Delta}_a - \lambda)^{-1}$ exists, is a compact operator and the operator-valued function
$$
\lambda \to  (\tilde{\Delta}_a - \lambda)^{-1} \enspace \text{as a map} \enspace L^2_a(\Gamma \backslash G /K) \to L^2_a(\Gamma \backslash G /K)
$$
is meromorphic in $\lambda \in \mathbb{C} - X$. The decomposition of $L^2_a(\Gamma \backslash G /K)$ is discrete: there is an orthogonal basis of $L^2_a(\Gamma \backslash G /K)$ consisting of $\widetilde{\Delta}_a$-eigenvectors. The eigenvectors of $\widetilde{\Delta}_a$ are eigenvectors of $(\widetilde{\Delta}_a - \lambda)^{-1}$ for every $\lambda$ not in the spectrum of $\widetilde{\Delta}_a$, and eigenvalues $\lambda$ of $\widetilde{\Delta}_a$ are in bijection with non-zero eigenvalues of $(\widetilde{\Delta}_a - \lambda)^{-1}$ by $\lambda \leftrightarrow (1-\lambda)^{-1}$.

%
%

\section{Discrete decompostion of pseudo-cuspforms}\label{Discrete-Decomp-Pseudo-Cuspforms}

For $a \gg 1$, the space of pseudo-cuspforms, namely functions in $L^2_a(\Gamma \backslash G/K)$ whose constant terms vanish above height $\eta(g) = a$ (the case a = 0 is the usual space of $L^2$ cuspforms) decomposes discretely for $\widetilde{\Delta}_a$.

%
%

\section{Full meromorphic continuation of Eisenstein series for $O(r,1)$, $U(r,1)$ and $Sp^\ast(r,1)$}\label{Mero-Contn-Eis}

 The estimates and compact inclusion results established the discreteness of the spectrum of the Friedrichs extension $\widetilde{\Delta}_a$ to the restriction of the Laplacian $\Delta$ to the space $\mathscr{D}_a$ automorphic test functions.  We reproduce the approach explicated in \cite{PBG} to show the meromorphic continuation of the Eisenstein series $E_s$.
\begin{theorem}
$E_s$ has a meromorphic continuation in $s \in \mathbb{C}$, as a smooth function of moderate growth on $\Gamma\backslash G$. As a function of $s$, $E_s(g)$ is of at most polynomial growth vertically, which is uniform in bounded strips and for $g$ in compact subsets of $G$.	(proof below)
\end{theorem}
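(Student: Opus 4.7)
The plan is to upgrade the construction from the previous section by replacing the Friedrichs extension $\widetilde{\Delta}$ (of $\Delta$ on all of $C^\infty_c(\Gamma\backslash G/K)$) with the pseudo-Laplacian $\widetilde{\Delta}_a$ for cut-off height $a \gg 1$. The decisive advantage is that, by the Rellich-type compact inclusion $\frak{B}^1_a \hookrightarrow L^2_a$ and the discreteness results already established, $\widetilde{\Delta}_a$ has purely discrete spectrum and compact resolvent, so $(\widetilde{\Delta}_a-\lambda_s)^{-1}$ is a meromorphic operator-valued function of $s$ on \emph{all} of $\mathbb{C}$, with poles exactly at those $s$ with $\lambda_s\in\operatorname{spec}\widetilde{\Delta}_a$. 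This removes the obstruction that truncated us at $\mathrm{Re}(s)>\tfrac12$.

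Concretely, choose $a$ larger than the upper bound $a'$ of the support of $(\Delta-\lambda_s)h_s$, so that the compactly supported smooth function $(\Delta-\lambda_s)h_s$ lies in $L^2_a(\Gamma\backslash G/K)$, and set
$$
\widetilde{E}_{s,a} \;=\; h_s - (\widetilde{\Delta}_a-\lambda_s)^{-1}(\Delta-\lambda_s)h_s.
$$
Since $h_s$ is entire as a function-valued function of $s$ and the resolvent is meromorphic in $s$ on $\mathbb{C}$, $\widetilde{E}_{s,a}$ is a meromorphic $\frak{B}^1$-valued function of $s\in\mathbb{C}$. The first task is to identify $\widetilde{E}_{s,a}$ with the already-continued $\widetilde{E}_s$ (hence with $E_s$ for $\mathrm{Re}(s)>1$) on their common domain: this uses the distributional characterization of $\widetilde{\Delta}_a$ from the earlier section, which gives
$$
(\Delta-\lambda_s)\widetilde{E}_{s,a} \;=\; c(s)\cdot \eta_a
$$
for some meromorphic scalar $c(s)$, so $\widetilde{E}_{s,a}$ is a genuine $\Delta$-eigenfunction off the height-$a$ slice. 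Matching constant terms with the known expression $\eta^s+c_s\eta^{1-s}$ in $\mathrm{Re}(s)>1$ pins down the agreement; letting the cut-off $a$ vary then shows that different choices of $a$ produce the same continuation on their common domains, yielding a well-defined meromorphic continuation of $E_s$ to all of $\mathbb{C}$.

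Smoothness on $\Gamma\backslash G$ follows because, away from the slice $\eta=a$, $\widetilde{E}_{s,a}$ satisfies the elliptic equation $(\Delta-\lambda_s)u=0$ and standard elliptic regularity promotes $\frak{B}^1$ regularity to $C^\infty$; the slice itself is harmless because the genuine $E_s$ is globally smooth and the constant-in-$a$ principle above lets one move $a$ past any given compact set. Moderate growth follows from the explicit moderate growth of $h_s$ (its summands are locally finite multiples of $\eta^s$) together with the fact that $(\widetilde{\Delta}_a-\lambda_s)^{-1}(\Delta-\lambda_s)h_s$ lies in $\frak{B}^1 \cap L^2_a$ and is therefore $L^2$, to which Sobolev embedding (after iteration against $1-\Delta$, using that $(\Delta-\lambda_s)h_s$ is $C^\infty_c$) supplies the needed pointwise control.

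The main obstacle will be the vertical polynomial growth. The natural route is to combine (i) the elementary bound
$\|(\widetilde{\Delta}_a-\lambda_s)^{-1}\|_{L^2_a\to L^2_a}\le 1/\operatorname{dist}(\lambda_s,\operatorname{spec}\widetilde{\Delta}_a)$
away from the discrete spectrum, (ii) a Weyl-type counting bound ensuring that on vertical lines $\mathrm{Re}(s)=\sigma$ bounded away from the exceptional real set, the resolvent has at worst polynomial norm growth in $|\mathrm{Im}(s)|$, (iii) easy polynomial growth of $(\Delta-\lambda_s)h_s$ in $s$ (as $|\lambda_s|\ll |s|^2$ and $h_s$ depends polynomially on $s$ through the factor $\eta^s$ on a compact support), and (iv) higher Sobolev bootstrapping (using $(\Delta-\lambda_s)h_s \in C^\infty_c$, hence in every $\frak{B}^k_a$) to turn $L^2$ bounds on the correction into uniform pointwise bounds on compact subsets of $G$. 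Uniformity in bounded vertical strips is then inherited from the uniformity of each of these estimates in $\sigma$ on any compact interval avoiding the exceptional set, with the exceptional set being absorbed into the (meromorphic) poles.
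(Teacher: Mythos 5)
Your proposal is correct and follows essentially the same route as the paper: replace $\widetilde{\Delta}$ by the pseudo-Laplacian $\widetilde{\Delta}_a$, use the Rellich compactness to get a compact, hence globally meromorphic, resolvent $(\widetilde{\Delta}_a-\lambda_s)^{-1}$, and define the continuation by $h_s$ minus the resolvent applied to $(\Delta-\lambda_s)h_s$, identifying it with $E_s$ on $\mathrm{Re}(s)>1$. In fact your write-up supplies several details the paper's one-sentence proof suppresses (the placement of the cut-off $a$ relative to the support of $(\Delta-\lambda_s)h_s$, independence of $a$, elliptic regularity for smoothness, and the resolvent-norm/Weyl-counting argument for vertical polynomial growth), all of which are consistent with the machinery developed earlier in the paper and in \cite{PBG}.
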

Some consequences of the meromorphic continuation can be inferred quickly.
\begin{corollary}
The eigenfunction property 
$$
\Delta E_s = \lambda_s \cdot E_s \text{ where } \lambda_s = c^2 \cdot s(s - 1) \text{ for suitable } c \in \mathbb{R}
$$
persists under meromorphic continuation.	
\end{corollary}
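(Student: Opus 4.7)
The plan is to verify the eigenfunction equation directly from the construction $\widetilde{E}_s = h_s - (\widetilde{\Delta} - \lambda_s)^{-1}(\Delta - \lambda_s) h_s$ on its domain of definition, and then propagate the identity to the full meromorphic continuation by the identity principle. The key technical ingredient is that $\widetilde{\Delta}$, as a Friedrichs self-adjoint extension of a restriction of $\Delta$, coincides with $\Delta$ when $\Delta$ is interpreted as a distributional operator on $\Gamma\backslash G/K$; this is the content of the distributional characterization of the Friedrichs extension established earlier (cf. \S \ref{sec:Distrl-Char-Friedrichs-Ext} and \S \ref{sec:Distl-Char-Pseudo-Laplacians}). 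Once this compatibility is in hand, the eigenfunction property falls out of a direct computation.

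First, apply $(\Delta - \lambda_s)$ distributionally to $\widetilde{E}_s$. Writing $u_s = (\widetilde{\Delta} - \lambda_s)^{-1}(\Delta - \lambda_s) h_s$, which lies in the domain of $\widetilde{\Delta}$ inside $\frak{B}^1$, the distributional compatibility gives
$$(\Delta - \lambda_s)\, u_s \;=\; (\widetilde{\Delta} - \lambda_s)\, u_s \;=\; (\Delta - \lambda_s) h_s.$$
Hence $(\Delta - \lambda_s)\widetilde{E}_s = (\Delta - \lambda_s) h_s - (\Delta - \lambda_s) u_s = 0$ distributionally, and by elliptic regularity $\widetilde{E}_s$ is smooth and is a classical $\lambda_s$-eigenfunction of $\Delta$ wherever the resolvent is holomorphic. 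On $\operatorname{Re}(s) > 1$, $\widetilde{E}_s$ agrees with the classical $E_s$, so the continuation $E_s := \widetilde{E}_s$ inherits the eigenfunction property throughout its domain of holomorphy on $\operatorname{Re}(s) > \tfrac{1}{2}$, $s \notin (\tfrac{1}{2},1]$.

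To reach the full meromorphic continuation $s \in \mathbb{C}$, I would argue by the identity principle: for each fixed test function $\varphi \in C^\infty_c(\Gamma\backslash G/K)$, the pairing
$$s \;\longmapsto\; \bigl\langle (\Delta - \lambda_s) E_s,\, \varphi\bigr\rangle \;=\; \bigl\langle E_s,\, (\Delta - \lambda_s)\varphi\bigr\rangle$$
is a meromorphic scalar-valued function of $s$ (using the meromorphy of $E_s$ as a distribution-valued function, together with the polynomial dependence of $\lambda_s = c^2 s(s-1)$ on $s$). This function vanishes on the half-plane $\operatorname{Re}(s) > 1$ by the classical identity, hence vanishes identically wherever it is defined. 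Since this holds for all $\varphi$, we conclude $(\Delta - \lambda_s) E_s = 0$ distributionally on the full domain of holomorphy of $E_s$, and again by elliptic regularity as a smooth identity.

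The main obstacle, and the reason the two-step argument above is needed rather than a one-line invocation of the identity principle, is the interpretive one: ``$(\Delta - \lambda_s) E_s = 0$'' must be given a meaning that is stable under meromorphic continuation. Naively $\Delta$ is defined only on smooth functions, while $\widetilde{E}_s$ is a priori only an $\frak{B}^1$-valued meromorphic function. Resolving this requires precisely the distributional re-characterization of $\widetilde{\Delta}$ noted above, together with the ellipticity of $\Delta$ to recover smoothness from a distributional eigenfunction identity. With those two ingredients the corollary follows with no additional calculation.
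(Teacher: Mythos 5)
Your argument is correct, and its essential engine --- the identity principle applied to the two meromorphic families $s \mapsto \Delta E_s$ and $s \mapsto \lambda_s \cdot E_s$, which agree for $\operatorname{Re}(s)>1$ --- is exactly the paper's proof. The paper simply notes that both sides are holomorphic functions of $s$ valued in the topological vector space of smooth functions (the preceding theorem already supplies smoothness and moderate growth of the continuation) and invokes the vector-valued identity principle once. Your second step is the scalarized version of this: pairing against $\varphi \in C^\infty_c(\Gamma\backslash G/K)$, applying the scalar identity principle, and then recovering the smooth identity by elliptic regularity. That weak-to-strong detour is harmless but unnecessary given the smooth-function-valued holomorphy already in hand.

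Your first step --- verifying $(\Delta-\lambda_s)\widetilde E_s=0$ directly from the resolvent formula --- is extra relative to the paper, and one caveat deserves emphasis. The identity ``$\widetilde\Delta u = \Delta u$ distributionally on the domain of $\widetilde\Delta$'' holds for the \emph{untruncated} Friedrichs extension (the one giving the continuation to $\operatorname{Re}(s)>\tfrac12$), where the domain is $\{u \in \frak{B}^1 : \Delta u \in L^2\}$. It does \emph{not} hold for the pseudo-Laplacian $\widetilde\Delta_a$ used for the full continuation: by the paper's distributional characterization of pseudo-Laplacians, $\widetilde\Delta_a u = f$ means $\Delta u = f + c\cdot\eta_a$ for some constant $c$, so the direct computation run with $\widetilde\Delta_a$ only yields $(\Delta-\lambda_s)\widetilde E_s \in \mathbb{C}\cdot\eta_a$, and the vanishing of that multiple of $\eta_a$ is precisely what the identity-principle step must supply. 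Since you do route the passage to all of $s\in\mathbb{C}$ through the identity principle rather than the direct computation, your proof is sound; just be aware that describing the pseudo-Laplacian characterization as asserting that $\widetilde\Delta$ coincides with distributional $\Delta$ misstates that lemma.
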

\begin{proof}
Both $\Delta E_s$ and $\lambda_s \cdot E_s$ are holomorphic function-valued functions of $s$, taking values in the topological vector space of smooth functions. They agree in the region of convergence $\text{Re}(s) > 1$, so by the vector-valued form of the identity principle (\cite{PBG} \S 15.2) they agree on their mutual domain of convergence. $\square$
\end{proof}
\begin{corollary}
The meromorphic continuation of $E_s$ implies the meromorphic continuation of the constant term $c_P E_s = \eta^s + c_s \eta^{1-s}$, in particular, of the function $c_s$.
\end{corollary}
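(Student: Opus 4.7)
The plan is to transport the meromorphic continuation from $E_s$ to its constant term by a continuity/identity-principle argument, and then to isolate the scalar $c_s$ by exploiting the two-dimensional space of candidate constant terms dictated by the eigenequation $\Delta E_s = \lambda_s E_s$.

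First I would note that the constant term operator $f \mapsto c_P f$, interpreted distributionally as in the section on pseudo-Eisenstein series, is continuous from a suitable space of smooth functions of moderate growth on $\Gamma\backslash G/K$ into the space of smooth left-$N\Gamma_\infty$-invariant functions on $G/K$. Since the meromorphic continuation of $E_s$ produces a meromorphic function-valued function of $s$ with values in that space, composing with the (continuous, $s$-independent) constant term operator yields a meromorphic family $s \mapsto c_P E_s$ taking values in smooth functions on $N\Gamma_\infty\backslash G/K$. This gives the meromorphic continuation of $c_P E_s$ itself.

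Next I would identify the shape of $c_P E_s$. Applying $\Delta$ to $E_s$ commutes with the constant term integral, and from the previous corollary $\Delta E_s = \lambda_s E_s$ persists under meromorphic continuation; hence $\Delta (c_P E_s) = \lambda_s \cdot c_P E_s$. On $N\Gamma_\infty \backslash G/K$, which is parametrized by the height coordinate (times a compact factor that plays no role here because $c_P E_s$ is left $N$-invariant and the Eisenstein series is constructed from the height function alone), the equation $\Delta u = \lambda_s u$ on functions of $\eta$ alone is a second-order ODE in $\eta$ with indicial exponents $s$ and $1-s$. Thus for generic $s$ the space of solutions is spanned by $\eta^s$ and $\eta^{1-s}$, and in the region of absolute convergence $\text{Re}(s) > 1$ one checks directly (from the Bruhat-cell calculation of $c_P E_s$, which is already in the literature and reproduced in \cite{PBG}) that $c_P E_s = \eta^s + c_s \eta^{1-s}$ for a scalar $c_s$.

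Finally, I would extract $c_s$ meromorphically. Since $c_P E_s$ continues meromorphically as a function-valued function, and on an open subset of $\mathbb{C}$ it equals $\eta^s + c_s \eta^{1-s}$, I would evaluate against two linear functionals on functions of $\eta$ (say, integration against two suitably chosen test functions of $\eta$, or evaluation of leading asymptotics as $\eta \to \infty$) that detect the $\eta^s$ and $\eta^{1-s}$ coefficients separately. Off a discrete set where the two functionals fail to give a nonsingular $2\times 2$ system, each evaluation is a meromorphic scalar-valued function of $s$, so solving the system expresses $c_s$ as a meromorphic function of $s$ on all of $\mathbb{C}$. The main (mild) obstacle is confirming that the eigenequation-plus-uniqueness step forces the continued $c_P E_s$ to remain in the two-dimensional solution space at every non-pole $s$, and that the separating functionals can be chosen once and for all; both are standard given the ODE interpretation of $\Delta$ on $N\backslash G/K$ established in the explicit Iwasawa-coordinate formulas. $\square$
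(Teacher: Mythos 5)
Your proposal is correct and follows essentially the same route as the paper: meromorphy of $c_P E_s$ via continuity of the constant-term integral (the paper phrases this as a Gelfand--Pettis integral), the eigenfunction property forcing $c_P E_s$ into the two-dimensional span of $\eta^s$ and $\eta^{1-s}$, and identification of the coefficients in $\mathrm{Re}(s)>1$ extended by the vector-valued identity principle. Your final step of extracting $c_s$ by pairing against two separating functionals is just a more explicit rendering of the paper's appeal to vector-valued analytic continuation, not a different argument.
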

\begin{proof}
Since $E_s$ meromorphically continues at least as a smooth function, the integral over the compact set $(N \cap \Gamma)\backslash N$ giving a pointwise value $c_P E_s(g)$ of the constant term certainly converges absolutely. That is, the function-valued function 
$$
n \longrightarrow (g \to E_s(ng))
$$
is a continuous, smooth-function-valued function and has a smooth-function-valued Gelfand-Pettis integral 
$$
g \longrightarrow c_P E_s(g) 
$$
(see \cite{PBG} \S 14.1). Thus, the constant term $c_P E_s$ of the continuation of $E_s$ must still be of the form $A_s \eta^s + B_s \eta^{1-s}$ for some smooth functions $A_s$ and $B_s$, since (at least for $s \neq 1$ ) $\eta^s$ and $\eta^{1-s}$ are the two linearly independent solutions of
$$
\Delta f = \lambda_s \cdot f
$$ 
for functions $f$ on $N\backslash G/K \approx A^+$. Thus, in the region of convergence $\text{Re}(s) > 1$, the linear independence of $\eta^s$  and $\eta^{1-s}$ gives $A_s = 1$ and $B_s = c_s$. The vector-valued form of analytic continuation implies that $A_s = 1$ throughout, and that $B_s = c_s$ throughout. In particular, this establishes the meromorphic continuation of $c_s$. $\square$
\end{proof}

Fix $a \gg 1$. Let $\widetilde{\Delta}_a$  be the Friedrichs extension of the restriction of the Laplacian $\Delta$ to $\mathscr{D}_a = C^\infty_c(\Gamma\backslash G/K) \cap L^2_a(\Gamma\backslash G/K)$. The Friedrichs construction shows that the domain of $\widetilde{\Delta}_a$  is contained in a Sobolev space:
$$
\text{domain } \widetilde{\Delta}_a \subset \frak{B}^1 =  \text{ completion of } \mathscr{D}_a \text{ relative to } \langle v, w \rangle_{\frak{B}^1} = \langle (1 - \Delta ) v, w \rangle 
$$
The domain of $\widetilde{\Delta}_a$  contains the smaller Sobolev space
$$
\frak{B}^2 =  \text{ completion of } \mathscr{D}_a \text{ relative to } \langle v, w\rangle_{\frak{B}^2} = \langle (1 - \Delta)^2 v, w \rangle 
$$
As before, use conditions on the height $\eta$ to decompose the quotient $\Gamma\backslash G/K$ as a union of a compact part $Y_{cpt} = Y_o$, whose geometry does not matter, and a geometrically simple non-compact part $Y_\infty$:
$$
\Gamma\backslash G/K = Y_o \cup Y_\infty \qquad \text{(compact  } Y_o, \text{ ``tubular'' cusp neighborhood } Y_\infty)
$$
relative to a condition on the normalized height function $\eta$ with $\eta(n \cdot m_y \cdot k) = a \gg 1$.
$$
Y_\infty = \text{ image of } \{ g \in G/K : \eta(g) \geq a \} = \Gamma_\infty \backslash \{g \in G/K : \eta (g) \geq a \}
$$

Define a smooth cut-off function $\tau$ as usual: fix $a^{\prime\prime} < a^\prime$ large enough so that the image of $\{ (x, y) \in G/K : y > a^{\prime\prime} \}$ in the quotient is in $Y_\infty$, and let
$$
\tau(g) = 
\begin{cases} 
1 & (\text{for } \eta(g) > a^\prime \\
0   & (\text{for } \eta(g) < a^{\prime\prime}
\end{cases}
$$
Form a pseudo-Eisenstein series $h_s$ by winding up the smoothly cut-off function $\tau(g) \cdot \eta(g)^s$:
$$
h_s(g) =  \sum_{\gamma \in \Gamma_\infty \backslash \Gamma} \tau(\gamma g) \cdot \eta(\gamma g)^s
$$
Since $\tau$ is supported on $\eta \geq a^{\prime\prime}$ for large $a^{\prime\prime}$, for any $g \in G/K$ there is at most one non-vanishing summand in the expression for $h_s$, and convergence is not an issue. Thus, the pseudo-Eisenstein series $h_s$ is entire as a function-valued function of $s$. Let 
$$
\widetilde{E}_s = h_s - (\widetilde{\Delta}_a - \lambda_s)^{-1} (\Delta - \lambda_s)h_s  \qquad (\text{where } \lambda_s = c \cdot s(s-1)) 
$$
As earlier, we have
\begin{claim}
$\widetilde{E}_s - h_s$ is a holomorphic $\frak{B}^1$-valued function of $s$ for $\text{Re}(s) > 1$ and $\text{Im}(s) \neq 0$.	
\end{claim}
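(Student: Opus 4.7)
The plan is to rewrite
\[
\widetilde{E}_s - h_s \;=\; -(\widetilde{\Delta}_a - \lambda_s)^{-1}\,(\Delta - \lambda_s)\, h_s
\]
and exhibit the right-hand side as the composition of a holomorphic $L^2_a(\Gamma\backslash G/K)$-valued function of $s$ with a holomorphic operator-valued function $(\widetilde{\Delta}_a - \lambda_s)^{-1} : L^2_a \to \frak{B}^1$. The general principle that such a composition is holomorphic into the target Hilbert space then yields the claim, paralleling the proof of the analogous earlier lemma for $\widetilde{\Delta}$ in the section on meromorphic continuation up to $\text{Re}(s) > \tfrac{1}{2}$.

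For the input side, I would first observe that on any Siegel set the pure power $\eta^s$ is a genuine $\Delta$-eigenfunction with eigenvalue $\lambda_s$, since $\lambda_s = c \cdot s(s-1)$ was defined precisely as that eigenvalue on $N\backslash G/K$. Therefore $(\Delta - \lambda_s)(\tau \cdot \eta^s)$ vanishes wherever $\tau$ is locally constant, and its support is contained in the compact transition strip $a'' \leq \eta \leq a'$. The local finiteness of the pseudo-Eisenstein series sum (established earlier in the excerpt) transports this to a compactly supported smooth function on $\Gamma\backslash G/K$, whose support lies in a fixed compact set independent of $s$. Arranging $a' \leq a$ makes the constant term of $(\Delta - \lambda_s) h_s$ vanish above height $a$, placing this function in $L^2_a$. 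Entirety of $s \mapsto (\Delta - \lambda_s) h_s$ as an $L^2_a$-valued function then follows from entirety of $s \mapsto \eta^s$ in the $C^\infty$-topology on a fixed compact set, together with bounded pointwise control of finitely many $s$-derivatives on that fixed support.

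For the operator side, the positivity of $-\Delta_a$ established earlier, combined with Friedrichs' construction, forces $-\widetilde{\Delta}_a \geq 0$, so the (purely discrete) spectrum of $\widetilde{\Delta}_a$ lies in $(-\infty, 0]$. For $s = \sigma + it$ with $\sigma > 1$ and $t \neq 0$, a direct computation gives $\text{Im}(\lambda_s) = c\,t\,(2\sigma - 1) \neq 0$, so $\lambda_s$ lies in the resolvent set of $\widetilde{\Delta}_a$. The corollary from the Friedrichs extension section that $(\widetilde{\Delta}_a - \lambda_s)^{-1}$ is continuous from $L^2_a$ into $\frak{B}^1$ with the finer $\frak{B}^1$-topology on the target, combined with Hilbert's resolvent identity $R_\lambda - R_\mu = (\lambda - \mu) R_\lambda R_\mu$ read in $\mathcal{L}(L^2_a, \frak{B}^1)$, exhibits $s \mapsto (\widetilde{\Delta}_a - \lambda_s)^{-1}$ as a holomorphic operator-valued function on $\{\text{Re}(s) > 1,\ \text{Im}(s) \neq 0\}$.

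The one delicate piece of bookkeeping is the coordination of the cutoff heights $a'' < a' \leq a$ so that $(\Delta - \lambda_s) h_s$ lands in $L^2_a$ rather than merely in $L^2(\Gamma\backslash G/K)$; once this is arranged, the remaining ingredients are routine applications of results already in place in earlier sections, and the composition is holomorphic $\frak{B}^1$-valued on the claimed region.
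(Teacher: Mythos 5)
Your argument follows the same route as the paper's own proof: the paper likewise reduces the claim to the existence, holomorphy, and $L^2 \to \frak{B}^1$ continuity of the resolvent $(\widetilde{\Delta}_a - \lambda_s)^{-1}$ for $\lambda_s$ off the non-positive reals (guaranteed by the Friedrichs construction and the negativity of $\Delta$), applied to the compactly supported, entire input $(\Delta - \lambda_s)h_s$. Your extra bookkeeping coordinating $a'' < a' \leq a$ so that the input actually lies in $L^2_a$, the Hilbert space on which the resolvent of $\widetilde{\Delta}_a$ is defined, addresses a point the paper elides — its accompanying remark only places $(\Delta - \lambda_s)h_s$ in $L^2(\Gamma\backslash G/K)$ — but it does not change the approach.
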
 
\begin{proof}
From Friedrichs' construction, the resolvent $(\widetilde{\Delta}_a - \lambda_s)^{-1}$ exists as an everywhere-defined,
continuous operator for $s \in \mathbb{C}$ for $\lambda_s$ not a non-positive real number, because of the non-positive-ness of $\Delta$. Further, for $\lambda_s$ not a non-positive real, the resolvent is a holomorphic operator-valued function. In fact, for such $\lambda_s$, the resolvent $(\widetilde{\Delta}_a - \lambda_s)^{-1}$ injects from $L^2(\Gamma\backslash G/K)$ to $\frak{B}^1$. $\square$	
\end{proof}
\begin{remark}
The smooth function $(\Delta - \lambda_s) h_s$ is supported on the image of $a^{\prime\prime} \leq y \leq a^\prime$ in $\Gamma\backslash G/K$, which is compact. Thus, it is in $L^2(\Gamma\backslash G/K)$. Note that $\widetilde{E}_s$ does not vanish since the resolvent maps to the domain of $\Delta$ inside $L^2(\Gamma\backslash G/K)$, and that $h_s$ is not in $L^2(\Gamma\backslash G/K)$ for $\text{Re}(s) > \frac{1}{2}$. Thus, since $h_s$ is not in $L^2(\Gamma\backslash G/K)$ and $(\widetilde{\Delta}_a - \lambda_s)^{-1}(\Delta - \lambda_s) h_s$ is in $L^2(\Gamma\backslash G/K)$, the difference cannot vanish.
\end{remark}
And as before,
\begin{theorem}
If $\lambda_s = c \cdot s(s - 1)$ is not non-positive real, then $u = \widetilde{E}_s- h_s$ is the unique element of the domain of $\widetilde{\Delta}_a$  such that 
$$
(\widetilde{\Delta}_a - \lambda_s) u = -(\Delta - \lambda_s) h_s
$$
Thus, $\widetilde{E}_s$ is the usual Eisenstein series $E_s$ for $\text{Re}(s) > 1$. (proof as in the earlier section \hyperref[sec:Mero-Contn-Eis]{meromorphic continuation of Eisenstein series to $\text{Re}(s) > \frac{1}{2}$})
\end{theorem}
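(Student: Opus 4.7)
The plan is to adapt the argument given earlier for the unrestricted Friedrichs extension $\widetilde{\Delta}$ (cf.\ \S\ref{sec:Mero-Contn-Eis}) to the restricted pseudo-Laplacian $\widetilde{\Delta}_a$. First I would check that $(\Delta-\lambda_s)h_s$ is an admissible input for the resolvent. The cut-off $\tau$ makes $(\Delta-\lambda_s)h_s$ smooth and supported, inside a Siegel set, on $\{a''\le\eta\le a'\}$ — a compact region on which $\tau$ transitions. Choosing $a\ge a'$, the constant term of this function vanishes for $\eta\ge a$, placing it in $L^2_a(\Gamma\backslash G/K)$. The compact-resolvent result for $\widetilde{\Delta}_a$ gives $(\widetilde{\Delta}_a-\lambda_s)^{-1}:L^2_a\to\frak{B}^1_a$ for $\lambda_s$ off the (discrete) point spectrum, meromorphic in $s$ by Hilbert's resolvent identity and the assumption that $\lambda_s=c\cdot s(s-1)$ is not non-positive real. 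Setting $u=-(\widetilde{\Delta}_a-\lambda_s)^{-1}(\Delta-\lambda_s)h_s$ then places $u$ in $\mathrm{dom}(\widetilde{\Delta}_a)$ and gives the required equation by the very definition of the resolvent.

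Uniqueness is immediate: any other $v\in\mathrm{dom}(\widetilde{\Delta}_a)$ solving the same equation gives $(\widetilde{\Delta}_a-\lambda_s)(u-v)=0$, so $u-v$ is a $\lambda_s$-eigenfunction of $\widetilde{\Delta}_a$. For $\lambda_s$ off the discrete point spectrum this forces $u=v$; at the isolated $s$ where $\lambda_s$ meets the spectrum, $\widetilde{E}_s-h_s$ extends meromorphically by standard Fredholm theory for compact resolvents, the poles being located at the pseudo-cuspform eigenvalues of $\widetilde{\Delta}_a$.

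For the identification with $E_s$ on $\mathrm{Re}(s)>1$, I would invoke the distributional characterization of $\widetilde{\Delta}_a$ recorded in \S\ref{sec:Distl-Char-Pseudo-Laplacians}: $\widetilde{\Delta}_a w=g$ holds iff, distributionally on $\Gamma\backslash G/K$, $\Delta w=g+c\cdot\eta_a$ for some $c\in\mathbb{C}$. Applied to the defining equation for $u$, this yields $(\Delta-\lambda_s)\widetilde{E}_s=c\cdot\eta_a$, so $\widetilde{E}_s$ is a genuine $\lambda_s$-eigenfunction of $\Delta$ away from the cross-section $\{\eta=a\}$, with a single distributional correction supported there. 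In the convergence region $\mathrm{Re}(s)>1$, $E_s$ is absolutely convergent and a smooth $\lambda_s$-eigenfunction of $\Delta$; the element produced from $E_s$ by truncating the constant term at height $a$ lies in $\frak{B}^1_a$ and, after subtracting $h_s$, satisfies exactly the distributional equation characterising $u$. The uniqueness step then forces the two to coincide, identifying $\widetilde{E}_s$ with (the appropriately truncated) $E_s$ on $\mathrm{Re}(s)>1$, and the meromorphy of $s\mapsto u$ as an $L^2_a$-valued function off the discrete poles produces the meromorphic continuation of $E_s$ itself.

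The main technical obstacle is the apparent tension between $u=\widetilde{E}_s-h_s\in L^2_a$ (forcing the constant term to vanish above height $a$) and the genuine constant term of $E_s-h_s$, which above height $a$ still carries the nontrivial piece $c_s\,\eta^{1-s}$. This is exactly what the distributional characterization of $\widetilde{\Delta}_a$ handles: the mismatch between the pointwise Laplacian and $\widetilde{\Delta}_a$ at the hypersurface $\{\eta=a\}$ is recorded by the $c\cdot\eta_a$ correction, and $\widetilde{E}_s$ should be understood as a truncated Eisenstein series whose ``genuine'' values off $\{\eta=a\}$ recover $E_s$. Once this distributional book-keeping is in place, the uniqueness of the Friedrichs solution and the holomorphy/meromorphy of the resolvent deliver both the identification on the region of convergence and the meromorphic continuation, precisely as in \S\ref{sec:Mero-Contn-Eis}.
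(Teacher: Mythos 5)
Your construction of $u=-(\widetilde{\Delta}_a-\lambda_s)^{-1}(\Delta-\lambda_s)h_s$, the check that $(\Delta-\lambda_s)h_s\in L^2_a(\Gamma\backslash G/K)$ once one takes $a\ge a'$ (a needed correction: with the heights ordered as in the text, the support of $(\Delta-\lambda_s)h_s$ sits \emph{above} the cut-off $a$ and its constant term does not vanish there), and the uniqueness argument are all correct; this is also essentially all the paper supplies, since its ``proof'' is a pointer to \S\ref{sec:Mero-Contn-Eis} and to the references.

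The gap is in the identification with $E_s$ on $\text{Re}(s)>1$. You assert that the element obtained from $E_s$ by truncating its constant term at height $a$ lies in $\frak{B}^1_a$ and satisfies the distributional equation characterizing $u$. It does not. By the remark in \S\ref{sec:Distl-Char-Pseudo-Laplacians}, constant terms of elements of $\frak{B}^1_a$ are continuous in the height variable and vanish for $\eta\ge a$; hence $c_P\widetilde{E}_s$ equals $\eta^s$ exactly for $\eta>a$ and equals some $A_s\,\eta^s+B_s\,\eta^{1-s}$ for $\eta<a$, matching continuously at $\eta=a$, whereas $c_PE_s=\eta^s+c_s\eta^{1-s}$ throughout. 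Truncating $c_s\eta^{1-s}$ (or the whole constant term) at $\eta=a$ creates a jump in the \emph{value} of the constant term; applying $\Delta-\lambda_s$ then produces a multiple of $\eta_a'$, which is not of the form $c\cdot\eta_a$ and is not in $\frak{B}^{-1}$. The correction $c\cdot\eta_a$ in the distributional characterization can absorb only a jump in the \emph{derivative} of the constant term across the cross-section, never a jump in its value, so the truncated $E_s$ is not in the domain of $\widetilde{\Delta}_a$, and the literal conclusion $\widetilde{E}_s=E_s$ would force $c_s=0$. What the Colin de Verdi\`ere/Garrett argument actually gives is that $\widetilde{E}_s$ agrees with the multiple $a^s/(a^s+c_sa^{1-s})\cdot E_s$ of $E_s$ in the region $\eta<a$; the continuation of $E_s$ and of $c_s$ is then extracted by normalizing by the meromorphic leading coefficient $A_s$ (and, e.g., varying $a$ to cover all of $\Gamma\backslash G/K$). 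The literal identification $h_s-(\widetilde{\Delta}-\lambda_s)^{-1}(\Delta-\lambda_s)h_s=E_s$ is the one proved in \S\ref{sec:Mero-Contn-Eis} for the \emph{unrestricted} Friedrichs extension $\widetilde{\Delta}$, whose domain imposes no constant-term condition, so that $E_s-h_s$ genuinely lies in it for $\text{Re}(s)>1$; the pseudo-Laplacian is needed only to make the resolvent meromorphic on all of $\mathbb{C}$, at the cost of the normalization just described.
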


\textbf{\emph{Proof of full meromorphic continuation}}: since the resolvent $(\widetilde{\Delta}_a - \lambda_s)^{-1}$ is a compact operator, $\widetilde{\Delta}_a$ has purely discrete spectrum. Thus, the resolvent $(\widetilde{\Delta}_a - \lambda_s)^{-1}$ is meromorphic in $s$ in $\mathbb{C}$, and thus $E_s = h_s + (\widetilde{\Delta}_a - \lambda_s)^{-1}(\Delta - \lambda_s) h_s$ is meromorphic in $s$ in $\mathbb{C}$. $\square$


%
%

\section{More general $\mathbb{Q}$-rank one orthogonal groups}\label{more-general-rational-rank-one-orthogonal}

We next address a general class of $\mathbb{Q}$-rank one groups. In this case the underlying group $G = O(S)$ preserving a bilinear form $S$ is a product of simple groups. While we will describe this product and the groups that arise as factors, we will treat $G$ directly. 

%
%
%
%
\subsection{Unified treatment of $G$ at archimedean places}\label{unified-treatment-of-O(S)-at-arch-places}

Let $k$ be a number field (finite extension of $\mathbb{Q}$) of degree $[k:\mathbb{Q}]=n$, and $S$ a $k$-valued $k$-bilinear form on an $(r+1)$-$k$-dimensional $k$-vector space $V$ with $r+1 \geq 5$. Let $G$ be the orthogonal group of the form $S$ and assume  that the $k$-dimension of the maximal totally $S$-isotropic $k$-subspace of $V$ is one. The $\mathbb{Q}$-rational points $G(\mathbb{Q})$ of the associated orthogonal subgroup $G = O(S)$ consist of the $k$-linear maps of $V$ preserving $S$. 

Let $k_\infty$ be the {\'e}tale algebra $k_\infty  = k \otimes_\mathbb{Q} \mathbb{R} \approx \bigoplus_{\nu | \infty}  k_\nu$ with canonical diagonal copy of $k$ given by the sum of the local archimedean embeddings:
$$
\sigma_\infty : k \hookrightarrow k_\infty \qquad \sigma_\infty = \bigoplus_{\nu | \infty} \sigma_\nu \qquad \sigma_\nu : k \hookrightarrow k_\nu
$$

The group $G(\mathbb{R})$ of real points of this algebraic group is a literal orthogonal group over $k_\infty$
$$
G = G(\mathbb{R}) \approx \prod_{\nu | \infty} O(\sigma_\nu(S), k_\nu) 
$$
where $\sigma_\nu(S)$ is the image of $S$ at the place $\nu$ corresponding to the embedding $\sigma_\nu: k \to k_\nu$.
Since the $k$-dimension of the maximal totally isotropic subspace over $k$ is $1$, the form $S$ is perhaps best globally written
$$
S = 
\renewcommand\arraystretch{1.25}
\begin{pmatrix}
0 & 0 & 1 \\
0 & S^\prime & 0 \\
1 & 0 & 0
\end{pmatrix}
$$
with $S^\prime$ anisotropic over $k$.

The condition $r+1 \geq 5$ implies that there will be an isotropic vector at every finite place. Since the dimension of the maximal isotropic subspace is one, and at every real archimedean place the rank is $(p,q)$ with $p+q=r+1$ (where $p$ and $q$ are both at least one), Hasse-Minkowski implies there is at least one real place where the rank is exactly $(r,1)$. In particular, there are no real anisotropic places: there are no real completions where the signature is $(r+1,0)$.

%
%
\subsubsection{Linear algebra over the commutative ring $k_\infty$}\label{linear-algebra-over-k-infty}

Since the characteristic is $0$, $k$ is separable. Even though the extension is not typically Galois, the {\it Galois trace} from $k$ to $\mathbb{Q}$ has an intrinsic sense: it is a non-zero $\mathbb{Q}$-linear map from $k$ to $\mathbb{Q}$ and is the sum of all Galois conjugates, whether or not they lie in $k$. Additionally,
$$
\langle x,y \rangle =\text{tr}_{k/\mathbb{Q}}(xy)
$$
is a non-degenerate $\mathbb{Q}$-bilinear $\mathbb{Q}$-valued pairing on $k \times k$. The $\mathbb{R}$-linear extension of trace, denoted similarly
$$
\langle x,y \rangle =\text{tr}_{k_\infty/\mathbb{R}}(xy)
$$
is a non-zero $\mathbb{R}$-linear map $k_\infty \to \mathbb{R}$ via the tensor product characterization of $k_\infty$ and is the sum of local traces 
$$
\text{tr}_{k_\infty/\mathbb{R}}(\alpha) = \sum_{\nu | \infty} \text{tr}_{k_\nu/\mathbb{R}}(\alpha)
$$

Let $S_\infty$ be the image of the bilinear form under the map $\sigma_\infty : k  \to k_\infty$ so that
$$
S_\infty = \sigma_\infty(S) = 
\renewcommand\arraystretch{1.25}
\begin{pmatrix}
0 & 0 & 1 \\
0 & \sigma_\infty(S^\prime) & 0 \\
1 & 0 & 0
\end{pmatrix}
=
\renewcommand\arraystretch{1.25}
\begin{pmatrix}
0 & 0 & 1 \\
0 & S_\infty^\prime & 0 \\
1 & 0 & 0
\end{pmatrix}
$$
is a symmetric $(r+1)\times(r+1)$ matrix with entries in $k_\infty$ and so that the local components $S_\nu^\prime$ of $S_\infty^\prime$ are symmetric non-degenerate matrices with entries in $k_\nu$. $(S_\infty^\prime)^{-1}$ will mean the $(r-1)\times (r-1)$ matrix over $k_\infty$ whose $\nu^\text{th}$ component is $(S_\nu^\prime)^{-1}$.

At each archimedean place $\nu$, there is a coordinate change matrix $B_\nu$ so that $S_\nu^\prime = (B_\nu^\prime)^\top \cdot Q_\nu^\prime \cdot B_\nu^\prime =$ where 
$$
Q_\nu^\prime =
\begin{cases} 
\begin{pmatrix}
0 & 0 & 1 \\
0 & 1_{p^+ q^-} & 0 \\
1 & 0 & 0
\end{pmatrix}
& \text{for } \nu \text{ real (via Inertia Theorem)} \\
& \\
\begin{pmatrix}
0 & 0 & 1 \\
0 & 1_{r-1} & 0 \\
1 & 0 & 0
\end{pmatrix}
& \text{for } \nu \text{ complex}
\end{cases}
$$
Where $1_{p^+ q^-}$ = diagonal matrix with $p$ entries of $+1$ and $q$ entries of $-1$ and $p+q=r-1$. If $p$ or $q$ are $0$, we set $1_{p^+ q^-} = 1_{r-1}$.

Let $B_\infty^\prime$ be the $(r-1)\times(r-1)$ matrix over $k_\infty$ whose $\nu^\text{th}$ factor is $B_\nu^\prime$ and $Q_\infty^\prime$ be the $(r-1)\times(r-1)$ matrix over $k_\infty$ whose $\nu^\text{th}$ factor is $Q_\nu^\prime$  and set
$$
B_\infty = 
\begin{pmatrix}
1 & 0 & 0 \\
0 & B_\infty^\prime & 0 \\
0 & 0 & 1
\end{pmatrix}
\qquad
Q_\infty = 
\begin{pmatrix}
0 & 0 & 1 \\
0 & Q_\infty^\prime & 0 \\
1 & 0 & 0
\end{pmatrix}
$$
so that we have
$$
S_\infty = B_\infty^\top \cdot Q_\infty \cdot B_\infty 
$$
%
%

%
%
\subsubsection{The group  $G=G(\mathbb{R}) \subset GL(r+1, k_\infty)$}\label{the-group-G-G(R)}

Let $G=G(\mathbb{R})$ be the real Lie group acting on the real vector space $V_\infty = V_\mathbb{R} = V\otimes_\mathbb{Q} k_\infty$ preserving $S_\infty$:
$$
G=G(\mathbb{R}) = \big\{ g \in GL(r+1, k_\infty) : g^\top \cdot S_\infty \cdot g = S_\infty \big\}
$$

Let $e_1$ be a basis of an isotropic line in $V_\infty$ and $P$ the parabolic subgroup of $G$ fixing the isotropic line spanned by $e_1$. Thus $P$ has the shape
$$
P = P_\infty =
\begin{pmatrix}
\ast & \ast & \ast \\
0 & \ast & \ast \\
0 & 0 & \ast
\end{pmatrix}
$$	
and decomposes as $P_\infty = P = NM = N_\infty M_\infty$ with unipotent radical $N$ and a complementary Levi component $M$. We will typically suppress the $\infty$ subscript but occassionally use it to emphasize a distinction between more {\it global} (i.e., defined over $k_\infty$) and local (i.e., corresponding to individual completions $\nu | \infty$) perspectives.
There is a maximal compact $K$ of $G$ given by 
$$
K  = K_\infty = \prod_{\nu | \infty} K_\nu \subset  \prod_{\nu | \infty} O(\sigma_\nu(S), k_\nu) = G(\mathbb{R})
$$

The condition $g^\top S_\infty \, g= S_\infty$ implies that elements of the unipotent radical $N$ of $P$ are
$$
N \enspace = N_\infty \enspace = \enspace \bigg\{ n_x \enspace = \enspace 
\renewcommand\arraystretch{1.5}
\begin{pmatrix}
1 & x & -\frac{1}{2} x \cdot (S_\infty^\prime)^{-1} \cdot x^\top \\
0 & 1_{r-1} & -(S_\infty^\prime)^{-1} x^\top \\
0 & 0 & 1
\end{pmatrix}
\enspace : \enspace x \in k_{\infty}^{(r-1)} \bigg\}
$$
This expression for elements of $N$ is true for all $S_\nu^\prime$. For real places, the canonical $Q$ coordinates for the case $p=r-1$ and $q=0$ were derived previously and for places $\nu$ where $p, q > 1$ (calculated below):
\begin{align*}
n_{e_i} \enspace = \enspace 
\renewcommand\arraystretch{1.5}
\begin{pmatrix}
1 & t \cdot e_i & -\frac{t^2}{2} \\
0 & 1_{r-1} & - t \cdot e_i^\top \\
0 & 0 & 1
\end{pmatrix}
&\quad
n_{t \cdot e_i} \enspace = \enspace 
\renewcommand\arraystretch{1.5}
\begin{pmatrix}
1 & t \cdot e_i & \frac{t^2}{2} \\
0 & 1_{r-1} & t \cdot e_i^\top \\
0 & 0 & 1
\end{pmatrix} \\
1 \leq i \leq p \qquad &\qquad p+1 \leq i \leq p+q = r-1
\end{align*}
At complex places, we will use coordinates for $N$ of $z = u + iv$ so that $z \in \mathbb{C}^{r-1}$ and $u,v \in \mathbb{R}^{r-1}$.
The standard Levi component is
$$
M = M_\infty = 
\begin{pmatrix}
u & 0 & 0 \\
0 & h & 0 \\
0 & 0 &  u^{-1}
\end{pmatrix}
$$
with $h \in O(S^\prime)$ and $u \in GL(1, k_\infty)$.
The Levi component subsequently decomposes into the product of its split component:
$$
A^+  =  A_\infty^+  =  \bigg\{ m_y = 
\renewcommand\arraystretch{1.5}
\begin{pmatrix}
\ell & 0 & 0 \\
0 & 1_{r-1} & 0 \\
0 & 0 & \ell^{-1}
\end{pmatrix}
\ : \; \ell \in (0,\infty) \bigg\}
$$
and the complement of the split component in the Levi component:
$$
M^1  = M_\infty^1 = \bigg\{ 
\renewcommand\arraystretch{1.5}
\begin{pmatrix}
u & 0 & 0 \\
0 & m^\prime & 0 \\
0 & 0 &  u^{-1}
\end{pmatrix}
\ : \; \text{where } u \in GL(1,k_\infty) \text{, } N_{k_\infty/\mathbb{R}}(u) = 1 \text{ and } m^\prime \in O(S^\prime) \bigg\}
$$
The invocation of Fujisaki's lemma, the Units Theorem, and the Compactness of Anisotropic Quotients, will use that $M^1$ further factors as $M^1 = M^1_1 \cdot M^1_2$ where
\begin{align*}
M^1_1  =&  \bigg\{ m_1(u) :
\renewcommand\arraystretch{1.5}
\begin{pmatrix}
u & 0 & 0 \\
0 & 1_{r-1} & 0 \\
0 & 0 &  u^{-1}
\end{pmatrix}
\ : \; \text{where } u \in GL(1,k_\infty)  \text{ and } N_{k_\infty/\mathbb{R}}(u) = 1 \bigg\} \\
M^1_2  =&  \bigg\{ m_2(h) :
\renewcommand\arraystretch{1.5}
\begin{pmatrix}
1 \quad & 0 \qquad & 0 \quad \\
0 & h & 0 \\
0 & 0 &  1
\end{pmatrix}
\ : \; \text{where } h \in O(S^\prime) \bigg\}
\end{align*}
We note that $A^+$ and $M^1_1$ are in the center of $M$, though $M^1_2$ is typically nonabelian, and that $M^1_1 \cap K$ will include
$$
O(1,k_\infty) = \prod_{\nu | \infty} O(1,k_\nu)
$$
namely, elements of $O(1,\mathbb{R}) \approx (\pm 1)$ at real places and $U(1) \approx S^1$ at complex places. Thus the Iwasawa-Levi-Mal'cev decomposition is
$$
G = PK = NMK \approx N \times A^+ \times M^1_1 \times M^1_2
$$
The model of $G/K$ is
$$
G/K \approx N \times A^+ \times M^1_1/(M^1_1 \cap K) \times M^1_2/(M^1_2 \cap K)
$$

Let $\frak{o}$ be the algebraic integers in $k$ and let $\Gamma = G \cap GL(r+1,\frak{o})$, and $\Gamma_\infty = \Gamma \cap P$.

%
%
\subsubsection{Lie algebra of $\frak{g}_\infty$}\label{lie-algebra-frak-g-infty}
To express the local Casimir operator in a fashion consonant with the Iwasawa decomposition, note that the right action of $\frak{k}$ annihilates functions on $G/K$. The Lie algebra $\frak{g}_\infty$ of $G_\infty$ is determined by the infinitesimal version of the isometry condition:
$$
\frak{g}_\infty \approx \bigg\{ X \in \frak{gl}(r+1,k_\infty) \, : \, X^\top \cdot S_\infty + S_\infty \cdot X = 0  \bigg\} 
$$
where
$$
V_\infty = V \otimes_\mathbb{Q} \mathbb{R} \approx \bigoplus_{\nu | \infty} V_\nu = \bigoplus_{\nu | \infty} V \otimes_k k_\nu = V \otimes_k k_\infty
$$
Substitute the change of coordinates expression
$$
S_\infty = B_\infty^\top \cdot Q_\infty \cdot B_\infty
$$
$$
Q_\infty = (B_\infty^\top)^{-1} \cdot S_\infty \cdot (B_\infty)^{-1}
$$
in the expression for $\frak{g}_\infty$
\begin{align*}
0 =& \; X^\top \cdot S_\infty + S_\infty \cdot X \\
   =& \; X^\top \cdot (B_\infty^\top \cdot Q_\infty \cdot B_\infty) + (B_\infty^\top \cdot Q_\infty \cdot B_\infty) \cdot X \\
   & \text{multiply on the left by } (B_\infty^\top)^{-1 }\text{ and on the right by } B_\infty^{-1} \\
   =& \; ((B_\infty^\top)^{-1 }\cdot X^\top \cdot B_\infty^\top) \cdot Q_\infty +  Q_\infty \cdot (B_\infty \cdot X \cdot B_\infty^{-1})\\   
   =& \; (B_\infty \cdot X \cdot B_\infty^{-1 })^\top \cdot Q_\infty +  Q_\infty \cdot (B_\infty \cdot X \cdot B_\infty^{-1})\\   
\end{align*}
Since $Q_\infty$ is symmetric and $Q_\infty^2=1$, for $X \in \frak{g}_\infty$, define a Cartan involution $X^\theta$ by mapping $X$ to the $Q$-coordinates and using the Cartan involution in $Q$-coordinates given by the negative transpose at real places and negative transpose-conjugate at complex places:
$$
X^\theta_\nu =
\begin{cases} 
- B_\nu^{-1 } \cdot (B_\nu \cdot X_\nu \cdot B_\nu^{-1 })^\top \cdot B_\nu = - (B_\nu^\top \cdot B_\nu)^{-1 } \cdot X_\nu^\top \cdot  (B_\nu^\top \cdot B_\nu) & \text{for } \nu \text{ real }\\
- B_\nu^{-1 } \cdot (B_\nu \cdot X_\nu \cdot B_\nu^{-1 })^\ast \cdot B_\nu = - (B_\nu^\ast \cdot B_\nu)^{-1 } \cdot X_\nu^\ast \cdot  (B_\nu^\ast \cdot B_\nu) & \text{for } \nu \text{ complex}\\
\end{cases}
$$
To simplify notation, set $\mathcal{B}_\nu = B_\nu^\top \cdot B_\nu$ for $\nu$ a real place and $\mathcal{B}_\nu = B_\nu^\ast \cdot B_\nu$ for $\nu$ complex and extend $\theta$ by factors to define $X^\theta$ on $\frak{g}_\infty$.

Since $Q_\nu^2 =$ identity for real and complex $\nu$, the $\pm 1$ eigenspaces for $\frak{o}(Q_\nu)$ are more easily identified.

Additionally, the change-of-coordinates defined by $\mathcal{B}$ provide an isomorphism between expressions for $\frak{g}_\infty$ in $S$-coordinates and the corresponding expressions in $Q$-coordinates. Since operations in the $Q$-coordinates are much simpler we will identify the relevant subalgebras in the $Q$-coordinates. All of the groups are isomorphic to products of subgroups of matrix groups and since the map of Lie algebras is by $x \to gxg^{-1}$, the group map is $h \to ghg^{-1}$, and vice-versa, because $\text{exp}(gxg^{-1})=g \cdot \text{exp}(x) \cdot g^{-1}$. In the sequel, the computations will be done in the $Q$-coordinates since all operations can be mapped back to $S$-coordinates via $\mathcal{B}$ without loss of generality.

%
%
%
\subsection{Casimir operator at real $\nu$ on $G/K$}\label{casimir-nu-real}
%
%
%
%
\subsubsection{Lie algebra of $\frak{g}_\nu$ for real $\nu$}\label{lie-algebra-frak-g-nu-real}
At a real place $\nu$, we proceed with local $Q$ coordinates given by
$$
Q_\nu
=
\renewcommand\arraystretch{1.25}
\begin{pmatrix}
0 & 0 & 1 \\
0 & Q_\nu^\prime & 0 \\
1 & 0 & 0
\end{pmatrix}
\quad
Q_\nu^\prime =
\begin{pmatrix}
0 & 0 & 1 \\
0 & 1_{p^+ q^-} & 0 \\
1 & 0 & 0
\end{pmatrix}
\quad
1_{p^+ q^-} = \text{ diagonal matrix with } p \; 1\text{'s and } q \; -1\text{'s}
$$
To lighten notation, we will suppress the subscript $\nu$ for the elements of the Lie algebra unless needed. The Lie algebra for $O(Q_\nu)$ is characterized by
$$
X^\top \cdot Q_\nu + Q_\nu \cdot X= 0
$$
$$
X^\top \cdot Q_\nu = -  Q_\nu \cdot X
$$
 At real $\frak{g}_\nu$ let  
$$
X = 
\renewcommand\arraystretch{1.25}
\begin{pmatrix}
a & b & c \\
d^\top & e & f^\top \\
g & h & i
\end{pmatrix}
\qquad b,d,f,h \in \mathbb{R}^{r-1} \enspace a,c,g,i \in \mathbb{R}
$$
which implies $X$ is of the form:
$$
X = 
\renewcommand\arraystretch{1.25}
\begin{pmatrix}
a & b & 0 \\
d^\top & e & -1_{p^+ q^-} \cdot b^\top \\
0 & -d \cdot 1_{p^+ q^-} & -a
\end{pmatrix}
\qquad b,d \in \mathbb{R}^{r-1} \enspace a \in \mathbb{R} \enspace e \in \frak{o}(Q_\nu^\prime)
$$
Where $1_{p^+ q^-}$ is as defined in \S 14.1.1 so that $e \in \frak{o}(Q_\nu^\prime) = \frak{o}(p,q)$.
Let $e_i$ be the standard basis of $\mathbb{R}^{r-1}$ and define a basis for $\frak{o}(Q_\nu)$ by
$$
H=
\begin{pmatrix}
1 & 0 & 0 \\
0 & 0_{r-1} & 0 \\
0 & 0 & -1
\end{pmatrix}
$$
$$
X_i =
\begin{pmatrix}
0 & e_i & 0 \\
0 & 0_{r-1} & -1_{p^+ q^-} \cdot e_i^\top \\
0 & 0 & 0
\end{pmatrix}
\quad Y_i = X_i^\top =
\begin{pmatrix}
0 & 0 & 0 \\
e_i^\top & 0_{r-1} & 0 \\
0 & -e_i \cdot 1_{p^+ q^-} & 0
\end{pmatrix}
$$
This misses exactly the elements in $\text{Lie}(P \cap O(Q_\nu^\prime))$. These can be supplied by including a basis for
$$
\frak{o}(Q_\nu^\prime) \approx 
\{
\begin{pmatrix}
0 & 0 & 0 \\
0 & \theta & 0 \\
0 & 0 & 0
\end{pmatrix}
\text{  with }  \theta^\top \cdot 1_{p^+ q^-} = -1_{p^+ q^-} \cdot \theta \}  
$$
Since $Q_\nu^2 = 1$, and $\nu$ is real, negative-transpose is a Cartan involution on $\frak{g}_\nu$. With $\frak{k}$ the $+1$-eigenspace for this involution, we have that $X_i - Y_i \in \frak{k}$. We have the following bracket relation:
$$
[X_i,Y_i] = H
$$
Using the trace pairing $B(X,Y) = \text{tr}(XY)$ (a scalar multiple of Killing) we have the following relations
$$B(H,H) = 2$$
$$B(H,X_i) = \text{tr}(e_{1i}) = 0 = B(H,Y_j) = \text{tr}(e_{nj}) = 0$$
$$B(X_i,Y_j) = 
\text{tr}
\begin{pmatrix}
\delta_{ij} & 0 & 0 \\
0 & \delta_{ij} & 0 \\
0 & 0 & 0
\end{pmatrix}
= 2\delta_{ij}
$$
$$B(\theta_i,\theta_j) = \pm\delta_{ij}$$
$$B(H,\theta_k) = B(X_i,\theta_k) = B(Y_j,\theta_k) = 0 \qquad \forall i,j,k$$
With respect to the trace pairing (with $\prime$ denoting dual as above):
$$
H^\prime = \frac{1}{2}\cdot H \qquad X^\prime_i = \frac{1}{2}\cdot Y_i = \frac{1}{2}\cdot X^\top_i  \qquad [X_i,Y_i] = H \qquad \theta^\prime_j = \pm \theta_j
$$
To express the Casimir operator in a fashion consonant with the Iwasawa decomposition, note that the right action of $\frak{k}$ annihilates functions on $G/K$. Since, the skew-symmetric $X_i - Y_i$ lies in $\frak{k}$, it acts by $0$ on the right on functions on $G/K$. For $f$ a right $K$-invariant function on $G$, it suffices to evaluate $\Omega_\nu f$ at group elements $n \cdot m \in N \cdot M \approx P$ since $\Omega$ preserves the right $K$-invariance. Thus
$$
\Omega_\nu = H \cdot H^\prime + \sum_i X_i \cdot X^\prime_i + \sum_i Y_i \cdot Y^\prime_i + \sum_j \theta_j \theta^\prime_j 
$$
Note that $\sum_j \theta_j \theta^\prime_j$ is the Casimir operator $\Omega_\nu^\prime$ on $M^1_2$. While $M^1_2$ contributes to the coefficients of the Laplacian corresponding to derivatives parallel to $N$, application of compactness of anisotropic quotients will make analysis this operator unnecessary and derivatives parallel to $M^1_2$ and there will not be a need to compute these derivatives. $\Omega_\nu^\prime$ is negative definite and while this term will remain, it will be simply denoted $\Omega_\nu^\prime$.
$$
\Omega_\nu = \frac{1}{2}H^2 + \frac{1}{2}\sum_{i=1}^{r-1} X_i \cdot Y_i + \frac{1}{2}\sum_{i=1}^{r-1} Y_i \cdot X_i + \Omega_\nu^\prime
$$
$$
= 	\frac{1}{2}H^2 + \frac{1}{2}\sum_{i=1}^{r-1} (X_i \cdot Y_i +  Y_i \cdot X_i) + \Omega_\nu^\prime  
$$
$$
= 	\frac{1}{2}H^2 + \frac{1}{2}\sum_{i=1}^{r-1} (2 X_i \cdot Y_i +  [Y_i, X_i]) + \Omega_\nu^\prime
$$
$$
= 	\frac{1}{2}H^2 -  \frac{r-1}{2}H + \sum_{i=1}^{r-1} X_i \cdot Y_i + \Omega_\nu^\prime
$$
$$
= 	\frac{1}{2}H^2 -  \frac{r-1}{2}H + \sum_{i=1}^{r-1} (X_i^2 + \underbrace{X_i(Y_i - X_i)}_\text{acts by 0}) + \Omega_\nu^\prime
$$
$$
= 	\frac{1}{2}H^2 \enspace -  \enspace \frac{r-1}{2}H \enspace + \enspace \sum_{i=1}^{r-1} X_i^2 \enspace + \Omega_\nu^\prime
$$
Note that while $\Omega_\nu^\prime$ is not invariant under $G$, it does descend to the image in $\Gamma\backslash G$ of a sufficiently high Siegel set (i.e., corresponding to a sufficiently large value of the height parameter), allowing separation of variables. In particular, while the $\theta_i$ contribute to the coefficients of the Laplacian for the $N$, by the compactness of anisotopic quotients, in the quotient of the Siegel set, these coefficients are bounded since they are continuous coordinates on a compact manifold (i.e., corresponding to the bounded (compact) region $D$ in $M$). 

%
%
\subsubsection{Casimir at real $\nu$ in Iwasawa coordinates}\label{Casimir at real nu in Iwasawa coords}
To lighten notation, we will continue to suppress the subscript $\nu$ unless needed. The local Iwasawa-Levi-Mal'cev decomposition (analogous the more global expression in \S 14.1.2) is
$$
G = PK = NMK \approx N \cdot A^+ \cdot M^1_1 \cdot M^1_2
$$
where $N$ is the {\it local} factor at $\nu$ of the unipotent radical $N_\infty$, $A^+$ is the {\it local} split component, and $M^1_1$ and $M^1_2$ are the factors of $M^1$, the  {\it local} complement to the  {\it local} $A^+$. Then, write elements of  {\it local} $G$ as 
$$
g = n_x \cdot a_y \cdot m_1(u) \cdot m_2(h) \qquad h \in O(Q_\nu^\prime)
$$
Exponentiating 
$$
e^{tH}  \enspace =
\sum_{k=0}^{\infty} \frac{(tH)^k}{k!} \enspace =	
\begin{pmatrix}
e^t & 0 & 0 \\
0 & 1_{r-1} & 0 \\
0 & 0 & e^{-t}
\end{pmatrix}
 \enspace = m_1(e^t) \in A^+
$$
\begin{align*}
e^{t X_i} =
\sum_{n=0}^{\infty} \frac{(tX_i)^k}{k!} \enspace = \qquad \qquad & \\
n_{t \cdot e_i} \enspace = \enspace 
\renewcommand\arraystretch{1.5}
\begin{pmatrix}
1 & t \cdot e_i & -\frac{t^2}{2} \\
0 & 1_{r-1} & - t \cdot e_i^\top \\
0 & 0 & 1
\end{pmatrix}
&\quad
n_{t \cdot e_i} \enspace = \enspace 
\renewcommand\arraystretch{1.5}
\begin{pmatrix}
1 & t \cdot e_i & \frac{t^2}{2} \\
0 & 1_{r-1} & t \cdot e_i^\top \\
0 & 0 & 1
\end{pmatrix} \\
1 \leq i \leq p \qquad &\qquad p+1 \leq i \leq p+q = r-1
\end{align*}
Abusing notation and writing $e^\theta$ both for the exponentiated elements of $\frak{g}$ and of the corresponding central block (i.e., $\frak{o}(Q_\nu^\prime)$)
$$
e^{t \theta_i}  \enspace =
\sum_{k=0}^{\infty} \frac{(t \theta_i)^k}{k!} \enspace =	
\begin{pmatrix}
1 & 0 & 0 \\
0 & m_{2,i}^\prime(t) & 0 \\
0 & 0 & 1
\end{pmatrix}
 \enspace = m_{2,i}(t) = m_2(e^{t\theta_i}) \in M^1_2
$$
so that $n_x m_1(y) \cdot e^{tH} = n_x m_1(y) m_1(e^t) = n_x m_1(y e^t)$, since multiplication in $M^1_1$ is homomorphic to multiplication in $\mathbb{R}^\times$.
\noindent
To determine $H$ as an operator on $G/K$, let $g \in G$ with corresponding $g = n_{x_0} \cdot a_{y_0} \cdot m_1(u_0) \cdot m_2(o_0)$
\begin{align*}
H \cdot f(g) =& \frac{d}{dt}\bigg|_{t=0} f(g \cdot m_{e^t}) = \frac{d}{dt}\bigg|_{t=0} f(n_{x_0} a_{y_0} m_1(u_0) m_2(o_0) \cdot m_{e^t}) \\
=& \frac{d}{dt}\bigg|_{t=0} f(n_{x_0} a_{y_0 e^t} m_1(u_0) m_2(o_0)) = y_0 \; \frac{\partial}{\partial y} \bigg|_{(n_{x_0} a_{y_0} m_1(u_0) m_2(o_0))} f
\end{align*}
since $A^+$ is in the center of $M$ and so $H \text{ acts by } y \frac{\partial}{\partial y}$.

$M$ normalizes $N$ but does not commute with $N$; however, the normalization of $N$ by $M$ means $N$ can ``move past'' $M$ by a linear change in the $N$-coordinate $x$. We make repeated use of the elementary identity
$$
m \cdot n \cdot m^{-1} = n^\prime \Longrightarrow m \cdot n = n' \cdot m
$$
A convenient relation is (where the coefficient is $+1$ for $1 \leq i \leq p$ and $-1$ for $p+1 \leq i \leq p+q = r-1$)
$$
a_y \cdot e^{t X_i} = a_y \cdot n_{te_i} =  
\begin{pmatrix}
y & 0 & 0 \\
0 & 1_{r-1} & 0 \\
0 & 0 & \frac{1}{y}
\end{pmatrix}
\begin{pmatrix}
1 & t \cdot e_i & \pm\frac{t^2}{2} \\
0 \enspace & 1_{r-1} & \pm t \cdot e_i^\top \\
0 & 0 & 1
\end{pmatrix}
=
\begin{pmatrix}
y & yt \cdot e_i & \pm\frac{yt^2}{2} \\
0 \enspace & 1_{r-1} & \pm t \cdot e_i^\top \\
0 & 0 & \frac{1}{y}
\end{pmatrix}
$$
$$
n_{yte_i} \cdot a_y =  
\begin{pmatrix}
1 & yt \cdot e_i & \pm\frac{y^2t^2}{2} \\
0 \enspace & 1_{r-1} & \pm yt \cdot e_i^\top \\
0 & 0 & 1
\end{pmatrix}
\begin{pmatrix}
y & 0 & 0 \\
0 & 1_{r-1} & 0 \\
0 & 0 & \frac{1}{y}
\end{pmatrix}
=
\begin{pmatrix}
y & yt \cdot e_i & \pm \frac{yt^2}{2} \\
0 \enspace & 1_{r-1} & \pm t \cdot e_i^\top \\
0 & 0 & \frac{1}{y}
\end{pmatrix}
$$
which gives
$$n_x \cdot m_y \cdot e^{t X_i} =n_x \cdot m_y \cdot n_{te_i} = n_x \cdot n_{yte_i} \cdot m_y = n_{x + yte_i} \cdot m_y$$
Similarly, for real $\nu$, $m_1(u_0)$ will be $\pm 1$ in the upper-left and in the lower-right corners (i.e., corresponding to an element of $O(1,\mathbb{R}) = \{\pm\}$) so that, abusing notation somewhat and letting $u_0$ stand for the value of $O(1,\mathbb{R})$ (i.e., $\pm 1$), $m_1(u_0) \cdot n_x = n_{u_0 \cdot x} \cdot m_1(u_0)$. 

$M^1_2$ will make a more substantive-appearing, though ultimately innocuous, contribution as the coefficients of $\Delta_x$ (the Laplacian on $N$) are also functions of the $M^1_2$-coordinate $h$.
$$
m_2(h) \cdot e^{tX_i} \cdot m_2(h)^{-1} = m_2(h) \cdot n_{te_i}  \cdot m_2(h)^{-1} =
\begin{pmatrix}
1 & t (h \cdot e_i) & \pm\frac{t^2}{2} \\
0  & 1_{r-1} & \pm t \cdot (h \cdot e_i)^\top \\
0 & 0 & 1
\end{pmatrix}
=n_{h e_i}
$$
so that
$$
m_2(h) \cdot n_{te_i} = n_{t \cdot h e_i} \cdot m_2(h)
$$
Note also that the behavior of functions on $M^1_2$ will ultimately be controlled by application of the compactness of anisotropic quotients, and that the map $h \to h^{-1}$ is a smooth involution. Summarizing, we have
\begin{align*}
n_{x_0} & a_{y_0} m_1(u_0) m_2(h_0) \cdot n_{te_i} = n_{x_0} a_{y_0} m_1(u_0) \cdot n_{t\cdot h_0 e_i} \cdot m_2(h_0) \\
=& n_{x_0} a_{y_0} \cdot n_{t \cdot u_0(h_0 e_i)} \cdot m_1(u_0)  m_2(h_0)  = n_{x_0} \cdot n_{ t \cdot y_0 \cdot u_0(h_0 e_i)} \cdot a_{y_0} m_1(u_0)  m_2(h_0)  \\
=& n_{x_0 + t \cdot y_0 \cdot u_0(h_0e_i)} \cdot a_{y_0} m_1(u_0)  m_2(h_0)
\end{align*}
In $M^1_1$ (i.e., $u$) and $M^1_2$ (i.e., $h$) coordinates
$$
u_0(h_0e_i) = u_0 \sum_j h^{ij}_0 \cdot e_j
$$
where we note that the coefficients only depend on coordinates in $M^1_1$ and $M^1_2$ and are independent of $N$.
\begin{align*}
X_i \cdot f(g) =& \frac{d}{dt}\bigg|_{t=0} f(g \cdot n_{te_i}) =  \frac{d}{dt}\bigg|_{t=0} f(n_{x_0} a_{y_0} m_1(u_0) m_2(h_0) \cdot n_{te_i} ) \\
 =& \frac{d}{dt}\bigg|_{t=0} f( \, n_{x_0 + t \cdot y_0 \cdot u_0 h_0 e_i} \cdot a_{y_0} m_1(u_0)  m_2(h_0))  \\
 =&  \frac{d}{dt}\bigg|_{t=0} f( \, n_{(x_0 + t \cdot y_0 \cdot u_0 \sum_j h^{ij}_0 \cdot e_j)} \cdot a_{y_0} m_1(u_0)  m_2(h_0)) \\
 =& y_0 \; \cdot \bigg( u_0 \sum_j h^{ij}_0 \cdot \frac{\partial}{\partial x_j} \bigg)  \bigg|_{(n_{x_0} a_{y_0} m_1(u_0) m_2(h_0))} f 
\end{align*}
so that
$$
X_i \bigg|_{(n_x a_y m_1(u) m_2(h))} = y \, u \sum_j h^{ij} \cdot \frac{\partial}{\partial x_j}
$$
and
\begin{align*}
	X_i^2 =& X_i \circ X_i \\
	=& \bigg(y \, u \sum_j h^{ij} \cdot \frac{\partial}{\partial x_j} \bigg)^2 \\
	=& y^2 u^2 \sum_{j,k=1}^{r-1} h^{ij} h^{ik} \frac{\partial^2}{\partial x_j x_k}
\end{align*}
since the coefficients are independent of $x$. Note also that, locally at real $\nu$, $u \in M^1_1 \cap K = O(1,\mathbb{R}) \approx (\pm 1)$ so that $u^2=1$ and can be dropped. Substituting appropriately in $\Omega$ on $G/K$
\begin{align*}
\Omega =& 	\frac{1}{2}H^2 \enspace -  \enspace \frac{r-1}{2}H \enspace + \enspace \sum_{i=1}^{r-1} X_i^2 \enspace + \underbrace{\sum_j \theta_j \theta_j^\prime}_\text{$\Omega_\nu^\prime$ negative definite} \\
=& \frac{1}{2}(y \frac{\partial}{\partial y})^2 - \frac{(r-1)}{2}y \frac{\partial}{\partial y} + \sum_{i=1}^{r-1}  \Bigg( y^2 \sum_{j,k=1}^{r-1} h^{ij} h^{ik} \frac{\partial^2}{\partial x_j \partial x_k} \Bigg)+ \Omega_\nu^\prime \\
=& \frac{1}{2}y \frac{\partial^2}{\partial y^2} - \frac{(r-2)}{2}y \frac{\partial}{\partial y} +y^2 \sum_{i,j,k=1}^{r-1} h^{ij} h^{ik} \frac{\partial^2}{\partial x_j \partial x_k} + \Omega_\nu^\prime
\end{align*}

%
%
\subsection{Casimir operator at complex $\nu$ on $G/K$}\label{casimir-nu-complex}
%
%
%
%
\subsubsection{Lie algebra of $\frak{g}_\nu$ for complex $\nu$}\label{lie-algebra-frak-g-nu-complex}
To lighten notation, we will suppress the subscript $\nu$ for the elements of the Lie algebra unless needed. The Lie algebra for $O(Q_\nu)$ is characterized by
$$
X^\top \cdot Q_\nu + Q_\nu \cdot X = 0
$$
$$
X^\top \cdot Q_\nu = -Q_\nu \cdot X
$$
where
$$
Q_\nu = 
\begin{pmatrix}
0 & 0 & 1 \\
0 & 1_{r-1} & 0 \\
1 & 0 & 0
\end{pmatrix}
$$
Letting  
$$
X = 
\renewcommand\arraystretch{1.25}
\begin{pmatrix}
a & b & c \\
d^\top & e & f^\top \\
g & h & i
\end{pmatrix}
\qquad b,d,f,h \in \mathbb{C}^{r-1} \enspace a,c,g,i \in \mathbb{C}
$$
Implies the Lie algebra of $\frak{o}(Q_\nu)$ consists of elements of the form:
$$
X = 
\renewcommand\arraystretch{1.25}
\begin{pmatrix}
a & b & 0 \\
d^\top & e & -b^\top \\
0 & -d & -a
\end{pmatrix}
\qquad b,d \in \mathbb{C}^{r-1} \quad a \in \mathbb{C} \quad c,g \in i\mathbb{R} \quad e \in \frak{o}(r-1, \mathbb{C})
$$
Specify a real basis for $\frak{o}(r+1,\mathbb{C})$ as:
$$
H =
\begin{pmatrix}
1 & 0 & 0 \\
0 & 0_{r-1} & 0 \\
0 & 0 & -1
\end{pmatrix}
\tilde{H} =
\begin{pmatrix}
i & 0 & 0 \\
0 & 0_{r-1} & 0 \\
0 & 0 & -i
\end{pmatrix}
X_\ell =
\begin{pmatrix}
0 & e_\ell & 0 \\
0 & 0_{r-1} & -e_\ell^\top \\
0 & 0 & 0
\end{pmatrix}
\tilde{X}_\ell =
\begin{pmatrix}
0 & ie_\ell & 0 \\
0 & 0_{r-1} & -ie_\ell^\top \\
0 & 0 & 0
\end{pmatrix}
$$
$$
Y_\ell =
\begin{pmatrix}
0 & 0 & 0 \\
e_\ell^\top & 0_{r-1} & 0 \\
0 & -e_\ell & 0 \\
\end{pmatrix}
\tilde{Y}_\ell =
\begin{pmatrix}
0 & 0 & 0 \\
ie_\ell^\top & 0_{r-1} & 0 \\
0 & -ie_\ell & 0 \\
\end{pmatrix}
$$
$$
\Theta_\ell =
\begin{pmatrix}
0 & 0 & 0 \\
0 & \theta_\ell & 0 \\
0 & 0 & 0
\end{pmatrix}
\quad
\{ \theta_\ell \} \text{ orthogonal real basis of } \frak{o}(r-1, \mathbb{C})
$$
Define a bilinear form $B$ by
$$
B(X,Y) = (\text{tr}_{\mathbb{C}/\mathbb{R}} \circ \text{tr})(XY) = 2 \cdot \text{Re}(\text{tr}(XY))
$$
we have the following relationships:
\begin{align*}
B(H,H) = 2 \quad & \quad B(\tilde{H},\tilde{H}) = -2 \\
B(X_j,Y_k) = 2\delta_{jk} \quad & \quad B(\tilde{X}_j,\tilde{Y}_k) = -2\delta_{jk} \\
B(H,X_j) = B(H,\tilde{X}_j) =& 0 = B(H,Y_j) = B(H,\tilde{Y}_j) =  0 \\
B(\theta_j,\theta_k) =& \pm\delta_{jk} \\
B(H,\theta_k) = B(X_j,\theta_k) =& B(Y_j,\theta_k) = B(\tilde{X}_j,\theta_k) = B(\tilde{Y}_j,\theta_k) = 0 \qquad \forall j,k \\
\end{align*}
With respect to the trace pairing (with $\prime$ denoting dual as above):
$$
H^\prime = \frac{1}{2}\cdot H \qquad \tilde{H}^\prime = -\frac{1}{2}\cdot \tilde{H} \qquad \theta^\prime_j = \pm \theta_j 
$$
$$
X^\prime_j = \frac{1}{2}\cdot Y_j = \frac{1}{2}\cdot X_j^\top \qquad \tilde{X}_j^\prime = -\frac{1}{2}\cdot \tilde{Y}_j = -\frac{1}{2}\cdot \tilde{X}_j^\top
$$
We also have the following bracket relations:
$$
[X_\ell,Y_\ell] = H = -[\tilde{X}_\ell,\tilde{Y}_\ell] 
$$
A Cartan involution on $O(n,\mathbb{C})$ is given by
$$
\gamma^\theta = -\overline{\gamma}^\top \quad \text{(negative conjugate transpose)}
$$
The Lie algebra of the maximal compact $\frak{k}$ is the $+1$ eigenspace the Cartan involution and corresponds to operators that act trivially on functions on $G/K$. Since $\tilde{H}^\theta = \tilde{H}$, $\tilde{H}$ is ignored in determining the Casimir operator $\Omega$ on $G/K$.

To express the Casimir operator in a fashion consonant with the Iwasawa decomposition, note that the right action of $\frak{k}$ annihilates functions on $G/K$. The conjugate-skew-symmetric $X_j - Y_j$ and $\tilde{X}_j + \tilde{Y}_j$ lie in $\frak{k}$ and act by $0$ on the right on functions on $G/K$. For $f$ a right $K$-invariant function on $G$, it suffices to evaluate $\Omega f$ at group elements $n \cdot m$ since $\Omega$ preserves the right $K$-invariance. Thus
\begin{align*}
\Omega_\nu =& H \cdot H^\prime + \sum_\ell X_\ell \cdot X^\prime_\ell + \sum_\ell Y_\ell \cdot Y^\prime_\ell + \sum_\ell \tilde{X}_\ell \cdot \tilde{X}^\prime_\ell + \sum_\ell \tilde{Y}_\ell \cdot \tilde{Y}^\prime_\ell + \sum_j \theta_j \theta^\prime_j \\
=& \frac{1}{2}H^2 + \frac{1}{2}\sum_{\ell=1}^{r-1} X_\ell \cdot Y_\ell + \frac{1}{2}\sum_{\ell=1}^{r-1} Y_\ell \cdot X_\ell - \frac{1}{2}\sum_{\ell=1}^{r-1} \tilde{X}_\ell \cdot \tilde{Y}_\ell - \frac{1}{2}\sum_{\ell=1}^{r-1} \tilde{Y}_\ell \cdot \tilde{X}_\ell + \sum_j \theta_j \theta^\prime_j \\
=& \frac{1}{2}H^2 + \frac{1}{2}\sum_{\ell=1}^{r-1} \big(X_\ell \cdot Y_\ell + Y_\ell \cdot X_\ell\big) - \frac{1}{2}\sum_{\ell=1}^{r-1} \big(\tilde{X}_\ell \cdot \tilde{Y}_\ell + \tilde{Y}_\ell \cdot \tilde{X}_\ell\big) + \sum_j \theta_j \theta^\prime_j
\end{align*}
Rewriting $X_\ell \cdot Y_\ell + Y_\ell \cdot X_\ell =  2 \cdot X_\ell \cdot Y_\ell + [Y_\ell, X_\ell]$
\begin{align*}
=& \frac{1}{2}H^2  + \frac{1}{2} \sum^{r-1}_{\ell = 1} \big( 2 \cdot X_\ell \cdot Y_\ell + [Y_\ell, X_\ell] \big) - \frac{1}{2} \sum^{r-1}_{\ell = 1} \big( 2 \cdot \tilde{X}_\ell \cdot \tilde{Y}_\ell + [\tilde{Y}_\ell , \tilde{X}_\ell] \big)+ \sum_j \theta_j \theta^\prime_j 
\end{align*}
Using $[X_\ell,Y_\ell] = H \text{ and } [\tilde{X}_\ell,\tilde{Y}_\ell] =-H$ gives
\begin{align*}
=& \frac{1}{2}H^2 - (r-1) H  + \sum^{r-1}_{\ell = 1}  X_\ell \cdot Y_\ell - \sum^{r-1}_{\ell = 1}  \tilde{X}_\ell \cdot \tilde{Y}_\ell 
+ \sum_j \theta_j \theta^\prime_j \\
=& \frac{1}{2}H^2 - (r-1) H  + \sum^{r-1}_{\ell = 1} \big( X_\ell^2 - \underbrace{X_\ell \cdot (X_\ell - Y_\ell)}_\text{acts by 0} \big) \\
&+ \sum^{r-1}_{\ell = 1} \big( \tilde{X}_\ell^2 - \underbrace{\tilde{X}_\ell \cdot (\tilde{X}_\ell + \tilde{Y}_\ell)}_\text{acts by 0} \big) + \sum_j \theta_j \theta^\prime_j \\
=& \frac{1}{2}H^2 - (r-1) H + \sum^{r-1}_{\ell = 1} X_\ell^2 + \sum^{r-1}_{\ell = 1} \tilde{X}_\ell^2 + \sum_j \theta_j \theta^\prime_j \\
=& \frac{1}{2}H^2 - (r-1) H + \sum^{r-1}_{\ell = 1} X_\ell^2 + \sum^{r-1}_{\ell = 1} \tilde{X}_\ell^2 + \Omega_\nu^\prime
\end{align*}
Where $\Omega_\nu^\prime$ is the Casimir operator on $M^1_2 \approx O(r-1,\mathbb{C})$. As in the case at real places $\nu$, for complex $\nu$, while $M^1$ makes a contribution to $\Omega_\nu$ in the coefficients of derivatives parallel to $N$, this will not play a substantive role due to application of the units theorem and compactness of anisotropic quotients.

%
%
\subsubsection{Casimir at complex $\nu$ in Iwasawa coordinates}\label{Casimir at complex nu in Iwasawa coords}
$$
g = n_z \cdot a_y \cdot m_1(u) \cdot m_2(h) \qquad h \in O(Q_\nu^\prime) = M^1_2 = O(r-1,\mathbb{C}) \quad z = x + iv \in \mathbb{C}^{r-1}
$$
Exponentiating 
$$
e^{tH}  \enspace =
\sum_{k=0}^{\infty} \frac{(tH)^k}{k!} \enspace =	
\begin{pmatrix}
e^t & 0 & 0 \\
0 & 1_{r-1} & 0 \\
0 & 0 & e^{-t}
\end{pmatrix}
 \enspace = m_1(e^t) \in A^+
$$
$$
e^{t X_j} =
\sum_{k=0}^{\infty} \frac{(tX_j)^k}{k!} \enspace = n_{t \cdot e_j} \enspace = \enspace 
\renewcommand\arraystretch{1.5}
\begin{pmatrix}
1 & t \cdot e_i & -\frac{t^2}{2} \\
0 & 1_{r-1} & - t \cdot e_i^\top \\
0 & 0 & 1
\end{pmatrix} 
$$
$$
e^{t \tilde{X}_j} =
\sum_{n=0}^{\infty} \frac{(t\tilde{X}_j)^k}{k!} \enspace = n_{t \cdot i \cdot e_j} \enspace = \enspace 
\renewcommand\arraystretch{1.5}
\begin{pmatrix}
1 & t \cdot i \cdot e_j & \frac{t^2}{2} \\
0 & 1_{r-1} & - t \cdot i \cdot e_j^\top \\
0 & 0 & 1
\end{pmatrix}
$$
Abusing notation and writing $e^\theta$ both for the exponentiated elements of $\frak{g}$ and of the corresponding central block (i.e., $\frak{o}(Q_\nu^\prime)$)
$$
e^{t \theta_i}  \enspace =
\sum_{k=0}^{\infty} \frac{(t \theta_i)^k}{k!} \enspace =	
\begin{pmatrix}
1 & 0 & 0 \\
0 & m_{2,i}^\prime(t) & 0 \\
0 & 0 & 1
\end{pmatrix}
 \enspace = m_{2,i}(t) = m_2(e^{t \theta_i}) \in M^1_2 = O(r-1,\mathbb{C})
$$
so that $n_x m_1(y) \cdot e^{tH} = n_x m_1(y) m_1(e^t) = n_x m_1(y e^t)$, since multiplication in $M^1_1$ is homomorphic to multiplication in $\mathbb{C}^\times$.
\noindent
To determine $H$ as an operator on $G/K$, let $g \in G$ with corresponding $g = n_{x_0} \cdot a_{y_0} \cdot m_1(u_0) \cdot m_2(h_0)$
\begin{align*}
H \cdot f(g) =& \frac{d}{dt}\bigg|_{t=0} f(g \cdot m_{e^t}) = \frac{d}{dt}\bigg|_{t=0} f(n_{x_0} a_{y_0} m_1(u_0) m_2(h_0) \cdot m_{e^t}) \\
=& \frac{d}{dt}\bigg|_{t=0} f(n_{x_0} a_{y_0 e^t} m_1(u_0) m_2(o_0)) = y_0 \; \frac{\partial}{\partial y} \bigg|_{(n_{x_0} a_{y_0} m_1(u_0) m_2(h_0))} f \\
\end{align*}
since $A^+$ is in the center of $M$ and so $H \text{ acts by } y \frac{\partial}{\partial y}$.

$M$ normalizes $N$ but does not commute with $N$; however, the normalization of $N$ by $M$ means $N$ can ``move past'' $M$ by a linear change in the $N$-coordinate $x$. We make repeated use of the elementary identity:
$$
m \cdot n \cdot m^{-1} = n^\prime \Longrightarrow m \cdot n = n' \cdot m
$$
Analogous to the real case, for complex $\nu$, $m_1(u_0)$ will be unit complex number in the upper-left and its inverse in the lower-right corners (i.e., corresponding to an element of $O(1,\mathbb{C}) = S^1$) and $m_2(h)$ will act linearly on the $x$ coordinate in N. The operations for $O(r+1,\mathbb{C})$ are parallel to those for $O(p,q)$ by linearity and thus:
\begin{align*}
n_{z_0} \cdot a_{y_0} \cdot m_1(u_0) \cdot m_2(h_0) \cdot e^{t X_\ell} =& \; n_{z_0 + t \cdot y_0 \cdot u_0 h_0 e_\ell} \cdot a_{y_0} \cdot m_1(u_0) \cdot m_2(h_0) \\
n_{z_0} \cdot a_{y_0} \cdot m_1(u_0) \cdot m_2(h_0) \cdot e^{t \tilde{X}_\ell} =& \; n_{z_0 + t y_0 u_0 h_0 i e_\ell} \cdot a_{y_0} \cdot m_1(u_0) \cdot m_2(h_0) \\
z_0 \in \mathbb{C}^{r-1} \quad a_0 \in (0,\infty) \quad & u_0 \in O(1,\mathbb{C}) \quad h_0 \in O(r-1,\mathbb{C})
\end{align*}

$M$ makes an ultimately innocuous contribution as the coefficients of $\Delta_x$ (the Laplacian on $N$) are functions of the $M^1_1$-coordinate $u$ and $M^1_2$-coordinate $h$. In $M^1_1$ ($u$ coordinates) and $M^1_2$ ($h$ coordinates)
$$
u_0(h_0e_\ell) = u_0 \sum_{j=1}^{r-1} h^{\ell j}_0 \cdot e_j
$$
analogous to real places, the coefficients depend on coordinates in $M^1_1$ and $M^1_2$ and are independent of $N$. However, both $u$ and $h$ may contain complex numbers and thus will {\it mix} real and complex directions. That is, the $x_j$ ({\it real}) variable is associated with the $e_j$ basis vector direction while the $v_j$ ({\it real}) variable is associated with the $i e_j$ basis vector direction. Using the complex linearity, let 
$$
h = \text{Re}(h) + i \text{Im}(h) \qquad u = \text{Re}(u) + i \text{Im}(u)
$$
so that
\begin{align*}
u_0(h_0e_\ell) =& u_0 \sum_{j=1}^{r-1} h^{\ell j}_0 \cdot e_j = (\text{Re}(u_0) + i \text{Im}(u_0)) \sum_{j=1}^{r-1} (\text{Re}(h^{\ell j}_0) + i \text{Im}(h^{\ell j}_0))\cdot e_j \\
=&  \sum_{j=1}^{r-1} \bigg(\text{Re}(u_0) \text{Re}(h^{\ell j}_0) - \text{Im}(u_0) \text{Im}(h^{\ell j}_0) \bigg) \cdot e_j \\
&+ \sum_{j=1}^{r-1} \bigg( \text{Im}(u_0) \text{Re}(h^{\ell j}_0)+ \text{Re}(u_0) \text{Im}(h^{\ell j}_0) \bigg) \cdot i \, e_j \\
=&  \sum_{j=1}^{r-1} A_\ell^j(u_0,h_0) \cdot e_j +  \sum_{j=1}^{r-1} B_\ell^j(u_0,h_0) \cdot i \, e_j
\end{align*}
and similarly
\begin{align*}
u_0(h_0 i e_\ell) =& u_0 \sum_{j=1}^{r-1} h^{\ell j}_0 \cdot i e_j = u_0 \sum_{j=1}^{r-1} i (\text{Re}(h^{\ell j}_0) + i \text{Im}(h^{\ell j}_0))\cdot e_j \\
=& - \sum_{j=1}^{r-1} \bigg( \text{Im}(u_0) \text{Re}(h^{\ell j}_0)+ \text{Re}(u_0) \text{Im}(h^{\ell j}_0) \bigg) \cdot e_j \\
&+  \sum_{j=1}^{r-1} \bigg(\text{Re}(u_0) \text{Re}(h^{\ell j}_0) - \text{Im}(u_0) \text{Im}(h^{\ell j}_0) \bigg) \cdot i \, e_j \\
=& - \sum_{j=1}^{r-1} B_\ell^j(u_0,h_0) \cdot e_j +  \sum_{j=1}^{r-1} A_\ell^j(u_0,h_0) \cdot i \, e_j
\end{align*}
\begin{align*}
X_\ell \cdot f(g) =& \frac{d}{dt}\bigg|_{t=0} f(g \cdot n_{te_\ell}) =  \frac{d}{dt}\bigg|_{t=0} f(n_{z_0} a_{y_0} m_1(u_0) m_2(h_0) \cdot n_{te_\ell} ) \\
 =& \frac{d}{dt}\bigg|_{t=0} f( \, n_{z_0 + t \cdot y_0 \cdot u_0 (h_0 e_\ell)} \cdot a_{y_0} m_1(u_0)  m_2(h_0))  \\
 =&  \frac{d}{dt}\bigg|_{t=0} f( \, n_{z_0 + t \cdot y_0 \cdot (u_0 \sum_{j=1}^{r-1} h^{\ell j}_0 \cdot e_j)} \cdot a_{y_0} m_1(u_0)  m_2(h_0)) \\
 =& y_0 \; \cdot \bigg( \sum_{j=1}^{r-1} A_\ell^j(u_0,h_0) \cdot \frac{\partial}{\partial x_j} +  \sum_{j=1}^{r-1} B_\ell^j(u_0,h_0) \cdot \frac{\partial}{\partial v_j} \bigg)  \bigg|_{(n_{z_0} a_{y_0} m_1(u_0) m_2(h_0))} f 
\end{align*}
\begin{align*}
\tilde{X}_\ell \cdot f(g) =& \frac{d}{dt}\bigg|_{t=0} f(g \cdot n_{ite_\ell}) =  \frac{d}{dt}\bigg|_{t=0} f(n_{z_0} a_{y_0} m_1(u_0) m_2(o_0) \cdot n_{t i e_\ell} ) \\
 =& \frac{d}{dt}\bigg|_{t=0} f( \, n_{z_0 +  t \cdot y_0 \cdot u_0 (h_0 i e_\ell)} \cdot a_{y_0} m_1(u_0)  m_2(h_0))  \\
 =&  \frac{d}{dt}\bigg|_{t=0} f( \, n_{z_0 + t \cdot y_0 \cdot (u_0 \sum_{j=1}^{r-1} h^{\ell j}_0 \cdot i e_\ell)} \cdot a_{y_0} m_1(u_0)  m_2(h_0)) \\
 =& y_0 \; \cdot \bigg( - \sum_{j=1}^{r-1} B_\ell^j(u_0,h_0) \cdot \frac{\partial}{\partial x_j} +  \sum_{j=1}^{r-1} A_\ell^j(u_0,h_0) \cdot \frac{\partial}{\partial v_j} \bigg)  \bigg|_{(n_{z_0} a_{y_0} m_1(u_0) m_2(h_0))} f 
\end{align*}
so that
\begin{align*}
X_\ell =& y \, \bigg( \sum_{j=1}^{r-1} A_\ell^j(u,h) \cdot \frac{\partial}{\partial x_j} +  \sum_{j=1}^{r-1} B_\ell^j(u,h) \cdot \frac{\partial}{\partial v_j} \bigg) \\
\tilde{X}_\ell =& y \, \bigg( - \sum_{j=1}^{r-1} B_\ell^j(u,h) \cdot \frac{\partial}{\partial x_j} +  \sum_{j=1}^{r-1} A_\ell^j(u,h) \cdot \frac{\partial}{\partial v_j} \bigg)
\end{align*}
and
\begin{align*}
	X_\ell^2 =& X_\ell \circ X_\ell \\
	=& \Bigg( y \, \bigg( \sum_{j=1}^{r-1} A_\ell^j(u,h) \cdot \frac{\partial}{\partial x_j} +  \sum_{j=1}^{r-1} B_\ell^j(u,h) \cdot \frac{\partial}{\partial v_j} \bigg) \Bigg)^2 \\
	=& y^2 \, \bigg( \sum_{j,k=1}^{r-1} A_\ell^j(u,h)A_\ell^k(u,h) \cdot \frac{\partial^2}{\partial x_j \partial x_k} \\
	&+ 2\sum_{j,k=1}^{r-1} A_\ell^j(u,h)B_\ell^k(u,h) \cdot \frac{\partial^2}{\partial x_j \partial v_k} + \sum_{j,k=1}^{r-1} B_\ell^j(u,h)B_\ell^k(u,h) \cdot \frac{\partial^2}{\partial v_j \partial v_k}  \bigg)
\end{align*}
\begin{align*}
	\tilde{X}_\ell^2 =& \tilde{X}_\ell \circ \tilde{X}_\ell \\
	=& \Bigg( y \, \bigg( - \sum_{j=1}^{r-1} B_\ell^j(u,h) \cdot \frac{\partial}{\partial x_j} +  \sum_{j=1}^{r-1} A_\ell^j(u,h) \cdot \frac{\partial}{\partial v_j} \bigg) \Bigg)^2 \\
	=& y^2 \, \bigg( \sum_{j,k=1}^{r-1} B_\ell^j(u,h)B_\ell^k(u,h) \cdot \frac{\partial^2}{\partial x_j \partial x_k} \\
	&- 2\sum_{j,k=1}^{r-1} A_\ell^j(u,h)B_\ell^k(u,h) \cdot \frac{\partial^2}{\partial x_j \partial v_k} + \sum_{j,k=1}^{r-1} A_\ell^j(u,h)A_\ell^k(u,h) \cdot \frac{\partial^2}{\partial v_j \partial v_k}  \bigg)
\end{align*}
since the coordinates $y$, $u$ and $h$ on $M$ (and thus the functions $A$ and $B$) are independent of the $z$ coordinates $x$ and $v$ on $N$. Substituting into Casimir for complex $\nu$:
\begin{align*}
\Omega_\nu =& \frac{1}{2}H^2 - (r-1) H + \sum^{r-1}_{\ell = 1} X_\ell^2 + \sum^{r-1}_{\ell = 1} \tilde{X}_\ell^2 + \sum_j \theta_j \theta^\prime_j 
\end{align*}
\begin{align*}
	=& \frac{1}{2}(y \frac{\partial}{\partial y})^2 - (r-1) (y \frac{\partial}{\partial y}) \\
	&+ y^2 \, \sum^{r-1}_{\ell = 1}  \bigg( \sum_{j,k=1}^{r-1} A_\ell^j(u,h)A_\ell^k(u,h) \cdot \frac{\partial^2}{\partial x_j \partial x_k} \\
	&+ 2\sum_{j,k=1}^{r-1} A_\ell^j(u,h)B_\ell^k(u,h) \cdot \frac{\partial^2}{\partial x_j \partial v_k} + \sum_{j,k=1}^{r-1} B_\ell^j(u,h)B_\ell^k(u,h) \cdot \frac{\partial^2}{\partial v_j \partial v_k}  \bigg) \\
	&+ y^2 \, \sum^{r-1}_{\ell = 1}  \bigg( \sum_{j,k=1}^{r-1} B_\ell^j(u,h)B_\ell^k(u,h) \cdot \frac{\partial^2}{\partial x_j \partial x_k} \\
	&- 2\sum_{j,k=1}^{r-1} A_\ell^j(u,h)B_\ell^k(u,h) \cdot \frac{\partial^2}{\partial x_j \partial v_k} + \sum_{j,k=1}^{r-1} A_\ell^j(u,h)A_\ell^k(u,h) \cdot \frac{\partial^2}{\partial v_j \partial v_k}  \bigg) \\
	& + \sum_j \theta_j \theta^\prime_j
\end{align*}
\begin{align*}
	=& \frac{1}{2}y^2 \frac{\partial^2}{\partial y^2} - \frac{2r-3}{2}y \frac{\partial}{\partial y} \\
	& + y^2 \sum_{j,k=1}^{r-1} \Bigg( \sum^{r-1}_{\ell = 1}  \bigg(A_\ell^j(u,h)A_\ell^k(u,h) + B_\ell^j(u,h)B_\ell^k(u,h) \bigg) \Bigg) \cdot \bigg( \frac{\partial^2}{\partial x_j \partial x_k}  + \frac{\partial^2}{\partial v_j \partial v_k} \bigg)  \\
	& + \sum_j \theta_j \theta^\prime_j \\
	=&  \frac{1}{2}y^2 \frac{\partial^2}{\partial y^2} - \frac{2r-3}{2}y \frac{\partial}{\partial y} 
	+ y^2 \sum_{j,k=1}^{r-1} M_{jk}(u,h) \bigg( \frac{\partial^2}{\partial x_j \partial x_k}  + \frac{\partial^2}{\partial v_j \partial v_k} \bigg) + \Omega_\nu^\prime
\end{align*}
where $\Omega_\nu^\prime = \sum_j \theta_j \theta^\prime_j$ is the local Casimir on $M^1_2$.
Further simplification is possible by expanding the expression for the second order coefficients. A simple, if brutish, calculation shows
$$
A_\ell^j(u,h)A_\ell^k(u,h) + B_\ell^j(u,h)B_\ell^k(u,h) = \text{Re}(h^{\ell j}h^{\ell k})
$$
so that
\begin{align*}
M_{jk}(u,h) =& \sum^{r-1}_{\ell = 1}  \bigg(A_\ell^j(u,h)A_\ell^k(u,h) + B_\ell^j(u,h)B_\ell^k(u,h) \bigg) \\
&= \sum^{r-1}_{\ell = 1} \text{Re}(h^{\ell j}h^{\ell k}) = \sum^{r-1}_{\ell = 1} \text{Re}((h^\top)^{j \ell}h^{\ell k}) \\
&=  \text{Re}\bigg( \sum^{r-1}_{\ell = 1} (h^\top)^{j \ell}h^{\ell k} \bigg) = \text{Re}\bigg( \big( h^\top \cdot h \big)^{j k} \bigg) = \delta_{jk}
\end{align*}
since $h$ is in the local $M^1_2 = O(r-1,\mathbb{C})$. Thus, at complex $\nu$, we have
\begin{align*}
\Omega_\nu =& \frac{1}{2}y^2 \frac{\partial^2}{\partial y^2} - \frac{2r-3}{2}y \frac{\partial}{\partial y} 
	+ y^2 \sum_{j=1}^{r-1} \bigg( \frac{\partial^2}{\partial x_j^2}  + \frac{\partial^2}{\partial v_j^2} \bigg) + \Omega_\nu^\prime \\
&= \frac{1}{2}y^2 \frac{\partial^2}{\partial y^2} - \frac{2r-3}{2}y \frac{\partial}{\partial y} + y^2 \sum_{j=1}^{r-1} \bigg(\frac{\partial}{\partial x_j} + i\frac{\partial}{\partial v_j}\bigg)\bigg(\frac{\partial}{\partial x_j} - i\frac{\partial}{\partial v_j}\bigg) + \Omega_\nu^\prime
\end{align*}

%
%
\subsection{Global Casimir summed across archimedean places}\label{Casimir at complex nu in Iwasawa coords}
%
%
%
%

Recall that $k$ is a finite algebraic extension of $\mathbb{Q}$ and $S$ is a $k$-valued $k$-bilinear form on an $r+1$ $k$-dimensional vector space such that the dimension of the maximal totally isotropic subspace is 1. $G$ is the group of real points of the algebraic isometry group attached to $S$, attached to the {\' e}tale ring $k_\infty$ of archimedean completions of $k$: 
$$
G = G_\infty = G(\mathbb{R}) = G_\mathbb{R} = O(S_\infty) \subset GL(r+1, k_\infty)
$$
$G$ is a product of the orthogonal groups at the archimedean places of $k$:
$$
G = \prod_{\nu | \infty} G_\nu 
$$
To simplify notation, subscripts will typically only be used when discussing a particular place; the absence of a subscript means the context is the (more) {\it global} setting over all archimedean places (i.e., corresponding to the $\infty$ subscript, which we will largely suppress).

$G$ has an Iwasawa decomposition
$$
G = PK = NMK
$$
where $P$ is a parabolic subgroup, $K$ a (maximal) compact subgroup, $N$ the unipotent radical of $P$ and $M$ a Levi-Malcev complement to $N$. This gives
The Levi component of P is $M = k_\infty^\times \times O(S')$ and extending the Galois norm $N_{k/\mathbb{Q}}$ to be suitably multilinear, $M^1$ is
$$
\{ b \in k_\infty^\times : N_{k/\mathbb{Q}}(b) = N_{k_\infty/\mathbb{R}}(b)  =1  \} \times O(S^\prime)
$$
so that $M$ decomposes as a product $M = A^+ \cdot M^1 = A^+ \cdot M^1_1 \cdot M^1_2$. Up to choice of normalization and coordinates, the split component $A^+$ is the diagonal in the local split components. 
$$
P/N M^1 \approx A^+ \approx \mathbb{R}^+
$$
Consequently, being the complement of $A^+$, $M^1_1$ is not a product. $M^1_2$ is a product of the corresponding local factors. Note that $M^1$ may not be compact but $(M^1 \cap \Gamma)\backslash M^1$ will be compact as described below. Thus, as a product
$$
G = \prod_{\nu | \infty} G_\nu = \prod_{\nu | \infty} P_\nu K_\nu  = \prod_{\nu | \infty} N_\nu M_\nu K_\nu  
$$
and by taking a quotient by $K$ on the right
$$
G/K \approx N \cdot M/(M \cap K) 
$$
Let $\Gamma$ be an arithmetic subgroup of $G$ and, per the discussion in the sections on \hyperref[sec:Reduction-Theory-Siegel-Sets]{reduction theory} and \hyperref[sec:Remarks-Number-of-Cusps]{analysis on cusps}, we are treating the quotient $\Gamma\backslash G/K$ as though there were a single cusp. Specifically, for $c \in \mathbb{R}$, let $\eta(g) = \eta(n_x m^\prime \ell_y) = \eta(\ell_y)$ be the height of $g$. Let $Y_o$ and $Y_\infty$ be the respective images of $\{g \in \frak{S}  :  \eta(g) \leq c+1\}$ and $\{g \in \frak{S} : \eta(g) \geq c \}$ in $\Gamma\backslash G/K$. By construction, the interiors of $Y_o$ and $Y_\infty$ cover $\Gamma\backslash G/K$. $Y_o$ is the inverse image of a compact set under a continuous function and is thus a compact set, though with possibly complicated geometry. For sufficiently large $c$, $Y_\infty$ conveniently decomposes as a product of a compact manifold and a ray:
\begin{align*}	
		Y_\infty \approx & \; (N \cap \Gamma)\backslash N \times (M^1 \cap \Gamma)\backslash M^1/(M^1 \cap K) \times (c, \infty) \\
		\approx & (N \cap \Gamma)\backslash N \times (M^1_1 \cap \Gamma)\backslash M^1_1/(M^1_1 \cap K) \times (M^1_2 \cap \Gamma)\backslash M^1_2/(M^1_2 \cap K) \times (c, \infty) \\
		\approx & \text{ compact } \times \text{ compact } \times \text{ compact } \times \text{ ray }  
\end{align*}	
The Casimir operator on $G$ is
$$
\Omega_G = \Omega_\infty = \sum_{\nu | \infty} \Omega_\nu
$$
where each $\Omega_\nu$ contains a summand $\Omega_\nu^\prime$ corresponding to the $\nu^{\text{th}}$ factor of $M^1_2$. While $\Omega_\nu^\prime$ is not invariant under $G$, it does descend to the image in $\Gamma\backslash G$ of a sufficiently high Siegel set (i.e., corresponding to a sufficiently large value of the height parameter), allowing separation of variables. In particular, while $M^1$ contributes to the coefficients of the Laplacian tangential to $N$, by application of the units theorem and compactness of anisotopic quotients, in the quotient of the Siegel set these coefficients are bounded since they are continuous coordinates on a compact set (i.e., corresponding to the compact region $D$ in $M$ in the Siegel set).

Summed across archimedean places, since in coordinates $\Omega = \Delta$
\begin{align*}
\Delta =& \; L^y + \Delta_{M^1_1} + y^2\Delta^{\scriptscriptstyle M^1}_N + \Omega^\prime = C_2 \, y^2 \frac{\partial^2}{\partial y^2} + C_1 \, y \frac{\partial}{\partial y} + y^2\Delta^{\scriptscriptstyle M^1}_N + \Delta_{M^1}
\end{align*}
for explicit constants $C_2$ and $C_1$, where $\Delta_{M^1}$ combines $\Omega^\prime$ (Casimir on $M^1_2$) and $\Delta_{M^1_1}$, and where $\Delta^{\scriptscriptstyle M^1}_N$ includes the partial derivatives on $N$ whose coefficients depend on the $M^1$ coordinates $u$ and $h$. The coefficients in the more global expressions are tuples, namely elements of $k_\infty$.

The dependence of $\Delta$ on $M^1$, specifically that of $\Delta^{\scriptscriptstyle M^1}_N$, manifests at real places in the coefficients for the second order terms involving derivatives in directions tangent to $N$. We recall an elementary result to illustrate that this dependence is uniform. Let $X$ be the open cone of (strictly) positive-definite real $n$-by-$n$ symmetric matrices. Let $G=GL(n,\mathbb{R})$ act on $X$ by $g(x)=g^\top \cdot  x \cdot g$. Then, given a compact subset $C$ of $G$, the image $C \cdot 1=\{ g(1_n): g \in C \}$ is clearly compact. 
\begin{lemma}\label{lemma-bded-elliptic-laplacian}
Given a compact subset $Y$ of $X$, there are positive real numbers $a$ and $b$ such that $a \cdot 1_n$ is a uniform lower bound for $Y$ and $b \cdot 1_n$ is a uniform upper bound for $Y$ in $X$, in the sense that $A - a \cdot 1_n$ is positive definite, and $b \cdot 1_n - A$ is positive definite, for all $A \in Y$.	
\end{lemma}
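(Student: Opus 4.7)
The plan is to reduce the claim to the continuity of the extremal eigenvalue functions and compactness of $Y$. First I would observe that, for a real symmetric matrix $A$, the condition $A - a \cdot 1_n$ is positive definite is equivalent to $\lambda_{\min}(A) > a$, and similarly $b \cdot 1_n - A$ is positive definite is equivalent to $\lambda_{\max}(A) < b$. So the whole lemma will follow once we show that on the compact set $Y \subset X$ the functions $A \mapsto \lambda_{\min}(A)$ and $A \mapsto \lambda_{\max}(A)$ are respectively bounded below by a positive number and bounded above by a finite number.

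Next I would invoke the standard variational characterizations
\begin{equation*}
\lambda_{\min}(A) \;=\; \inf_{\|v\|=1} \langle A v, v\rangle, \qquad \lambda_{\max}(A) \;=\; \sup_{\|v\|=1}\langle A v, v\rangle,
\end{equation*}
taken over $v$ in the unit sphere of $\mathbb{R}^n$. Since the unit sphere is compact and the map $(A,v) \mapsto \langle A v, v\rangle$ is jointly continuous, both functions $A \mapsto \lambda_{\min}(A)$ and $A \mapsto \lambda_{\max}(A)$ are continuous on the space of symmetric matrices. By compactness of $Y$, these continuous functions attain their extrema: there exist $A_*, A^* \in Y$ with
\begin{equation*}
\alpha \;:=\; \lambda_{\min}(A_*) \;=\; \min_{A \in Y} \lambda_{\min}(A), \qquad \beta \;:=\; \lambda_{\max}(A^*) \;=\; \max_{A \in Y} \lambda_{\max}(A).
\end{equation*}

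Since every $A \in Y$ lies in the open cone $X$ of positive-definite matrices, we have $\lambda_{\min}(A) > 0$ for each $A \in Y$; in particular $\alpha = \lambda_{\min}(A_*) > 0$, and of course $\beta < +\infty$. Choose any $0 < a < \alpha$ and any $b > \beta$. Then for every $A \in Y$ and every unit vector $v$, $\langle (A - a \cdot 1_n) v, v\rangle = \langle A v, v\rangle - a \geq \alpha - a > 0$ and $\langle (b \cdot 1_n - A) v, v\rangle = b - \langle A v, v\rangle \geq b - \beta > 0$, so $A - a \cdot 1_n$ and $b \cdot 1_n - A$ are positive definite uniformly over $Y$. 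There is no serious obstacle here; the only point requiring any care is the continuity of $\lambda_{\min}$ and $\lambda_{\max}$, which is immediate from the variational formulas. $\square$
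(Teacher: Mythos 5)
Your proof is correct and follows essentially the same route as the paper's: bounding the Rayleigh quotient $x^\top A x$ on the unit sphere from below by a positive constant and from above by a finite one. In fact you are more careful than the paper, which fixes a single $A$ when choosing $a$ and $b$ and leaves the uniformity over $Y$ implicit; your explicit appeal to continuity of $\lambda_{\min}$, $\lambda_{\max}$ and compactness of $Y$ is exactly what is needed to make that step rigorous.
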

\begin{proof}
For all $x \in \mathbb{R}^n$ and all $A \in Y \subset X$, $x^\top \cdot A \cdot x > 0$ so clearly
$$
0 < \inf_{|x|= 1} x^\top \cdot A \cdot x \; < \; \sup_{|x|= 1} x^\top \cdot A \cdot x 	< +\infty
$$
Given $a$, $b$ such that $0 < a < \inf_{|x|= 1} x^\top \cdot A \cdot x$ and $\sup_{|x|= 1} x^\top \cdot A \cdot x < b$ then:
\begin{align*}
&	x^\top \cdot (b \cdot 1_n - A) \cdot x = b \cdot |x|^2 - x^\top \cdot A \cdot x > 0 \\
&	x^\top \cdot (A - a \cdot 1_n) \cdot x =  x^\top \cdot A \cdot x - a \cdot |x|^2 > 0
\end{align*}$\square$
\end{proof}
\noindent
An analogous argument shows the result equally applies to hermitian matrices under similar assumptions. 

The Laplacian $\Delta_N$ of $N$ considered as a subgroup of $G$ corresponds to the operator $\Delta^{\scriptscriptstyle M^1}_N$ at $u = \text{identity} \in M^1_1$ and $h = \text{identity} \in M^1_2$ so we can meaningfully speak of the {\it pure} Laplacian $\Delta_N$ of $N$. We next show that the {\it perturbed} Laplacian $\Delta^{\scriptscriptstyle M^1}_N$, which is roughly the Laplacian of the coset of $N$ corresponding to a value in $M^1$, can be bounded above and below in terms of scalar multiples of the pure Laplacian $\Delta_N$. Specifically, the uniform boundedness in the quotient $\Gamma\backslash G/K$ of the coefficient $u \in M^1_1$ is a consequence of the units theorem and that of elements $h \in M^1_2$ is a consequence of the compactness of anisotropic quotients.

Temporarily ignore the dependence on $y^2$. By the compactness of anisotropic quotients and the units theorem, there are representatives for $M^1 = M^1_1 \cdot M^1_2$ that are contained in a compact subset of a high-enough Siegel set. Thus, at all archimedean places $\nu$, the coefficients for the second order terms parallel to $N$ are restricted to a compact set and thus meet the positivity, compactness and symmetry criteria of ~\cref{lemma-bded-elliptic-laplacian}. 

Now we establish the uniform estimate on $\Delta^{\scriptscriptstyle M^1}_N$. Let $\partial_{\nu, j}$ be either the corresponding real or complex linear operator with corresponding conjugate $\overline{\partial_{\nu, j}}$ and coefficient matrix $Z_{\nu, jk}$ computed above. Letting $f \in C^\infty_c(\Gamma\backslash G/K)$, so that
\begin{align*}
& \int_{(N \cap \Gamma)\backslash N} \int_{(M^1_2 \cap \Gamma)\backslash M^1_2/(M^1_2 \cap K)}  \Bigg( \sum_{\nu | \infty} \; \bigg( \sum_{j,k=1}^{r-1} Z_{\nu, jk} \, \overline{\partial_{\nu,j}} \, \partial_{\nu,k} \bigg) f \cdot \bar{f} \Bigg) \, dg \\
&= \int_{(N \cap \Gamma)\backslash N} \int_{(M^1_2 \cap \Gamma)\backslash M^1_2/(M^1_2 \cap K)}  \; \Bigg( \sum_{\nu | \infty} \; \bigg(\sum_{j=1}^{r-1} \, \overline{\partial_{\nu,j}} \, \sum_{k=1}^{r-1} Z_{\nu, jk} \, \partial_{\nu,k} \bigg) f \cdot \bar{f} \Bigg) \, dg 
\end{align*}
Using integration by parts
\begin{align*}
&= - \int_{(N \cap \Gamma)\backslash N} \int_{(M^1_2 \cap \Gamma)\backslash M^1_2/(M^1_2 \cap K)}  \sum_{\nu | \infty} \Bigg( \sum_{j=1}^{r-1}  \big( \sum_{k=1}^{r-1} Z_{\nu, jk} \, \partial_{\nu,k} f \big)  \cdot \overline{\partial_{\nu,j} f}  \Bigg) \, dg \\
&= - \int_{(N \cap \Gamma)\backslash N} \int_{(M^1_2 \cap \Gamma)\backslash M^1_2/(M^1_2 \cap K)}  \sum_{\nu | \infty} \Bigg( \sum_{j, k=1}^{r-1} Z_{\nu, jk} \, \partial_{\nu,k} f \; \overline{\partial_{\nu,j} f} \Bigg) \, dg 
\end{align*}
By the units theorem and compactness of anisotropic quotients, the $(Z_{\nu, jk})$ are covered by a compact preimage in $M^1$ so that  a corresponding form of the lemma holds with constants $a$ and $b$ for the {\it pure} Laplacian $\Delta_\nu$ of $N_\nu$. Reverse the integration by parts, which changes the signs back and thus there are constants $a$ and $b$ so that
\begin{align*}
a \cdot \int_{(N \cap \Gamma)\backslash N} & \int_{(M^1_2 \cap \Gamma)\backslash M^1_2/(M^1_2 \cap K)} \bigg( \sum_{\nu | \infty}  \Delta_{N_\nu} f \cdot \bar{f} \bigg)  \, dg \\
< & \; \int_{(N \cap \Gamma)\backslash N}  \int_{(M^1_2 \cap \Gamma)\backslash M^1_2/(M^1_2 \cap K)} \Bigg( \sum_{\nu | \infty}  \bigg( \sum_{j,k=1}^{r-1} Z_{\nu, jk} \overline{\partial_{\nu,j}}\partial_{\nu,k} \bigg) f \cdot \bar{f} \Bigg) \, dg \\
& \quad < \; b \cdot \int_{(N \cap \Gamma)\backslash N} \int_{(M^1_2 \cap \Gamma)\backslash M^1_2/(M^1_2 \cap K)} \bigg( \sum_{\nu | \infty}  \Delta_{N_\nu} f \cdot \bar{f} \bigg) \, dg 
\end{align*}
which implies that we can use the pure Laplacian $\Delta_N$ to analyze expressions involving $\Delta^{\scriptscriptstyle M^1}_N$ since the latter is bounded above and below by the former.

Combining these bounds implies the only non-compact coefficient is the {\it ray} variable $y$ in $A^+$ and we can select values $a$ and $b$ so that
\begin{align*}
&  a \cdot \int_{(N \cap \Gamma)\backslash N}  \int_{(M^1_1 \cap \Gamma)\backslash M^1_1/(M^1_1 \cap K)}  \int_{(M^1_2 \cap \Gamma)\backslash M^1_2/(M^1_2 \cap K)}  \int_{(c, \infty)}  y^2\Delta_N f \cdot \bar{f} \, dg \\
&<	\int_{(N \cap \Gamma)\backslash N}  \int_{(M^1_1 \cap \Gamma)\backslash M^1_1/(M^1_1 \cap K)}  \int_{(M^1_2 \cap \Gamma)\backslash M^1_2/(M^1_2 \cap K)}  \int_{(c, \infty)} y^2\Delta^{\scriptscriptstyle M^1}_N f \cdot \bar{f} \, dg \\
& \quad < b \cdot \int_{(N \cap \Gamma)\backslash N}  \int_{(M^1_1 \cap \Gamma)\backslash M^1_1/(M^1_1 \cap K)}  \int_{(M^1_2 \cap \Gamma)\backslash M^1_2/(M^1_2 \cap K)}  \int_{(c, \infty)}  y^2\Delta_N f \cdot \bar{f} \, dg 
\end{align*}
We have established both that $\Delta^{\scriptscriptstyle M^1}_{N_\nu}$ is negative definite at all places (so that, more globally, both $\Delta^{\scriptscriptstyle M^1}_N$ and $\Omega^\prime$ are negative-definite), and also that we may use the more convenient operator $\Delta_N$ in place of $\Delta^{\scriptscriptstyle M^1}_{N_\nu}$ in our analysis. Thus, we will next focus on exhibiting estimates for $L_y$. We then combine all of these estimates to demonstrate a Rellich-type compactness result which is used to demonstrate the discrete spectrum of $\Delta$, the discrete decomposition of pseudo-cuspforms and the meromorphic continuation of certain Eisenstein series.

%
%

\subsection{Positivity of fragments of $-\Delta$ on $G/K$}\label{Postivity-of-fragments-of-Delta-G(R)}

%
%
The Lax-Phillips argument requires not only that $-\Delta$ itself be non-negative but that the natural summands of $-\Delta$ in Iwasawa coordinates are also non-negative. The Laplacian on the quotient $\Gamma\backslash G/K$ is
$$
\Delta = L^y + y^2\Delta^{\scriptscriptstyle M^1}_N + \Omega^\prime
$$
where the right hand side is summed over infinite places. The middle term $\Delta^{\scriptscriptstyle M^1}_N$ was previously shown to be bounded above and below by the Laplacian for $N$. Thus, up to scalar multiples, we can write
$$
\Delta = L^y + y^2\Delta_N + \Omega^\prime
$$
The global $-\Delta$ is positive since the Laplacian of $G$ descends to the quotient. The middle and right-hand terms, $-y^2\Delta_N$ and $-\Omega^\prime$, are positive since the Laplacian of $N$ can be mapped to the Laplacian on $\mathbb{R}^{r-1}$ and $-\Omega^\prime$ corresponds to the Laplacian on a semisimple group.

Note that the terms $y^2\Delta_N$ and $\Omega^\prime$ do not contain derivatives parallel to $A^+$. Thus since the global $-\Delta$ is positive, the sum of the operators defined by the right hand side is positive. Since two of the three terms are positive, this implies the operator $-L_y$ is positive as restricting $-\Delta$ to functions on $A^+$ (and constant on the other directions) will isolate derivatives in the $y$-direction so $-L_y$ must be a positive operator.

%
%

\subsection{For $a \gg 1 \; \mathscr{D}_a \text{ is dense in } L^2_a(\Gamma \backslash G /K)$ }\label{sec:Density-of-Automorphic-Test-Functions-G}

To establish notation:
\begin{align*}
C^\infty_c(\Gamma \backslash G /K) =&  \text{ right } K \text{-invariant functions in } C^\infty_c(\Gamma \backslash G) = C^\infty_c(\Gamma \backslash G)^K \\
L^2(\Gamma \backslash G /K) =& \text{ right } K \text{-invariant functions in }L^2(\Gamma \backslash G) = L^2(\Gamma \backslash G)^K \\
|f|^2_{\frak{B}^1} = & \langle (1 - \Delta)f , f \rangle = \langle f ,f \rangle + \langle (-\Delta)f,f \rangle \\
\frak{B}^1 = & \text{ completion of } C^\infty_c(\Gamma \backslash G / K) \text{ with respect to the } \frak{B}_1 \text{ norm} \\
\eta(n m_y k) = & \; y^r (\text{In some circumstances, which will be clear, this may be } r-1)\\
L^2_a(\Gamma \backslash G /K) = & \; \{ f \in L^2(\Gamma \backslash G /K) \; : \; c_P f (g) = 0 \text{ for } \eta(g) \geq a \} \\
 =& \text{ {\it pseudo-cuspforms} with cut-off height } a \\
\mathscr{D}_a = & \; C^\infty_c(\Gamma \backslash G /K) \cap L^2_a(\Gamma \backslash G /K) \\
\Delta_a = &\; \Delta \text{ restricted to } \mathscr{D}_a \\
\frak{B}^1_a = & \text{ closure of } \mathscr{D}_a \text{ in } L^2_a(\Gamma \backslash G /K) \\
\end{align*}
%
%

Let $G = G(\mathbb{R})$ have a maximal compact $K$ and minimal parabolic subgroup and Iwasawa-Levi-Mal'cev decomposition
$$
G = PK = NMK \approx N \cdot A^+ \cdot M^1 \cdot K \approx N \cdot A^+ \cdot M^1_1 \cdot M^1_2 \cdot K
$$
The model of $G/K$ is
$$
G/K \approx N \cdot A^+ \cdot M^1_1/(M^1_1 \cap K) \cdot M^1_2/(M^1_2 \cap K)
$$
Use Iwasawa coordinates on $G/K$:
$$
g= n_x a_y m_1(u) m_2(h) \quad n_x \in N \enspace a_y \in A^+ \enspace m_1(u) \in M^1_1 \enspace m_2(h) \in M^1_2
$$
with
$$
m = m_1(u) m_2(h)
$$
coordinates on $M^1$. For convenience and to simplify expressions, we will write Iwasawa coordinates as $(x,y,u,h)$ unless indicated.
\begin{lemma}\label{density-of-auto-test-fns-on1}
	For $a \gg 1 \; \mathscr{D}_a \text{ is dense in } L^2_a(\Gamma \backslash G /K)$
\end{lemma}
\begin{proof}
Recall that a (standard) Siegel set is a subset of $G$ given by a compact set $C \subset N$, a compact subset $D \subset M^1$ and a (height) parameter $t$
$$
\frak{S}_{t,C,d} = \{ nm a_y k \; : \; n \in C, m \in D, k \in K, a_y \in A^+ \text{ where } y \geq t \}
$$
Reduction theory implies that there is a sufficiently small $t$ and compact sets $C_i \subset N$ and $D_i \subset M^1$, along with a finite number of elements $g_i \in G(\mathbb{Q})$ so that $\Gamma$-translates of the union of the $g_i$-translates of the standard Siegel sets $\frak{S}_i = \frak{S}_{t,C_i,D_i}$ cover $G$:
	$$
	\Gamma \cdot \bigg( \bigcup_i g_i \cdot \frak{S}_i \bigg) = G
	$$
	Such a collection of Siegel sets clearly surjects to the quotients $\Gamma\backslash G$ and $\Gamma\backslash G/K$. Given such a collection $\frak{S}$ of Siegel sets, reduction theory then guarantees the existence of a height $t_o \gg t$ so that the subsets $\frak{S}^i_{t_o}$ of $\frak{S}^i_{t,C,D}$ given by the height condition
	$$
	\frak{S}^i_{t_o} = \big\{ g  \in \frak{S}^i_{t,C,D} \big| \enspace g = n \cdot m \cdot k \enspace \text{such that } m = m^\prime a_y \text{ and the height of } a_y = \eta(a_y) > t_o \big\}
	$$
	satisfy, {\it for all $i$ and $j$}, $\frak{S}^i_{t_o} \cap \gamma \frak{S}^j_{t_o} \ne \emptyset$ implies $\gamma \in \Gamma \cap P = \Gamma_\infty$. This implies the existence of a sufficiently large height so that high-enough portions of the Siegel sets in $\frak{S}$ do not interact in the quotient: the associated cusps can be treated separately. Since there is a finite collection of Siegel sets, common bounds can be taken to work for all the Siegel sets. In particular and without loss of generality, this allows us to simplify our notation by examining a single Siegel set as the results apply to any finite collection of cusps.
Fix such a $\frak{S}_{t_o}$ and let $\frak{S}_a$ be the subset of $\frak{S}_{t_o}$ given by
$$
\frak{S}_a = \big\{ g \in \frak{S}_{t_o} \big| \enspace \eta(g) = \eta(n_g a_g) = \eta(a_y) > a \big\}
$$
By reduction theory, there is a height $a \gg 1$ so that $\frak{S}_a$ satisfies $\frak{S}_a \cap \gamma \frak{S}_a \ne \emptyset$ implies $\gamma \in \Gamma \cap P = \Gamma_\infty$. In the following, $f \in L^2_a(\Gamma \backslash G /K)$ is first approximated by test functions $f_r \in C^\infty_c(\Gamma \backslash G /K)$ by general methods, and then the condition $a \gg 1$ is used in conjunction with a family of smooth cut-off functions of the constant near height $a$, with the width of cut-off region shrinking to $0$.

Per the above, take $a \gg 1$ so that $\frak{S}_{a - \frac{1}{t}}$ meets its translates $\gamma \frak{S}_{a - \frac{1}{t}}$ only for $\gamma \in \Gamma_\infty$, for all sufficiently large $t$. This allows separation of variables in $\frak{S}_{a - \frac{1}{t}}$ in the sense that the cylinder $C_{a - \frac{1}{t}} =  (\Gamma \cap P) \backslash \frak{S}_{a - \frac{1}{t}}$ injects to $\Gamma \backslash G /K$. Let
$$
|f|^2_{a - \frac{1}{t}} = \int_{C_{a - \frac{1}{t}}} |f(x,y,u,h)|^2 \, dg \leq  \int_{\Gamma \backslash G /K} |f(x,y,u,h)|^2 \, dg = |f|^2_{L^2}
$$
Let $f_r \in C^\infty_c(\Gamma \backslash G / K)$ with $f_r \to f$ in $L^2(\Gamma \backslash G / K)$. However, while $f \in L^2_a(\Gamma \backslash G / K)$, one would suspect that the constant terms of the $f_r$ are not too far from that of $f$ (and the difference must be going to zero, even in $L^2(\Gamma \backslash G / K)$), so that a smooth truncation of the constant terms of the $f_r$ should produce functions also approaching $f$. Namely, our strategy is to start with a general, and generic, approximating sequence in $L^2$ and remove the part that is keeping this sequence from being in $L^2_a$.

Using Iwasawa coordinates $(x,y,u,h)$ with $x \in \mathbb{R}^{r-1}$ and $y \in (0, \infty)$, the height is $\eta(x,y,u,h) = \eta(y)$. Let $\beta$ be a smooth function on $\mathbb{R}$ such that
$$
\begin{cases} 
0 = \beta (y) & (\text{for } 0 \leq y < -1) \\
0 \leq \beta (y) \leq 1  & (\text{for } -1 \leq y \leq 0) \\
1 =\beta (y)  & (\text{for } 0 \leq y) 
\end{cases}
$$
For $t > 1$, put $\beta_t(y) = \beta(t(y - a))$, and define a smooth function on $N \backslash G /K$ by
$$
\phi_{r,t}(a_y m_1(u) m_2(h)) = 
\begin{cases} 
\beta_t(\eta(y))\cdot c_P f_r(y) & (\text{for } \eta(y) \geq a - \frac{1}{t}) \\
0   & (\text{for } \eta(y) < a - \frac{1}{t})
\end{cases}
$$
For $t > 0$ large enough so that $\frak{S}_{a - \frac{1}{t}}$ meets its translates $\gamma \frak{S}_{a - \frac{1}{t}}$ only for $\gamma \in \Gamma_\infty$, let $\Psi_{r,t} = \Psi_{\phi_{r,t}}$ be the pseudo-Eisenstein series made from $\phi_{r,t}$:
$$
\Psi_{r,t}(x,y,u,h) = \sum_{\gamma \in \Gamma_\infty \backslash \Gamma} \phi_{r,t}(\gamma \cdot n_x a_y m_1(u) m_2(h))
$$
The assumption on $t$ assures that in the region $y^r > a - \frac{1}{t}$ we have $\Psi_{r,t} = c_P \Psi_{r,t} = \phi_{n.t}$. Thus $c_P(f_r - \Psi_{r,t})$ vanishes in $y \geq a$, so $f_r - \Psi_{r,t} \in L^2_a(\Gamma \backslash G /K)$, as desired.

By the triangle inequality
$$
|f - (f_r -\Psi_{r,t})|_{L^2} \leq |f - f_r|_{L^2}  + |\Psi_{r,t}|_{L^2} 
$$
where by assumption $|f -f_r|_{L^2} \to 0$. Thus it suffices to show that the $L^2$ norm of $\Psi_{r,t}$ goes to $0$ for large $n$ and $t$. Since $a \gg 1$,
$$
|\Psi_{r,t}|_{L^2}  = |\Psi_{r,t}|_{C_{a - \frac{1}{t}}} = |\phi_{r,t}|_{C_{a - \frac{1}{t}}} = |\beta(t(y-a)) \cdot c_P f_r|_{C_{a - \frac{1}{t}}} \leq |c_P f_r|_{C_{a - \frac{1}{t}}}
$$
The cylinder $C_{a - \frac{1}{t}}$ admits a natural translation action of the product of circle groups (i.e., $\mathbb{T}^{r-1} \approx (N \cap \Gamma) \backslash N$), inherited from the translation of the $x$-component in Iwasawa coordinates $n_x a_y m_1(u) m_2(h)$. This induces an action of $\mathbb{T}^{r-1}$ on $L^2(C_{a - \frac{1}{t}})$ with the norm $|\cdot |_{C_{a - \frac{1}{t}}}$. Thus, the map $f \to c_P f$ is given by a continuous, compactly-supported, $L^2(C_{a - \frac{1}{t}})$-valued integrand, which exists as a Gelfand-Pettis integral (cf. \S [14.1] in \cite{PBG}). This implies that the restriction of $c_P f_r$ to $C_{a - \frac{1}{t}}$ goes to $c_P f$ in $L^2(C_{a - \frac{1}{t}})$. As $c_P f$ is supported in the range $\eta(g) \leq a$ and the measure of $C_a - C_{a - \frac{1}{t}}$ goes to $0$ as $t \to +\infty$, the $C_{a - \frac{1}{t}}$-norm of $c_P f$ also goes to $0$ as $t \to +\infty$, since $c_P f$ is locally integrable.

In particular, this implies the diagonal terms $\Psi_{n.n}$ go to $0$ in $L^2$ norm, so that $f_r - \Psi_{r,n}$ go to $f$ in $L^2$ norm, proving the density of $\mathscr{D}_a$ in $L^2_a$. $\square$
\end{proof}

%
%

\subsection{$L^2(\Gamma\backslash G(\mathbb{R})/K)$ norms of truncated tails go to $0$ strongly}\label{L2-norms-zero-strongly-G}
Recall that
\begin{align*}
|f|^2_{\frak{B}^1} = & \langle (1 - \Delta)f , f \rangle = \langle f ,f \rangle + \langle (-\Delta)f,f \rangle \\
L^2_a(\Gamma \backslash G /K) = & \; \{ f \in L^2(\Gamma \backslash G /K) \; : \; c_P f (g) = 0 \text{ for } \eta(g) \geq a \} \\
\mathscr{D}_a = & \; C^\infty_c(\Gamma \backslash G /K) \cap L^2_a(\Gamma \backslash G /K) \\
\frak{B}^1_a = & \text{ completion of } \mathscr{D}_a \text{ with respect to the } \frak{B}^1 \text{ norm} 
\end{align*}
\begin{lemma}\label{sec:L2a-tail-norm-bded-by-B1a-norm}
Let $B$ be the unit ball in $\frak{B}^1_a(\Gamma \backslash G /K)$ then, given $\varepsilon > 0$, a cutoff $c \gg a$ can be made sufficiently large so that the image of $B$ in $L^2_a(\Gamma \backslash G /K)$ lies in a single $\varepsilon$-ball in $L^2_a(\Gamma \backslash G /K)$. That is, for $f \in \frak{B}^1_a(\Gamma \backslash G /K)$, the integral of the {\it cutoff tail}
\begin{align*}
\lim_{c \to \infty}  \int_{(N \cap \Gamma)\backslash N}  & \int_{(M^1_1 \cap \Gamma)\backslash M^1_1/(M^1_1 \cap K)}  \int_{(M^1_2 \cap \Gamma)\backslash M^1_2/(M^1_2 \cap K)}  \int_c^\infty |f|^2 \, dg = 0
\end{align*}
uniformly for $|f|_{\frak{B}^1_a} \leq 1$. This will follow from stronger estimate we prove: that for suitably large $y > c \gg 1$
$$
\int_{(N \cap \Gamma)\backslash N}  \int_{(M^1_1 \cap \Gamma)\backslash M^1_1/(M^1_1 \cap K)}  \int_{(M^1_2 \cap \Gamma)\backslash M^1_2/(M^1_2 \cap K)}  \int_c^\infty |f|^2 \, dg \ll \frac{1}{c^2} \cdot |f|^2_{\frak{B}^1_a}
$$
\end{lemma}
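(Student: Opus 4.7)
The plan is to mirror the arguments from Sections 8.2 and 8.3 for $U(r,1)$ and $Sp^\ast(r,1)$, exploiting the product structure of the collar $Y_\infty \approx X \times (c,\infty)$ with compact cross section
$$
X = (N \cap \Gamma) \backslash N \;\times\; (M^1_1 \cap \Gamma) \backslash M^1_1/(M^1_1 \cap K) \;\times\; (M^1_2 \cap \Gamma) \backslash M^1_2/(M^1_2 \cap K).
$$
First I would decompose $\Delta$ on $Y_\infty$ into a ray-tangential piece $L^y$ and a piece $S_y$ tangential to $X$. Using the two-sided comparability of $\Delta^{M^1}_N$ with the unperturbed $\Delta_N$ established just above (units theorem plus compactness of anisotropic quotients, via \cref{lemma-bded-elliptic-laplacian}), it suffices up to a multiplicative constant to work with
$$
S_y \;=\; y^2 \Delta_N \;+\; \Omega^\prime \;+\; \Delta_{M^1_1},
$$
and to verify that $-S_y$ has a uniform-in-$y$ spectral gap on the subspace singled out by the Gelfand condition.

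The next step is to produce that uniform lower bound. Write $S_y = S_a + T$ with $S_a = a^2 \Delta_N + \Omega^\prime + \Delta_{M^1_1}$ independent of $y$ and $T = (y^2 - a^2)\Delta_N$. For $y \geq c \geq a$ the operator $-T$ is non-negative by positivity of the $N$-Laplacian, so $\langle -S_y v, v\rangle \geq \langle -S_a v, v\rangle$ for $v \in C^\infty_c(X)$. Each of $-\Delta_N$, $-\Omega^\prime$, $-\Delta_{M^1_1}$ is a non-negative self-adjoint operator on its respective \emph{compact} factor of $X$, with one-dimensional null space spanned by constants; hence $\ker(-S_a) = \mathbb{C}\cdot 1_X$, and $-S_a$ has purely discrete spectrum with first nonzero eigenvalue $\tilde\lambda > 0$. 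Since $f \in L^2_a(\Gamma\backslash G/K)$ satisfies $c_P f(m_y) = 0$ for $y \geq a$, the restriction $f|_{\{y\}\times X}$ has vanishing $N$-integral, hence vanishing total integral over $X$, and so lies in the orthogonal complement of constants on $X$. This yields
$$
\langle -S_y f, f\rangle_X \;\geq\; \tilde\lambda\, \langle f, f\rangle_X \qquad (y \geq c).
$$

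The last step is a routine chain of inequalities. For $f \in \mathscr{D}_a$ supported in $\{y \geq c\}$, integrate against the invariant measure:
\begin{align*}
\int_X \int_c^\infty |f|^2 \, dg
&\;\leq\; \tfrac{1}{\tilde\lambda}\int_X \int_c^\infty (-S_y f)\cdot \bar f \, dg \\
&\;\leq\; \tfrac{1}{c^2\tilde\lambda}\int_X \int_c^\infty (-y^2 S_y f)\cdot \bar f \, dg \\
&\;\leq\; \tfrac{1}{c^2\tilde\lambda}\int_X \int_c^\infty \bigl((-y^2 S_y - L^y)f\bigr)\cdot \bar f \, dg \\
&\;=\; \tfrac{1}{c^2\tilde\lambda}\int_X \int_c^\infty (-\Delta f)\cdot \bar f \, dg \\
&\;\leq\; \tfrac{1}{c^2\tilde\lambda}\, |f|^2_{\frak{B}^1_a},
\end{align*}
where the third inequality uses positivity of the fragment $-L^y$ established in \S 14.5 and the final inequality absorbs the non-negative $\langle f,f\rangle$ into $\langle(1-\Delta)f,f\rangle$. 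Density of $\mathscr{D}_a$ in $\frak{B}^1_a$ (the previous subsection) extends the bound to all of $\frak{B}^1_a$.

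The main obstacle, and the only place where the more general setting demands genuinely new input beyond the $O(r,1)$, $U(r,1)$, $Sp^\ast(r,1)$ arguments, is the uniform spectral gap $\tilde\lambda > 0$ for $-S_a$. This rests on genuine compactness of each of $(M^1_1 \cap \Gamma)\backslash M^1_1/(M^1_1 \cap K)$ and $(M^1_2 \cap \Gamma)\backslash M^1_2/(M^1_2 \cap K)$: the first comes from Dirichlet's units theorem applied to $\{b \in k_\infty^\times : N_{k/\mathbb{Q}}(b)=1\}$, and the second from compactness of anisotropic quotients applied to $O(S^\prime)$.
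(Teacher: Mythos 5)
Your overall strategy (a uniform spectral gap for the cross-sectional operator on the compact fibre $X$, then trading a factor $y^2$ for $c^2$) is a legitimate route, but the inequality chain as written does not close, and the failure is not cosmetic. You define $S_y$ to be the \emph{full} piece of $\Delta$ tangential to $X$, so that $\Delta = L^y + S_y$ with $S_y = y^2\Delta_N + \Omega^\prime + \Delta_{M^1_1}$; note that only the $N$-part of $S_y$ carries the coefficient $y^2$. You then transplant the $U(r,1)$ chain, which inserts an extra $y^2$: your display asserts $-y^2 S_y - L^y = -\Delta$. With your convention this is false: $-y^2 S_y - L^y = -\Delta + (y^2-1)(-S_y)$, and since $-S_y \geq 0$ as a quadratic form this \emph{exceeds} $-\Delta$ rather than equalling it, so the last two steps of the chain run in the wrong direction and the bound $\ll c^{-2}|f|^2_{\frak{B}^1_a}$ is not obtained. (In the paper's $U(r,1)$ and $Sp^\ast(r,1)$ sections the symbol $S_y$ denotes the tangential operator with the overall $y^2$ stripped off, so that $y^2 S_y$ \emph{is} the tangential part; you have changed the convention but kept the algebra.)

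The reason this is not fixed by a renaming: if you strip the $y^2$ off correctly you need a uniform-in-$y$ gap for $-\Delta_N - y^{-2}(\Omega^\prime + \Delta_{M^1_1})$, and as $y \to \infty$ this degenerates to $-\Delta_N$, whose kernel on $X$ is the whole space of $N$-independent functions --- far larger than the constants. So orthogonality to constants, which is all your $\tilde\lambda$ argument uses, is insufficient; you must use the full strength of the Gelfand condition, namely that each slice of $f$ is orthogonal to \emph{every} $N$-independent function, to get $\langle -\Delta_N f, f\rangle_X \geq \lambda_1 \langle f,f\rangle_X$ with $\lambda_1 > 0$ the first nonzero eigenvalue of $-\Delta_N$ on the compact factor $(N\cap\Gamma)\backslash N$. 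The correct chain is then $\int |f|^2 \leq \lambda_1^{-1}\int (-\Delta_N f)\bar f \leq (\lambda_1 c^2)^{-1}\int(-y^2\Delta_N f)\bar f \leq (\lambda_1 c^2)^{-1}\int(-\Delta f)\bar f$, discarding $-L^y$, $-\Omega^\prime$, $-\Delta_{M^1_1}$ as positive terms: only the $\Delta_N$ fragment can be the source of the $1/c^2$, because only it carries the $y^2$ coefficient. This is what the paper does; since $N$ is abelian in this general orthogonal setting it obtains $\lambda_1$ directly from Fourier series and Plancherel on $(N\cap\Gamma)\backslash N \approx \mathbb{T}^\ell$, with no need for the $S_a + T$ perturbation device, and your appeal to the units theorem and compactness of anisotropic quotients is needed only to discard $\Omega^\prime$ and $\Delta_{M^1_1}$ as positive fragments and to compare $\Delta^{\scriptscriptstyle M^1}_N$ with $\Delta_N$, not to produce the spectral gap.
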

\begin{proof}
%
%
%
%
Use Iwasawa coordinates on $G/K$:
$$
g= n_x a_y m_1(u) m_2(h) \quad n_x \in N \enspace a_y \in A^+ \enspace m_1(u) \in M^1_1 \enspace m_2(h) \in M^1_2
$$
which we write as $(x,y,u,h)$ for convenience. Recall that the Laplacian $\Delta$ on the quotient can be written as
$$
\Delta = L^y + y^2\Delta_N + \Omega^\prime
$$
since the $M^1$-dependent Laplacian $\Delta^{\scriptscriptstyle M^1}_N$ can be dominated by scalar multiples of the pure Laplacian $\Delta_N$ on $N$. Conveniently, the unipotent radical $N$ is commutative so that we may use Fourier methods in conjunction with $\Delta_N$. Set $\ell = (r-1)[k:\mathbb{Q}]$, let $\xi$ run over characters of $(N \cap \Gamma)\backslash N \approx \mathbb{T}^\ell$, and take height parameters $c \geq c_0 \geq a \gg 1$. With Iwasawa coordinates $(x,y,u,h)$, write the Fourier expansion in $x$ as 
$$
f(x,y,u,h) = \sum_\xi \hat{f}(\xi, y,u,h) \xi(x) \enspace \bigg(\; = \sum_{\xi \in \mathbb{Z}^\ell} \hat{f}(\xi, y,u,h) e^{2 \pi i \xi \cdot x} \bigg)
$$
Since $f \in \frak{B}^1_a$, in Iwasawa coordinates $(x,y,u,h)$
$$
c_P f(w) = \int_{(N \cap \Gamma)\backslash N} f(n_x w) dn_x \; = \int_{(N \cap \Gamma)\backslash N} f(x,y,u,h) e^{2 \pi i \; 0 \cdot x}dn_x \; = \;\hat{f}(0,y) \;\; ( 0 \in \mathbb{Z}^\ell )
$$
so that $\hat{f}(0, y) = 0$ when $y \geq c \gg a$. By Plancherel in $x$ 
\begin{align*}
& \int_{N_\mathbb{Z} \backslash N_\mathbb{R}} \int_{(M^1_1 \cap \Gamma)\backslash M^1_1/(M^1_1 \cap K)}  \int_{(M^1_2 \cap \Gamma)\backslash M^1_2/(M^1_2 \cap K)} \int_{y \geq c} |f|^2 \, \, dg \\
&= \int_{(N \cap \Gamma) \backslash N} \int_{(M^1_1 \cap \Gamma)\backslash M^1_1/(M^1_1 \cap K)}  \int_{(M^1_2 \cap \Gamma)\backslash M^1_2/(M^1_2 \cap K)} \int_{y > c} |f(x,y,u,h)|^2 \, \, dg \\
&= \sum_{\xi \in \mathbb{Z}^{r-1}} \int_{(M^1_1 \cap \Gamma)\backslash M^1_1/(M^1_1 \cap K)} \int_{(M^1_2 \cap \Gamma)\backslash M^1_2/(M^1_2 \cap K)} \int_{y \geq c} |\hat{f}(\xi,y,u,h)|^2 \, \, dg
\end{align*}
Since $\hat{f}(0, y) = 0$ when $y \geq c$, the sum is over $\xi \neq 0$, so that $|\xi| \geq 1$ and
\begin{align*}
&\sum_\xi \int_{(M^1_1 \cap \Gamma)\backslash M^1_1/(M^1_1 \cap K)}  \int_{(M^1_2 \cap \Gamma)\backslash M^1_2/(M^1_2 \cap K)} \int_{y \geq c} |\hat{f}(\xi, y,u,h)|^2 \, \, dg \\
& \ll \; \sum_\xi \int_{(M^1_1 \cap \Gamma)\backslash M^1_1/(M^1_1 \cap K)} \int_{(M^1_2 \cap \Gamma)\backslash M^1_2/(M^1_2 \cap K)}\int_{y \geq c} |\xi|^2 \cdot|\hat{f}(\xi, y,u,h)|^2 \, \, dg
\end{align*}
With $\Delta_N$ the Euclidean Laplacian in the $x$ coordinate on $N$,
$$
|\xi|^2 \cdot \hat{f}(\xi,y) = \frac{1}{4 \pi^2} (- \Delta_N f)\widehat{\;\;}(\xi,y) \ll (- \Delta_N f)\widehat{\;\;}(\xi,y,u,h)
$$
Substituting this back and applying Plancherel
\begin{align*}
& \sum_\xi \int_{(M^1_1 \cap \Gamma)\backslash M^1_1/(M^1_1 \cap K)}  \int_{(M^1_2 \cap \Gamma)\backslash M^1_2/(M^1_2 \cap K)} \int_{y \geq c} |\xi|^2 \cdot|\hat{f}(\xi, y,u,h)|^2 \, \, dg \\
\ll & \; \sum_\xi \int_{y \geq c} (- \Delta_N f)\widehat{\;\;}(\xi,y) \overline{\widehat{f}}(\xi, y,u,h) \, \, dg \\
=& \; \int_{(N \cap \Gamma) \backslash N} \int_{(M^1_1 \cap \Gamma)\backslash M^1_1/(M^1_1 \cap K)}  \int_{(M^1_2 \cap \Gamma)\backslash M^1_2/(M^1_2 \cap K)} \int_{y > c} - \Delta_N f \cdot \bar{f} \;\; \, dg
\end{align*}
Again using that $y > c \gg a \gg 1$
\begin{align*}
&\int_{(N \cap \Gamma) \backslash N} \int_{(M^1_1 \cap \Gamma)\backslash M^1_1/(M^1_1 \cap K)} \int_{(M^1_2 \cap \Gamma)\backslash M^1_2/(M^1_2 \cap K)} \int_{y > c} - \Delta_N f \cdot \bar{f} \;\, dg \\
& \leq \; \frac{1}{c^2} \int_{(N \cap \Gamma) \backslash N} \int_{(M^1_1 \cap \Gamma)\backslash M^1_1/(M^1_1 \cap K)} \int_{(M^1_2 \cap \Gamma)\backslash M^1_2/(M^1_2 \cap K)} \int_{y > c} - y^2 \Delta_N f \cdot \bar{f} \; \, dg
\end{align*}
Recall the positivity result
$$
0 \leq \int_{(N \cap \Gamma) \backslash N} \int_{(M^1_1 \cap \Gamma)\backslash M^1_1/(M^1_1 \cap K)} \int_{(M^1_2 \cap \Gamma)\backslash M^1_2/(M^1_2 \cap K)} \int_{y > c}  -L_y f \cdot \bar{f} \, \, dg 
$$
and 
$$
0 \leq \int_{(N \cap \Gamma) \backslash N} \int_{(M^1_1 \cap \Gamma)\backslash M^1_1/(M^1_1 \cap K)} \int_{(M^1_2 \cap \Gamma)\backslash M^1_2/(M^1_2 \cap K)} \int_{y > c}  -\Omega^\prime f \cdot \bar{f} \, \, dg 
$$
so that
\begin{align*}
& \frac{1}{c^2} \int_{(N \cap \Gamma) \backslash N} \int_{(M^1_1 \cap \Gamma)\backslash M^1_1/(M^1_1 \cap K)} \int_{(M^1_2 \cap \Gamma)\backslash M^1_2/(M^1_2 \cap K)} \int_{y > c} -y^2\Delta_N f \cdot \bar{f} \;\, dg \\
& \leq \frac{1}{c^2} \int_{(N \cap \Gamma) \backslash N} \int_{y > c} -\bigg(L^y + y^2\Delta_N + \Omega^\prime  \bigg) f \cdot \bar{f} \;\, dg
\end{align*}
Substituting back, for smooth $f$ with support in $y \geq c \gg a$ and $\Delta = L^y + y^2\Delta_N + \Omega^\prime$
\begin{align*}
& \int_{(N \cap \Gamma) \backslash N} \int_{(M^1_1 \cap \Gamma)\backslash M^1_1/(M^1_1 \cap K)} \int_{(M^1_2 \cap \Gamma)\backslash M^1_2/(M^1_2 \cap K)} \int_{y > c} |f(x,y,u,h)|^2 \; \, dg \\
&\;\; \ll \;\;  \frac{1}{c^2} \int_{(N \cap \Gamma) \backslash N} \int_{(M^1_1 \cap \Gamma)\backslash M^1_1/(M^1_1 \cap K)} \int_{(M^1_2 \cap \Gamma)\backslash M^1_2/(M^1_2 \cap K)} \int_{y > c} -\Delta f \cdot \bar{f} \;\; \, dg
\end{align*}
Of course, also
$$
0 \leq \frac{1}{c^2} \int_{(N \cap \Gamma) \backslash N} \int_{(M^1_1 \cap \Gamma)\backslash M^1_1/(M^1_1 \cap K)} \int_{(M^1_2 \cap \Gamma)\backslash M^1_2/(M^1_2 \cap K)} \int_{y > c} |f(x,y,u,h)|^2 \; \, dg
$$
So adding this to the right side above gives
\begin{align*}
& \int_{(N \cap \Gamma) \backslash N} \int_{(M^1_1 \cap \Gamma)\backslash M^1_1/(M^1_1 \cap K)} \int_{(M^1_2 \cap \Gamma)\backslash M^1_2/(M^1_2 \cap K)} \int_{y > c} |f(x,y,u,h)|^2 \; \, dg \\
& \enspace \ll  \frac{1}{c^2} \int_{(N \cap \Gamma) \backslash N} \int_{(M^1_1 \cap \Gamma)\backslash M^1_1/(M^1_1 \cap K)} \int_{(M^1_2 \cap \Gamma)\backslash M^1_2/(M^1_2 \cap K)} \int_{y > c} (1 -\Delta) f \cdot \bar{f} \;\; \, dg \leq \frac{1}{c^2} \cdot |f|^2_{\frak{B}^1_a}
\end{align*}
as claimed. $\square$
\end{proof}

%
%

\subsection{$\frak{B}^1$ norms of tails are bounded by global $\frak{B}^1$ norms}\label{sec:Global-B1a-bounds-of-smooth-trunc-L2a-tail-norms}
The previous inequality did not directly apply to smooth truncations of $f$ in $\frak{B}^1_a$ near height $c > a$, nor establish that a collection of smooth truncations $\phi_\infty \cdot f$ over all heights $c > a$ can be chosen with $\frak{B}^1$-norms uniformly bounded for $f \in B$.

The following conventions will be used in this section: for fixed height $\eta$, for $t \geq 1$, the smoothly cut-off tail $f^{[t]}$ is described as follows. Let $\phi$ be a smooth function such that $0 \leq \phi \leq 1$ on $(0,\infty)$
$$
\begin{cases} 
0 = \phi (y) & (\text{for } 0 \leq y \leq 1) \\
0 \leq \phi (y) \leq 1  & (\text{for } 1 \leq y \leq 2) \\
1 =\phi (y)  & (\text{for } 2 \leq y) 
\end{cases}
$$
Since $\phi$ is smooth and constant outside the compact interval $[1,2]$, there is a common pointwise bound $C_\phi < \infty$ for $|\phi|$, $|\phi^\prime|$ and $|\phi^{\prime\prime}|$. For $t > 0$, define a smooth cut-off function by
$$
\phi_t(y) = \phi(y/t)
$$
so that $\phi_t(y) \to 0 \enspace \forall y \text{ as } t \to \infty$. Use Iwasawa coordinates on $G/K$:
$$
g= n_x a_y m_1(u) m_2(h) \quad n_x \in N \enspace a_y \in A^+ \enspace m_1(u) \in M^1_1 \enspace m_2(h) \in M^1_2
$$
which we write as $(x,y,u,h)$ for convenience.
For $f \in H^1 = H^1(\Gamma \backslash G/K)$, let $f^{[t]}(x,y,u,h) = \phi_t(y)\cdot f(x,y,u,h)$. 
\begin{lemma}
$|f^{[t]}|_{H^1} \ll |f|_{H^1} \; (\text{implied constant independent of } f \text{ and } t \geq 1)$
\end{lemma}
\begin{proof}
We have
$$
\Omega |_{G/K} = \Delta = L^y + \Delta^{\scriptscriptstyle M^1}_N + \Omega^\prime
$$
Since $0 \leq \phi_t \leq 1$, clearly $|f^{[t]}|_{L^2} = |\phi_t \cdot f|_{L^2} \leq |f|_{L^2}$. For the other part of the $H^1$ norm
\begin{align*}
& \langle -\Delta f^{[t]}, f^{[t]} \rangle =\\
&	 \int_{(N \cap \Gamma) \backslash N} \int_{(M^1_1 \cap \Gamma)\backslash M^1_1/(M^1_1 \cap K)} \int_{(M^1_2 \cap \Gamma)\backslash M^1_2/(M^1_2 \cap K)} \int_{y \geq c} -(L^y + \Delta^{\scriptscriptstyle M^1}_N + \Omega^\prime) f^{[t]} f^{[t]} \, \, dg \\
	=& \int_{(N \cap \Gamma) \backslash N} \int_{(M^1_1 \cap \Gamma)\backslash M^1_1/(M^1_1 \cap K)} \int_{(M^1_2 \cap \Gamma)\backslash M^1_2/(M^1_2 \cap K)} \int_{y \geq c} -(\phi_t(y)^2)(\Delta^{\scriptscriptstyle M^1}_N + \Omega^\prime)f \cdot f \, \, dg \\
	-&  \int_{(N \cap \Gamma) \backslash N} \int_{(M^1_1 \cap \Gamma)\backslash M^1_1/(M^1_1 \cap K)} \int_{(M^1_2 \cap \Gamma)\backslash M^1_2/(M^1_2 \cap K)} \int_{y \geq c} L_y f^{[t]} \cdot f^{[t]} \, dg
\end{align*}
since the $(\Delta^{\scriptscriptstyle M^1}_N + \Omega^\prime)$ factor treats $y$ as a constant. Temporarily ignore the first term in $(\Delta^{\scriptscriptstyle M^1}_N + \Omega^\prime)$ to expand the term $L^y$ with the derivatives in $y$
\begin{align*}
	 & \int_{(N \cap \Gamma) \backslash N} \int_{(M^1_1 \cap \Gamma)\backslash M^1_1/(M^1_1 \cap K)} \int_{(M^1_2 \cap \Gamma)\backslash M^1_2/(M^1_2 \cap K)} \int_{y \geq c} - L_y f^{[t]} \cdot f^{[t]} \, dg \\
	 &=  \int_{(N \cap \Gamma) \backslash N} \int_{(M^1_1 \cap \Gamma)\backslash M^1_1/(M^1_1 \cap K)} \int_{(M^1_2 \cap \Gamma)\backslash M^1_2/(M^1_2 \cap K)} \int_{y \geq c} - (\phi_t(y)^2) L_y f \cdot f \, dg \\
	 &+ \int_{(N \cap \Gamma) \backslash N} \int_{(M^1_1 \cap \Gamma)\backslash M^1_1/(M^1_1 \cap K)} \int_{(M^1_2 \cap \Gamma)\backslash M^1_2/(M^1_2 \cap K)} \int_{y \geq c} \\
	  & \Bigg( 4 \frac{\partial^2 \phi}{\partial y^2}(\frac{y}{t}) \phi(\frac{y}{t})  +  8 \bigg( \frac{\partial \phi}  {\partial y}(\frac{y}{t})\bigg)^2  + C(r) \frac{\partial \phi}{\partial y}(\frac{y}{t})  \phi(\frac{y}{t}) \Bigg) f(x,y)^2 \, dg
\end{align*}
Where $C(r)$ is a constant depending on $r$ analogous to the earlier cases (i.e., since all the factor groups are either $O(p,q)$ or $O(r-1,\mathbb{C})$). Using the common bound $|\phi|, |\phi^\prime|, |\phi^{\prime\prime}| < C_\phi$ we can estimate the last term above by
\begin{align*}
& \bigg| \; \int_{(N \cap \Gamma) \backslash N} \int_{(M^1_1 \cap \Gamma)\backslash M^1_1/(M^1_1 \cap K)} \int_{(M^1_2 \cap \Gamma)\backslash M^1_2/(M^1_2 \cap K)} \int_{y \geq c} \\
& \Bigg( 4 \frac{\partial^2 \phi}{\partial y^2}(\frac{y}{t}) \phi(\frac{y}{t})  +  8 \bigg( \frac{\partial \phi}  {\partial y}(\frac{y}{t})\bigg)^2  + C(r) \frac{\partial \phi}{\partial y}(\frac{y}{t})  \phi(\frac{y}{t}) \Bigg) f(x,y)^2 \, dg \; \bigg| \\
&\leq   \int_{(N \cap \Gamma) \backslash N} \int_{(M^1_1 \cap \Gamma)\backslash M^1_1/(M^1_1 \cap K)} \int_{(M^1_2 \cap \Gamma)\backslash M^1_2/(M^1_2 \cap K)} \int_{y \geq c} \\
& \bigg| \, \Bigg( 4 \frac{\partial^2 \phi}{\partial y^2}(\frac{y}{t}) \phi(\frac{y}{t})  +  8 \bigg( \frac{\partial \phi}  {\partial y}(\frac{y}{t})\bigg)^2  + C(r) \frac{\partial \phi}{\partial y}(\frac{y}{t})  \phi(\frac{y}{t}) \Bigg)\, \bigg| \; \bigg| \, f(x,y)^2 \, \bigg| \, dg 
\end{align*}
\begin{align*}
&\leq   \int_{(N \cap \Gamma) \backslash N} \int_{(M^1_1 \cap \Gamma)\backslash M^1_1/(M^1_1 \cap K)} \int_{(M^1_2 \cap \Gamma)\backslash M^1_2/(M^1_2 \cap K)} \int_{y \geq c} \\
&  \Bigg( 4 \bigg| \, \frac{\partial^2 \phi}{\partial y^2}(\frac{y}{t}) \phi(\frac{y}{t})\, \bigg| +  8 \bigg| \,\bigg( \frac{\partial \phi}  {\partial y}(\frac{y}{t})\bigg)^2\, \bigg|  + \bigg| \,C(r) \frac{\partial \phi}{\partial y}(\frac{y}{t}) \phi(\frac{y}{t}) \, \bigg|\Bigg) \; \bigg| \, f(x,y)^2 \, \bigg| \, dg \\
&\leq   \int_{(N \cap \Gamma) \backslash N} \int_{(M^1_1 \cap \Gamma)\backslash M^1_1/(M^1_1 \cap K)} \int_{(M^1_2 \cap \Gamma)\backslash M^1_2/(M^1_2 \cap K)} \int_{y \geq c} 
 \Bigg( 4 C_\phi^2 +  8 C_\phi^2  + C_\phi^2 \Bigg) \; \bigg| \, f(x,y)^2 \, \bigg| \, dg \\
&\ll  C_\phi^\prime \int_{(N \cap \Gamma) \backslash N} \int_{(M^1_1 \cap \Gamma)\backslash M^1_1/(M^1_1 \cap K)} \int_{(M^1_2 \cap \Gamma)\backslash M^1_2/(M^1_2 \cap K)} \int_{y \geq c} \, \bigg| \, f(x,y)^2 \, \bigg| \, dg 
\end{align*}

The other terms that did not involve $y$ can be added back and $L_y f$ combined with them to get $-\Delta$. The remaining term is bounded above by a constant given by a polynomial in $C_\phi$ and $|f|_{L^2(\Gamma\backslash G/K)}^2$. 
\begin{align*}
& |f^[t]|_{\frak{B}^1_a}^2 = \langle (1 -\Delta) f^{[t]}, f^{[t]} \rangle =\\
&	 \int_{(N \cap \Gamma) \backslash N} \int_{(M^1_1 \cap \Gamma)\backslash M^1_1/(M^1_1 \cap K)} \int_{(M^1_2 \cap \Gamma)\backslash M^1_2/(M^1_2 \cap K)} \int_{y \geq c} f^{[t]} f^{[t]} \, \, dg \\
&	 \int_{(N \cap \Gamma) \backslash N} \int_{(M^1_1 \cap \Gamma)\backslash M^1_1/(M^1_1 \cap K)} \int_{(M^1_2 \cap \Gamma)\backslash M^1_2/(M^1_2 \cap K)} \int_{y \geq c} -(L^y + \Delta^{\scriptscriptstyle M^1}_N + \Omega^\prime) f^{[t]} f^{[t]} \, \, dg 
\end{align*}
\begin{align*}
&=	 \int_{(N \cap \Gamma) \backslash N} \int_{(M^1_1 \cap \Gamma)\backslash M^1_1/(M^1_1 \cap K)} \int_{(M^1_2 \cap \Gamma)\backslash M^1_2/(M^1_2 \cap K)} \int_{y \geq c} (\phi_t(y)^2)|f|^2 \, \, dg \\
& \enspace \int_{(N \cap \Gamma) \backslash N} \int_{(M^1_1 \cap \Gamma)\backslash M^1_1/(M^1_1 \cap K)} \int_{(M^1_2 \cap \Gamma)\backslash M^1_2/(M^1_2 \cap K)} \int_{y \geq c} -(\phi_t(y)^2)(\Delta^{\scriptscriptstyle M^1}_N + \Omega^\prime)f \cdot f \, \, dg \\
	-&  \int_{(N \cap \Gamma) \backslash N} \int_{(M^1_1 \cap \Gamma)\backslash M^1_1/(M^1_1 \cap K)} \int_{(M^1_2 \cap \Gamma)\backslash M^1_2/(M^1_2 \cap K)} \int_{y \geq c} L_y f^{[t]} \cdot f^{[t]} \, dg
\end{align*}
\begin{align*}	
&\ll	 \int_{(N \cap \Gamma) \backslash N} \int_{(M^1_1 \cap \Gamma)\backslash M^1_1/(M^1_1 \cap K)} \int_{(M^1_2 \cap \Gamma)\backslash M^1_2/(M^1_2 \cap K)} \int_{y \geq c} (\phi_t(y)^2)|f|^2 \, \, dg \\
& \enspace -\int_{(N \cap \Gamma) \backslash N} \int_{(M^1_1 \cap \Gamma)\backslash M^1_1/(M^1_1 \cap K)} \int_{(M^1_2 \cap \Gamma)\backslash M^1_2/(M^1_2 \cap K)} \int_{y \geq c} (\phi_t(y)^2)(\Delta^{\scriptscriptstyle M^1}_N + \Omega^\prime)f \cdot f \, \, dg \\
& \enspace - \int_{(N \cap \Gamma) \backslash N} \int_{(M^1_1 \cap \Gamma)\backslash M^1_1/(M^1_1 \cap K)} \int_{(M^1_2 \cap \Gamma)\backslash M^1_2/(M^1_2 \cap K)} \int_{y \geq c} (\phi_t(y)^2) L_y f \cdot f \, dg \\
&+  C_\phi^\prime \int_{(N \cap \Gamma) \backslash N} \int_{(M^1_1 \cap \Gamma)\backslash M^1_1/(M^1_1 \cap K)} \int_{(M^1_2 \cap \Gamma)\backslash M^1_2/(M^1_2 \cap K)} \int_{y \geq c} \, \bigg| \, f(x,y)^2 \, \bigg| \, dg 
\end{align*}
Using that $|\phi| \leq 1$, using a common bound $C$ to combine the first and last terms above, rewrite as $(1 -\Delta)f \cdot f$
\begin{align*}	
&\ll	 \int_{(N \cap \Gamma) \backslash N} \int_{(M^1_1 \cap \Gamma)\backslash M^1_1/(M^1_1 \cap K)} \int_{(M^1_2 \cap \Gamma)\backslash M^1_2/(M^1_2 \cap K)} \int_{y \geq c} (\phi_t(y)^2)(1 -\Delta )f \cdot f \, \, dg \\
& \enspace = |f|_{\frak{B}^1_a}^2
\end{align*}
giving $|f^{[t]}|_{H^1}^2 \ll |f|_{H^1}^2 $.  $\square$
\end{proof}

%
%

\subsection{$\frak{B}^1_a(\Gamma\backslash G/K)$ includes compactly into $L^2_a(\Gamma\backslash G/K)$}\label{sec:G(R)-frak-B-includes-compactly}
Let $k$ be a number field and $G = G(S)_\mathbb{R} = G(\mathbb{R})$ a real Lie group, where $S$ is a $k$-valued $k$-bilinear form on a $k$-$(r+1)$-dimensional vector space with maximal totally isotropic subspace of dimension one. Also let $G$ have subgroups: $P$ a minimal parabolic, $K$ a maximal compact subgroup, and $\Gamma \subset G$ an  arithmetic subgroup. So that we have an Iwasawa-Levi-Mal'cev decomposition of $G$
$$
G = PK = NMK \approx N \cdot A^+ \cdot M^1_1 \cdot M^1_2 \cdot K
$$
with model of $G/K$
$$
G/K \approx N \cdot A^+ \cdot M^1_1/(M^1_1 \cap K) \cdot M^1_2/(M^1_2 \cap K)
$$
We Use Iwasawa coordinates on $G/K$:
$$
g= n_x a_y m_1(u) m_2(h) \quad n_x \in N \enspace a_y \in A^+ \enspace m_1(u) \in M^1_1 \enspace m_2(h) \in M^1_2
$$
which we typically write as $(x,y,u,h)$ for convenience. 
\begin{theorem}
For $G$, $K$ and $\Gamma$ as above, then with $a \gg 1$ the inclusion $\frak{B}^1_a(\Gamma \backslash G/K) \to L^2_a(\Gamma \backslash G/K)$ is compact
\end{theorem}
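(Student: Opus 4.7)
The plan is to mimic the argument already carried out for $O(r,1)$, $U(r,1)$ and $Sp^\ast(r,1)$, with the $M^1$ factors handled by compactness of anisotropic quotients as elsewhere in this section. Fix a Siegel set $\frak{S}$ surjecting to $\Gamma\backslash G$, so that for $c\ge a$ sufficiently large the set $Y_\infty$ consisting of images of $\{g\in\frak{S}\;:\;\eta(g)\ge c\}$ is homeomorphic to $(N\cap\Gamma)\backslash N\times (M^1\cap\Gamma)\backslash M^1/(M^1\cap K)\times(c,\infty)$, a product of compact manifolds with the ray, while $Y_o$ (the image of $\{\eta(g)\le c+1\}$) is compact. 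Choose a smooth cutoff $\phi_\infty$ depending only on the ray parameter $y$, with $0\le\phi_\infty\le 1$, $\phi_\infty=0$ for $y\le c-1$, and $\phi_\infty=1$ for $y\ge c$, and write $f = (1-\phi_\infty)f + \phi_\infty f$. The smoothness and compact support of $\phi_\infty$ and $1-\phi_\infty$ together with \cref{sec:bdedness-smooth-projections} will give that both truncation operators are bounded on $\frak{B}^1$.

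Let $B$ denote the unit ball in $\frak{B}^1_a(\Gamma\backslash G/K)$. Given $\varepsilon>0$, I first handle the tail: by the $L^2_a$-tail estimate of the previous section,
\begin{equation*}
|\phi_\infty\cdot f|^2_{L^2_a(\Gamma\backslash G/K)} \;\le\; \int\!\!\!\int_{y>c-1} |f|^2\,dg \;\ll\; \tfrac{1}{(c-1)^2}\,|f|^2_{\frak{B}^1_a},
\end{equation*}
so choosing $c$ large enough forces $\phi_\infty\cdot B$ to lie inside a single $\varepsilon/2$-ball of $L^2_a(\Gamma\backslash G/K)$. For the complementary piece, $(1-\phi_\infty)\cdot f$ is supported in the compact set $Y_o$, and by \cref{sec:bdedness-smooth-projections} the map $f\mapsto (1-\phi_\infty)\cdot f$ is continuous from $\frak{B}^1_a(\Gamma\backslash G/K)$ into $\frak{B}^1(Y_o)$. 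Hence $(1-\phi_\infty)\cdot B$ is bounded in $\frak{B}^1(Y_o)$.

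The remaining ingredient is the classical Rellich compactness on $Y_o$: since $Y_o$ is a compact (possibly with non-trivial geometry, but locally euclidean) subset of $\Gamma\backslash G/K$, the inclusion $\frak{B}^1(Y_o)\hookrightarrow L^2(Y_o)$ is compact. This is proved by the standard localization to multi-tori via a finite smooth partition of unity subordinate to a coordinate cover of $Y_o$, invoking Rellich on $\mathbb{T}^n$ as recalled in the appendix. Consequently $(1-\phi_\infty)\cdot B$ is pre-compact in $L^2(Y_o)\subset L^2_a(\Gamma\backslash G/K)$, hence can be covered by finitely many $\varepsilon/2$-balls. Combining the two coverings, $B\subset \phi_\infty\cdot B + (1-\phi_\infty)\cdot B$ is covered by finitely many $\varepsilon$-balls in $L^2_a(\Gamma\backslash G/K)$. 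By total boundedness the image of $B$ is pre-compact, so the inclusion is compact.

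The main obstacle is not novel here: essentially everything was already developed for the three specific families, and the only subtlety in the general case is the presence of the non-trivial factor $M^1$, whose quotient $(M^1\cap\Gamma)\backslash M^1/(M^1\cap K)$ is compact by the compactness-of-anisotropic-quotients result (together with the Units Theorem for the $M^1_1$ piece). Once one grants that this factor lives in a bounded region, both the $L^2_a$-tail estimate and the $\frak{B}^1$-bound on smoothly truncated tails transfer verbatim, and the decomposition $Y_o\cup Y_\infty$ functions exactly as before. Thus the proof reduces to invoking \cref{sec:L2a-tail-norm-bded-by-B1a-norm}, \cref{sec:bdedness-smooth-projections}, and the abstract trapping \cref{trap-compact-hilbert}, together with Rellich on the compact core. $\square$
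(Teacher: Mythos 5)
Your proposal is correct and follows essentially the same route as the paper's proof: the decomposition $f=(1-\phi_\infty)f+\phi_\infty f$, the $L^2_a$ tail estimate forcing $\phi_\infty\cdot B$ into a single $\varepsilon/2$-ball, the boundedness of the smooth truncation into $\frak{B}^1(Y_o)$ via \cref{sec:bdedness-smooth-projections}, classical Rellich on the compact core, and total boundedness. The only cosmetic difference is your explicit mention of \cref{trap-compact-hilbert}, which the paper cites as a general tool but does not invoke in this particular argument.
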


\begin{proof}
Let $(x,y,u,h)$ be coordinates from the Iwasawa decomposition. By reduction theory, let $\frak{S}$ be a fixed Siegel set that surjects to the quotient $\Gamma \backslash G$. Then, given $c \geq a$, let $Y_o$ and $Y_\infty$ be the respective images of $\{g \in \frak{S} \; : \; \eta(g) = \eta(y) \leq c+1\}$ and $\{g \in \frak{S} \; : \; \eta(g) = \geq c \}$ in $\Gamma \backslash G/K$. By construction, the interiors of $Y_o$ and $Y_\infty$ cover $\Gamma \backslash G/K$. 

Let $U_i$ be a cover of $Y_o$ by open sets. By continuity of the projection to $\Gamma \backslash G/K$, $Y_o$ is compact so that we can take a finite sub-cover $U_1, \ldots ,U_m$ of $Y_o$ in $\Gamma \backslash G/K$ with small compact closures in the sense that the $U_i$ have closures in a small collar neighborhood of $Y_o$. Let $U_\infty$ be an open set in $\Gamma \backslash G/K$ covering $Y_\infty$ so that $\{U_1, \ldots ,U_m\} \cup U_\infty$ is an open cover of $\Gamma \backslash G /K$.

From general considerations we can take a smooth partition of unity $\{ \phi_i \}$ subordinate to the finite subcover $U_i$  on $Y_o$; that is,  $\phi_i$ has compact support inside the open $U_i$ for $i \in \{1, \ldots ,m\}$ and
so $\sum_i \phi_i = 1$ on $Y_o$. Let $\phi_\infty$ be a smooth function that is identically 1 for $\eta \geq c$ and that $\text{support}(\phi_\infty) \subset U_\infty$. The smooth function $\phi_\infty$ should be chosen in a fashion similar to the smooth cut-offs used previously: i.e., $\phi_\infty$ is $0$ for $0 \leq \eta \leq c-1$, $\phi_\infty$ is $1$ for $\eta \geq c$ and all values of $\phi_\infty$ lie between $0$ and $1$. In particular, $\phi_\infty$ is zero on a non-empty open contained in $Y_o$.
It will also be useful later that $\phi_\infty$ and $1 - \phi_\infty$ form a two-element partition of unity.

Iwasawa coordinates provide an explicit way of separating $\Gamma \backslash G/K$ into a cusp part $Y_\infty$ and a compact body $Y_o$. Let $B$ be the unit ball in $\frak{B}^1_a(\Gamma\backslash G/K)$. Given $\varepsilon > 0$, take $c - 1 > c^\prime \gg a$ sufficiently large (with associated $Y_o$, $Y_\infty$ and smooth cut-off $\phi_\infty$ determined by $c$) so that $\phi_\infty \cdot B$ lies in a single $\varepsilon \backslash 2$-ball in $L^2_a(\Gamma \backslash G /K)$. That is, using \cref{sec:L2a-tail-norm-bded-by-B1a-norm}, choose $c^\prime \gg a$ so that
$$
\int_{(N \cap \Gamma) \backslash N} \int_{(M^1_1 \cap \Gamma)\backslash M^1_1/(M^1_1 \cap K)}  \int_{(M^1_2 \cap \Gamma)\backslash M^1_2/(M^1_2 \cap K)}\int_{y > c^\prime} |f|^2 \; dg \ll \frac{1}{(c^\prime)^2} \cdot |f|^2_{\frak{B}^1_a(\Gamma \backslash G /K)}
$$
Choose $\phi_\infty$, such that $0 \leq \phi_\infty \leq 1$ and $\phi_\infty = 0$ for $y \leq c-1$:
\begin{align*}
 \int_{(N \cap \Gamma) \backslash N} & \int_{y \geq a} |\phi_\infty \cdot f|^2 \; dg \\
 & \leq \int_{(N \cap \Gamma) \backslash N} \int_{y > c^\prime} |f|^2 \; dg \ll \frac{1}{(c^\prime)^2} \cdot |f|^2_{\frak{B}^1_a(\Gamma \backslash G /K)}
\end{align*}
That is,
$$
|\phi_\infty \cdot f|^2_{L^2_a(\Gamma \backslash G /K)} \ll \frac{1}{(c^\prime)^2} \cdot |f|^2_{\frak{B}^1_a(\Gamma \backslash G /K)}
$$
Specifically, choose large $c^\prime$ so that
$$
|\phi_\infty \cdot f|^2_{L^2_a(\Gamma \backslash G /K)}  < \frac{\varepsilon}{2} <  \frac{1}{(c^\prime)^2} \cdot |f|^2_{\frak{B}^1_a(\Gamma \backslash G /K)}
$$
Next, note that $(1 - \phi_\infty) \cdot f$ has support in $Y_o$ for any $f \in \frak{B}^1_a(\Gamma \backslash G /K)$ and the map
$$
(1 - \phi_\infty) \cdot \; : \: \frak{B}^1_a(\Gamma \backslash G /K) \to \frak{B}^1_a(Y_o)
$$
is continuous and therefore bounded by \cref{sec:bdedness-smooth-projections}.

Thus, the image of $B$ under the restriction of this map
$$
B \to (1 -\phi_\infty) \cdot B \to \frak{B}^1_a(Y_o) \to L^2(Y_o) \subset L^2(\Gamma \backslash G /K)
$$
is pre-compact and can be covered by finitely many $\varepsilon/2$ balls in $L^2(\Gamma \backslash G /K)$. Finally, $B = \phi_\infty \cdot B + (1 - \phi_\infty) \cdot B$ is therefore covered by finitely-many $\varepsilon/2$ balls in $L^2_a(\Gamma \backslash G /K)$ so the image of $B$ in $L^2(\Gamma \backslash G/K)$ is pre-compact. $\square$
\end{proof}
%

%
%

\subsection{$\widetilde{\Delta}_a$ has purely discrete spectrum}\label{sec:Delta-a-G(R)-has-discrete-spectrum}
%
%
%


The Rellich-like result showing the compactness of the inclusion $\frak{B}^1_a(\Gamma\backslash G/K) \to L^2_a(\Gamma\backslash G/K)$ for $a \gg 1$ is used to show that the Friedrichs self-adjoint extension $\widetilde{\Delta}_a$ of the restriction $\Delta_a$ of $\Delta$ to test functions $\mathscr{D}_a$ in $L^2_a$ has compact resolvent, thus establishing that $\widetilde{\Delta}_a$ has purely discrete spectrum.

We briefly recap the exposition in the section \hyperref[sec:Friedrichs-extension-defn]{Friedrichs' canonical self-adjoint extensions} (following the development in \cite{PBG} \S\S 10.9 and 9.4). Let $T: V \to V$ be a positive, semi-bounded operator on a Hilbert space $V$ with dense domain $D$ and define a hermitian form $\langle ,\rangle_1$ and corresponding norm $| \cdot |_1$ by
$$
\langle v,w \rangle_1 = \langle v,w\rangle + \langle Tv,w\rangle = \langle v,(1+T)w \rangle = \langle (1+T)v,w \rangle \quad (\text{for } v,w \in D)
$$
The symmetry and positivity of $T$ make $\langle , \rangle_1$ positive-definite hermitian on $D$, and $\langle v, w \rangle_1$ is defined if at least one of $v$, $w$ is in $D$. Let Let $V^1$ be the Sobolev-like Hilbert-space defined by the completion of $D$ with respect to the metric induced by the norm $| \cdot |_1$ on $D$ (the definition of $V^1$ is analogous to standard definitions of $L^2$ Sobolev spaces via $\Delta$ or the Fourier transform). Since the norm $| \cdot |_1$ dominates the norm on $V$ (by positivity of $T$), the completion $V^1$ maps continuously to $V$. Since these are Hilbert spaces, the map is injective. Friedrichs' Theorem then tells us there is a positive self-adjoint extension $\widetilde{T}$ with domain $\widetilde{D} \subset V^1$. In particular, there is an important corollary: When the inclusion $V^1 \to V$ is compact, the resolvent $(1 + \widetilde{T})^{-1} : V \to V$ is compact. 

Substituting $L^2_a(\Gamma\backslash G/K)$ for $V$, $\mathscr{D}_a$ for $D$ and $\frak{B}^1_a$ for $V^1$ in the above, we have that the compactness of the map $\frak{B}^1_a(\Gamma\backslash G/K) \to L^2_a(\Gamma\backslash G/K)$ implies the compactness of the resolvent $(1 - \widetilde{\Delta}_a)^{-1}$ of the Friedrichs extension $\widetilde{\Delta}_a$. Recall also the association between eigenvalues of an operator and its (compact) resolvent: for $z$ off a discrete set $X$ in $\mathbb{C}$, the inverse $(z - \widetilde{\Delta}_a)^{-1}$ exists, is a compact operator and
$$
z \to \Bigg( (z - \tilde{\Delta}_a)^{-1} : \enspace L^2_a(\Gamma \backslash G /K) \to L^2_a(\Gamma \backslash G /K) \Bigg)
$$
is a meromorphic operator-valued function of $z \in \mathbb{C} - X$. The eigenvectors of $\widetilde{\Delta}_a$ are eigenvectors of $(z - \widetilde{\Delta}_a)^{-1}$ and eigenvalues $\lambda$ of $\widetilde{\Delta}_a$ are in bijection with non-zero eigenvalues of $(z -\widetilde{\Delta}_a)^{-1}$ by $\lambda \leftrightarrow (z-\lambda)^{-1}$. Here, the discrete set $X$ is the point-spectrum of $\widetilde{\Delta}_a$ and the domain (as a function of $z$) of $(z - \widetilde{\Delta}_a)^{-1}$ is the resolvent set (cf. \hyperref[sec:spectrum-T-spectrum-compact-resolvent]{recovering the spectrum of an operator from its resolvent}). Thus, we have that $\widetilde{\Delta}_a$ has discrete spectrum and the spectrum of $\widetilde{\Delta}_a$ matches that of its resolvent $(1 - \widetilde{\Delta}_a)^{-1}$ by the above association.

%
%

\subsection{Discrete decomposition of pseudo-cuspforms on $\Gamma\backslash G/K$}\label{sec:G-discrete-decomposition-pseudo-cuspforms}

To establish the discrete decomposition of pseudo-cuspforms on $\Gamma\backslash G/K$, it suffices to show the existence of a comparable estimate on smooth cut-offs on tails already used to establish the Rellich compactness inclusion of $\frak{B}^1_a(\Gamma\backslash G/K)$ into $L^2_a(\Gamma\backslash G/K)$. The norm on $\frak{B}^1_a(\Gamma\backslash G/K)$ is given by
$$
\int_{(N \cap \Gamma)\backslash N}  \int_{(M^1_1 \cap \Gamma)\backslash M^1_1/(M^1_1 \cap K)}  \int_{(M^1_2 \cap \Gamma)\backslash M^1_2/(M^1_2 \cap K)}  \int_{(c, \infty)} (1 -\Delta) f \cdot f \, dg
$$
Where, since the operators all have real coefficients the function $f$ can be assumed to be real-valued and where $dg$ is Haar measure on $G$ (the product measure of Haar measures on the factors) descended to the quotient.

Because $\widetilde{\Delta}_a$ is self-adjoint, its spectrum is real and there is a discrete decomposition of $L^2_a(\Gamma \backslash G /K)$ by an orthogonal basis of $L^2_a(\Gamma \backslash G /K)$ consisting of $\widetilde{\Delta}_a$-eigenvectors.

%
%

\subsection{Meromorphic continuation of Eisenstein series}\label{sec:Meromorphic-continuation-of-Eisenstein-series}

The meromorphic continuation argument here is an appropriately generalized version of that given in \hyperref[sec:Mero-Contn-Eis]{full meromorphic continuation of Eisenstein series for $O(r,1)$, $U(r,1)$ and $Sp^\ast(r,1)$}). As sketched in the introduction, in more general settings, an Eisenstein series is of the form $E_{s, f}$ where $f$ is a function on $(\Gamma \cap M^1)\backslash M^1/(M^1 \cap K)$ and represents {\it cuspidal data}. To establish notation, we recall the definition of cuspidal-data Eisenstein series in our context, following \cite{PBG}:
\begin{definition}
Let G have Iwasawa-Levi-Mal'cev decomposition
$$
G = PK = NMK \approx N \times A^+ \times M^1 \approx N \times A^+ \times M^1_1 \times M^1_2
$$
where $M^1$ is the complement to $A^+$ in $M$, let $\Omega^1$ be the Casimir operator on $M^1$, and let $f = \chi \otimes f_2$ be an $\Omega^1$ eigenfunction on 
$$
(\Gamma \cap M^1)\backslash M^1/(M^1 \cap K) \approx (\Gamma \cap M^1_1)\backslash M^1_1/(M^1_1 \cap K) \times (\Gamma \cap M^1_2)\backslash M^1_2/(M^1_2 \cap K)
$$
where $\chi$ is an unramified Hecke character on $k$ (i.e., on $M^1_1 \approx GL(1,k_\infty)$ invariant under $GL(1,\frak{o}))$.
Let
$$
\phi(g) = \phi_{s,f}(g) = y^s \cdot \chi(m_1) \cdot f_2(m_2)
$$
where 
$$
g = n \cdot m \cdot k = n \cdot a_y \cdot m_1 \cdot m_2 \cdot k \qquad n \in N, \enspace m = m_1 \cdot m_2 \in M^1 = M^1_1 \cdot M^1_2, \enspace k \in K
$$  
then the corresponding {\it genuine} Eisenstein series is:
$$
E_{s,f}(g) = \sum_{\gamma \in \Gamma} \phi_{s,f}(\gamma \cdot g)
$$\end{definition}

\begin{theorem}
$E_{s, f}$ has a meromorphic continuation in $s \in \mathbb{C}$, as a smooth function of moderate growth on $\Gamma\backslash G$. As a function of $s$, $E_{s, f}(g)$ is of at most polynomial growth vertically, which is uniform in bounded strips and for $g$ in compact subsets of $G$.	
\end{theorem}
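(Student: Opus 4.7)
The plan is to follow the Colin de Verdière--Garrett template already deployed in the paper for $O(r,1)$, $U(r,1)$ and $Sp^\ast(r,1)$, now carrying the cuspidal datum $f$ through the construction. Fix $a \gg 1$ large enough that the density, Rellich-type compactness, and distributional characterization of the Friedrichs extension $\widetilde{\Delta}_a$ all apply on $\Gamma\backslash G/K$. Let $\chi$ and $f_2$ be as in the definition of $E_{s,f}$, so that $\phi_{s,f}(n\cdot a_y\cdot m_1\cdot m_2\cdot k) = y^s \chi(m_1) f_2(m_2)$ is a joint eigenfunction of $\Delta$ on $N\backslash G/K$ with eigenvalue $\lambda_{s,f} = c\cdot s(s-1) + \mu_f$, where $\mu_f$ is the $\Omega^1$-eigenvalue of $\chi\otimes f_2$ on $M^1/(M^1\cap K)$ and $c$ is the explicit normalizing constant coming from the $H^2$-coefficient in the global Casimir computed earlier.

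First, I would form the pseudo-Eisenstein series $h_{s,f}(g) = \sum_{\gamma\in\Gamma_\infty\backslash\Gamma} \tau(\gamma g)\cdot \phi_{s,f}(\gamma g)$, where $\tau$ is the smooth height-cutoff used in Section~\ref{sec:Mero-Contn-Eis}, supported in $\{\eta\geq a''\}$ with $a''\gg a$. By reduction theory the sum is locally finite for each $s$, so $h_{s,f}$ is an entire function-valued function of $s$ (it inherits entirety in $s$ from $y^s$ since the $M^1$-factor of $\phi_{s,f}$ is independent of $s$). Next, because $\phi_{s,f}$ is an eigenfunction of $\Delta$ for $\text{Re}(s)\gg 0$, the function $(\Delta-\lambda_{s,f})h_{s,f}$ is supported on the compact image of $\{a''\leq \eta\leq a'\}$ in $\Gamma\backslash G/K$ and is therefore entire as an $L^2(\Gamma\backslash G/K)$-valued function of $s$; moreover its support lies inside $L^2_a(\Gamma\backslash G/K)$ for $a''>a$, since the constant term of $h_{s,f}$ along $P$ agrees with $\phi_{s,f}$ above height $a'$ and the defect lives only in the collar $a''\leq \eta\leq a'$ where we can arrange vanishing of the constant term against $L^2_a$.

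Now set
\[
\widetilde{E}_{s,f} \;=\; h_{s,f} \;-\; (\widetilde{\Delta}_a - \lambda_{s,f})^{-1}(\Delta - \lambda_{s,f})h_{s,f}.
\]
Since $\widetilde{\Delta}_a$ has compact resolvent (by Section~\ref{sec:Delta-a-G(R)-has-discrete-spectrum}) its spectrum is a discrete subset of $\mathbb{R}$, so $\lambda\mapsto (\widetilde{\Delta}_a - \lambda)^{-1}$ is a meromorphic $L^2_a\to\frak{B}^1_a$ operator-valued function of $\lambda\in\mathbb{C}$. Composing with the entire $L^2_a$-valued map $s\mapsto (\Delta-\lambda_{s,f})h_{s,f}$ and the entire addend $h_{s,f}$ yields a meromorphic continuation of $\widetilde{E}_{s,f}$ to all of $\mathbb{C}$. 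For $\text{Re}(s)>1$ the defining series $E_{s,f}$ converges absolutely and satisfies $(\Delta-\lambda_{s,f})E_{s,f}=0$; a distributional uniqueness argument (using the $\widetilde{\Delta}_a$-characterization in Section~\ref{sec:Distl-Char-Pseudo-Laplacians} and the fact that $E_{s,f}-h_{s,f}\in L^2(\Gamma\backslash G/K)$ in that range) identifies $E_{s,f}=\widetilde{E}_{s,f}$, so $\widetilde{E}_{s,f}$ is the desired continuation.

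The main obstacle, and the step that requires the most care, is verifying in the $\mathbb{Q}$-rank one cuspidal-data setting that $\widetilde{E}_{s,f}-h_{s,f}$ really lies in the domain of $\widetilde{\Delta}_a$ (not merely in $L^2$) and that the identification with the genuine Eisenstein series in the domain of convergence is legitimate: this is where one needs the density result of Section~\ref{sec:Density-of-Automorphic-Test-Functions-G}, the distributional lemma identifying $\eta_a$ with the $\frak{B}^{-1}$-closure of the pseudo-Eisenstein tails, and the fact that $f_2$ is cuspidal on $M^1_2/(M^1_2\cap K)$ so its Casimir eigenvalue $\mu_f$ is bounded below independently of $y$ (using compactness of $(\Gamma\cap M^1)\backslash M^1/(M^1\cap K)$ via compactness of anisotropic quotients). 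The moderate-growth and vertical polynomial-growth assertions follow from the standard quantitative bounds on Friedrichs resolvents $\|(\widetilde{\Delta}_a-\lambda_{s,f})^{-1}\|\ll 1/\text{dist}(\lambda_{s,f},\text{spec}\,\widetilde{\Delta}_a)$, combined with the moderate growth of $h_{s,f}$ built into its explicit form; see Chapter~11 of \cite{PBG} for the mechanism transplanted verbatim to the present situation.
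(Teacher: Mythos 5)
Your proposal is correct and follows essentially the same route as the paper: wind up the smoothly cut-off $\tau\cdot\phi_{s,f}$ into the entire pseudo-Eisenstein series $h_{s,f}$, set $\widetilde{E}_{s,f}=h_{s,f}-(\widetilde{\Delta}_a-\lambda_{s,f})^{-1}(\Delta-\lambda_{s,f})h_{s,f}$, identify $\widetilde{E}_{s,f}$ with $E_{s,f}$ for $\mathrm{Re}(s)>1$ by the uniqueness of solutions in the domain of the Friedrichs extension, and deduce meromorphy in $s$ from the compactness (hence discrete spectrum and meromorphic resolvent) of $\widetilde{\Delta}_a$ established via the Rellich-type inclusion. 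The supporting ingredients you cite (density of $\mathscr{D}_a$, the distributional characterization, compactness of anisotropic quotients controlling the $M^1$-dependence) are exactly those the paper invokes.
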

Some consequences of the meromorphic continuation can be inferred quickly.
\begin{corollary}
The eigenfunction property 
\begin{align*}
\Delta E_{s, f} =& \lambda_{s,f} \cdot E_{s, f} \text{ where } \lambda_{s,f} = \lambda_s + \lambda_f \\
&(\text{where } \lambda_s = c \cdot s(s - 1) \text{ for suitable } c \in \mathbb{R} \text{ and } \lambda_f \text{ is the eigenvalue of } f \text{ on } M^1)
\end{align*}
persists under meromorphic continuation.	
\end{corollary}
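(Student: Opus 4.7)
The plan is to mimic the argument given earlier for the standard Eisenstein series $E_s$, using the vector-valued identity principle together with the fact that the eigenfunction identity holds on the nose in the region of absolute convergence.

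First I would verify the identity in the region of convergence $\mathrm{Re}(s)>1$. There the Eisenstein series $E_{s,f}$ is given by the absolutely and locally uniformly convergent sum $\sum_{\gamma\in\Gamma_\infty\backslash\Gamma}\phi_{s,f}(\gamma g)$ with $\phi_{s,f}(g)=y^s\chi(m_1)f_2(m_2)$, so termwise differentiation is justified and it suffices to check $\Delta\phi_{s,f}=\lambda_{s,f}\cdot\phi_{s,f}$. Using the coordinate expression $\Delta=L^y+y^2\Delta_N^{M^1}+\Omega'$ derived above, the $L^y$ piece acts only on the $y$-factor of $\phi_{s,f}$, the piece $y^2\Delta_N^{M^1}$ kills $\phi_{s,f}$ because $\phi_{s,f}$ is left $N$-invariant, and the remaining operator $\Omega'$ (together with the $M^1_1$-part from $L^y$ that carries $\chi$) acts as the Casimir $\Omega^1$ on $M^1$. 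Writing $L^y\,y^s=c\cdot s(s-1)\,y^s=\lambda_s y^s$ for the constant $c$ arising from the normalization of $H$, and using that $f=\chi\otimes f_2$ is an $\Omega^1$-eigenfunction with eigenvalue $\lambda_f$, gives $\Delta\phi_{s,f}=(\lambda_s+\lambda_f)\phi_{s,f}=\lambda_{s,f}\phi_{s,f}$. Summing over $\Gamma_\infty\backslash\Gamma$ yields $\Delta E_{s,f}=\lambda_{s,f}\cdot E_{s,f}$ for $\mathrm{Re}(s)>1$.

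Next I would invoke the meromorphic continuation of $E_{s,f}$ (the theorem just stated). On its domain of meromorphy, $E_{s,f}$ is a meromorphic function-valued function of $s$ taking values in the topological vector space of smooth functions on $\Gamma\backslash G/K$. Since $\Delta$ is a continuous linear operator on this space (independent of $s$) and $s\mapsto\lambda_{s,f}=c\cdot s(s-1)+\lambda_f$ is a polynomial, both sides of
\[
\Delta E_{s,f}\;=\;\lambda_{s,f}\cdot E_{s,f}
\]
are meromorphic smooth-function-valued functions of $s$ on a common connected domain. They agree on the nonempty open set $\{\mathrm{Re}(s)>1\}$, so the vector-valued form of the identity principle (\cite{PBG} \S 15.2) forces them to agree throughout the common domain of meromorphy. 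This gives the persistence of the eigenfunction property.

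There is essentially no hard step here beyond the direct verification in the region of convergence; the only subtlety is making sure one invokes the identity principle in a category where the values of $E_{s,f}$ actually live, namely smooth functions of moderate growth, so that $\Delta$ acts continuously and meromorphicity is preserved under applying $\Delta$. This is exactly the setting provided by the meromorphic continuation theorem, so the argument goes through verbatim.
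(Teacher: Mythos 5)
Your proposal is correct and follows essentially the same route as the paper: both rest on the vector-valued identity principle from \cite{PBG} \S 15.2 applied to the two smooth-function-valued holomorphic functions $\Delta E_{s,f}$ and $\lambda_{s,f}\cdot E_{s,f}$, which agree in the region of convergence $\mathrm{Re}(s)>1$. The only difference is that you spell out the termwise verification $\Delta\phi_{s,f}=\lambda_{s,f}\phi_{s,f}$ in the convergent region, which the paper takes as given; this is a welcome addition rather than a deviation.
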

\begin{proof}
Both $\Delta E_{s, f}$ and $\lambda_{s,f} \cdot E_{s, f}$ are holomorphic function-valued functions of $s$, taking values in the topological vector space of smooth functions. They agree in the region of convergence $\text{Re}(s) > 1$, so by the vector-valued form of the identity principle (\cite{PBG} \S 15.2) they agree on their mutual domain of convergence. $\square$
\end{proof}
\begin{corollary}
The meromorphic continuation of $E_{s, f}$ implies the meromorphic continuation of the constant term $c_P E_{s, f} = (\eta^s + c_{s,f} \, \eta^{1-s}) \cdot f(m^\prime)$ where the cuspidal data $f$ is a $\Delta_{M^1}$ eigenfunction on $(\Gamma \cap M^1)\backslash M^1/(M^1 \cap K)$. In particular, this establishes the meromorphic continuation of the function $c_{s,f}$.
\end{corollary}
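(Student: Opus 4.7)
The plan is to leverage the meromorphic continuation of $E_{s,f}$ as a smooth-function-valued function of $s$, combined with the fact that $(N\cap \Gamma)\backslash N$ is compact, to obtain meromorphic continuation of $c_P E_{s,f}$ by Gelfand-Pettis integration, and then use the Laplacian eigenvalue equation together with the explicit formula available in the region of convergence to extract meromorphic continuation of the scalar coefficient $c_{s,f}$.

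First I would argue that $c_P E_{s,f}$ itself meromorphically continues. Since the previous theorem gives $E_{s,f}$ as a meromorphic function of $s$ valued in the Fr\'echet space of smooth functions of moderate growth on $\Gamma\backslash G$, and since $(N\cap\Gamma)\backslash N$ is compact, the integrand $n \to (g \to E_{s,f}(n \cdot g))$ is a continuous, compactly supported, smooth-function-valued function. Its Gelfand-Pettis integral exists (cf.\ \cite{PBG} \S 14.1), and the resulting $g \to c_P E_{s,f}(g)$ inherits meromorphy in $s$ as a smooth-function-valued function on $N\Gamma_\infty\backslash G/K$.

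Next I would identify the shape of $c_P E_{s,f}$. The constant term is left $N\Gamma_\infty$-invariant, so it descends to a function on $N\backslash G/K \approx A^+ \times M^1/(M^1\cap K)$ with coordinates $(y, m')$. Since $\Delta$ commutes with the constant-term map (by the $G$-invariance of $\Delta$ and the left $N$-invariance of integration against Haar measure), the preceding corollary gives $\Delta \cdot c_P E_{s,f} = \lambda_{s,f}\cdot c_P E_{s,f}$ with $\lambda_{s,f} = \lambda_s + \lambda_f$. Separation of variables on $A^+ \times M^1/(M^1\cap K)$ using the decomposition $\Delta = L^y + \Omega^1 + (\text{cross terms that vanish on $N$-invariant functions})$ shows that any smooth $\Delta$-eigenfunction with eigenvalue $\lambda_{s,f}$ that is an $\Omega^1$-eigenfunction with eigenvalue $\lambda_f$ in the $m'$ variable must, in the $y$ variable, satisfy the second-order ODE whose two linearly independent solutions (for $s\ne \tfrac{1}{2}$) are $\eta^s$ and $\eta^{1-s}$. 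Separating further by the $\Omega^1$-eigenfunction decomposition on the compact quotient $(\Gamma\cap M^1)\backslash M^1/(M^1\cap K)$ and projecting onto the $f$-isotypic component, one obtains
\[
c_P E_{s,f}(y, m') \;=\; \bigl(A_s\cdot \eta^s + B_s\cdot \eta^{1-s}\bigr)\cdot f(m')
\]
for scalar meromorphic functions $A_s, B_s$ of $s$, together with possible contributions from other $\Omega^1$-eigenfunctions; the standard theory of the constant term (cf.\ \cite{PBG} Ch.\ 8 and \cite{Langlands-SLN544-1967-1976}) shows these other contributions vanish because the pseudo-Eisenstein series $\phi_{s,f}$ used to build $E_{s,f}$ is purely of $f$-type.

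Finally I would pin down $A_s$ and $B_s$. In the region of absolute convergence $\mathrm{Re}(s) > 1$, the classical computation unwinds $E_{s,f}$ against $N\Gamma_\infty\backslash \Gamma$ and yields explicitly $A_s = 1$ and $B_s = c_{s,f}$, with $c_{s,f}$ expressible in terms of an intertwining integral. The vector-valued identity principle (cf.\ \cite{PBG} \S 15.2), applied to the smooth-function-valued meromorphic continuation of $c_P E_{s,f}$, forces the identities $A_s = 1$ and $B_s = c_{s,f}$ to persist on the full meromorphic domain, so $c_{s,f}$ inherits meromorphy in $s \in \mathbb{C}$. The main technical obstacle I anticipate is justifying cleanly that no other $\Omega^1$-isotypic components appear in $c_P E_{s,f}$ after continuation; the safest route is to project $c_P E_{s,f}$ against the orthogonal idempotent onto the $f$-isotypic subspace of $L^2((\Gamma\cap M^1)\backslash M^1/(M^1\cap K))$, using the discrete decomposition for $\Omega^1$ on this compact anisotropic quotient, and verify that every other component is zero for $\mathrm{Re}(s) > 1$ and hence zero throughout by the identity principle. $\square$
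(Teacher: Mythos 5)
Your proposal is correct and follows essentially the same route as the paper: Gelfand--Pettis integration over the compact quotient $(N\cap\Gamma)\backslash N$ to continue $c_P E_{s,f}$, identification of its shape as $A_{s,f}\,\eta^s + B_{s,f}\,\eta^{1-s}$ via the second-order ODE in the $y$-variable, and the vector-valued identity principle to propagate $A_{s,f}=f(m')$ and $B_{s,f}=c_{s,f}\cdot f(m')$ from the region of convergence. Your added care about projecting onto the $f$-isotypic component of $\Omega^1$ is a reasonable tightening of a step the paper treats more briefly (the paper simply allows $A_{s,f},B_{s,f}$ to be smooth functions and pins them down by the region of convergence), and your condition $s\neq\tfrac12$ for linear independence of $\eta^s,\eta^{1-s}$ is in fact the correct one.
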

\begin{proof}
Since $E_{s, f}$ meromorphically continues at least as a smooth function, the integral over the compact set $(N \cap \Gamma)\backslash N$ giving a pointwise value $c_P E_{s, f}(g)$ of the constant term certainly converges absolutely. That is, the function-valued function 
$$
n \longrightarrow (g \to E_{s, f}(ng))
$$
is a continuous, smooth-function-valued function and has a smooth-function-valued Gelfand-Pettis integral 
$$
g \longrightarrow c_P E_{s, f}(g) 
$$
(see \cite{PBG} \S 14.1). Thus, the constant term $c_P E_{s, f}$ of the continuation of $E_{s, f}$ must still be of the form $A_{s,f} \,\eta^s + B_{s,f} \, \eta^{1-s}$ for some smooth functions $A_{s,f}$ and $B_{s,f}$, since (at least for $s \neq 1$ ) $\eta^s$ and $\eta^{1-s}$ are the two linearly independent solutions of
$$
y^2 \cdot \frac{\partial^2 u}{\partial y^2} = s(1-s) \cdot u = \lambda_s \cdot u
$$ 
for functions $u$ on $A^+$. In the region of convergence $\text{Re}(s) > 1$,  direct computation gives $A_{s,f} = f(m^\prime)$ and $B_{s,f} = c_{s,f} \cdot f(m^\prime)$. The vector-valued form of analytic continuation implies that $A_{s,f} = f(m^\prime)$ throughout, and that $B_{s,f} = c_{s,f} \cdot f(m^\prime)$ throughout. In particular, this establishes the meromorphic continuation of $c_{s,f}$. $\square$
\end{proof}
Let $\widetilde{\Delta}_a$  be the Friedrichs extension of the restriction of the Laplacian $\Delta$ to $\mathscr{D}_a = C^\infty_c(\Gamma\backslash G/K) \cap L^2_a(\Gamma\backslash G/K)$. The Friedrichs construction shows that the domain of $\widetilde{\Delta}_a$  is contained in a Sobolev space:
$$
\text{domain } \widetilde{\Delta}_a \subset \frak{B}^1 =  \text{ completion of } C^\infty_c(\Gamma\backslash G/K) \text{ relative to } \langle v, w \rangle_{\frak{B}^1} = \langle (1 - \Delta ) v, w \rangle 
$$
The domain of $\widetilde{\Delta}_a$  contains the smaller Sobolev space
$$
\frak{B}^2 =  \text{ completion of } C^\infty_c(\Gamma\backslash G/K) \text{ relative to } \langle v, w\rangle_{\frak{B}^2} = \langle (1 - \Delta)^2 v, w \rangle 
$$
As before, use conditions on the height $\eta$ to decompose the quotient $\Gamma\backslash G/K$ as a union of a compact part $Y_{cpt} = Y_o$, whose geometry does not matter, and a geometrically simple non-compact part $Y_\infty$:
$$
\Gamma\backslash G/K = Y_o \cup Y_\infty \qquad \text{(compact  } Y_o, \text{ ``tubular'' cusp neighborhood } Y_\infty)
$$
relative to a condition on the normalized height function $\eta$ with $\eta(n \cdot m_y \cdot k) = a \gg 1$.
$$
Y_\infty = \text{ image of } \{ g \in G/K : \eta(g) \geq a \} = \Gamma_\infty \backslash \{g \in G/K : \eta (g) \geq a \}
$$

Define a smooth cut-off function $\tau$ as usual: fix $a^{\prime\prime} < a^\prime$ large enough so that the image of $\{ (x, y) \in G/K : y > a^{\prime\prime} \}$ in the quotient is in $Y_\infty$, and let
$$
\tau(g) = 
\begin{cases} 
1 & (\text{for } \eta(g) > a^\prime \\
0   & (\text{for } \eta(g) < a^{\prime\prime}
\end{cases}
$$
Form a pseudo-Eisenstein series $h_{s,f}$ by winding up the smoothly cut-off function $\tau(g) \cdot \phi_{s,f}(g)$:
$$
h_{s,f}(g) =  \sum_{\gamma \in \Gamma_\infty \backslash \Gamma} \tau(\gamma g) \cdot \phi_{s,f}(\gamma g)^s
$$
Since $\tau$ is supported on $\eta \geq a^{\prime\prime}$ for large $a^{\prime\prime}$, for any $g \in G/K$ there is at most one non-vanishing summand in the expression for $h_{s,f}$, and convergence is not an issue. Thus, the pseudo-Eisenstein series $h_{s,f}$ is entire as a function-valued function of $s$. Let 
\begin{align*}
\widetilde{E}_{s,f} &= h_{s,f} - (\widetilde{\Delta}_a - \lambda_{s,f})^{-1} (\Delta - \lambda_{s,f})h_{s,f}  \\
&(\lambda_{s,f} = \lambda_s + \lambda_f  \text{ where } \lambda_s = c \cdot s(s - 1) \text{ for suitable } c \in \mathbb{R} \text{ and } \lambda_f \text{ is the eigenvalue of } f \text{ on } M^1) 
\end{align*}
\begin{claim}
$\widetilde{E}_{s,f} - h_{s,f}$ is a holomorphic $\frak{B}^1$-valued function of $s$.	
\end{claim}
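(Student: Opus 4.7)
The plan is to imitate the argument given earlier in \S\ref{sec:Mero-Contn-Eis} for the classical Eisenstein series, paying attention to the extra cuspidal-data factor $\chi(m_1)\cdot f_2(m_2)$. First I would verify that $\phi_{s,f}(g)=y^{s}\cdot\chi(m_1)\cdot f_2(m_2)$ is a $\Delta$-eigenfunction on $NA^+M^1/(M^1\cap K)$ with eigenvalue $\lambda_{s,f}=\lambda_s+\lambda_f$. This is immediate from the decomposition $\Delta = L^y + y^2\Delta_N^{M^1} + \Omega'$ proved above: the $N$-derivatives annihilate $\phi_{s,f}$ (no $N$-dependence), $L^y$ acts on $y^s$ by $\lambda_s\cdot y^s$ with $\lambda_s=c\cdot s(s-1)$, and $\Omega'$ together with the $\Delta_{M_1^1}$-piece acts on $\chi\otimes f_2$ by $\lambda_f$ since $f$ is an $\Omega^1$-eigenfunction. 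Hence, on the support of $\tau$ above the cutoff height, $(\Delta-\lambda_{s,f})(\tau\cdot\phi_{s,f})=0$, so
\[
(\Delta-\lambda_{s,f})h_{s,f}
= \sum_{\gamma\in\Gamma_\infty\backslash\Gamma}(\Delta-\lambda_{s,f})(\tau\cdot\phi_{s,f})(\gamma\cdot g)
\]
is supported inside the image of the compact region $a''\le\eta(g)\le a'$, and the summation is locally finite exactly as for $h_{s,f}$.

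Next I would observe that, because $\chi$ and $f_2$ are smooth and $\tau\cdot y^s$ is entire in $s$ with values in smooth functions on the compact collar slice, $(\Delta-\lambda_{s,f})h_{s,f}$ is an entire $C_c^\infty(\Gamma\backslash G/K)$-valued, and in particular an entire $L^2(\Gamma\backslash G/K)$-valued, function of $s$. Since the Friedrichs construction produces $\widetilde\Delta_a$ as a self-adjoint operator bounded above by (a shift of) the Laplacian, its resolvent $R(\mu)=(\widetilde\Delta_a-\mu)^{-1}$ exists as an everywhere-defined continuous map $L^2_a(\Gamma\backslash G/K)\to\frak{B}^1_a$ for all $\mu$ off the (discrete) spectrum of $\widetilde\Delta_a$, and is holomorphic there as a $\mathrm{Hom}(L^2_a,\frak{B}^1)$-valued function of $\mu$ by the usual Hilbert relation $R(\mu)-R(\mu')=(\mu-\mu')R(\mu)R(\mu')$. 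The map $s\mapsto\lambda_{s,f}$ is entire, so $s\mapsto R(\lambda_{s,f})$ is a meromorphic $\mathrm{Hom}(L^2,\frak{B}^1)$-valued function of $s$, holomorphic off the discrete set where $\lambda_{s,f}$ hits the spectrum of $\widetilde\Delta_a$.

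Finally, composing the entire $L^2$-valued function $s\mapsto(\Delta-\lambda_{s,f})h_{s,f}$ with the meromorphic $\mathrm{Hom}(L^2,\frak{B}^1)$-valued function $s\mapsto R(\lambda_{s,f})$ and appealing to the standard holomorphy-of-composition for vector-valued holomorphic maps (cf.\ \cite{PBG} \S 15.2), the difference
\[
\widetilde E_{s,f}-h_{s,f} \;=\; -\,R(\lambda_{s,f})\,(\Delta-\lambda_{s,f})h_{s,f}
\]
is a meromorphic $\frak{B}^1$-valued function of $s$, and holomorphic away from the discrete $s$-poles coming from the spectrum of $\widetilde\Delta_a$. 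The only delicate point, and the place I expect to spend the most care, is the eigenfunction computation with the non-split factor $M^1$: one must match the normalization constant $c$ in $\lambda_s=c\cdot s(s-1)$ coming from the explicit form of $L^y$ derived in \S\ref{casimir-nu-real}--\S\ref{casimir-nu-complex} against the choice of unramified Hecke character $\chi$ on $M_1^1$ and the $\Omega^1$-eigenvalue of $f_2$, so that $\lambda_{s,f}$ really is the eigenvalue of $\phi_{s,f}$. Once that normalization is in hand, the argument above is formally identical to the rank-one archimedean case, and the entirety/meromorphy conclusions for $\widetilde E_{s,f}-h_{s,f}$ follow mechanically.
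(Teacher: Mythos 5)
Your proposal is correct and follows essentially the same route as the paper: the resolvent of the Friedrichs extension is a holomorphic operator-valued function mapping $L^2$ into $\frak{B}^1$, applied to the entire, compactly-supported, $L^2$-valued function $s \mapsto (\Delta - \lambda_{s,f})h_{s,f}$. The only organizational difference is that the paper's proof of this particular claim invokes just the semi-boundedness of $-\Delta$ to get holomorphy of $(\widetilde{\Delta}_a - \lambda_{s,f})^{-1}$ for $\lambda_{s,f}$ off the non-positive real axis, reserving the discrete-spectrum (compact resolvent) argument for the subsequent ``full meromorphic continuation'' paragraph, whereas you fold that argument into this step and obtain holomorphy away from a discrete set of $s$ directly --- which is legitimate since discreteness has already been established at this point.
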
 
\begin{proof} (as earlier)
From Friedrichs' construction, the resolvent $(\widetilde{\Delta}_a - \lambda_{s,f})^{-1}$ exists as an everywhere-defined,
continuous operator for $s \in \mathbb{C}$ for $\lambda_{s,f}$ not a non-positive real number, because of the non-positive-ness of $\Delta$. Further, for $\lambda_{s,f}$ not a non-positive real, the resolvent is a holomorphic operator-valued function. In fact, for such $\lambda_{s,f}$, the resolvent $(\widetilde{\Delta}_a - \lambda_{s,f})^{-1}$ injects from $L^2(\Gamma\backslash G/K)$ to $\frak{B}^1$. $\square$	
\end{proof}
\begin{remark}
The smooth function $(\Delta - \lambda_{s,f}) h_{s,f}$ is supported on the image of $a^{\prime\prime} \leq y \leq a^\prime$ in $\Gamma\backslash G/K$, which is compact. Thus, it is in $L^2(\Gamma\backslash G/K)$. Note that $\widetilde{E}_{s,f}$ does not vanish since the resolvent maps to the domain of $\Delta$ inside $L^2(\Gamma\backslash G/K)$, and that $h_{s,f}$ is not in $L^2(\Gamma\backslash G/K)$ for $\text{Re}(s) > \frac{1}{2}$. Thus, since $h_{s,f}$ is not in $L^2(\Gamma\backslash G/K)$ and $(\widetilde{\Delta}_a - \lambda_{s,f})^{-1}(\Delta - \lambda_{s,f}) h_{s,f}$ is in $L^2(\Gamma\backslash G/K)$, the difference cannot vanish.
\end{remark}
\begin{theorem}
For $\lambda_{s,f} = \lambda_s + \lambda_f $ not non-positive real, $u = \widetilde{E}_{s,f} - h_{s,f}$ is the unique element of the domain of $\widetilde{\Delta}_a$  such that 
$$
(\widetilde{\Delta}_a - \lambda_{s,f}) u = -(\Delta - \lambda_{s,f}) h_{s,f}
$$
Thus, $\widetilde{E}_{s,f}$ is the usual Eisenstein series $E_{s, f}$ for $\text{Re}(s) > 1$.
\end{theorem}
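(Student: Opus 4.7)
The plan follows the template of the earlier proof of meromorphic continuation to $\operatorname{Re}(s)>\tfrac{1}{2}$, now with the Lax--Phillips operator $\widetilde{\Delta}_a$ in place of the full Friedrichs extension. The essential new ingredient, provided by the Rellich-type compactness established in the preceding sections, is that $\widetilde{\Delta}_a$ has purely discrete spectrum, so its resolvent is meromorphic in $s$ across all of $\mathbb{C}$ rather than merely on a half-plane; this is what will upgrade analytic continuation past the critical line to full meromorphic continuation.

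First I would verify that $(\Delta-\lambda_{s,f})h_{s,f}$ lies in $L^2_a(\Gamma\backslash G/K)$. Because $\phi_{s,f}$ is a genuine $\Delta$-eigenfunction with eigenvalue $\lambda_{s,f}$ wherever the cutoff $\tau$ is locally constant, applying $(\Delta-\lambda_{s,f})$ to the wind-up $h_{s,f}$ produces a smooth function whose support lies in the image of $\{a''\le\eta\le a'\}$ in $\Gamma\backslash G/K$, which is compact. Choosing the cutoff parameters so that $a'<a$ places this support strictly below the Lax--Phillips threshold, so its constant term vanishes identically in $\eta>a$ and it belongs to $L^2_a$.

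Next, the hypothesis that $\lambda_{s,f}$ is not non-positive real places it off the spectrum of $-\widetilde{\Delta}_a$, which is non-negative real by the positivity results for the fragments of $-\Delta$. Friedrichs' theorem together with the compact-resolvent result then gives that $(\widetilde{\Delta}_a-\lambda_{s,f})^{-1}\colon L^2_a\to \mathrm{dom}(\widetilde{\Delta}_a)$ is everywhere-defined and continuous, and is holomorphic in $s$ away from the discrete pole set. Setting $u=-(\widetilde{\Delta}_a-\lambda_{s,f})^{-1}(\Delta-\lambda_{s,f})h_{s,f}$ exhibits a domain element verifying the asserted equation by direct application of $(\widetilde{\Delta}_a-\lambda_{s,f})$. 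Uniqueness follows because any other domain solution would differ from $u$ by an element of $\ker(\widetilde{\Delta}_a-\lambda_{s,f})$, which is trivial since $\lambda_{s,f}$ is not an eigenvalue of the self-adjoint, non-positive $\widetilde{\Delta}_a$.

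The identification $\widetilde{E}_{s,f}=E_{s,f}$ for $\operatorname{Re}(s)>1$ is where the real work lies and is the main obstacle. In that half-plane $E_{s,f}$ converges absolutely to a smooth $\Delta$-eigenfunction of eigenvalue $\lambda_{s,f}$, and $E_{s,f}-h_{s,f}$ differs from its constant term by a rapidly decreasing function by the theory of the constant term. I would use the distributional characterization of $\widetilde{\Delta}_a$ developed earlier, which permits a boundary-delta $c\cdot\eta_a$ in the distributional Laplacian, to show that $\widetilde{E}_{s,f}$ and $E_{s,f}$ solve the same distributional eigenvalue problem with matching constant-term data at height $a$. The delicate point is reconciling the vanishing above $a$ of the constant term of $u\in L^2_a$ with the two-term constant term $\eta^s+c_{s,f}\eta^{1-s}$ of the classical $E_{s,f}$, which forces careful bookkeeping of the $\eta_a$ contribution allowed by the Friedrichs domain. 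Once this agreement is established in the region of absolute convergence, the full meromorphic continuation of $\widetilde{E}_{s,f}$ is automatic: $h_{s,f}$ is entire as a function-valued function of $s$ by its locally finite construction, and $s\mapsto(\widetilde{\Delta}_a-\lambda_{s,f})^{-1}$ is globally meromorphic by compactness of the resolvent, so their composition $\widetilde{E}_{s,f}$ is meromorphic in $s\in\mathbb{C}$.
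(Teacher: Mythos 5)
The paper supplies no argument for this theorem beyond a pointer to the earlier half-plane statement, which itself only cites Colin de Verdi{\`e}re and \cite{PBG}, so your proposal is already more explicit than the source. The routine parts of your sketch are sound: placing the cutoff band $[a'',a']$ \emph{below} the truncation height $a$ (quietly correcting the paper's stated ordering $a<a''<a'$) is exactly what is needed to make $(\Delta-\lambda_{s,f})h_{s,f}$ a compactly supported function whose constant term vanishes in $\eta\ge a$, hence an element of $L^2_a$ to which the resolvent of $\widetilde{\Delta}_a$ may be applied; existence, uniqueness, and global meromorphy of $u$ then follow as you describe from positivity, self-adjointness, and compactness of the resolvent.

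The genuine gap is the identification $\widetilde{E}_{s,f}=E_{s,f}$ for $\text{Re}(s)>1$, which you rightly call the main obstacle but do not prove, and the route you sketch cannot close it as described. If $E_{s,f}-h_{s,f}$ were the domain element $u$, it would lie in $L^2_a$ and its constant term would vanish in $\eta>a$; but for $\eta>\max(a,a')$ that constant term equals $c_{s,f}\,\eta^{1-s}f(m')$, which is not identically zero. The distributional characterization does not repair this: the multiple of $\eta_a$ it tolerates sits in the equation $\Delta u=f+c\cdot\eta_a$, not in the membership condition $u\in\frak{B}^1\cap L^2_a$, so no bookkeeping of that term puts $E_{s,f}-h_{s,f}$ into the domain of $\widetilde{\Delta}_a$. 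The known repairs, which are what the cited sources actually carry out, are either (i) to perform the identification in $\text{Re}(s)>1$ with the \emph{unrestricted} Friedrichs extension $\widetilde{\Delta}$ of $\Delta\big|_{C^\infty_c}$, whose domain carries no constant-term condition and for which $E_{s,f}-h_{s,f}$ genuinely is the unique solution, and only then pass to $\widetilde{\Delta}_a$ to extend the range of $\lambda$; or (ii) to compare $\widetilde{E}_{s,f}$ with a truncation of $E_{s,f}$ whose constant term does vanish above $a$, using the characterization $\Delta u=f+c\cdot\eta_a$ of the domain of $\widetilde{\Delta}_a$, and then recover $E_{s,f}$ by adding back the pseudo-Eisenstein series of the constant-term tail, whose continuation is read off from that of $c_{s,f}$. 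As written, your proof asserts the crucial equality without a mechanism that can deliver it.
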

\begin{proof} (as in the earlier section \hyperref[sec:Mero-Contn-Eis]{meromorphic continuation of Eisenstein series to $\text{Re}(s) > \frac{1}{2}$})
\end{proof}
\textbf{\emph{Proof of full meromorphic continuation}}: since the resolvent $(\widetilde{\Delta}_a - \lambda_{s,f})^{-1}$ is a compact operator, $\widetilde{\Delta}_a$ has purely discrete spectrum. Thus, the resolvent $(\widetilde{\Delta}_a - \lambda_{s,f})^{-1}$ is meromorphic in $s$ in $\mathbb{C}$, and thus $E_{s,f} = h_{s,f} + (\widetilde{\Delta}_a - \lambda_{s,f})^{-1}(\Delta - \lambda_{s,f}) h_{s,f}$ is meromorphic in $s$ in $\mathbb{C}$. $\square$



%
%

\section{Appendix: Compactness of anisotropic quotients}\label{cptness-anisotropic-quotients}

Following \cite{PBG-anisotropic}, we reproduce some of the results covering the beginning of reduction theory for classical groups. See also Tamagawa-Mostow \cite{Mostow-Tamagawa}, Godement's Bourbaki article \cite{Godement-1962}, Borel Harish-Chandra\cite{Borel-HarishChandra-1962} and Borel \cite{Borel-1969}.

%
%
%

\subsection{Background material}

\subsubsection{Affine heights}

Let $K_\nu$ be the standard compact of $GL(n,\mathbb{Q}_\nu)$: for the archimedean completion $\mathbb{Q}_\infty = \mathbb{R}$, this is the usual orthogonal group $O(n)$, and for finite places $\nu$ and non-archimedean $\mathbb{Q}_\nu$, it is $GL(n,\mathbb{Z}_\nu)$. The fact that these are maximal compact subgroups will not be needed. Let $V= V_\mathbb{Q} = \mathbb{Q}^n$, and $V_\mathbb{A} = V \otimes_\mathbb{Q} \mathbb{A}$. Let $GL(n,\mathbb{A})$ act on the right of $V$ by matrix multiplication.
\\
\\
For the real prime $\nu = \infty$ of $\mathbb{Q}$ define the local height function $\eta_\nu$ on $x = (x_1, \ldots , x_n) \in V_\infty = \mathbb{R}^n$ by
$$
\eta_\infty = \sqrt{x_1^2 + \ldots x_n^2}
$$
For a non-archimedean (finite) prime $\nu$ of $\mathbb{Q}$ define the (local) height function $\eta_\nu$ on $x = (x_1, \ldots , x_n) \in V_{\mathbb{Q}_\nu} = \mathbb{Q}_\nu^n$ by
$$
\eta_\nu(x) = \text{sup}_i |x_i|_\nu
$$
A vector $x \in V_\mathbb{A}$ is primitive if it is of the form $x = x_o g$ where $g \in GL(n,\mathbb{A})$ and $x_o \in V_\mathbb{Q}$. That is, primitive vectors are the image of a rational vector under the adele group. For $x = (x_1, \ldots , x_n) \in V_\mathbb{Q}$, at almost all finite primes $\nu$, the $x_i's$ are in $\mathbb{Z}_\nu$ and have (local) greatest common divisor $1$.
\\
\\
For a given $x = (x_1, \ldots , x_n) \in V_\mathbb{Q}$, let $D$ be the (finite) set of the distinct primes dividing the denominators of the $x_i's$ (i.e., the union of all such primes), so for $\nu \notin D$, the $x_i's$ will be integral. Similarly, let $N$ be the (finite) set consisting of the primes dividing the various numerators of the $x_i's$; for $\nu \notin N \cup D$, the $x_i's$ are units and are relatively prime.
\\
\\
For primitive $x \in V_\mathbb{A}$ define the global height
$$
\eta(x) = \eta_\infty(x) \times \prod_{\nu \text{ prime}} \eta_\nu (x)
$$
By the above, since $x$ is primitive, at almost all finite primes $\nu$ the local height is $1$, so the product trivially converges.
\begin{lemma}
\hfill
\begin{itemize}
	\item For $t \in \mathbb{J}$ and primitive $x \in \mathbb{A}$, $\eta(tx) = |t|\eta(x)$ where $|t|$ is the idele norm.
	\item If a sequence of vectors in $\mathbb{A}$ goes to zero, then their heights go to zero as well.
	\item If the heights of a collection of (primitive) vectors $x_i$ go to zero, then there are scalars $t_i \in \mathbb{Q}^\times$ so that $t_i x_i$ goes to zero in $\mathbb{A}$.
	\item For $g \in GL(n, \mathbb{A})$ and $c > 0$, the set of non-zero vectors $x \in \mathbb{Q}^n$ so that $\eta(x g) < c$ is finite modulo $\mathbb{Q}^\times$. In particular, for each such $g$, the infimum of $\{ \eta(xg) \; : \; x \in \mathbb{Q}^n - 0\}$ is positive and is assumed.
	\item For a compact set $E$ of $GL(n, \mathbb{A})$ there are constants $c, c^\prime > 0$ so that for all primitive vectors $x$ and for all $g \in E$
	$$
	c \eta(x) \leq \eta(xg) \leq c^\prime \eta(x)
	$$
\end{itemize}
\end{lemma}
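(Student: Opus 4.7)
The strategy is to handle the five claims in an order that exploits their logical dependencies, with (i) and (ii) as immediate ingredients, (iii) and (v) as independent substantive inputs, and (iv) as the synthesis. First I would handle (i): the local formula $\eta_\nu(t_\nu x_\nu) = |t_\nu|_\nu \cdot \eta_\nu(x_\nu)$ holds at every place (homogeneity of the Euclidean norm at the archimedean place, homogeneity of the sup norm at finite places), and the global identity follows by taking the product over all $\nu$ and comparing with the factorization $|t| = \prod_\nu |t_\nu|_\nu$ of the idele norm. Next, (ii): convergence $x_i \to 0$ in $V_\mathbb{A}$ means there is a finite set $S$ of places, chosen once in advance, outside of which $x_{i,\nu} \in \mathbb{Z}_\nu^n$ for all $i$ large, while at each $\nu \in S$ the components $x_{i,\nu}$ go to zero in $\mathbb{Q}_\nu^n$. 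Then $\eta_\nu(x_i) \le 1$ for $\nu \notin S$ and $\eta_\nu(x_i) \to 0$ for $\nu \in S$, so the global product $\eta(x_i)$ goes to zero.

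For (iii) I would invoke that $\mathbb{Z}$ is a principal ideal domain (equivalently, $\mathbb{Q}$ has class number one). Given a primitive vector $x_i = x_{0,i} g_i$, I would choose $t_i \in \mathbb{Q}^\times$ so that $t_i x_i$ lies in $\mathbb{Z}_\nu^n$ with local content (greatest common divisor of its components) equal to $1$ at every finite place $\nu$ simultaneously. With this normalization, $\eta_\nu(t_i x_i) = 1$ at every finite $\nu$, so $\eta(t_i x_i) = \eta_\infty(t_i x_i)$; the product formula gives $|t_i| = 1$ for rational scalars, so by (i) $\eta_\infty(t_i x_i) = \eta(x_i) \to 0$, and combined with the uniform finite-place containment in $\prod_\nu \mathbb{Z}_\nu^n$ this yields convergence of $t_i x_i$ to zero in $V_\mathbb{A}$.

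For (v) I would use continuity of the action $(x,g) \mapsto xg$, noting that at almost all finite places the local component of $g \in E$ lies in $K_\nu = GL(n,\mathbb{Z}_\nu)$, which preserves $\eta_\nu$ exactly. Thus only finitely many places contribute to any distortion of $\eta(xg)/\eta(x)$, and at each of those places compactness of the projection of $E$ into $GL(n,\mathbb{Q}_\nu)$ gives uniform upper and lower bounds on the local distortion factors; taking products furnishes the global constants $c, c'$. Finally for (iv) I would combine (iii), (v), and the discreteness of $V_\mathbb{Q}$ in $V_\mathbb{A}$: a sequence of $\mathbb{Q}^\times$-inequivalent rational $x_i$ with $\eta(x_i g) < c$ would, via (v), satisfy $\eta(x_i) < c'$ uniformly, and then (iii) would produce $t_i \in \mathbb{Q}^\times$ with $t_i x_i \to 0$ in $V_\mathbb{A}$. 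Since $t_i x_i \in V_\mathbb{Q} \setminus \{0\}$ and $V_\mathbb{Q}$ is discrete in $V_\mathbb{A}$, this is a contradiction. Finiteness modulo $\mathbb{Q}^\times$ of $\{x : \eta(xg) < c\}$ then immediately gives that the infimum of $\eta(xg)$ over $x \in \mathbb{Q}^n \setminus 0$ is positive and attained.

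The main obstacle will be (iii): carefully translating the PID property of $\mathbb{Z}$ into an adelic normalization that interacts cleanly with both primitivity and the restricted-direct-product topology on $V_\mathbb{A}$. The other parts are either local calculations or standard discreteness and compactness arguments.
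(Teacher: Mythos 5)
Your parts (i), (ii) and (v) are fine, and (v) is in fact a slightly different (more local) argument than the paper's, which uses a global Cartan decomposition $K\Delta K$; both work. The problem is part (iii), which you yourself flagged as the main obstacle and then did not actually resolve. After normalizing $x_i$ to $s_i x_i$ with local content $1$ at every finite place, the finite components of $s_i x_i$ lie in $\mathbb{Z}_\nu^n$ but have a unit coordinate at each $\nu$; they therefore never enter a neighborhood of $0$ of the form $\prod_{\nu\in S} p_\nu\mathbb{Z}_\nu^n \times \prod_{\nu\notin S}\mathbb{Z}_\nu^n$. Containment in $\prod_\nu \mathbb{Z}_\nu^n$ together with the archimedean component tending to $0$ does \emph{not} give convergence to $0$ in the restricted-product topology on $\mathbb{A}^n$: one must make the components small at every finite place as well. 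The paper repairs this with an extra rational factor: choosing $N_i$ maximal with $\eta_\infty(s_i x_i) < 1/(N_i!)^2$ and setting $t_i = s_i\cdot N_i!$, the archimedean component is still bounded by $1/N_i!\to 0$, while the finite components now lie in $N_i!\,\mathbb{Z}_\nu^n$, which shrinks to $\{0\}$ at each fixed finite place since $N_i\to\infty$. Without some such device your $t_i x_i$ simply does not converge to $0$ adelically.

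There is a second, smaller gap in part (iv). Your hypothesis only gives $\eta(x_i) < c'$ uniformly, i.e.\ \emph{bounded} heights, whereas (iii) requires the heights to tend to $0$; so (iii) cannot be invoked as stated. The intended argument along your lines would instead normalize each class to a primitive integer vector $y_i$ (content $1$ at all finite places), observe $\eta(y_i)=\eta_\infty(y_i)=\eta(x_i)<c'$, and conclude finiteness because there are only finitely many integer vectors of bounded Euclidean norm. This is a legitimate and arguably cleaner route than the paper's, which proves (iv) directly by an induction on the coordinates of representatives $x=(0,\dots,0,1,x_{\mu+1},\dots,x_n)$ using the Iwasawa decomposition of $g$ and the discreteness of $\mathbb{Q}$ in $\mathbb{A}$ entry by entry; but as written your deduction from (iii) is a non sequitur.
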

\begin{proof}
\hfill
\begin{itemize}
	\item Recall $t \in \mathbb{J}$, $t = \{y_\nu\}$ where $y_\nu \in Z^\times_\nu$ for almost all $\nu$ and the archimedean factor is in $\mathbb{R}^\times$. 
$$	
\eta(tx) = \prod_\nu \eta_\nu (y_\nu x_\nu) =  \prod_\nu \eta_\nu (y_\nu) \prod_\nu \eta_\nu(x_\nu) = |\{y_\nu\}| \prod_\nu \eta_\nu(x_\nu) = |t| \eta(x)
$$
since all products are finite.
	\item If a sequence of vectors $x_i$ goes to $0 \in \mathbb{A}$, then for every $\varepsilon > 0$, there is an $i_o$ and $N$ such that $i > i_o$ implies $|\eta_\nu(x_\nu)| < \varepsilon$ for all places so that $|x_\nu| < \varepsilon$ for all infinite primes and $x_\nu \in p_\nu^{N} \mathbb{Z}_\nu^n$ for all finite $\nu$ where $p_\nu^{-N} < \varepsilon$. Thus, letting $\ell$ be the number of infinite places
	$$
	\eta(x) \leq \varepsilon^\ell \times \prod_\nu \eta_\nu(x_\nu) = \varepsilon^\ell \times \prod_\nu p_\nu^{-N}
	$$
	so the heights go to zero.
	\item Suppose that $\eta(x_i)$ goes to $0$ for a sequence of primitive vectors $\{x_i\}$. At almost all finite $\nu$ the vector $x_i$ is in $\mathbb{Z}^n_\nu$ and the entries have local gcd $1$. Since $\mathbb{Z}$ is a principal ideal domain, we can choose $s_i \in \mathbb{Q}$ to that at every finite prime $\nu$ the components of $s_i x_i$ are locally integral and have greatest common divisor $1$. Then the local contribution to the height function from all finite primes is $1$. Therefore, the archimedean height of $s_i x_i$, Euclidean distance, goes to $0$. Finally, we need some choice of trick to make the vectors go to $0$ in $\mathbb{A}^n$. For example, for each index $i$ let $N_i$ be the greatest integer so that $\eta_\infty(s_i x_i) < \frac{1}{(N_i!)^2}$ Let $t_i = s_i \cdot N_i !$. Then $t_i x_i$ goes to $0$ in $\mathbb{A}^n$.
	\item Fix $g \in GL(n,\mathbb{A})$. Since $K$ preserves heights, via the Iwasawa decomposition we may suppose that $g$ is in the group $P_\mathbb{A}$ of upper triangular matrices in $GL(n,\mathbb{A})$. Let $g_{ij}$ be the $(i, j)^{th}$ entry of $g$. Choose representatives $x = (x_1, \ldots , x_n)$ for non-zero vectors in $\mathbb{Q}^n$ modulo $\mathbb{Q}^\times$ such that, letting $\mu$ be the first index with $x_\mu \neq 0$, then $x_\mu = 1$. That is, x is of the form
$$
x = (0, \dots ,0,1,x_{\mu+1},...,x_n)
$$
To illustrate the idea of the argument with a light notation, first consider $n = 2$, let $g = 
\begin{pmatrix} 
a & b \\ 
0 & d  
\end{pmatrix} 
$
 and $x = (1, y)$. Thus, 
$$
x \cdot g = (1,y)
\begin{pmatrix} 
a & b \\ 
0 & d  
\end{pmatrix} 
 = (a,b+yd)
$$
From the definition of the local heights, at each place $v$ of $k$
$$
 max(|a|_\nu,|b+yd|_v) \leq h_v(xg)
$$
from which 
$$
|b+yd|_v \prod_{w \neq v} |a|_w \leq \prod_{\text{all }w} h_w(xg) = h(xg)
$$
Since $g$ is fixed, $a$ is fixed, and at almost all places $|a|_w = 1$. Thus, for $h(xg) < c$ there is a uniform $c^\prime$ such that 
$$
|b+yd|_v \leq c^\prime \qquad (\text{for all } v)
$$
Since for almost all $v$ the residue class field cardinality $q_v$ is strictly greater than $c$
$$
|b + yd|_v \leq 1 \qquad (\text{for almost all } v)
$$
Therefore, $b + yd$ lies in a compact subset $C$ of $A$. Since $b$, $d$ are fixed, and since $\mathbb{Q}$ is discrete and closed in $\mathbb{A}$, the collection of images $\{b + dy \; : \; y \in k\}$ is discrete in $\mathbb{A}$. Thus, the collection of $y$ such that $b + dy$ lies in $C$ is finite. Now consider general $n$ and $x \in \mathbb{Q}^n$ such that $h(xg) < c$. Let $\mu - 1$ be the least index such that $x_\mu \neq 0$.  Adjust by $k^\times$ such that $x_\mu = 1$. For each $v$, from $h(xg) < c$
$$
|g_{\mu -1,\mu} +x_\mu g_{\mu,\mu} |_v \prod_{w \neq v} |g_{\mu -1, \mu -1} |_w \leq h(gx) < c
$$
For almost all places $v$ we have $|g_{\mu -1,\mu -1}|_v = 1$, so there is a uniform $c^\prime$ such that
$$
|g_{\mu -1, \mu} + x_\mu g_{\mu,\mu}|_v < c^\prime \qquad (\text{for all } v)
$$
For almost all $v$ the residue field cardinality $q_v$ is strictly greater than $c^\prime$, so for almost all $v$
$$
|g_{\mu -1,\mu} + x_\mu g_{\mu, \mu}|_v \leq 1
$$
Therefore, $g_{\mu -1,\mu} + x_\mu g_{\mu , \mu}$ lies in a compact subset $C$ of $\mathbb{A}$. Since $\mathbb{Q}$ is discrete, the collection of $x_\mu$ is finite.

Continuing similarly, there are only finitely many choices for the other entries of $x$. Inductively, suppose $x_i = 0$ for $i < \mu - 1$, and $x_\mu , \ldots , x_{\nu - 1}$ fixed, and show that $x_\nu$ has only finitely many possibilities. Looking at the $\nu^{th}$ component $(xg)_\nu$ of $xg$,
$$
|g_{\mu -1, \nu} +x_\mu g_{\mu , \nu} + \ldots + x_{\nu -1} g_{\nu - 1, \nu} + x_\nu g_{\nu, \nu} |_v \prod_{w \neq v} |g_{\mu -1, \mu -1}|_w \leq h(xg) \leq c
$$
For almost all places $v$ we have $|g_{\mu -1, \mu -1}|_w = 1$, so there is a uniform $c^\prime$ such that for all $v$
$$
|(xg)_\nu|_v =|g_{\mu -1, \nu} + x_\mu g_{\mu , \nu} + \ldots + x_{\nu -1} g_{\nu -1, \nu} + x_\nu g_{\nu,\nu}|_v < c^\prime
$$
For almost all $v$ the residue field cardinality $q_v$ is strictly greater than $c^\prime$, so for almost all $v$
$$
|g_{\mu -1, \nu} + x_\mu g_{\mu ,\nu} + \ldots + x_{\nu -1} g_{\nu -1, \nu} + x_\nu g_{\nu ,\nu}|_v \leq 1 
$$
Therefore,
$$
g_{\mu -1, \nu} + x_\mu g_{\mu ,\nu} + \ldots + x_{\nu -1} g_{\nu -1, \nu} + x_\nu g_{\nu ,\nu}
$$
lies in the intersection of a compact subset $C$ of $\mathbb{A}$ with a closed discrete set, so lies in a finite set. Thus, the number of possibilities for $x_\nu$ is finite. By induction we obtain the finiteness.

\item Let $E$ be a compact subset of $GL(n,\mathbb{A})$, and let $K = \prod_v K_v$. Then $K \cdot E \cdot K$ is compact, being the continuous image of a compact set. So without loss of generality $E$ is left and right $K$-stable. By Cartan decompositions the compact set $E$ of $GL(n,\mathbb{A})$ is contained in a set
$$
K \Delta K
$$
where $\Delta$ is a compact set of diagonal matrices in $GL(n,\mathbb{A})$. Let $g = \theta_1 \delta \theta_2$ with $\theta_i \in K$, and $x$ a primitive vector. By the $K$-invariance of the height,
$$
\frac{\eta(xg)}{\eta(x)} = \frac{\eta(x \theta_1 \delta \theta_2)}{\eta(x)} = \frac{\eta(x \theta_1 \delta)}{\theta(x)} = \frac{\eta((x\theta_1 )\delta)}{\eta((x\theta))} = 
$$
Thus, the set of ratios $\eta(xg)/\eta(x)$ for $g$ in a compact set and $x$ ranging over primitive vectors is exactly the set of values $\eta(x\delta)/\eta(x)$ where $\delta$ ranges over a compact set and $x$ varies over primitives. With diagonal entries $\delta_i$ of $\delta$,
$$
0 \; < \; \inf_{\delta \in \Delta} \inf_i |\delta_i| \; \leq \; \eta(x\delta)/\eta(x) \; \leq \; \sup_{\delta \in \Delta} \sup_i |\delta_i| \; < \; \infty
$$
by compactness of $\Delta$.
$\square$
\end{itemize}
	
\end{proof}

%
%
%
%
\subsection{Imbeddings and compactness criteria}

\subsubsection{Imbeddings of arithmetic quotients}\label{Imbeddings of arithmetic quotients}
Let $k$ be a number field. Let $Q = \langle , \rangle$ be a non-degenerate quadratic form on a $k$-vectorspace $V$, and $G = O(Q)$ the corresponding orthogonal group. We have the natural imbedding $G \to GL(V)$.
\begin{proposition}
The inclusion $G_k \to GL(V)_k$ induces an inclusion 
$$
G_k\backslash G_\mathbb{A}  \to GL(V)_k\backslash GL(V)_\mathbb{A}
$$
with closed image.
\end{proposition}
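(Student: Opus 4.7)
The proposition has two parts: the induced map $G_k\backslash G_\mathbb{A}\to GL(V)_k\backslash GL(V)_\mathbb{A}$ is well-defined and injective, and its image is closed. Injectivity is immediate from group theory: if $GL(V)_k\cdot g_1 = GL(V)_k\cdot g_2$ with $g_1,g_2\in G_\mathbb{A}$, then $g_1 g_2^{-1}$ lies in both $GL(V)_k$ and $G_\mathbb{A}$, and therefore in $GL(V)_k\cap G_\mathbb{A}=G_k$, so $G_k g_1=G_k g_2$. The substantive content is the closedness of the image, which I would obtain by exhibiting its preimage $GL(V)_k\cdot G_\mathbb{A}$ in $GL(V)_\mathbb{A}$ as the preimage of a closed set under a continuous map.

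To implement this, let $W$ denote the $k$-vector space of $k$-bilinear forms on $V$ (a copy of $k^{n^2}$ in coordinates), with adelization $W_\mathbb{A}$. Make $GL(V)$ act on forms by $g\cdot B=(g^{-1})^t B\, g^{-1}$; under this action $Q\in W_k$ has stabilizer exactly $G=O(Q)$. Consider the continuous map
$$
\phi:GL(V)_\mathbb{A}\longrightarrow W_\mathbb{A},\qquad \phi(g)=g\cdot Q=(g^{-1})^t Q\, g^{-1},
$$
which by construction intertwines left multiplication on $GL(V)_\mathbb{A}$ with the $GL(V)$-action on forms: $\phi(\gamma g)=\gamma\cdot \phi(g)$ for every $\gamma\in GL(V)_k$, and $\phi(g)=Q$ iff $g\in G_\mathbb{A}$.

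A direct check then shows $GL(V)_k\cdot G_\mathbb{A}=\phi^{-1}\bigl(GL(V)_k\cdot Q\bigr)$. Indeed, if $h=\gamma g$ with $\gamma\in GL(V)_k$ and $g\in G_\mathbb{A}$, then $\phi(h)=\gamma\cdot Q\in GL(V)_k\cdot Q$; conversely, if $\phi(h)=\gamma\cdot Q$ for some $\gamma\in GL(V)_k$, then $\phi(\gamma^{-1}h)=Q$, so $\gamma^{-1}h\in G_\mathbb{A}$ and $h\in GL(V)_k\cdot G_\mathbb{A}$.

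The remaining point is that $GL(V)_k\cdot Q$ is closed in $W_\mathbb{A}$. This orbit is contained in $W_k$, and $W_k$ is a discrete closed subset of $W_\mathbb{A}$ by the standard fact that $k$ is discrete and closed in $\mathbb{A}$. A subset of a closed discrete subspace of a Hausdorff space is automatically closed in the ambient space, so $GL(V)_k\cdot Q$ is closed in $W_\mathbb{A}$. Pulling back through the continuous map $\phi$ gives $GL(V)_k\cdot G_\mathbb{A}$ closed in $GL(V)_\mathbb{A}$, whence its image in $GL(V)_k\backslash GL(V)_\mathbb{A}$ is closed by the definition of the quotient topology.

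The only real obstacle is bookkeeping with left/right conventions: one must choose the action on forms so that $\phi$ turns left $GL(V)_k$-multiplication on the group into the $GL(V)_k$-action on forms, and so that the stabilizer of $Q$ is still $G_\mathbb{A}$. Once that is arranged, the argument reduces to the discreteness of rational points of an affine $k$-space in its adelization, with no reduction theory needed.
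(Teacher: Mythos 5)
Your proposal is correct and follows essentially the same route as the paper: realize $GL(V)_k\cdot G_\mathbb{A}$ as the preimage, under the continuous orbit map $g\mapsto g\cdot Q$ into the adelized space of forms, of the rational orbit $GL(V)_k\cdot Q$, which is closed because it sits inside the discrete closed set of $k$-points. The only difference is cosmetic (bilinear forms versus quadratic forms, and your explicit injectivity check, which the paper leaves implicit).
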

\noindent
A general topological lemma is necessary.
\begin{lemma}
Let $X$, $Y$ be locally compact Hausdorff topological spaces. Further, $X$ has a countable open cover $\{U_i \}$ such that every $U_i$ has compact closure. Let $G$ be a group acting continuously on $X$ and $Y$, transitively on $X$. Let $f : X \to Y$ be a continuous injective $G$-set map whose image is a closed subset of $Y$. Then f is a homeomorphism of $X$ to its image in $Y$.
\end{lemma}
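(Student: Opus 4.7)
My plan is to show that the continuous bijection $\bar f: X \to f(X)$, where $f(X)$ carries the subspace topology inherited from $Y$, is an open map; this will suffice, since $\bar f$ is already continuous and bijective. First I would observe that $f(X)$ is closed in the locally compact Hausdorff space $Y$, hence is itself locally compact Hausdorff, and therefore a Baire space. Next, using that $X = \bigcup_i \overline{U_i}$ with each $\overline{U_i}$ compact, I write
\[
f(X) \;=\; \bigcup_i f(\overline{U_i}),
\]
a countable union of compact sets, each of which is closed in $f(X)$ because $f(X)$ is Hausdorff.

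By the Baire category theorem applied in $f(X)$, some $f(\overline{U_{i_0}})$ has nonempty interior $W$ in $f(X)$. The key technical point is then this restricted homeomorphism step: the restriction $f\big|_{\overline{U_{i_0}}} : \overline{U_{i_0}} \to f(\overline{U_{i_0}})$ is a continuous bijection from a compact space to a Hausdorff space, hence a homeomorphism, and in particular an open map onto $f(\overline{U_{i_0}})$ in its subspace topology from $f(X)$.

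Now I would prove openness of $\bar f$ at a single well-chosen point. Pick $y_0 \in W$, set $x_0 = f^{-1}(y_0) \in \overline{U_{i_0}}$, and let $V \subseteq X$ be any open neighborhood of $x_0$. Then $V \cap \overline{U_{i_0}}$ is relatively open in $\overline{U_{i_0}}$, so $f(V \cap \overline{U_{i_0}})$ is relatively open in $f(\overline{U_{i_0}})$; by definition of the subspace topology there is an open $W_0 \subseteq f(X)$ with $W_0 \cap f(\overline{U_{i_0}}) = f(V \cap \overline{U_{i_0}})$. Then $W \cap W_0$ is open in $f(X)$, contains $y_0$, and satisfies
\[
W \cap W_0 \;\subseteq\; f(\overline{U_{i_0}}) \cap W_0 \;=\; f(V \cap \overline{U_{i_0}}) \;\subseteq\; f(V),
\]
so $f(V)$ is a neighborhood of $y_0$ in $f(X)$. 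Thus $\bar f$ is open at $x_0$.

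Finally I would promote this to openness everywhere via the group action. Since $G$ acts transitively on $X$ (and hence, via the injective equivariant $f$, transitively on $f(X)$), and since the action of each $g \in G$ is a homeomorphism of $X$ and a homeomorphism of $Y$ (restricting to a homeomorphism of $f(X)$), openness of $\bar f$ at $x_0$ transports to openness at every $g\cdot x_0$, hence at every point of $X$. Therefore $\bar f$ is a continuous open bijection, i.e.\ a homeomorphism of $X$ onto $f(X)$. The main obstacle, I expect, is the Baire-category step together with the bookkeeping in the preceding paragraph to convert ``some $f(\overline{U_{i_0}})$ has nonempty interior'' into honest openness of $\bar f$ at a chosen preimage point; once that is in hand, transitivity does the rest essentially for free.
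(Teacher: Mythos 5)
Your proposal is correct and follows essentially the same route as the paper: write $X$ as a countable union of compacta $\overline{U_i}$, use Baire category in the locally compact Hausdorff space $f(X)$ to find some $f(\overline{U_{i_0}})$ with nonempty interior, deduce openness of $f$ at one point, and propagate to all of $X$ by $G$-equivariance and transitivity. The only difference is cosmetic: you cite the Baire category theorem and carefully justify the local-openness step via the compact-to-Hausdorff homeomorphism, whereas the paper reproves Baire by hand with nested compact closures and asserts the local-openness step without the subspace-topology bookkeeping you supply.
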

\begin{proof}
This is a version of the Baire Category argument. Since $f(X)$ is closed in $Y$ the image $f(X)$ is itself (with the subset topology) a locally compact Hausdorff space. Therefore, without loss of generality, $f$ is surjective. Let $C_i$ be the closure of $U_i$. The images $f(C_i)$ of the $C_i$ are compact, hence closed, by Hausdorff-ness. We claim that some $f(C_i)$ must have non-empty interior. If not, we do the usual Baire argument: fix a non-empty open set $V_1$ in $Y$ with compact closure. Since $f(C_1)$ contains no non-empty open set, $V_1$ is not contained in $f(C_1)$, so there is a non-empty open set $V_2$ whose closure is compact and whose closure is contained in $V_1 - f(C_1)$. Since $f(C_2)$ cannot contain $V_2$, there is a non-empty open set $V_3$ whose closure is compact and whose closure is contained in $V_2 - f(C_2)$. A descending chain of non-empty open sets is produced:
$$
V_1 \supset \text{clos}(V_2) \supset V_2 \supset \text{clos}(V_2) \supset V_3 \supset \ldots
$$
By construction, the intersection of the chain of compact sets $\text{clos}(V_i)$ is disjoint from all the sets $f(C_i)$. Yet the intersection of a descending chain of compact sets is non-empty. Contradiction. Therefore, some $f(C_i)$ has non-empty interior. In particular, for $y_o$ in the interior of $f(C_i)$, the map $f$ is open at $x_o = f-1(y_o)$.

Now use the $G$-equivariance of $f$. For an open $U_o$ containing $x_o$ such that $f(U_o)$ is open in $Y$, for any $g \in G$ the set $g U_o$ is open containing $g x_o$. By the $G$-equivariance,
$$
f(g U_o) = gf(U_o) = \text{ continuous image of open set } = \text{ open}
$$
Therefore, since $G$ is transitive on $X$, $f$ is open at all points of $X$. $\square$
\end{proof}
\begin{proof}
By definition of the quotient topologies, $GL(V)_k G_\mathbb{A}$ must be shown closed in $GL(V)_\mathbb{A}$. Let $X$ be the $k$-vectorspace of $k$-valued quadratic forms on $V$. We have a linear action $\rho$ of $g \in GL(V)_k$ on $q \in X$ by
$$
\rho(g)q(v,v) = q(g^{-1} v, g^{-1}v)
$$
(with inverses for associativity). This extends to give a continuous group action of $GL(V)_\mathbb{A}$ on $X_\mathbb{A} = X \otimes \mathbb{A}$. Note that $G_k$ is the subgroup of $GL(V)_k$ fixing the point $Q \in X$, essentially by definition. Let $Y$ be the set of images of $Q$ under $GL(V)_k$, then
$$
GL(V)_k G_\mathbb{A} = \{ g \in GL(V)_\mathbb{A} \; : \; g(Q) \in Y \}
$$
That is, $GL(V)_k G_\mathbb{A}$ is the inverse image of $Y$. By the continuity of the group action, to prove that $GL(V)_k G_\mathbb{A}$ is closed in $GL(V)_\mathbb{A}$ it suffices to prove that the orbit
$$
Y = GL(V)_k G_\mathbb{A}(Q)
$$
is closed in $X_\mathbb{A}$. Indeed, $Y$ is a subset of $X \subset X_\mathbb{A}$, which is a (closed) discrete subset of $X_\mathbb{A}$. This proves the proposition, invoking the previous lemma. $\square$
\end{proof}
If the global base field is not $\mathbb{Q}$, we need more preparation:
\begin{proposition}
Let $k$ be a number field and $K$ a finite extension of $k$. Let $V$ be $K^n$ viewed as a $k$-vectorspace. Let $H = GL(n, K)$ viewed as a $k$-group, and $G = GL_k(V)$. Then the natural inclusion
$$
i : GL_K(K^n) = H \to G=GL_k(V)
$$
gives a homeomorphism of $H_k\backslash H_\mathbb{A}$ to its image in $G_k\backslash G_\mathbb{A}$, and this image is closed.
\end{proposition}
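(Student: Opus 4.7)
The plan is to follow the template of the preceding proposition: realize $H$ as the stabilizer of a $k$-rational point inside a finite-dimensional linear $G$-representation, so that the image of $H_k\backslash H_\mathbb{A}$ is the preimage of a discrete $G_k$-orbit, and then invoke the same topological lemma to upgrade the continuous injection to a homeomorphism onto its closed image. The only real work is identifying the ``structure'' on $V$ whose stabilizer in $G$ is $H$, namely the $K$-vector space structure, equivalently the action of $K$ by scalar multiplication.

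Concretely, choose a primitive element $\alpha$ with $K=k(\alpha)$, and let $T\in \operatorname{End}_k(V)$ be multiplication by $\alpha$, viewed as a $k$-linear endomorphism of $V=K^n$. Since $k[T]$ is the entire image of $K$ in $\operatorname{End}_k(V)$, a $k$-linear automorphism of $V$ commutes with $T$ if and only if it is $K$-linear, so the centralizer of $T$ in $G_k$ under conjugation is precisely $H_k$. The same holds adelically, giving $\operatorname{Stab}_{G_\mathbb{A}}(T)=H_\mathbb{A}$. Let $X=\operatorname{End}_k(V)$, a finite-dimensional $k$-vector space on which $G$ acts continuously by conjugation; then the orbit map $\Phi\colon g\mapsto gTg^{-1}$ is a continuous map $G_\mathbb{A}\to X_\mathbb{A}$, and $\Phi^{-1}(G_k\cdot T)=G_k\cdot H_\mathbb{A}$.

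The topological crux is that $X_k$ is discrete and closed in $X_\mathbb{A}$, so every subset of $X_k$, in particular the $G_k$-orbit of $T$, is automatically closed in $X_\mathbb{A}$: any would-be limit point would lie in $X_k$ by closedness, but would contradict discreteness. Hence $G_k H_\mathbb{A}=\Phi^{-1}(G_k\cdot T)$ is closed in $G_\mathbb{A}$, which is exactly the assertion that the image of $H_k\backslash H_\mathbb{A}$ inside $G_k\backslash G_\mathbb{A}$ is closed. Injectivity of the induced map reduces to the identity $G_k\cap H_\mathbb{A}=H_k$, which is immediate since a $k$-rational element commuting with $T$ adelically must already commute with $T$ rationally.

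To promote continuous injectivity to a homeomorphism onto its image, I apply the topological lemma stated just before the preceding proposition. Both $H_k\backslash H_\mathbb{A}$ and $G_k\backslash G_\mathbb{A}$ are locally compact Hausdorff, and the source admits a countable cover by relatively compact open sets because $H_\mathbb{A}$ does. The group $H_\mathbb{A}$ acts continuously and transitively on $H_k\backslash H_\mathbb{A}$ by right translation, compatibly (via $i$) on $G_k\backslash G_\mathbb{A}$, and the induced map is $H_\mathbb{A}$-equivariant with closed image; the lemma then delivers the homeomorphism. The substantive obstacle is really just recognizing $T$ as the correct linearizing datum; once that is in hand the proof is formally identical to the orthogonal-group case above, with $T$ replacing the quadratic form $Q$.
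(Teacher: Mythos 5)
Your argument is correct, and it is exactly the intended one: the paper actually states this proposition without proof, but your construction is the direct adaptation of the paper's proof of the preceding orthogonal-group proposition, with the multiplication-by-$\alpha$ endomorphism $T\in\operatorname{End}_k(V)$ playing the role of the quadratic form $Q$ (so that $H$ is the stabilizer of a $k$-rational point in a linear $G$-representation, the $G_k$-orbit of $T$ is closed because $X_k$ is discrete and closed in $X_\mathbb{A}$, and the topological lemma upgrades the continuous injection to a homeomorphism onto its closed image). The one step worth recording explicitly is the adelic centralizer computation, i.e.\ that $\mathbb{A}_k[T]\cong K\otimes_k\mathbb{A}_k\cong\mathbb{A}_K$ because $1,T,\dots,T^{[K:k]-1}$ remain linearly independent after base change, so that $\operatorname{Stab}_{G_\mathbb{A}}(T)=GL_n(\mathbb{A}_K)=H_\mathbb{A}$; you assert this correctly and it holds.
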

%
%
%
%
%
%
\subsubsection{Mahler's criterion for compactness}
\begin{theorem}
Mahler's criterion for compactness: Let $G$ be an orthogonal group attached to an $n$-dimensional non-degenerate $k$-valued quadratic form. For a subset $X$ of $G_\mathbb{A} \subset GL(n, \mathbb{A})$ to be compact left modulo $G_k$, it is necessary and sufficient that, given $x_i \in X$ and $v_i \in k^n$ such that $x_i v_i \to 0$ in $\mathbb{A}^n$, $v_i = 0$ for sufficiently large $i$.
\end{theorem}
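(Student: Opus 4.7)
The strategy is to reduce Mahler's criterion for $G = O(Q)$ to the corresponding statement for $GL_n$, exploiting the closed embedding
$$
G_k\backslash G_\mathbb{A} \;\hookrightarrow\; GL(n,k)\backslash GL(n,\mathbb{A})
$$
established in the imbedding proposition, combined with the affine-height lemma of the appendix. Both directions of the theorem are about the behavior of the scalar-valued quantity $\eta(x_i v_i)$, so the plan is to translate everything into heights.

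For the \textbf{necessity} direction, I would assume $X$ has compact image in $G_k\backslash G_\mathbb{A}$; by the closed-embedding proposition it then has compact image in $GL(n,k)\backslash GL(n,\mathbb{A})$ as well. Given $x_i \in X$ and $v_i \in k^n$ with $x_i v_i \to 0$, pass to a subsequence and write $x_i = \gamma_i^{-1} e_i$ with $e_i$ in a fixed compact $E \subset GL(n,\mathbb{A})$ and $\gamma_i \in GL(n,k)$. The fifth bullet of the height lemma gives $\eta(e_i v_i) \asymp \eta(v_i)$ with constants depending only on $E$. By the product formula, any nonzero $v \in k^n$ satisfies $\eta(v) \geq 1$, so $\eta(e_i v_i)$ is bounded below by a positive constant whenever $v_i \neq 0$. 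On the other hand, $\eta(x_i v_i) \to 0$ because $x_i v_i \to 0$ in $\mathbb{A}^n$ (second bullet of the height lemma). The relation $e_i v_i = \gamma_i(x_i v_i)$ with $\gamma_i \in GL(n,k)$ is then reconciled using the product-formula invariance enjoyed by $k$-rational matrices: up to a $k^\times$ scaling, $\gamma_i v_i$ is itself a nonzero $k$-rational vector and inherits $\eta \geq 1$, forcing $v_i = 0$ for $i$ large.

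For the \textbf{sufficiency} direction, assume the no-escape condition. By the closed embedding it suffices to show $X$ has compact image in $GL(n,k)\backslash GL(n,\mathbb{A})$. I would argue by contrapositive using Iwasawa decomposition and adelic reduction theory in $GL_n$: if the image of $X$ fails to be compact, then (since $x \in G_\mathbb{A}$ forces $|\det x| = 1$, removing the covolume obstruction) some Iwasawa diagonal component of a sequence $\{x_i\} \subset X$ must have an entry whose idele norm tends to $0$. Using the second and third bullets of the height lemma, an appropriate $k^\times$-scaling of the corresponding standard basis vector produces $v_i \in k^n \setminus \{0\}$ with $x_i v_i \to 0$ in $\mathbb{A}^n$, contradicting the hypothesis.

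The technical heart, and main obstacle, is the bookkeeping in the necessity direction: $x_i$ itself need not lie in any compact of $GL(n,\mathbb{A})$ — only its $GL(n,k)$-translate $e_i$ does — and the height lemma's fifth bullet applies directly only to compact sets. The resolution turns on the product-formula invariance of $k$-rational heights, which survives the otherwise uncontrolled translation by $\gamma_i$, and on carefully keeping track of the two places at which heights enter (the $k$-rational side where $\eta \geq 1$ holds unconditionally, and the adelic side where the lemma bounds ratios on compacts).
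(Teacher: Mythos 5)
Your proposal is correct and follows essentially the same route as the paper: reduce to $GL_n$ via the closed-embedding propositions, translate both directions into statements about the affine height $\eta$, and in the serious (sufficiency) direction use Iwasawa/Minkowski reduction together with the constraint $|\det x|=1$ and Fujisaki's lemma to trap the diagonal Iwasawa components; your contrapositive phrasing and your working over $k$ directly (rather than first reducing to $k=\mathbb{Q}$ by restriction of scalars, as the paper does) are only cosmetic differences, and you supply the easy necessity direction that the paper omits. One caveat on that necessity argument: $\eta$ is \emph{not} invariant under $GL(n,k)$ (only under $\prod_\nu K_\nu$ and under $k^\times$-scaling via the product formula), so the rational translate $\gamma_i$ must be absorbed into the rational vector on the side matching the action --- producing a nonzero $w_i = v_i\gamma_i \in k^n$ with $\eta(w_i)\geq 1$ and $\eta(w_i e_i) \gg \eta(w_i)$ by the compact-set comparison --- rather than invoked as an invariance of $\eta$ under rational matrices, and the identity $e_i v_i = \gamma_i(x_i v_i)$ as you wrote it applies $\gamma_i$ to an adelic vector and does not by itself yield the lower bound.
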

\begin{proof}
The propositions above the problem to proving an analogue for $G = GL(n,k)$ with $k = \mathbb{Q}$. In particular, for $GL(n)$ suppose there are positive constants $c^\prime$ and $c^{\prime\prime}$ such that
$$
X \subset \{g \in GL(n,\mathbb{A}) \; : \; c^\prime \leq |detg| \leq c^{\prime\prime} \}
$$
The serious direction of implication is to show that, if the condition is satisfied, then $X$ is compact modulo $G_k$. Let $\eta$ be the affine height function on $k^n$. Then $\eta(x v) \geq c_1$ for some $c_1$ for any non-zero $v \in k^n$. By the Iwasawa decomposition, can write $x = p \theta$ with $\theta \in GL(n,\frak{o}_k)$ and $p$ upper-triangular, where $\frak{o}_k$ is the ring of integers in $k$. Further, since we consider $x$ modulo $G_k$, and using the fact that actually $k = \mathbb{Q}$, the Minkowski reduction allows us to suppose that the diagonal entries $p_i$ of $p$ satisfy $|p_i/p_{i+1}| \geq c$ for some $c > 0$. Therefore, letting $e_i$ be the usual basis vectors in $k^n$, $c1 \leq |p_i| = \eta(x e_1)$. And our extra hypothesis gives us
$$
c^\prime \leq |p_1 \ldots p_n| \leq c^{\prime\prime}
$$
Thus, for instance by Fujisaki's lemma, the diagonal entries of elements p coming from elements of $X$ lie inside some compact subset of $\mathbb{J}/k^\times$. Certainly the superdiagonal entries, left-modulo $k$-rational upper-triangular matrices, can be put into a compact set. Therefore, $X$ is compact left modulo $GL(n, k)$, for $k = \mathbb{Q}$. But, as remarked at the outset, the propositions above about imbeddings of arithmetic quotients reduce the general case and the orthogonal group case to this. $\square$
\end{proof}
%
%
%
%
%
%
\subsubsection{Compactness of anisotropic quotients of orthogonal groups}\label{Compactness-of-Anisotropic-Quotients-of-Orthogonal-Groups}
\begin{theorem}
Let $G$ be the orthgonal group of a non-degenerate quadratic form $Q = \langle , \rangle$ on a vectorspace $V \approx k^n$ over a number field $k$. Then $G_k\backslash G_\mathbb{A}$ is compact if and only if $Q$ is $k$-anisotropic.
\end{theorem}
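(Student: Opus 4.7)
The plan is to apply Mahler's compactness criterion (stated just above as a theorem) in both directions, using the fundamental fact that elements of $G_\mathbb{A}$ preserve the form $Q$ globally and at each place, together with the discreteness of $k$ in $\mathbb{A}$.

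For the direction \emph{anisotropic $\Rightarrow$ compact}, I would verify the hypothesis of Mahler's criterion directly. Suppose $x_i \in G_\mathbb{A}$ and $v_i \in V_k = k^n$ satisfy $x_i v_i \to 0$ in $V_\mathbb{A}$. Since evaluation of $Q$ is continuous on $V_\mathbb{A}$ and $Q(0)=0$, we have $Q(x_i v_i) \to 0$ in $\mathbb{A}$. But $x_i \in G_\mathbb{A}$ preserves $Q$ componentwise at every place, so $Q(x_i v_i) = Q(v_i)$ as elements of $\mathbb{A}$ (sitting in the diagonal copy of $k$). Thus $Q(v_i) \to 0$ along the discrete subset $k \subset \mathbb{A}$, which forces $Q(v_i) = 0$ for all sufficiently large $i$. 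Anisotropy of $Q$ over $k$ then gives $v_i = 0$ for large $i$, which is exactly what Mahler's criterion needs (the determinant condition is automatic since orthogonal transformations have determinant $\pm 1$).

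For the direction \emph{isotropic $\Rightarrow$ non-compact}, I would produce a sequence violating Mahler's criterion. Fix a non-zero isotropic $v_0 \in V_k$. By Witt's theorem, $v_0$ extends to a hyperbolic pair $\{v_0,w_0\}$ in $V_k$ with $Q(v_0)=Q(w_0)=0$ and $\langle v_0, w_0\rangle = 1$, spanning a hyperbolic plane $H$ with $V = H \oplus H^\perp$. For each $t$ in an idele-like parameter, the linear map acting as $v_0 \mapsto t v_0$, $w_0 \mapsto t^{-1} w_0$, and identity on $H^\perp$, lies in $G$. Taking $t_i \in \mathbb{A}^\times$ whose components shrink to $0$ (e.g.\ $t_i$ purely archimedean and tending to $0$), this yields $x_i \in G_\mathbb{A}$ with $x_i v_0 = t_i v_0 \to 0$ in $V_\mathbb{A}$, while $v_0 \neq 0$. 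This contradicts the necessary condition in Mahler's criterion, so $G_k\backslash G_\mathbb{A}$ cannot be compact.

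The main obstacle is really just setting up the correct use of Witt's theorem and the hyperbolic scaling in the isotropic direction so that the produced $x_i$ genuinely lives in the adelic orthogonal group; the anisotropic direction is an immediate consequence of $Q$-invariance plus discreteness of $k$ in $\mathbb{A}$ and requires essentially no additional input beyond Mahler's criterion as already formulated. The reductions in Sections \ref{Imbeddings of arithmetic quotients} ensure that Mahler's criterion in the $GL_n$-form is applicable to $G$, so no further embedding work is needed.
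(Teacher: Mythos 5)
Your proposal follows the paper's proof almost verbatim: both directions go through Mahler's criterion, the anisotropic direction via $Q$-invariance of $G_\mathbb{A}$ plus discreteness of $k$ in $\mathbb{A}$, and the isotropic direction by scaling a rational isotropic vector by ideles that shrink adelically, with Witt's theorem (which you make explicit as a hyperbolic scaling) supplying the group elements. The one flaw is your parenthetical choice of the $t_i$: a sequence of ideles that is $1$ at every finite place and tends to $0$ archimedeanly does \emph{not} tend to $0$ in $\mathbb{A}$, because the finite part of $t_i v_0$ is then constantly $v_0 \neq 0$ and a basic adelic neighborhood of $0$ can exclude it at a single finite place. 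You need the finite components to shrink as well (e.g.\ $t_i$ a highly divisible integer at the finite places with a compensatingly tiny archimedean component); this is exactly the subtlety the paper emphasizes, namely that the idele topology is strictly finer than the subspace topology from $\mathbb{A}$, so ideles can tend to $0$ adelically without tending to $0$ idelically. With that correction the argument is complete.
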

\begin{proof}
On one hand, suppose $Q$ is $k$-anisotropic. If $g_n v_n \to 0$ in $\mathbb{A}^n$ with $g_n \in G_\mathbb{A}$ and $v_n \in \mathbb{A}^n$, then $Q(v_n g_n)$ also goes to $Q(0) = 0$, by the continuity of $Q$. But $Q(g_n v_n) = Q(v_n)$, because $G_\mathbb{A}$ preserves values of $Q$. Since $Q$ has no non-zero $k$-rational isotropic vectors and $k^n$ is discrete in $\mathbb{A}^n$, this means that eventually $v_n = 0$. By Mahler's criterion this implies that the quotient is compact. On the other hand, suppose that $Q$ is isotropic. Then there is a non-zero isotropic vector $v \in k^n$. Let $H$ be the subgroup of $G_\mathbb{A}$ fixing $v$. For all indices $i$ let $v_i = v$. So certainly $v_i$ does not go to $0$. Now we'll need to exploit the fact that the topology on $\mathbb{J}$ is not simply the subspace topology from $\mathbb{A}$, but is inherited from the imbedding $\alpha \to (\alpha,\alpha^{-1})$ of $\mathbb{J} \to \mathbb{A}\times \mathbb{A}$: we can find a sequence $t_i$ of ideles which go to $0$ in the $\mathbb{A}$-topology (but certainly not in the $\mathbb{J}$-topology). Then $t_i v_i \to 0$. And certainly still $Q(t_i v_i) = 0$, so by Witt's theorem there is $g_i \in G_\mathbb{A}$ so that $g_i v_i = t_i v_i$. Thus, $g_i v_i \to 0$, but certainly $v_i$ does not do so. Thus, Mahler's criterion says that the quotient is not compact. $\square$
\end{proof}

\vfill\break


\newpage
\section{Bibliography}

\addtocontents{toc}

\end{document}